\newtheorem{theorem}[equation]{Theorem}
\newtheorem{lemma}[equation]{Lemma}
\newtheorem{prop}[equation]{Proposition}
\newtheorem{corollary}[equation]{Corollary}
\newtheorem{definition}[equation]{Definition}
\theoremstyle{remark}
\newtheorem{remark}[equation]{Remark}
\newtheorem{notation}[equation]{Notation}
\newtheorem{convention}[equation]{Convention}
\newtheorem{assumption}[equation]{Assumption}
\numberwithin{equation}{section}
\newcommand{\supp}{\text{supp}}
\newcommand{\Thetacyl}{\Theta_{\cyl}}
\newcommand{\what}{\widehat{w}}
\newcommand{\vhat}{\widehat{v}}
\newcommand{\uhat}{\widehat{u}}
\newcommand{\hhat}{\widehat{h}}
\newcommand{\Lhat}{\widetilde{L}}
\newcommand{\tauo}{\tau_{1}}
\newcommand{\taupol}{\tau_{pol}}
\newcommand{\tauhat}{\widehat{\tau}}
\newcommand{\tautilde}{\widetilde{\tau}}
\newcommand{\Dcal}{{\mathcal{D}}}
\newcommand{\gammahat}{{\widehat{\gamma}}} 
\newcommand{\betahat}{{\widehat{\beta}}} 
\newcommand{\varphihat}{\widehat{\varphi}}
\newcommand{\varphinl}{{\varphi_{nl}}}
\newcommand{\varphiinit}{{\varphi_{init}}}
\newcommand{\phieq}{{\widehat{\phi}_{eq}}}
\newcommand{\phie}{{\phi_{even}}}
\newcommand{\phio}{{\phi_{odd}}}
\newcommand{\gammagl}{{\alpha}}
\newcommand{\osc}{{{osc}}}
\newcommand{\ave}{{{avg}}}
\newcommand{\gtilde}{{\widetilde{g}}}
\newcommand{\Gtilde}{{\widetilde{G}}}
\newcommand{\chitilde}{{\widetilde{\chi}}}
\newcommand{\stilde}{{\widetilde{\sss}}}
\newcommand{\sbar}{\underline{\sss}}
\newcommand{\vunder}{\underline{v}}
\newcommand{\kunder}{\underline{k}}
\newcommand{\mupol}{{\mu_{pol}}}
\newcommand{\Vpol}{{V_{pol}}}
\newcommand{\zetapol}{{\zeta_{pol}}}
\newcommand{\Wpol}{{W_{pol}}}
\newcommand{\thetatilde}{{\widetilde{\theta}}}
\newcommand{\atilde}{{\widetilde{a}}}
\newcommand{\btilde}{{\widetilde{b}}}
\newcommand{\mutilde}{{\widetilde{\mu}}}
\newcommand{\symphi}{\textstyle\frac12 (\, \partial_+\phi(\sss_i) - \partial_-\phi(\sss_i)\, )}
\newcommand{\phitilde}{{\widetilde{\phi}}}
\newcommand{\phitildeq}{{\phitilde_{eq}}}
\newcommand{\phihat}{{{\widehat{\phi}}}}  
\newcommand{\upphihat}{{\widehat{\upphi}}}
\newcommand{\skernel}{\mathscr{K}}
\newcommand{\skernelv}{\widehat{\mathscr{K}}}
\newcommand{\val}{\mathscr{V}}
\newcommand{\R}{\mathbb{R}}
\newcommand{\Z}{\mathbb{Z}}
\newcommand{\N}{\mathbb{N}}
\newcommand{\Sph}{\mathbb{S}}
\newcommand{\Spheq}{\mathbb{S}^2_{eq}}
\newcommand{\Cir}{\mathbb{P}}
\newcommand{\cat}{\mathbb{K}}
\newcommand{\PiSph}{\Pi_{\Spheq}}
\newcommand{\Ric}{\operatorname{Ric}}
\newcommand{\sech}{\operatorname{sech}}
\newcommand{\cunder}{\underline{c}\,}
\newcommand{\phiunder}{{\underline{\phi}}}
\newcommand{\junder}{{\underline{j}}}
\newcommand{\epsilonunder}{\underline{\epsilon}}
\newcommand{\Lcal}{{\mathcal{L}}}
\newcommand{\Lcalp}{{\mathcal{L}'}}
\newcommand{\Acal}{{\mathcal{A}}}
\newcommand{\Bcal}{{\mathcal{M}}}
\newcommand{\Rcal}{{\mathcal{R}}}
\newcommand{\Jcal}{{\mathcal{J}}}
\newcommand{\Ecal}{{\mathcal{E}}}
\newcommand{\Ecalinv}{{\mathcal{E}^{-1}_L}}
\newcommand{\dbold}{{\mathbf{d}}}
\newcommand{\zetabold}{{\boldsymbol{\zeta}}}
\newcommand{\zetaboldpol}{{\widetilde{\zetabold}}}
\newcommand{\zetaboldhat}{{\widehat{\boldsymbol{\zeta}}}}
\newcommand{\mubold}{{\boldsymbol{\mu}}}
\newcommand{\muboldtilde}{{\widetilde{\mubold}}}
\newcommand{\zerobold}{{\boldsymbol{0}}}
\newcommand{\xiw}{{{w}}}
\newcommand{\xibold}{{\boldsymbol{\xi}}}
\newcommand{\avg}{\operatornamewithlimits{avg}}
\newcommand{\sss}{{{\ensuremath{\mathrm{s}}}}}
\newcommand{\sbold}{{\boldsymbol{\sss}}}
\newcommand{\sym}{{sym}}
\newcommand{\xx}{{{\ensuremath{\mathrm{x}}}}}
\newcommand{\shat}{{{\widehat{\sss}}}}
\newcommand{\shatbar}{{{\widehat{\sbar}}}}
\newcommand{\Acalsssi}{{\Acal_{\sss_i}\!}}
\newcommand{\Rcalsssi}{\mathcal{R}_{\sss_i}\!}
\newcommand{\cyl}{\ensuremath{\mathrm{Cyl}}}
\newcommand{\partialx}{\partial}
\newcommand{\Cmk}{c_2}
\newcommand{\yy}{\ensuremath{\mathrm{y}}}
\newcommand{\zz}{\ensuremath{\mathrm{z}}}
\newcommand{\rr}{r}
\newcommand{\sssunder}{{\underline{\sss}}}
\newcommand{\xxtilde}{{\ensuremath{\widetilde{\mathrm{x}}}}}
\newcommand{\yytilde}{{\ensuremath{\widetilde{\mathrm{y}}}}}
\newcommand{\domTheta}{\text{Dom}_\Theta}
\newcommand{\Pp}{{\widehat{G}}}
\newcommand{\Ghat}{{\widehat{G}}}
\newcommand{\Phat}{{\widehat{\Phi}}}
\newcommand{\phat}{{\widehat{\phi}}}
\newcommand{\phip}{\phi'}
\newcommand{\Phip}{\Phi'}
\newcommand{\grouptwo}{{\mathscr{G}_{\Spheq, m}}}
\newcommand{\Gcal}{\mathcal{G}}
\newcommand{\groupthree}{{\mathscr{G}_{\Sph^3,m}}}
\newcommand{\Xhat}{\widehat{X}}
\newcommand{\Mhat}{\widehat{M}}
\newcommand{\Fcal}{{\mathcal{F}}}
\newcommand{\sssX}{\widehat{\mathsf{X}}}
\newcommand{\YYY}{\widehat{\mathsf{Y}}}
\newcommand{\ZZZ}{\widehat{\mathsf{Z}}}
\newcommand{\bsigma}{\boldsymbol{\sigma}}
\newcommand{\bsigmaunder}{\underline{\bsigma}}
\newcommand{\abold}{\boldsymbol{a}}
\newcommand{\Fbold}{\boldsymbol{F}}
\newcommand{\Fboldunder}{\underline{\Fbold}}
\newcommand{\xbar}{\underline{\sssX}}
\newcommand{\ybar}{\underline{\YYY}}
\newcommand{\zbar}{\underline{\ZZZ}}
\newcommand{\xbartilde}{{\underline{\mathsf{X}}}}
\newcommand{\ybartilde}{{\underline{\mathsf{Y}}}}
\newcommand{\zbartilde}{{\underline{\mathsf{Z}}}}
\newcommand{\ytilde}{{\mathsf{Y}}}
\newcommand{\disjun}{\textstyle\bigsqcup}
\newcommand{\mmer}{{m_{mer}}}
\newcommand{\mpar}{{m_{par}}}
\newcommand{\Lmer}{{L_{mer}}}
\newcommand{\Lpar}{{L_{par}}}
\newcommand{\psicut}{{\psi_{cut}}}
\newcommand{\Psibold}{{\boldsymbol{\Psi}}}
\newcommand{\pointp}{{(F_1, \bsigmaunder)}}
\newcommand{\pointppol}{{(F_1, \bsigmaunder, \tautilde)}}
\begin{document}

\title[Doubling]{Minimal Surfaces in the Round Three-Sphere by Doubling the Equatorial Two-Sphere, II}

\author[N.~Kapouleas]{Nikolaos~Kapouleas}
\author[P.~McGrath]{Peter~McGrath}

\address{Department of Mathematics, Brown University, Providence,
RI 02912} \email{peter\_mcgrath@math.brown.edu} \email{nicos@math.brown.edu}

\date{\today}

\keywords{Differential Geometry, minimal surfaces, partial differential equations, perturbation methods}

\begin{abstract}
In \cite{kap} 
new closed embedded smooth minimal surfaces in the round three-sphere $\mathbb{S}^3(1)$ were constructed, 
each resembling two parallel copies of the equatorial two-sphere $\Spheq$ joined by small catenoidal bridges,  
with the catenoidal bridges concentrating along two parallel circles, 
or the equatorial circle and the poles.  
In this sequel 
we generalize those constructions 
so that the catenoidal bridges can concentrate along an arbitrary number of parallel circles, 
with the further option to include bridges at the poles. 
The current constructions follow the Linearized Doubling (LD) methodology developed in \cite{kap}  
and the LD solutions constructed here can be modified readily for use 
to doubling constructions of rotationally symmetric minimal surfaces with asymmetric sides \cite{LDa}. 
In particular they allow us to develop in \cite{LDa} doubling constructions for 
the catenoid in Euclidean three-space, the critical catenoid in the unit ball, 
and the spherical shrinker of the mean curvature flow.  

Unlike in \cite{kap}, our constructions here 
allow for sequences of minimal surfaces where the catenoidal bridges tend to be ``densely distributed'',  
that is do not miss any open set of $\Spheq$ in the limit. 
This in particular leads to interesting observations which seem 
to suggest that it may be impossible to construct embedded minimal surfaces with isolated singularities 
by concentrating infinitely many catenoidal necks at a point. 
\end{abstract}
\maketitle

\section{Introduction}
\label{S:intro}
$\phantom{ab}$
\nopagebreak
\subsection*{The general framework}
$\phantom{ab}$
\nopagebreak

This article is the second one 
in a series in which gluing constructions
for closed embedded minimal surfaces in the round three-sphere $\Sph^3(1)$
by doubling the equatorial two-sphere $\Spheq   $ are discussed. 
Doublings of the equatorial two-sphere $\Spheq   $ are important 
as a test case for developing the doubling methodology and also 
because their area is close to $8\pi$ 
(the area of two equatorial two-spheres), 
a feature they share with the celebrated surfaces 
constructed by Lawson in 1970 \cite{L2}.
The classification of the low area 
closed embedded minimal surfaces in the round three-sphere $\Sph^3(1)$, 
especially of those of area close to $8\pi$ 
or less, 
is a natural open question.
This is further motivated by the recent resolutions of 
the Lawson conjecture by Brendle \cite{brendle} 
and the Willmore conjecture by Marques and Neves \cite{neves} 
where they also characterize the Clifford torus and the equatorial sphere
as the only examples of area $\le2\pi^2$. 
We refer to \cite{Brendle:survey} for a survey of existence and uniqueness
results for minimal surfaces in the round three-sphere.

The general idea of doubling constructions by gluing methods 
was proposed and discussed in 
\cite{kapouleas:survey,kapouleas:clifford,alm20}.
Gluing methods have been applied extensively
and with great success in Gauge Theories by Donaldson, Taubes, and others.
The particular kind of gluing methods used in this article relate most
closely to the methods developed in \cite{schoen} and \cite{kapouleas:annals},
especially as they evolved and were systematized in
\cite{kapouleas:wente:announce,kapouleas:wente,kapouleas:imc}.
We refer to \cite{kapouleas:survey} for a general discussion of this gluing methodology 
and to \cite{alm20} for a detailed general discussion of doubling by gluing methods.

Roughly speaking, in such doubling constructions,  
given a minimal surface $\Sigma$,
one constructs first a family of smooth, embedded, and approximately minimal, initial surfaces. 
Each initial surface 
consists of two approximately parallel copies of $\Sigma$ with a number of discs removed and replaced by approximately catenoidal bridges.
Finally, one of the initial surfaces in the family is perturbed to minimality by Partial Differential Equations methods.
Understanding such constructions in full generality seems beyond the immediate horizon at the moment.
In the earliest such construction \cite{kapouleas:clifford} 
where doublings of the Clifford torus are constructed, 
there is so much symmetry imposed, 
that the position of the catenoidal bridges is completely fixed 
and all bridges are identical modulo the symmetries.
Moreover the bridges are uniformly distributed, 
that is when their number is large enough, 
there are bridges located inside any preassigned domain of $\Sigma$.
Wiygul \cite{wiygul:t,Wiygul:s} has extended that construction to multiple doublings 
with more that two copies of the Clifford torus involved (and some less symmetric doublings also), 
where the symmetries do not determine the vertical (that is perpendicular to $\Sigma$) 
position of the bridges.

In a previous article \cite{kap} doubling constructions where 
the horizontal position of the bridges is not determined by the symmetries, 
or there are more than one bridge modulo the symmetries, 
were carried out for the first time.  
To realize such constructions an intermediate step in the construction was introduced.  
In this intermediate step singular solutions of the linearized equation, 
called \emph{linearized doubling} (LD) solutions, 
are constructed and studied.  
This new approach which we call Linearized Doubling (LD),  
provides a systematic methodology for dealing with the obstructions involved 
and also provides a detailed understanding of the regions further away from the catenoidal bridges.
It can also be generalized to higher dimensions \cite{kapouleas:high:doubling}. 
In \cite{kap} the conversion of suitable continuous families of LD solutions on $\Spheq$ 
into minimal surfaces whose catenoidal bridges ``replace'' the singularities of the LD solutions was realized in general as part of the LD methodology. 
This reduced the construction of the minimal surfaces to the construction and estimation of the LD solutions, 
which remained however a very difficult problem.  

In \cite{kap} the only LD solutions which were constructed had their singularities either on two parallel 
circles symmetrically arranged around the equatorial circle of $\Spheq$, or 
at the poles and the equatorial circle of $\Spheq$.  
This way two (discrete) families of minimal surfaces were obtained, 
a family with the catenoidal bridges concentrating on two parallel circles symmetrically arranged around the equatorial circle of $\Spheq$
and a family where there are two bridges at the poles and the rest concentrate on the equatorial circle of $\Spheq$.  

\subsection*{Brief discussion of the results}
$\phantom{ab}$
\nopagebreak

In this article we study LD solutions with singularities on an arbitrary number (but $\ge2$) of parallel 
circles of $\Spheq$ (subject to the same symmetries as in \cite{kap}), 
with the option to have singularities at the poles also.  
This leads to the construction of minimal doublings of $\Spheq$ with the catenoidal 
bridges replacing the singularities of the LD solutions 
(see Theorems \ref{Tmain}, \ref{Tmainpol}, \ref{Tspheq}, and \ref{Tspheqpol})
and thus concentrating along an arbitrary number $\ge2$ of parallel circles and optionally at the poles.  

In particular we obtain for the first time (unlike in \cite{kap}) 
sequences of minimal doublings of $\Spheq$ where 
the number of the parallel circles tends to infinity 
and therefore the number of bridges contained in any 
fixed in advance open subset of $\Spheq$ tends also to infinity, 
that is the catenoidal bridges become ``densely distributed'' in the limit. 
We observe then the interesting phenomenon 
that the size of the catenoidal bridges on each minimal surface of such a sequence 
tends to become uniform,  
in the sense that 
$$
\lim_{\mpar\to\infty} \, 
\lim_{\mmer\to\infty} \, \tau_{max}/\tau_{min} =1, 
$$
where $\mpar$ is the number of the parallel circles, $\mmer$ the number of bridges on each circle, 
and
$\tau_{max}/\tau_{min}$ the ratio of the maximum size over the minimum size of the bridges  
(see Remarks \ref{rtausize} and \ref{rtausizepol}). 
This happens even in the case where nearby catenoidal bridges experience very different geometries, 
as when the bridge at a pole is surrounded by a very large number of bridges on nearby parallel circles.  
This suggests that when there is a very large number of catenoidal bridges close to a point, 
even in asymmetric situations, the sizes of the bridges would tend to become uniform. 
This would imply a negative answer to the very important question on whether embedded minimal surfaces 
with isolated singularities can be constructed by concentrating an infinite number of catenoidal 
bridges in the vicinity of a singular point. 

Finally we remark that the families of LD solutions we construct here find immediate application in \cite{LDa} 
to constructions of complete minimal surfaces in the Euclidean space by doubling the catenoid, 
constructions of free boundary minimal surfaces in the unit ball by doubling the critical catenoid, 
and of self-shrinkers of the mean curvature flow by doubling the spherical self-shrinker: 
Although the LD methodology as applied in \cite{kap} and in the current paper assumes 
that the minimal surface being doubled possesses a symmetry fixing it pointwise and exchanging 
its sides, as $\Spheq$ does, 
it is possible 
\cite[Remark 3.21]{kap}  
to extend the methodology to the case of asymmetric sides, 
as in the case of the catenoid and the spherical self-shrinker.  
Because the catenoid is conformally isometric to $\Spheq$ and the spherical self-shrinker is round 
we can use the LD solutions we derive here with small modifications to carry out the constructions 
in \cite{LDa}.

\subsection*{Outline of the approach}
$\phantom{ab}$
\nopagebreak

In this article, as in \cite{kap}, in order to construct a family of initial surfaces 
one of which is later perturbed to minimality by a fixed point theorem,  
we first construct a family of LD solutions 
(defined in \cite[Definition 3.1]{kap} and \ref{dLD0}). 
The LD solutions are converted to 
\emph{matched linearized doubling} (MLD) solutions 
(defined in \cite[Definition 3.4]{kap} and \ref{DMLD}),  
which are then converted to initial surfaces. 
In both articles the construction and estimation of the LD solutions relies 
on the rotational invariance of $\Spheq$. 
Each LD solution $\varphi$ is related to a solution $\phi$ obtained by averaging $\varphi$
on the circles of latitude  
(see \cite[Lemma 5.8]{kap} and \ref{Lphiavg}). 
$\phi$ then belongs to a class of rotationally invariant solutions which in this article 
we call 
\emph{rotationally invariant linearized doubling} (RLD) solutions (see  \ref{RL}). 
The RLD solutions studied in this article are rather more complicated compared to the ones studied in \cite{kap}. 
Much of the progress achieved involves their detailed understanding. 
We summarize schematically the main steps in the construction in the following diagram 
(see Remark \ref{Rsummary} for a more detailed outline): 
\begin{figure}[h]
\label{Foutline}
\begin{center}
\begin{tikzcd}
RLD \arrow{r} & LD \arrow{r} & MLD \arrow{r} & \text{Initial Surface} \arrow{r}& \text{Minimal Surface} \\[-.5cm]
\phat & \Phi & \varphi & M & \Mhat_\upphihat
\end{tikzcd}
 \end{center}
 \caption{The main steps of the construction}
 \end{figure}
 
We discuss now the main innovations of this paper. 
A large part of the effort is in understanding and estimating in detail the RLD solutions. 
Achieving this is helped by the observation that the class of LD solutions is invariant 
under conformal changes of the intrinsic metric of the surface and therefore RLD solutions can 
be considered as defined on the flat cylinder instead of the round sphere. 
We also employ a dimensionless flux $F^\phi$, 
which amounts to the logarithmic derivative of the RLD solution $\phi$, 
to carefully study the RLD solutions. 
Such a quantity was used also in \cite{kap}, 
but here we study $F^\phi$ much more carefully 
and we establish that it satisfies a Riccati differential equation and has several useful monotonicity properties.  

The fact that unlike in \cite{kap} we are dealing with situations where there are more than one (modulo the symmetries) circles of latitude where 
the catenoidal bridges concentrate, makes the balancing and unbalancing questions for the RLD solutions harder to study. 
The fluxes $F^\phi$ are the main tool which allows us to study these questions systematically. 
We also have developed a careful and systematic approach to study the parameters related to unbalancing (see \ref{RLquant}).

The current article incorporates various improvements in the application of the LD methodology: 
First, 
we estimate our LD solutions on the flat cylinder instead of the equatorial two-sphere as in \cite{kap}  
by making use of the conformal invariance of the LD solutions. 
The uniformity and further symmetries of the cylinder allow us to obtain stronger estimates.  
Second, we improve the analysis of the relation between the LD solutions and the corresponding RLD solutions. 
In particular, the decomposition 
$\Phi=\Pp+\Phat+\Phip$ in \ref{dPprime} of an LD solution $\Phi$ into a singular part $\Pp$, 
a rotationally invariant part $\Phat$, and an error term $\Phip$, 
is an improvement over the two decompositions \cite[Definitions 5.16 and 5.25]{kap} it replaces. 
Finally, we employ a simplified definition for $\skernel[L]$ in \ref{dkernel}. 

\subsection*{Organization of the presentation}
$\phantom{ab}$
\nopagebreak

In Section \ref{Notation}, we review definitions and notation from \cite{kap} 
relating to the elementary geometry of the objects we are interested in 
and catalog a useful conformal diffeomorphism \eqref{Esphcyl} between the cylinder and the twice-punctured two-sphere.
We also discuss some special rotationally invariant solutions and Green's functions for the linearized equation. 

The main new features in this paper which refine the approach in \cite{kap} take place in Sections \ref{S:RLD} and \ref{S:LD}.  
In Section \ref{S:RLD}, we define in \ref{RL} a class of rotationally invariant solutions of the linearized equation (RLD solutions), 
establish appropriate criteria for their existence and uniqueness (see \ref{existence}), 
and prove estimates governing their geometry in \ref{Prl}, 
and behavior under small perturbations of initial data in \ref{PODEest}.

In Section \ref{S:LD}, we more generally study linearized doubling solutions (LD solutions) on the cylinder which have prescribed logarithmic singularities at $L$. 
More precisely in Lemma \ref{Lphiavg} we convert RLD solutions $\phat$ to corresponding LD solutions $\Phi$. 
We introduce then in \ref{dPprime} the decomposition $\Phi=\Pp+\Phat+\Phip$ of an LD solution $\Phi$ into a singular part $\Pp$, 
a rotationally invariant part $\Phat$, and an error term $\Phip$.  
Much of the remaining work in the section is devoted to estimating $\Phip$ in \ref{LPhip}.

In Section \ref{S:MLD},
we convert the LD solutions we have constructed in Section \ref{S:LD}
to MLD solutions which satisfy the appropriate linear and nonlinear matching conditions.  
Using the earlier estimates, we prove in \ref{LLD} the estimates we need for these MLD solutions.  
In Section \ref{S:Main}, 
we convert these families of MLD solutions into families of smooth initial surfaces with small mean curvature.
by recalling work from \cite{kap}, 
and we then apply a fixed point theorem to perturbations of the families of initial surfaces constructed in Section \ref{S:MLD} 
to construct our minimal surfaces.  
Finally in Section \ref{S:poles} we modify the construction to include catenoidal bridges at the poles or on the equatorial circle of $\Spheq$ or both.

\subsection*{Acknowledgments}
\phantom{ab}
\nopagebreak

The authors would like to thank Richard Schoen for his continuous support and interest in the results of this article. 
NK would like to thank the Mathematics Department and the MRC at Stanford University
for providing a stimulating mathematical environment and generous financial support during Spring 2016.
NK was also partially supported by NSF grant DMS-1405537.

\section{Elementary geometry and notation}
\label{Notation}

\subsection*{H{\"o}lder norms and cut-off functions}
\label{sub:holder}
$\phantom{ab}$
\nopagebreak

We will find the following notation useful. 
\begin{definition}
\label{Dsimc}
We write $a\sim_c b$ to mean that 
$a,b\in\R$ are nonzero of the same sign, 
$c\in(1,\infty)$, 
and $\frac1c\le \frac ab \le c$. 
\end{definition}

We use the standard notation $\left\|u: C^{r,\beta}(\,\Omega,g\,)\,\right\|$ 
to denote the standard $C^{r,\beta}$-norm of a function or more generally
tensor field $u$ on a domain $\Omega$ equipped with a Riemannian metric $g$.
Actually the definition is completely standard only when $\beta=0$
because then we just use the covariant derivatives and take a supremum
norm when they are measured by $g$.
When $\beta\ne0$ we have to use parallel transport along geodesic segments 
connecting any two points of small enough distance
and this may be a complication if small enough geodesic balls are not convex.
In this paper we take care to avoid situations where such a complication
may arise and so we will not discuss this issue further.

We adopt the following notation from \cite{kap} for weighted H\"{o}lder norms.  
\begin{definition}
\label{D:newweightedHolder}
Assuming that $\Omega$ is a domain inside a manifold,
$g$ is a Riemannian metric on the manifold, 
$r\in \N_0$, 
$\beta\in[0,1)$, $u\in C^{r,\beta}_{loc}(\Omega)$ 
or more generally $u$ is a $C^{k,\beta}_{loc}$ tensor field 
(section of a vector bundle) on $\Omega$, 
$\rho,f:\Omega\to(0,\infty)$ are given functions, 
and that the injectivity radius in the manifold around each point $x$ in the metric $\rho^{-2}(x)\,g$
is at least $1/10$,
we define
$$
\left\|u: C^{r,\beta} ( \Omega,\rho,g,f)\right\|:=
\sup_{x\in\Omega}\frac{\,\left\|u:C^{r,\beta}(\Omega\cap B_x, \rho^{-2}(x)\,g)\right\|\,}{f(x) },
$$
where $B_x$ is a geodesic ball centered at $x$ and of radius $1/100$ in the metric $\rho^{-2}(x)\,g$.
For simplicity we may omit any of $\beta$, $\rho$, or $f$, 
when $\beta=0$, $\rho\equiv1$, or $f\equiv1$, respectively.
\end{definition}

$f$ can be thought of as a ``weight'' function because $f(x)$ controls the size of $u$ in the vicinity of
the point $x$.
$\rho$ can be thought of as a function which determines the ``natural scale'' $\rho(x)$
at the vicinity of each point $x$.
Note that if $u$ scales nontrivially we can modify appropriately $f$ by multiplying by the appropriate 
power of $\rho$.  Observe from the definition the following multiplicative property: 
\begin{equation}
\label{E:norm:mult}
\left\| \, u_1 u_2 \, : C^{k,\beta}(\Omega,\rho,g,\, f_1 f_2 \, )\right\|
\le
C(k)\, 
\left\| \, u_1 \, : C^{k,\beta}(\Omega,\rho,g,\, f_1 \, )\right\|
\,\,
\left\| \, u_2 \, : C^{k,\beta}(\Omega,\rho,g,\, f_2 \, )\right\|.
\end{equation}

Our arguments will require extensive use of cut-off functions, and it will be helpful to adopt the following. 
\begin{definition}
\label{DPsi} 
We fix a smooth function $\Psi:\R\to[0,1]$ with the following properties:
\begin{enumerate}[label=(\roman*).]
\item $\Psi$ is nondecreasing.

\item $\Psi\equiv1$ on $[1,\infty)$ and $\Psi\equiv0$ on $(-\infty,-1]$.

\item $\Psi-\frac12$ is an odd function.
\end{enumerate}
\end{definition}

Given $a,b\in \R$ with $a\ne b$,
we define smooth functions
$\psicut[a,b]:\R\to[0,1]$
by
\begin{equation}
\label{Epsiab}
\psicut[a,b]:=\Psi\circ L_{a,b},
\end{equation}
where $L_{a,b}:\R\to\R$ is the linear function defined by the requirements $L(a)=-3$ and $L(b)=3$.

Clearly then $\psicut[a,b]$ has the following properties:
\begin{enumerate}[label=(\roman*).]
\item $\psicut[a,b]$ is weakly monotone.

\item 
$\psicut[a,b]=1$ on a neighborhood of $b$ and 
$\psicut[a,b]=0$ on a neighborhood of $a$.

\item $\psicut[a,b]+\psicut[b,a]=1$ on $\R$.
\end{enumerate}

Suppose now we have two sections $f_0,f_1$ of some vector bundle over some domain $\Omega$.
(A special case is when the vector bundle is trivial and $f_0,f_1$ real-valued functions).
Suppose we also have some real-valued function $d$ defined on $\Omega$.
We define a new section 
\begin{equation}
\label{EPsibold}
\Psibold\left [a,b;d \, \right](f_0,f_1):=
\psicut[a,b\, ]\circ d \, f_1
+
\psicut[b,a]\circ d \, f_0.
\end{equation}
Note that
$\Psibold[a,b;d\, ](f_0,f_1)$
is then a section which depends linearly on the pair $(f_0,f_1)$
and transits from $f_0$
on $\Omega_a$ to $f_1$ on $\Omega_b$,
where $\Omega_a$ and $\Omega_b$ are subsets of $\Omega$ which contain
$d^{-1}(a)$ and $d^{-1}(b)$ respectively,
and are defined by
$$
\Omega_a=d^{-1}\left((-\infty,a+\frac13(b-a))\right),
\qquad
\Omega_b=d^{-1}\left((b-\frac13(b-a),\infty)\right),
$$
when $a<b$, and 
$$
\Omega_a=d^{-1}\left((a-\frac13(a-b),\infty)\right),
\qquad
\Omega_b=d^{-1}\left((-\infty,b+\frac13(a-b))\right),
$$
when $b<a$.
Clearly if $f_0,f_1,$ and $d$ are smooth then
$\Psibold[a,b;d\, ](f_0,f_1)$
is also smooth.


\subsection*{The parametrizations $\Theta$ and $\Thetacyl$ and the coordinates $(\xx,\yy,\zz)$ and $(\sss, \theta)$}
$\phantom{ab}$
\nopagebreak

We consider now the unit three-sphere $\Sph^3(1)\subset\R^4$.
We denote by $(x_1,x_2,x_3,x_4)$ the standard coordinates of $\R^4$
and we define by
\begin{equation}
\label{Eeqtwo}
\Spheq:=\Sph^3(1)\cap\{x_4=0\}
\end{equation}
an equatorial two-sphere in $\Sph^3(1)$.
To facilitate the discussion we fix spherical coordinates $(\xx,\yy,\zz)$ on $\Sph^3(1)$
by defining a map $\Theta:\R^3\to\Sph^3(1)$ by
\begin{equation}
\label{ETheta}
  \Theta(\xx,\yy,\zz) = 
(\cos\xx \cos\yy \cos\zz, \cos\xx \sin\yy \cos\zz, \sin\xx\cos\zz,\sin\zz).
\end{equation}
Note that in the above notation we can think of $\xx$ as the geographic latitude on $\Spheq$
and of $\yy$ as the geographic longitude.
We will also refer to
\begin{equation}
\label{Eeqone}
\begin{gathered}
\Cir_0:= \Spheq\cap \{x_3=0\}=
\Theta(\{\xx=\zz=0\}),
\\
p_N:=(0,0,1,0)=\Theta(\pi/2,\yy,0),
\\
p_S:=(0,0,-1,0)=\Theta(-\pi/2,\yy,0),
\end{gathered}
\end{equation}
as the equator circle, the North pole, and the South pole of $\Spheq$ respectively.

Clearly, the standard metric of $\Sph^3(1)$ is given in the coordinates of \eqref{ETheta}
by
\begin{equation}
\label{EThetag}
    \Theta^* g = \cos^2\zz\,( \, d\xx^2 +\cos^2\xx \,d\yy^2 \,)\, + d\zz^2.
\end{equation}
Finally we define a nearest-point projection 
$\PiSph:\Sph^3(1)\setminus\{(0, 0, 0, \pm 1)\}\to\Spheq$
by
\begin{equation}
\label{EPiSph}
\PiSph(x_1,x_2,x_3,x_4)=
\frac1{|(x_1,x_2,x_3,0)|}
(x_1,x_2,x_3,0).
\end{equation}
Clearly we have
\begin{equation}
\label{EPiSphTheta}
\PiSph\circ \Theta(\xx,\yy,\zz)
=
\Theta(\xx,\yy,0).
\end{equation}

The study of the RLD and LD solutions can be simplified by the observation that $\Spheq\setminus \{p_N, p_S\}$ is conformally equivalent to a flat cylinder $\R\times \Sph^1$. 
To be precise, let
\begin{align}
\label{Ecyl}
\cyl : = \R\times \Sph^1 
\end{align}
be the cylinder endowed with the flat product metric $\chi$,  
where 
\begin{equation}
\label{ELchi}
\chi: = d\sss^2 + d\theta^2 
\end{equation}
and $(\sss,\theta)$ are the standard coordinates on $\cyl$. 
Consider the map $\Thetacyl:  \cyl \rightarrow \Spheq\setminus \{p_N, p_S\}$ defined by 
\begin{align}
\label{Esphcyl}
\Thetacyl(\sss, \theta) = ( \sech \sss \cos \theta, \sech \sss \sin \theta, \tanh \sss ) .
\end{align}
Clearly $\Thetacyl$ is a diffeomorphism and from now on we will use it to identify 
$\Spheq\setminus \{p_N, p_S\}$ with $\cyl$.  
$\sss$ and $\theta$ can then be considered as coordinates on 
$\Spheq\setminus \{p_N, p_S\}$ and by \eqref{ETheta}
we have 
\begin{align}
\label{Exs}
\sin \xx = \tanh \sss , \qquad \cos \xx = \sech \sss, \qquad
\sss = \log\frac{1+\sin\xx}{\cos\xx},\qquad
 \text{and}\quad  \yy = \theta.
\end{align}
Straightforward computations show 
\begin{align}
\label{Echig2}
 g = (\sech^2 \sss) \chi,  
\qquad \frac{d \sss}{d \xx} = \frac{1}{\cos \xx}, 
\qquad \frac{d\xx}{d\sss} = \sech \sss.
\end{align}

We finally introduce some convenient notation.

\begin{notation}
\label{Nsym2}
We use the shorthand notation $\{ \sss \in (a, b)\}$ to denote the annular region 
$\{ (\sss, \theta) \subset \cyl: \sss \in (a, b)\}$ and similar abbreviations with regard to other types of intervals. 
\end{notation}

\begin{notation}
\label{NT}
For $X$ a subset of $\Spheq$ and $h$ a Riemannian metric on $\Spheq$ or on $\Spheq\setminus \{p_N, p_S\}$, 
we write $\dbold^h_X$ for the distance function from $X$, with respect to $h$.  We define a tubular neighborhood of $X$ by
\[ D^h_X(\delta):=\left \{p\in\Spheq:\dbold^h_X(p)<\delta\right\}. \] 
If $X$ is finite we just enumerate its points in both cases. 
For example, $\dbold^g_q(p)$ is the geodesic distance between $p$ and $q$ and
$D^g_q(\delta)$ is the geodesic disc in $\Spheq$ of center $q$ and radius $\delta$.
\end{notation}

\subsection*{The symmetries and the configurations}
\label{sub:sym}
$\phantom{ab}$
\nopagebreak

To study the symmetries of the parametrization $\Theta$,
we first define
reflections
of its domain $\domTheta$
$\xbar$, $\ybar_c$, $\ybar:=\ybar_0$, and $\zbar$,
and translations
$\YYY_c$,
where $c\in\R$, by 
\begin{equation}
\label{Edomsym}
\begin{aligned}
\xbar(\xx,\yy,\zz):=(-\xx,\yy,\zz),\qquad
\ybar_c(\xx,\yy,\zz)&:=(\xx,2c-\yy,\zz),\qquad
\zbar(\xx,\yy,\zz):=(\yy,\xx,-\zz),
\\
\YYY_c(\xx,\yy,\zz)&:=(\xx,\yy+c,\zz).
\end{aligned}
\end{equation}

We also define corresponding
reflections
$\xbartilde$, $\ybartilde_c$, $\ybartilde:=\ybartilde_0$, and $\zbartilde$
and rotations $\ytilde_c$,
of $\R^4 \supset \Sph^3(1)$ by
\begin{equation}
\label{Esphsym}
\begin{aligned}
\xbartilde(x_1,x_2,x_3,x_4):=&(x_1,x_2,-x_3,x_4),
\\
\ybartilde(x_1,x_2,x_3,x_4):=&(x_1,-x_2,x_3,x_4),
\\
\zbartilde(x_1,x_2,x_3,x_4):=&(x_1,x_2,x_3,-x_4),
\\
\ybartilde_c(x_1,x_2,x_3,x_4):=&(x_1\cos 2c + x_2\sin 2c ,  x_1 \sin 2c- x_2 \cos 2c,x_3, x_4)
\\
\ytilde_c(x_1,x_2,x_3,x_4):=&(x_1\cos c - x_2\sin c, x_1\sin c + x_2\cos c,x_3, x_4).
\end{aligned}
\end{equation}

Note that $\xbartilde$, $\ybartilde$, $\zbartilde$, and $\ybartilde_c$
are reflections with respect to the $3$-planes
$\{x_3=0\}$, $\{x_2=0\}$, $\{x_4=0\}$, and $\ytilde_c(\{x_2=0\})$,
respectively.
$\zbartilde$ exchanges the two sides of $\Spheq$ which it fixes pointwise.
Clearly $\ytilde_{2\pi}$ is the identity map.
We record the symmetries of $\Theta$ in the following lemma:

\begin{lemma}
\label{LThetasymmetries}
$\Theta$ restricted to
$$
\domTheta:=
\left(-\frac{\pi}{2},\frac{\pi}{2}\right) \times \R \times \left(-\frac{\pi}{2},\frac{\pi}{2}\right)
$$
is a covering map onto 
$\Sph^3(1)\setminus\{x_1=x_2=0\}$.
Moreover the following hold:
\begin{enumerate}[label=(\roman*).]
\item
The group of covering transformations is generated by
$\YYY_{2\pi}$.

\item
$\xbartilde\circ\Theta=\Theta\circ\xbar$,
$\ybartilde_c\circ\Theta=\Theta\circ\ybar_c$,
$\zbartilde\circ\Theta=\Theta\circ\zbar$,
and
$\ytilde_c\circ\Theta=\Theta\circ\YYY_c$.
\end{enumerate}
\end{lemma}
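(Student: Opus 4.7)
The plan is to verify the two assertions in two blocks: first that $\Theta$ restricted to $\domTheta$ is a covering map with the stated group of deck transformations, then that the four symmetry identities hold by direct substitution.

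First I would check that $\Theta(\domTheta)\subset \Sph^3(1)\setminus\{x_1=x_2=0\}$. The fact that $|\Theta(\xx,\yy,\zz)|^2=1$ is a one-line Pythagorean computation from \eqref{ETheta}. The fact that the image misses $\{x_1=x_2=0\}$ is immediate: on $\domTheta$ one has $\cos\xx>0$ and $\cos\zz>0$, hence $x_1^2+x_2^2=\cos^2\xx\cos^2\zz>0$. For surjectivity, given a target $(x_1,x_2,x_3,x_4)\in\Sph^3(1)$ with $x_1^2+x_2^2>0$, I would construct the preimage explicitly: since $x_1^2+x_2^2>0$ forces $|x_4|<1$ and $x_3^2<1-x_4^2$, one can set
\[
\zz:=\arcsin x_4\in(-\tfrac\pi2,\tfrac\pi2),\qquad \xx:=\arcsin(x_3/\cos\zz)\in(-\tfrac\pi2,\tfrac\pi2),
\]
and then $\yy$ is determined modulo $2\pi$ by $(\cos\yy,\sin\yy)=(x_1,x_2)/(\cos\xx\cos\zz)$. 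This explicit formula shows at once that the full preimage is $\{(\xx,\yy+2\pi k,\zz):k\in\Z\}$, from which (i) follows: the deck group is generated by $\YYY_{2\pi}$. Local invertibility of $\Theta$ is evident from the explicit inverse (one can alternatively compute $\det d\Theta=\cos\xx\cos^2\zz\neq 0$ on $\domTheta$), and together with the fact that the deck group acts properly discontinuously by translations in $\yy$, one concludes $\Theta$ is a covering map as claimed.

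For part (ii) I would verify each of the four identities by applying $\Theta$ to both sides and comparing coordinate by coordinate. The identities $\xbartilde\circ\Theta=\Theta\circ\xbar$ and $\zbartilde\circ\Theta=\Theta\circ\zbar$ reduce to the sign changes $\sin(-\xx)=-\sin\xx$ and $\sin(-\zz)=-\sin\zz$ respectively, while $\ytilde_c\circ\Theta=\Theta\circ\YYY_c$ is the angle-addition identity in the $(x_1,x_2)$-plane. The identity $\ybartilde_c\circ\Theta=\Theta\circ\ybar_c$ uses the expansions
\[
\cos(2c-\yy)=\cos 2c\cos\yy+\sin 2c\sin\yy,\qquad \sin(2c-\yy)=\sin 2c\cos\yy-\cos 2c\sin\yy,
\]
which, after multiplication by $\cos\xx\cos\zz$, match precisely the formula given for $\ybartilde_c$ in \eqref{Esphsym}.

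There is no serious obstacle here; the content is bookkeeping, and the only place where a small amount of care is needed is in the inverse construction of Step 1, where one must check that the formulas for $\zz$ and $\xx$ actually return values in $(-\pi/2,\pi/2)$. That in turn uses in an essential way the hypothesis $x_1^2+x_2^2>0$, which simultaneously identifies the image of $\Theta$ and explains why the two excluded points $\{x_1=x_2=0\}$ correspond to the loss of a natural longitudinal coordinate $\yy$.
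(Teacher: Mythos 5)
The paper states Lemma \ref{LThetasymmetries} without proof, treating it as elementary bookkeeping, so there is no "paper proof" against which to contrast. Your argument is correct and essentially the standard one: the explicit inverse construction via $\zz=\arcsin x_4$, $\xx=\arcsin(x_3/\cos\zz)$, $\yy$ determined modulo $2\pi$, proves surjectivity and simultaneously identifies the fibers as $\YYY_{2\pi}$-orbits, while the four identities in (ii) are direct trigonometric substitutions. Two small remarks. First, the parenthetical ``$\det d\Theta=\cos\xx\cos^2\zz$'' is imprecise as stated, since $d\Theta$ is a $4\times3$ matrix; what you mean is the square root of the Gram determinant, equivalently $\sqrt{\det(\Theta^*g)}$, which by \eqref{EThetag} equals $\cos\xx\cos^2\zz$ and is indeed nonvanishing on $\domTheta$. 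This does not affect the argument since you primarily rely on the explicit inverse. Second, the paper's displayed formula for $\zbar$ in \eqref{Edomsym} reads $\zbar(\xx,\yy,\zz)=(\yy,\xx,-\zz)$, which is almost certainly a typographical error; your verification (correctly) uses $\zbar(\xx,\yy,\zz)=(\xx,\yy,-\zz)$, which is what makes $\zbartilde\circ\Theta=\Theta\circ\zbar$ hold, since the swapped-coordinate version would require $\xx=\yy$.
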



The symmetry group of our constructions depends on a large number $m\in\N$
which we assume now fixed.
We define $\Lmer=\Lmer[m]\subset\Spheq$ to be the union of $m$ meridians symmetrically arranged: 
\begin{equation}
\label{ELmer}
\Lmer=\Lmer[m]:=\Theta\left(\{(\xx,\yy,0):\xx\in[-\pi/2,\pi/2],\yy=2\pi i/m, i\in\Z\}\right).
\end{equation}

\begin{definition}[The symmetry groups] 
\label{Dgroup}
We denote by $\groupthree$ and $\grouptwo$ the groups of 
isometries of $\Sph^3(1)$ and $\Spheq$ respectively
which fix $\Lmer[m]$ as a set.
\end{definition}

Clearly $\groupthree$ is a finite group and is generated by the reflections
$\xbartilde$, $\ybartilde$, $\zbartilde$ and $\ybartilde_{\pi/m}$, that is  
\begin{align}
\label{Esymmetries}
\groupthree = \left\langle \xbartilde, \ybartilde, \zbartilde, \ybartilde_{\pi/m} \right\rangle,
\end{align}
and moreover $\grouptwo$ can be identified with the subgroup of $\groupthree$ generated by $\xbartilde, \ybartilde$ and $\ybartilde_{\pi/m}$.
The configuration of our constructions also depends on a number $k\in \N$ whose values are restricted in terms of $m$ (see \ref{Aratio} below).

\begin{definition}
\label{Dlpar}
Given $\sbar \in [0, \infty)$, we define 
\begin{align*}
 L_{par}[\sbar] = \{ (\sss, \theta) \in \cyl : \sss = \pm \sbar\}.
\end{align*}
Furthermore, given 
$\sbold := (\sss_1, \dots, \sss_k)\in \R^k_+$ 
such that 
$0 <\sss_1<\cdots<\sss_k < \infty$, 
we define 
 \[ L_{par}[\sbold] = \bigcup_{i=1}^k L_{par}[\sss_i].\]
Finally given also 
a domain $\Omega\subset \Spheq$, 
we will denote by $\Omega^\sbold$ the 
``subdivision'' of $\Omega$ by $L_{par}[\sbold]$: 
More precisely $\Omega^\sbold$ is the abstract surface 
which is the disjoint union of the $\Omega\cap A$'s,  
where $A$ is the closure of any connected component 
(a disk or an annulus) of $\Spheq\setminus L_{par}[\sbold]$.  
Clearly functions on $\Omega$ can be thought of as functions on $\Omega^\sbold$ as well. 
\end{definition}

Note for example that a function defined on $\Omega$ which 
is in $C^\infty(\Omega^\sbold)$ is also in $C^0(\Omega)$ but not necessarily 
in $C^1(\Omega)$; it is ``piecewise smooth'' on $\Omega$.

\begin{definition}
\label{dL}
For $m$ as in \ref{ELmer} and 
$\sbar$ and $\sbold$ as in \ref{Dlpar}, 
we define  
\begin{align*}
L[\sbar ; m]  : =& L_{mer}[m]\cap L_{par}[\sbar ]  = \grouptwo (\sbar,0),
\\ 
L = L[\sbold; m] :=& L_{mer}[m]\cap L_{par}[\sbold] = \bigcup_{i=1}^k L[\sss_i;m].  
\end{align*}
For $i \in\{1,...,k\}$ we define $p_i:=(\sss_i,0)$, $L_i=L[\sss_i;m]$, 
and given also a $\grouptwo$-symmetric function $\tau:L[\sbold; m]\rightarrow\R$, 
$\tau_i := \tau(p_i)$.  
\end{definition}

%
\subsection*{The linearized equation and the cylinder} 
\label{sub:conformal}
$\phantom{ab}$
\nopagebreak

A major step in the construction is to estimate solutions of the Jacobi equation $\Lcalp u  = 0$, where
\begin{equation}
\label{Ejacobi}
\Lcalp = \Delta_{g} + |A|^2 + \Ric(\nu, \nu)= \Delta_{g}+ 2.
\end{equation}
We define also a version of $\Lcal'$ useful 
in the cylindrical picture by 
$$
\Lcal_\chi := \Delta_\chi + 2\sech^2 \sss = \sech^2 \sss \, \Lcal', 
\qquad
\text{ where }  
\qquad
\Delta_\chi := \frac{\partial^2}{\partial \sss^2} +  \frac{\partial^2}{\partial \theta^2}. 
$$ 

It will be easier to state some of our estimates if we use a scaled metric $\gtilde$ on $\Spheq$ and corresponding scaled coordinates $( \xxtilde,\yytilde)$ defined by 
\begin{align}
\label{Egtilde}
\gtilde:=m^2 g_{\Spheq},
\qquad
\xxtilde=m\xx,
\qquad
\yytilde=m\yy.
\end{align}
In the same fashion, we define a scaled metric $\chitilde$ on $\cyl$ and scaled coordinates
  $(\, \stilde, \thetatilde\, )$
defined by
\begin{align}
\label{Echitilde}
\chitilde:= m^2 \chi, 
\qquad 
\stilde = m \sss, 
\qquad 
\thetatilde = m \theta.
\end{align}
We also define corresponding scaled linear operators
\begin{equation} 
\begin{aligned}
\Lcal_{\gtilde} : = \Delta_{\gtilde} + 2m^{-2} = m^{-2} \Lcalp, 
\qquad 
\Lcal_{\chitilde}: = \Delta_{\chitilde}+2m^{-2}\sech^2 \sss = m^{-2}\Lcal_{\chi}. 
\end{aligned}
\end{equation}

For future reference, we record the following global parametrization of a catenoid with unit waist size:
\begin{equation}
\label{Ecatenoid}
\Xhat_{cat}(\sss, \theta) = \left( \cosh \sss \cos \theta, \cosh\sss  \sin \theta, \sss\right).
\end{equation}
For $\tau>0$,  $\tau \Xhat_{cat}$ parametrizes the catenoid with waist size $\tau$.

\begin{definition}
\label{dchi}
Let $\sbar \in \R_+$.  We define a shifted coordinate $\shat = \shat\, [\sssunder]$ by 
\[\shat := \stilde - m \sbar.\] 
\end{definition}
 
\begin{definition}
\label{dOmega}
For convenience we define 
\begin{align}
\label{Edelta}
\delta := {1}/(9m).
 \end{align}
Given $\sbar \in [0, \infty)$, we have 
nested open sets $D^\chi_{L[\sbar;m]}(3\delta) \subset \Omega'[\sbar;m]\subset \Omega[\sbar;m]$ where 
 \begin{align*}
 \Omega[\sbar; m] : = D^\chi_{L_{par}[\sbar]}\left(3/m\right),\qquad
 \Omega'[\sbar; m] := 
 D^\chi_{L_{par}[\sbar]}\left(2/m\right).
 \end{align*}
 \begin{figure}[h]
\centering
\includegraphics[scale=.7]{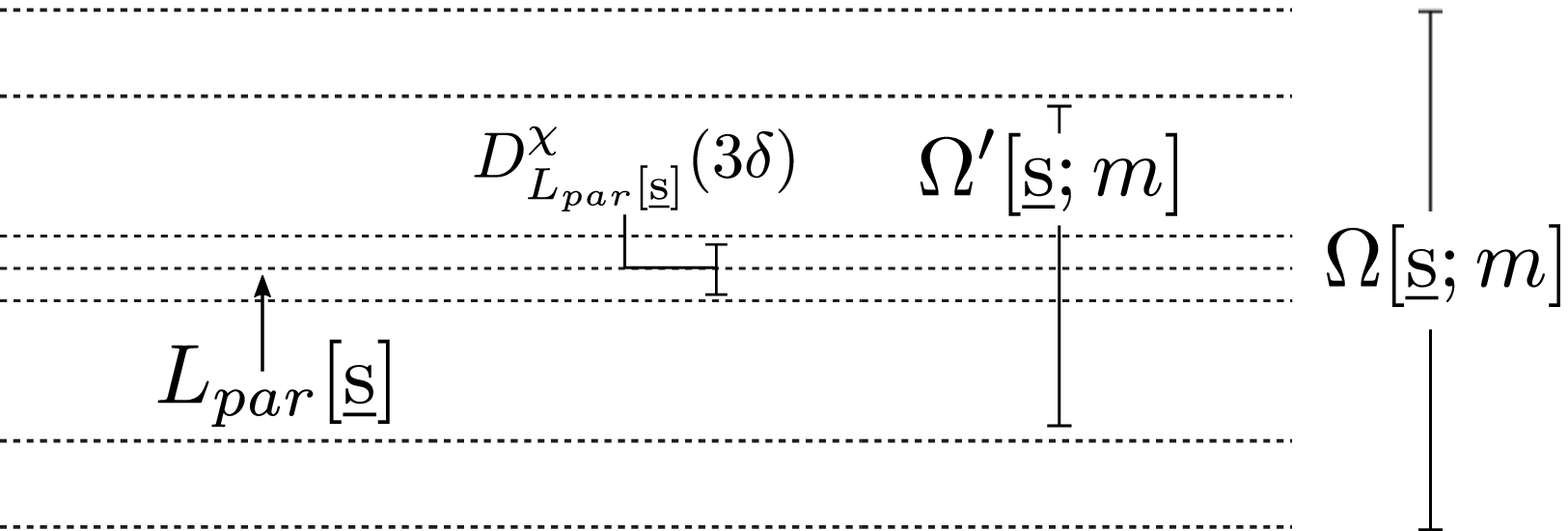}
\caption{A schematic of connected components of the neighborhoods of $\Lpar[\sbar]$ (defined in \ref{dOmega}) near latitude $\sssunder$.}
\end{figure}
\end{definition}

\begin{definition}[Antisymmetry operators] 
\label{Dantisym} 
Given a domain $\Omega\subset\cyl$ which is invariant under the coordinate reflection through some $\sbar\in\R$, 
we define a reflection operator $\Rcal_{\sbar}: C^0\left(\Omega\right)\rightarrow C^0\left(\Omega\right)$ 
by 
\[ \Rcal_{\sbar} u(\sbar+\sss', \theta) = u(\sbar- \sss', \theta), \quad \text{for} \quad (\sbar+ \sss', \theta) \in \Omega \]
and an antisymmetry operator $\Acal_{\sbar}: C^0\left(\Omega\right)\rightarrow C^0\left(\Omega\right)$ by 
\[ \Acal_{\sbar} u = u - \Rcal_{\sbar} u.\] 
\end{definition}

\begin{lemma}
\label{Rprod}
Let $\sbar \in \left(\frac{3}{m}, \infty\right)$.  The following hold:
\begin{enumerate}[label=(\roman*).]
\item  For all $u, v \in C^0_\sym(\Omega[\sbar; m])$,  
\[ \Acal_{\sbar}\left[ u v\right] = u \, \Acal_{\sbar} [v] + \Acal_{\sbar} [u] \, \Rcal_{\sbar} [v].\] 

\item  For all $u \in C^2_\sym(\Omega[\sbar; m])$,
\begin{align*}
\Acal_{\sbar}\left[ \Lcal_\chi u\right] &= \Lcal_\chi\left[ \Acal_{\sbar} u\right] + 2\Acal_{\sbar} [\sech^2 \sss] \, \Rcal_{\sbar} [u], \\
\Acal_{\sbar}\left[ \Lcal_\chitilde u\right] &= \Lcal_\chitilde\left[ \Acal_{\sbar} u\right] + 2m^{-2}\Acal_{\sbar} [\sech^2 \sss] \, \Rcal_{\sbar} [u].
\end{align*}

\item 
 \begin{enumerate}[label=(\alph*)]
 \item  Let $\Omega \subset \cyl$ be a domain.  Then 
$\left\| \sech^2 \sss : C^j\left( \Omega , \chi\right) \right\| \le C(j) \left \| \sech^2 \sss : C^0\left(\Omega, \chi \right)\right\|$. 
\item $\left\| \Acal_{\sbar} \left[ \sech^2 \sss \right] : C^j\left( \Omega[\sbar; m], \chi \, \right)\right\| \le \frac{C(j)}{m}\left\| \sech^2 \sss : C^0\left( \Omega[\sbar; m], \chi \, \right)\right\|$.
\end{enumerate}
\end{enumerate}
\end{lemma}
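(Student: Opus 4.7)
Part (i) is a purely algebraic identity: since $\Rcal_{\sbar}$ is the pullback by the diffeomorphism $(\sss,\theta)\mapsto(2\sbar-\sss,\theta)$, it is multiplicative, so $\Acal_{\sbar}[uv] = uv - \Rcal_{\sbar}[u]\,\Rcal_{\sbar}[v]$, and substituting $\Rcal_{\sbar}[v] = v - \Acal_{\sbar}[v]$ in one factor rearranges this into the stated formula. For (ii), the underlying reflection is an isometry of both $(\cyl,\chi)$ and $(\cyl,\chitilde)$, so $\Rcal_{\sbar}$, and hence $\Acal_{\sbar}$, commutes with $\Delta_\chi$ and with $\Delta_\chitilde = m^{-2}\Delta_\chi$. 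Decomposing $\Lcal_\chi = \Delta_\chi + 2\sech^2\sss$ and applying (i) to the product $(2\sech^2\sss)\cdot u$ then produces the stated commutator error term $2\,\Acal_{\sbar}[\sech^2\sss]\,\Rcal_{\sbar}[u]$; the identity for $\Lcal_\chitilde$ follows either by an identical argument or directly from $\Lcal_\chitilde = m^{-2}\Lcal_\chi$.

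For (iii)(a), I would induct from $\frac{d}{d\sss}\sech^2\sss = -2\sech^2\sss\tanh\sss$ to show that each $\partial^k_\sss\sech^2\sss$ equals $\sech^2\sss$ multiplied by a bounded polynomial in $\tanh\sss$; since $|\tanh\sss|\le 1$, this yields $|\partial^k_\sss\sech^2\sss|\le C(k)\sech^2\sss$. Combined with the observation that $\sech^2\sss$ varies by at most a bounded multiplicative factor on any $\chi$-ball of radius $1/100$, summing over $k\le j$ produces the claimed inequality.

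Part (iii)(b) is the main obstacle. Working on each connected component of $\Omega[\sbar;m]$ separately (recall this domain consists of two thin annuli of width $\sim 1/m$ about latitudes $\pm\sbar$, each invariant under the relevant reflection), the key structural observation is that $v(\sss) := \Acal_{\sbar}[\sech^2\sss](\sss) = \sech^2\sss - \sech^2(2\sbar-\sss)$, regarded as a function of $h := \sss - \sbar$ on the $+\sbar$ component (and analogously on the other), is odd in $h$ and hence vanishes on $L_{par}[\sbar]$. Since points in $\Omega[\sbar;m]$ satisfy $|h| \le 3/m$, Taylor expansion about $h=0$ combined with (iii)(a) immediately gives the pointwise bound $|v(\sss)| \le C|h|\,\sech^2\sbar \le (C/m)\sech^2\sbar$. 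The $C^j$ estimate then follows from the same oddness structure applied to higher derivatives: all even-order $\sss$-derivatives of $v$ also vanish on $L_{par}[\sbar]$ and inherit the $1/m$ gain via Taylor, while the odd-order derivatives are controlled after accounting for the natural scaling appropriate to the thin domain $\Omega[\sbar;m]$ (equivalently, by passing through the scaled metric $\chitilde$, in which each additional differentiation contributes a factor $m^{-1}$). The technical challenge is to package these bounds uniformly so that the single factor $C(j)/m$ controls the entire weighted $C^j$ norm.
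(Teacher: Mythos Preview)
Your arguments for (i), (ii), (iii)(a), and the $C^0$ case of (iii)(b) are correct and essentially identical to the paper's: the paper derives (i) by direct computation, (ii) from (i) together with the fact that $\Delta_\chi$ commutes with $\Rcal_{\sbar}$, (iii)(a) from the polynomial-in-$\tanh\sss$ structure of the derivatives of $\sech^2\sss$, and the $C^0$ bound in (iii)(b) via the mean value theorem, which is your Taylor-at-the-vanishing-point argument rephrased.

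For the higher-order part of (iii)(b) your treatment of the odd-order derivatives has a genuine gap. Writing $v := \Acal_{\sbar}[\sech^2\sss]$, one has $\partial_\sss^k v(\sss) = (\sech^2)^{(k)}(\sss) - (-1)^k(\sech^2)^{(k)}(2\sbar-\sss)$; for even $k$ this is again an anti-symmetrization and your Taylor/MVT argument yields the $1/m$ gain, but for odd $k$ it is a \emph{symmetrization}, and in particular $\partial_\sss v(\sbar) = -4\sech^2\sbar\tanh\sbar$, which is of order $\sech^2\sbar$ with no factor $1/m$. Your proposed remedy of passing to $\chitilde$ proves a bound on $\|v : C^j(\Omega[\sbar;m],\chitilde)\|$, not on $\|v : C^j(\Omega[\sbar;m],\chi)\|$ as stated; the thinness of $\Omega[\sbar;m]$ does not rescale the intrinsic $C^j(\chi)$ norm. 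So the argument does not establish the inequality as written. The paper's own proof has exactly the same gap---it simply asserts that higher-order derivatives ``follow from the mean value theorem in a similar way.'' In every subsequent application in the paper only $\chitilde$-scaled bounds on $\Acal_{\sbar}[\sech^2\sss]$ are ever used, so the metric in (iii)(b) is almost certainly a typo for $\chitilde$; under that reading your scaling observation is precisely the right mechanism.
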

\begin{proof}
(i) follows from a straightforward computation, and (ii) follows from (i) and a similar computation, using the fact that $\Delta$ commutes with $\Acal_{\sbar}$.  

For (iii).(a), observe that for each $j\geq 1$,  $\partial^j\left( \sech^2\sss \right)$ is a polynomial expression in $\sech^2 \sss$ and $\tanh \sss$ each term of which contains a factor of $\sech^2 \sss$.  (iii).(b) is a discrete version of (iii).(a) which follows from the mean value theorem.  More precisely, fix $\sssunder\in \R_+$ and let $\shat \in (-3, 3)$, where $\shat = \shat\, [\sssunder]$ is as in \ref{dchi}.   By the mean value theorem, there exists $\sss' \in (-\shat , \shat)$ such that
\begin{align*}
\Acal_{\sbar}[ \sech^2 \sss]\left( \sssunder +\frac{ \shat}{m}\right) &= \sech^2\left( \sssunder +\frac{ \shat}{m}\right) - \sech^2\left( \sssunder  - \frac{ \shat}{m}\right)\\
&= \frac{4 \, \shat}{m} \sech^2\left( \sssunder +\frac{ \sss'}{m}\right)\tanh\left( \sssunder +\frac{ \sss'}{m}\right).
\end{align*}
Estimating the right hand side of the preceding, it follows that
\[
\left\| \Acal_{\sbar} \left[ \sech^2 \sss \right] : C^0\left( \Omega[\sbar; m], \chi\, \right)\right\| \le 
\frac{C}{m} \left\| \sech^2 \sss  : C^0\left( \Omega[\sbar; m], \chi\, \right)\right\|.
\] 
Estimates on higher order derivatives follow from the mean value theorem in a similar way.
\end{proof}

\subsection*{Rotationally invariant functions}
\label{sub:rot}
$\phantom{ab}$

We call a function defined on a domain of $\cyl$ which depends only on the coordinate $\sss$ a  rotationally invariant function.  
The linearized equation $\Lcal_\chi \phi = 0$ amounts to an ODE when $\phi$ is rotationally invariant. 
Motivated by this, we introduce some notation to simplify the presentation:  

\begin{notation}
\label{Nsym}
Consider a function space $X$ consisting of functions defined on a domain
$\Omega\subset \Spheq$.
If $\Omega$ is invariant under the action of $\grouptwo$ (recall \ref{Dgroup}),
we use a subscript ``$\sym$'' to denote the subspace $X_\sym\subset X$
consisting of those functions  $f\in X$ which are covariant under the action of $\grouptwo$.
If $\Omega$ is a union of parallel circles, we use a subscript ``$\sss$'' to denote 
the subspace of functions $X_{\sss}$ consisting of 
rotationally invariant functions, which therefore depend only on $\sss$.
If moreover $\Omega$ is invariant under reflection with respect to $\{\sss = 0\}$,
we use a subscript ``$|\sss|$'' to denote 
the subspace of functions $X_{|\sss|}= X_\sss \cap X_\sym$
consisting of those functions which depend only on $|\sss|$.
\end{notation}

For example, we have
$C^0_{|\sss|}(\cyl)\subset C^0_\sym(\cyl) \subset C^0(\cyl)$
and $C^0_{|\sss|}(\cyl) \subset C^0_\sss(\cyl)$,
but $C^0_\sss(\cyl)$ is not a subset of $C^0_\sym(\cyl)$.
We also have the following. 

\begin{definition}
\label{davg}
Given a function $\varphi$ on some domain $\Omega\subset \Spheq$, we define
a rotationally invariant function $\varphi_\ave$ on 
the union $\Omega'$ of the parallel circles on which $\varphi$ is integrable
(whether contained in $\Omega$ or not),
by requesting that on each such circle $C$, 
\[\left. \varphi_\ave \right|_C
:=\avg_C\varphi.\]
We also define $\varphi_\osc$ on $\Omega\cap\Omega'$ by
\[ \varphi_\osc:=\varphi-\varphi_\ave.\]
\end{definition}

\begin{notation}
\label{Npartial}
If $\Omega \subset \cyl$ is a domain and $u \in C^0_\sss(\Omega)$ has one-sided partial derivatives at $\sss=\sbar$, 
then we denote these partial derivatives by using the notation
$$
\partial_{+\,}u(\sbar) :=\left. \frac{\partial u  }{\partial \sss}\right|_{\sss=\sbar+},
\qquad\qquad
\partial_{-\,} u(\sbar) :=-\left. \frac{\partial u  }{\partial \sss}\right|_{\sss=\sbar-}.
$$
If $u$ is $C^1$, we use the abbreviation $\partialx u := \frac{\partial u}{\partial \sss}$.  In that case, $\partial u = \partial_{+}u = -\partial_{-}u$.  
\end{notation}

If $\phi \in C^\infty_{\sss}(\cyl)$, the equation $\Lcal_\chi \phi  =0$ (recall \ref{ELchi}) is equivalent to 
\begin{align}
\label{ELchirot}
\frac{d^2\phi}{d\sss^2} + 2\sech^2 \sss \, \phi = 0.
\end{align}
\begin{definition}
\label{dphie}
Define rotationally invariant functions $\phie \in C^\infty_{|\sss|}(\cyl)$ and $\phio \in C^\infty_\sss(\cyl)$ by
\begin{align}
\phie(\sss) = 1-\sss \tanh \sss, \quad \phio(\sss) = \tanh \sss.
\end{align}
\end{definition}

\begin{lemma}
\label{Lphie}
$\phie$ and $\phio$ are even and odd in $\sss$ respectively and satisfy $\Lcal_\chi \phie = 0$ and $\Lcal_\chi \phio = 0$.  $\phie$ is strictly decreasing on $[0, \infty)$ and has a unique root $\sss_{root}\in (0, \infty)$.  $\phio$ is strictly increasing.  The Wronskian $W[ \phie, \phio]$ satisfies
\[ W[\phie, \phio](\sss): = \phie(\sss)\partial \phio(\sss) - \partial \phie(\sss) \phio(\sss) = 1.\]
\end{lemma}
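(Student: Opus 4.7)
The plan is to verify each assertion by direct computation, with the only conceptual point being the Wronskian identity, where Abel's theorem offers a shortcut.

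First I would record the parity statements by direct substitution: $\phie(-\sss)=1-(-\sss)\tanh(-\sss)=1-\sss\tanh\sss=\phie(\sss)$, and $\phio(-\sss)=\tanh(-\sss)=-\phio(\sss)$. Next, to verify that both functions are annihilated by $\Lcal_\chi$ on rotationally invariant functions, I would use \eqref{ELchirot} and compute $\phio'=\sech^2\sss$, $\phio''=-2\sech^2\sss\,\tanh\sss$, so $\phio''+2\sech^2\sss\,\phio=0$. For $\phie$, I would compute $\phie'=-\tanh\sss-\sss\sech^2\sss$ and $\phie''=-2\sech^2\sss+2\sss\sech^2\sss\tanh\sss$, and verify the cancellation against $2\sech^2\sss\,\phie$.

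The monotonicity claims then follow immediately from the sign of the first derivatives. For $\sss\in(0,\infty)$, both $\tanh\sss>0$ and $\sss\sech^2\sss>0$, so $\phie'(\sss)<0$, giving strict monotonicity on $[0,\infty)$. For the root, I would combine $\phie(0)=1>0$ with $\phie(\sss)=1-\sss\tanh\sss\to-\infty$ as $\sss\to\infty$ (since $\tanh\sss\to 1$) and apply the intermediate value theorem; uniqueness follows from strict monotonicity. Likewise $\phio'(\sss)=\sech^2\sss>0$ on all of $\R$.

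For the Wronskian, rather than expanding the product directly, I would invoke Abel's identity: the ODE $\phi''+2\sech^2\sss\,\phi=0$ has no first-order term, so $W[\phie,\phio]$ is constant on $\R$. Evaluating at $\sss=0$ gives $W(0)=\phie(0)\,\partial\phio(0)-\partial\phie(0)\,\phio(0)=1\cdot 1-0\cdot 0=1$. There is no substantive obstacle here; the entire lemma is a package of routine verifications for the two explicit solutions that will be used throughout the sequel, and the main value of stating it is to fix notation and sign conventions for $\phie$, $\phio$, and $\sss_\riza$.
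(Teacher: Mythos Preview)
Your proof is correct and matches the paper's approach, which simply reads ``Straightforward calculation using Definition \ref{dphie} and \eqref{ELchirot}.'' You have spelled out exactly the routine verifications the paper leaves implicit, with the Abel's-identity shortcut for the Wronskian being a standard and entirely acceptable variant.
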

\begin{proof}
Straightforward calculation using Definition \ref{dphie} and \eqref{ELchirot}.  
\end{proof}

\begin{remark}
When written in $\xx$ coordinates (recall \eqref{Exs}), $\phie$ and  $\phio$ satisfy
\begin{align}
\label{Ephiex}
\phio=\sin\xx,
\qquad
\phie=
1- \sin\xx \, \log\frac{1+\sin\xx}{\cos\xx}, 
\end{align}
and therefore our Definition 
\ref{dphie} is equivalent to \cite[Definition 2.18]{kap}. 
\end{remark}

It will be helpful to introduce the following auxiliary ODE solutions:
\begin{definition}
\label{dauxode}
Given $\atilde,\btilde\in\R$, and $\sbar \in \R_+$ we define
\begin{equation*}
\begin{aligned}
\phiunder=\phiunder\left [\atilde,\btilde;\sbar\right]\in \,\,
&
C^\infty_\sss\left( \, \{\sss\in[0,\infty)\} \,\right )
\bigcap
C^0_{|\sss|}( \cyl  ),
\\
\junder=\junder\left[\btilde;\sbar\right]\in \,\,
&
C^\infty_\sss\left( \{\sss\in[\sbar,\infty)\} \,\right)
\bigcap 
C^\infty_\sss\left(  \{\sss\in(0,\sbar]\} \,\right)
\bigcap 
C^0_{|\sss|}( \cyl),
\end{aligned}
\end{equation*}
by requesting they satisfy the initial data 
$$
\phiunder(\sbar)=\atilde,
\qquad
 \partial \phiunder (\sbar) =  \btilde ,
\qquad
\junder(\sbar)=0,
\qquad
\partial_{+}\junder(\sbar)=\partial_{-}\junder(\sbar)=m \btilde,
$$
and the ODEs $\Lcal_{\chitilde}\phiunder=0$ on
$\{\sss \in[0,\infty)\}$, 
and $\Lcal_{\chitilde}\junder=0$ on $\{\sss\in[\sbar,\infty)\}$ and on
$\{\sss\in[0,\sbar]\}$.
\end{definition}

\begin{remark}
\label{Rauxode}
Note that $\phiunder$ depends linearly on the pair $(\atilde, \btilde)\in \R^2$ and $\junder$ depends linearly on $\btilde\in \R$.  
\end{remark}

\begin{lemma}
\label{Lode}
Let $\sbar \in \left(\frac{3}{m}, \infty\right)$. The following estimates hold (recall \ref{Nsym}).  
\begin{enumerate}[label=(\roman*).]
\item
$\left\|  \, \phiunder[1, 0 ;\sbar] -1 \, : \, C_\sym^{j}( \, \Omega[\sbar; m]\,,\chitilde \, )\, \right\|\, \le \, C(j)/m^2$.

\item
$\left\|  \, \junder[1;\sbar] - |\, \shatbar \, |  \, : \, C_\sym^{j}( \, \Omega[\sbar; m] \setminus \Lpar[\sbar] \,,\chitilde \, ) \, \right\| \, \le \,  C(j)/m^2\, $.

\item
$\left\| \, \Acal_{\sbar} \,   \phiunder[1, 0 ;\sbar] \, : C_\sym^{j}( \, \Omega[\sbar; m]\, , \chitilde \, ) \, \right\|\le \, C(j)/m^3$.

\item
$\left\|  \, \Acal_{\sbar} \,   \junder[1;\sbar] \, : \, C_\sym^{j}( \, \Omega[\sbar; m] \setminus \Lpar[\sbar]  \,,\chitilde \, ) \, \right\| \, \le  \,  C(j)/m^3 $.
\end{enumerate}
\end{lemma}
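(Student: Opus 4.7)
The plan is to handle all four estimates through a single ODE perturbation argument in the rescaled variable $\shatbar = m(\sss - \sbar)$. In this variable $\Omega[\sbar;m]$ corresponds to the fixed-size interval $|\shatbar|\le 3$, and on rotationally invariant functions $\Lcal_\chitilde$ reduces to $\partial_\shatbar^2 + 2m^{-2}\sech^2\sss$, a perturbation of $\partial_\shatbar^2$ by a zeroth-order term of order $1/m^2$. Note that the norms $C^j(\cdot,\chitilde)$ measure $\stilde=m\sss$-derivatives, so each $\sss$-derivative gains a factor $1/m$ relative to a $\stilde$-derivative, which is crucial for the higher-order bookkeeping.

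For (i), set $u := \phiunder[1,0;\sbar]-1$; by Definition \ref{dauxode} it satisfies
\[ \partial_\shatbar^2 u + 2m^{-2}\sech^2\sss\,u = -2m^{-2}\sech^2\sss, \qquad u(\sbar)=\partial_\shatbar u(\sbar)=0. \]
Rewriting this as an integral equation via double integration from $\sbar$ and applying a contraction mapping / Gr\"onwall argument on $|\shatbar|\le 3$ yields $\|u\|_{C^0}\le C/m^2$; bootstrapping the ODE while invoking Lemma \ref{Rprod}(iii)(a) to control the $C^j$-norms of $\sech^2\sss$ then gives the higher-derivative bounds. Part (ii) is analogous: set $v:=\junder[1;\sbar]-|\shatbar|$, and observe that $|\shatbar|$ has $\partial_\sss^2|\shatbar|=0$ away from $\sbar$ and matches the one-sided slopes of $\junder[1;\sbar]$ at $\sbar$, so $v$ is $C^1$ across $\sbar$ with vanishing value and first derivative there and satisfies $\Lcal_\chi v = -2\sech^2\sss\cdot|\shatbar|$ on each component of $\Omega[\sbar;m]\setminus\Lpar[\sbar]$. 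The same contraction argument yields the claim; only $\Omega[\sbar;m]\setminus\Lpar[\sbar]$ is used because $v$ develops a jump in its third $\sss$-derivative at $\sbar$.

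For (iii)--(iv), apply $\Acal_\sbar$ to the homogeneous equations satisfied by $\phiunder[1,0;\sbar]$ and $\junder[1;\sbar]$; Lemma \ref{Rprod}(ii) then gives that $w:=\Acal_\sbar\phiunder[1,0;\sbar]$ and $w:=\Acal_\sbar\junder[1;\sbar]$ satisfy
\[ \Lcal_\chitilde w = -2m^{-2}\,\Acal_\sbar[\sech^2\sss]\cdot\Rcal_\sbar\phiunder[1,0;\sbar] \quad (\text{resp.}~\Rcal_\sbar\junder[1;\sbar]). \]
By Lemma \ref{Rprod}(iii)(b) the factor $\Acal_\sbar[\sech^2\sss]$ is $O(1/m)$ in any $C^j$-norm, while by (i)--(ii) the reflected factor is uniformly bounded on $\Omega[\sbar;m]$; hence the right-hand side is $O(1/m^3)$. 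The initial data vanish: $w(\sbar)=0$ is automatic, the first derivative vanishes in (iii) because $\partial\phiunder(\sbar)=0$ by Definition \ref{dauxode}, and in (iv) because the one-sided slopes $\partial_\sss\junder(\sbar^\pm)=\pm m$ cancel in $\partial_{\sss'}[\junder(\sbar+\sss')-\junder(\sbar-\sss')]|_{\sss'=0}$. Double integration and bootstrapping then produce $\|w\|_{C^j(\chitilde)}\le C(j)/m^3$. No serious obstacle is expected; the main care is in tracking $m$-powers and verifying the one-sided derivative cancellations at $\sbar$, with the $m^{-3}$ rate in (iii)--(iv) arising precisely from combining the $m^{-2}$ from the rescaled linearization with the $m^{-1}$ gain from the antisymmetrized coefficient.
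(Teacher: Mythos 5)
Your proposal is correct and follows essentially the same route as the paper: pass to the rescaled variable $\shat$ where $\Omega[\sbar;m]$ has fixed size, read off the $O(1/m^2)$ scale directly from the ODE for (i)--(ii), and for (iii)--(iv) apply $\Acal_\sbar$ via Lemma \ref{Rprod}.(ii) and exploit the $O(1/m)$ smallness of the antisymmetrized coefficient $\Acal_\sbar[\sech^2\sss]$ together with the vanishing initial data at $\sbar$ to gain the extra power. The only cosmetic difference is in the initial $C^0$ step, where the paper compares against explicit solutions such as $\cos\left(\tfrac{\sqrt{2}}{m}\shat\right)$ while you invoke a generic contraction/Gr\"onwall argument; both are interchangeable on a bounded interval.
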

\begin{proof}
Denote in this proof $\phiunder = \phiunder[1, 0; \sbar]$.  In $\shat$-coordinates (where $\shat = \shat\, [ \sssunder]$ is as in \ref{dchi}), the equation $\Lcal_{\chitilde} \phiunder =0$  is equivalent to
\begin{align}
\label{Ephiode}
\partial^2_{\, \shat} \, \phiunder + \frac{2}{m^2} \sech^2\left( \frac{\shat}{m}+ \sbar\right) \phiunder = 0. 
\end{align}
Since $\sech^2t$ is decreasing on $(0, \infty)$, it is easy to see that for $m\geq 6$, $\phiunder>0$ on $\Omega[\sbar; m]$, hence 
\[ \partial^2_{\, \shat}\,  \phiunder + \frac{2}{m^2} \phiunder > 0 \quad \text{on} \quad \Omega[\sbar; m].\] 
Integrating this differential inequality and matching the initial data implies
\begin{align*}
\left\| 1 - \phiunder : C^0_\sym\left(\Omega[ \sbar;m], \chitilde\,\right)\right\| 
\le \left\| 1 -  \cos \left( \frac{\sqrt{2}}{m}\, \shat \right) : C^0_\sym\left(\Omega[ \sbar;m], \chitilde\,\right)\right\| \le \frac{C}{m^2}.
\end{align*}
In conjunction with \eqref{Ephiode}, the $C^0$ bound above implies $\left|  \partial^2_{\, \shat} \, \phiunder\right| < C/m^2$ on $\Omega[\sbar, m]$.  Integrating this bound with respect to $\shat$ implies $\left| \partial_{\, \shat}\,  \phiunder \right|< C/m^2$.  Together, these bounds imply the $C^2$ bound in (i).  Higher derivative bounds follow from the $C^2$ bound by differentiating \eqref{Ephiode}.  

For (ii), denote $\junder = \junder[1; \sbar]$.  Arguing as above, we have on $\Omega[\sbar;m]\setminus \Lpar[\sbar]$
\begin{align}
\label{Ejode1}
\partial^2_{\, \shat} \, \junder + \frac{2}{m^2} \sech^2\left( \frac{\shat}{m}+ \sbar\right) \junder = 0 \quad \text{and} \quad
 \partial^2_{\, \shat} \, \junder + \frac{2}{m^2} \junder > 0.
\end{align}
A similar comparison argument establishes that
\begin{align*}
\left\| \junder -|\, \shat\, | : C^0_\sym ( \Omega[ \sbar; m]\setminus \Lpar[\sbar] , \chitilde\,)\right\|
&\le  \left\|  \frac{m}{\sqrt{2}} \sin \left( \frac{\sqrt{2}}{m}\, | \,\shat\, | \right)-\left|\, \shat\, \right| : C^0_\sym\left( \Omega[ \sbar; m]\setminus \Lpar[\sss_i] , \chitilde\, \right)\right\|\\
&\le \frac{C}{m^2}.
\end{align*}
The $C^0$ bound above implies $\left| \partial^2_{\, \shat } \, \junder \right| < C/m^2$.
For $t\in (-3, 3)$, we find by integrating this bound that
\begin{align*}
\left| \partial_{\, \shat} \junder (\, \shat\, ) - \partial_{\, \shat} | \, \shat\,  | \right|
= \left| \partial_{\, \shat} \junder (t) -  \partial_{\, \shat} \junder (0)\right| 
= \left| \int_{0}^{t} \partial^2_{\, \shat} \, \junder \, d\, \shat\, \right| \le \frac{C}{m^2}.
\end{align*}
This with the preceding bounds implies the $C^2$ bound in (ii).  Estimates on the higher derivatives follow by differentiating \eqref{Ejode1}.

 By Lemma \ref{Rprod}.(ii), $\Acal_{\sbar} \, \phiunder_i$ satisfies the equation
\begin{align}
\label{Eaphi}
 \partial^2_{ \, \shat}\, \Acal_{\sbar}\,  \phiunder+ \frac{2}{m^2} \sech^2 \left( \frac{\shat}{m}+ \sbar\right) \Acal_{\sbar}\,  \phiunder+ \frac{2}{m^2} \Acal_{\sbar}\left[ \sech^2 \sss\right] \left( \frac{\shat}{m}+ \sss_i\right) \Rcal_{\sbar} \, \phiunder = 0.
\end{align}
It follows from (i) that 
$\left\|  \, \Acal_{\sbar}\,  \phiunder \, : \, C_\sym^{2}( \, \Omega[\sbar; m]\,,\chitilde \, )\,\right \|\, \le \, C/m^2$.  Using this bound and the estimate on $\Acal_{\sbar}\left[ \sech^2 \sss\right]$ from Lemma \ref{Rprod}.(iii) in \eqref{Eaphi}, we find $ \left| \partial^2_{ \, \shat} \, \Acal_{\sbar}\,  \phiunder\right| < C/m^3$ on $\Omega[\sbar;m]$.  Integrating this bound twice with respect to $\shat$ twice and using the fact that $\partial_{\, \shat} \Acal_{\sbar} \,  \phiunder(\sbar) = \Acal_{\sbar} \, \phiunder(\sbar) =0$ establishes the $C^2$ bounds in (iii); as before, estimates on the higher derivatives follow from differentiating \eqref{Eaphi}.  The proof of (iv) is almost exactly the same as the proof of (iii), so we omit it.
\end{proof}


\subsection*{Green's functions and the Green's function for $\Lcal_\chi$}
$\phantom{ab}$
\nopagebreak 

\begin{definition}
\label{Dgreengeneral}
Let $(\Sigma^2, g)$ be Riemannian, $V\in C^\infty(\Sigma)$, and $p\in \Sigma$.  If there exists a domain $\Omega \subset \Sigma$ containing $p$ and $G_p \in C^\infty \left( \Omega \setminus \{p\} \right)$ satisfying 
\begin{enumerate}[label=(\roman*).]
\item $\left( \Delta_g + V\right) G_p= 0$ 
\item  For some neighborhood $\Omega'\subset\Omega$, 
$G_p-\log\circ \dbold^g_p$ is bounded on $\Omega'\setminus \{p\}$,
\end{enumerate}
we say  $G_p$ is a \emph{Green's function for $\Delta_g + V$ centered at $p$}. 
\end{definition}

For any $(\Sigma, g)$, $p$, and $V$ as above, standard theory guarantees the existence of a Green's function for $ \Delta_g + V$ with center at $p$.

\begin{lemma}
\label{Lremovablesing}
Suppose $G_p$ and $\Gtilde_p$ are Green's functions for $\Delta_g + V$, where  $p, \Omega, V$, and $(\Sigma, g)$ are as in Definition \ref{Dgreengeneral}.  Then $G_p - \Gtilde_p$ has a unique extension to $C^\infty(\Omega)$.
\end{lemma}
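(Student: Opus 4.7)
My plan is to show that $u := G_p - \Gtilde_p$ extends from $\Omega \setminus \{p\}$ to a smooth function on all of $\Omega$ using a removable singularity argument for bounded solutions of a linear elliptic equation with an isolated singular point in a $2$-manifold.

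First I would reduce to the local picture near $p$: by hypothesis there is a neighborhood $\Omega' \subset \Omega$ on which both $G_p - \log \circ \dbold^g_p$ and $\Gtilde_p - \log \circ \dbold^g_p$ are bounded, so their difference $u = G_p - \Gtilde_p$ is bounded on $\Omega' \setminus \{p\}$. Away from $p$, linearity gives $(\Delta_g + V) u = 0$ classically on $\Omega \setminus \{p\}$. So $u$ is a bounded classical solution of a linear elliptic equation on a punctured neighborhood of $p$, smooth on $\Omega \setminus \{p\}$.

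Next I would invoke the standard removable singularity theorem: in dimension two, a single point has zero $W^{1,2}$-capacity, so a bounded function $u \in C^\infty(\Omega' \setminus \{p\})$ satisfying $(\Delta_g + V) u = 0$ defines, via extension by an arbitrary value at $p$, an element of $L^\infty(\Omega')$ that is a weak solution of $(\Delta_g + V) u = 0$ on all of $\Omega'$. Concretely, for any test function $\varphi \in C^\infty_c(\Omega')$ one multiplies by a logarithmic cutoff $\eta_\epsilon$ which equals $0$ on $\dbold^g_p \le \epsilon^2$, equals $1$ outside $\dbold^g_p \ge \epsilon$, and satisfies $\int |\nabla \eta_\epsilon|^2 \to 0$ as $\epsilon \to 0$; integrating by parts on $\{\dbold^g_p > \epsilon^2\}$, the boundary terms and the cutoff error terms tend to zero because $u$ is bounded, yielding the weak equation on $\Omega'$. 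Elliptic regularity for the smooth coefficient operator $\Delta_g + V$ then upgrades this weak solution to $C^\infty(\Omega')$. Combined with the existing smoothness on $\Omega \setminus \{p\}$, we obtain a smooth extension of $u$ to all of $\Omega$.

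Uniqueness of the extension is immediate: any two continuous (let alone smooth) extensions of $u$ from $\Omega \setminus \{p\}$ agree on the dense set $\Omega \setminus \{p\}$ and hence agree at $p$ by continuity. The only step that might look technical is the cutoff computation showing that $u$ is a weak solution across $p$, but this is entirely standard in $2$ dimensions since $\log \dbold^g_p \in W^{1,2}_{\mathrm{loc}}$ fails only logarithmically and the bound $|u| \le C$ is more than enough to discard the boundary contributions; no structural difficulty arises from the potential $V$ since it is smooth and $u$ is bounded, so $Vu \in L^\infty \subset L^2_{\mathrm{loc}}$ near $p$.
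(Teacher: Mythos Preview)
Your proof is correct and follows the same overall arc as the paper: show $u=G_p-\Gtilde_p$ is bounded near $p$, deduce it is a weak solution of $(\Delta_g+V)u=0$ across $p$, then invoke elliptic regularity for the smooth extension and continuity for uniqueness. The only technical difference is in how the weak equation is established. The paper excises a geodesic ball $D^g_p(\epsilon)$, applies Green's identity on $\Omega\setminus D^g_p(\epsilon)$, and argues that the boundary terms $\int_{\partial D^g_p(\epsilon)}\big(u\,\partial_\eta\varphi-\varphi\,\partial_\eta u\big)$ vanish in the limit, using Schauder estimates to control $\partial_\eta u$. You instead use a logarithmic cutoff $\eta_\epsilon$ and the fact that points have zero $W^{1,2}$-capacity in dimension two, which requires only the $L^\infty$ bound on $u$ and avoids any appeal to derivative control near the singularity. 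Your route is the more standard and slightly more robust one; the paper's excision argument is shorter on the page but leans on a gradient bound whose uniformity in $\epsilon$ is not entirely immediate from interior Schauder alone.
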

\begin{proof}
In this proof, we denote $\Lcal = \Delta_g + V$.
Definition \ref{Dgreengeneral}.(ii) implies that $u: = G_p - \Gtilde_p$ satisfies $ \left\|  u : C^0\left( \Omega \setminus \{ p \} , g\right) \right\| \le C$.  Since $\Lcal u = 0$, the $C^0$ bound and Schauder estimates imply that
\begin{align}
\label{Eremovesing}
 \left\| u : C^j\left( \Omega \setminus \{ p \} , g\right) \right\| \le C(j). 
 \end{align}

By standard regularity theory, to prove the existence of the extension,  it suffices to prove that $u$ solves $\Lcal u = 0$ weakly in $\Omega$.  To this end, let $\varphi \in C^\infty_c(\Omega)$.  Compute
\begin{align*}
\int_{\Omega} u \Lcal \varphi  \, d\mu_g 
&=  \lim_{\epsilon \searrow 0} \int_{\Omega\setminus D^g_p(\epsilon)} \!\varphi \Lcal u \, d\mu_g + \lim_{\epsilon \searrow 0}\int_{\partial D^g_p(\epsilon)} \! u \frac{\partial \varphi}{\partial \eta} - \varphi \frac{\partial u }{ \partial \eta} \, d\mu_g
= 0,
\end{align*}
where the last equality follows from \eqref{Eremovesing} and that  $\Lcal u = 0$.
\end{proof}

\begin{convention}
\label{Cgreen}
Given two Green's functions $G_p, \Gtilde_p$ as in \ref{Lremovablesing}, we abuse notation by writing $G_p -\Gtilde_p$ for the smooth extension to $C^\infty(\Omega)$ of  $G_p - \Gtilde_p$, whose existence and uniqueness is asserted in Lemma \ref{Lremovablesing}.
\end{convention}

For future reference, we recall \cite[Lemma 2.20]{kap} basic properties of the Green's function used in \cite{kap} to construct initial surfaces. 
\begin{lemma}[{\cite[Lemma 2.20]{kap}}]
\label{LGp}
There is a function 
$G^{\Sph^2}\in C^\infty((0,\pi))$
uniquely characterized by (i) and (ii) 
and moreover satisfying (iii-vii) below.
We denote by $\rr$ the standard coordinate of $\R^+$.
\begin{enumerate}[label=(\roman*).]
\item For small $\rr$ we have $G^{\Sph^2}(\rr)=(1+O(\rr^2))\,\log \rr$.

\item
For each $p\in\Spheq$
we have $\Lcalp \left(G^{\Sph^2} \circ  \dbold^g_p\right) =0$
where
$G^{\Sph^2}\circ  \dbold^g_p\in C^\infty\left(\Spheq\setminus\{p,-p\}\right)$
(recall \ref{NT}).

\item
$G^{\Sph^2} \circ \dbold^g_{p_N}= (\log 2-1) \, \phio+\phie$ (recall \ref{Eeqone}).

\item
$G^{\Sph^2}(r)\, = \, 1+ \, \cos r \, \left(\,-1+\log\frac{2\sin r }{1+\cos r}\,\right)$. 

\item
$\frac{\partial G^{\Sph^2}}{\partial r} (r) \, = \, -\sin r \, \log\frac{2\sin r }{1+\cos r} 
+ \frac1{\sin r} + \frac{\sin r \cos r }{1+\cos r} $.  

\item
$
\left\|\, 
G^{\Sph^2} - 
\,\cos r \, \log \rr
\,
: C^{j}\left(\,
(0,1)\,,\,
r, dr^2,r^{2}\,\right)\,\right\|
\le 
\, C(j) \, .
$ 

\item
$
\left\|\, 
G^{\Sph^2} 
\,
: C^{j}\left(\,
(0,1)\,,\,
r, dr^2,\, |\log r| \,\right)\,\right\|
\le 
\, C(j) \, .
$ 
\end{enumerate}
\end{lemma}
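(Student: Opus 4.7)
The plan is to take the explicit formula (iv) as the definition of $G^{\Sph^2}$ and work outward from it, verifying each of the other claims in turn. Since $\frac{2\sin r}{1+\cos r}=2\tan(r/2)$ is positive and smooth on $(0,\pi)$, the formula (iv) unambiguously defines a function in $C^\infty((0,\pi))$. To establish (ii), I will use the fact that in geodesic polar coordinates $r$ centered at $p\in\Spheq$, the action of $\Lcalp$ on a radial function $F(r)$ is given by $\Lcalp(F\circ\dbold^g_p)=F''(r)+\cot r\,F'(r)+2F(r)$; so it suffices to verify that $G^{\Sph^2}$ as defined by (iv) satisfies the ODE $G''+\cot r\,G'+2G=0$ on $(0,\pi)$. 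This is a direct calculation using $\partial_r\log\frac{2\sin r}{1+\cos r}=\partial_r\log(2\tan(r/2))=\csc r$.

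Claim (v) follows immediately by differentiating (iv) and applying the identity above; claim (iii) is a translation between the two coordinate systems. More precisely, the geodesic distance from $p_N=\Theta(\pi/2,\yy,0)$ relates to the latitude $\xx$ by $r=\pi/2-\xx$, so $\sin\xx=\cos r$ and $\cos\xx=\sin r$. Substituting into the expressions (\ref{Ephiex}) for $\phio$ and $\phie$ gives $\phio=\cos r$ and $\phie=1-\cos r\log\frac{1+\cos r}{\sin r}$; a short simplification then yields $(\log 2-1)\phio+\phie=1+\cos r(-1+\log\frac{2\sin r}{1+\cos r})$, which is $G^{\Sph^2}(r)$, proving (iii). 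For (i), I will expand near $r=0$ using $\cos r=1-r^2/2+O(r^4)$ and $\log(2\tan(r/2))=\log r+\log(1+r^2/12+O(r^4))=\log r+O(r^2)$, so that $G^{\Sph^2}(r)=\log r+O(r^2)$ as claimed.

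For uniqueness, suppose $\tilde G\in C^\infty((0,\pi))$ also satisfies (i) and (ii). Then $u:=G^{\Sph^2}-\tilde G$ solves the same second-order linear ODE on $(0,\pi)$ and by (i) satisfies $u(r)=O(r^2)$ as $r\to 0^+$. Since the ODE admits (up to scalar multiples) only one solution that is bounded near $0$, namely $\cos r$ (the other independent local solution has a logarithmic singularity, cf.\ Frobenius analysis for the indicial roots of $u''+r^{-1}u'+2u=0$), we must have $u=c\cos r$ for some constant $c$. Taking $r\to 0^+$ forces $c=0$, so $u\equiv 0$ and uniqueness is proved.

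Finally, the weighted H\"older norm bounds (vi) and (vii) follow from the explicit form together with the expansion in Step (i). Writing $G^{\Sph^2}(r)-\cos r\log r=1+\cos r\bigl(-1+\log\tfrac{2\sin r}{r(1+\cos r)}\bigr)$, the right-hand side is smooth and bounded in $C^j$ on $(0,1)$ when measured in the rescaled metric $r^{-2}dr^2$, establishing (vi) directly. Claim (vii) then follows by combining (vi) with the trivial estimate that $\cos r\log r$ (and hence $G^{\Sph^2}$) is controlled by $|\log r|$ in the stated weighted norm on $(0,1)$. The only step requiring any care is the ODE verification in Step 2, which is elementary but requires bookkeeping; everything else is manipulation of the explicit formulas.
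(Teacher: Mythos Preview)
The paper does not supply its own proof of this lemma: it is quoted verbatim from \cite[Lemma 2.20]{kap} and left unproved here.  So there is nothing in the present paper to compare your argument against.

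On its own merits your direct-verification approach is sound and is exactly how one would establish the result: take (iv) as the definition, check the ODE $G''+\cot r\,G'+2G=0$ using $\partial_r\log(2\tan(r/2))=\csc r$, and read off (i), (iii), (v) from the explicit formula.  One small imprecision: in the uniqueness step you write $u(r)=O(r^2)$, but (i) only gives $u(r)=O(r^2|\log r|)$; this does not matter, since all you need is $u(r)\to 0$ as $r\to 0^+$ to force the constant multiple of $\cos r$ to vanish.  Your derivation of (vi) via the smooth even extension of $G^{\Sph^2}(r)-\cos r\log r=1+\cos r\bigl(-1+\log\tfrac{2\sin r}{r(1+\cos r)}\bigr)$ is clean and correct.
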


It will also be helpful to have Green's functions $G_{p}$ for $\Lcal_\chi$ well adapted to the cylinder.
\begin{lemma}
\label{Lgreen}
Given $p = (\sbar, \underline{\theta})\in \cyl$, there exists $G_p \in C^{\infty}\left(D^{\chi}_p(\frac{1}{2})\setminus \{p\}\right)$ satisfying: 
\begin{enumerate}[label=(\roman*).]
\item $\Lcal_\chi G_p = 0$ on $D^\chi_p(\frac{1}{2})\setminus \{p\}$. 

\item For $q$ near $p$, $G_p(q) = \log r + O(r^2|  \log r|)$, where $r(q) = \dbold^{\chi}_p(q)$.

\item $\left\| \Acal_{\sbar} G_{p} : C^{j} \left(D^{\chi}_p(\frac{1}{2}) \setminus \{ p \}, r ,\chi ,r \right)\right\| \le C(j).$ 
\end{enumerate}
\end{lemma}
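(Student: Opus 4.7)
The plan is to build $G_p$ by perturbing the flat fundamental solution $\log r$, where $r := \dbold^{\chi}_p$. On $D^\chi_p(1/2)$ the cylinder is isometric to a Euclidean disk, so $\Delta_\chi \log r = 0$ away from $p$, and therefore $\Lcal_\chi \log r = 2\sech^2 \sss \cdot \log r$ pointwise there. Using the flat identity $\Delta_\chi(r^2 \log r) = 4 \log r + 4$, I define the parametrix
\[
H_p := \log r - \tfrac{1}{2} \sech^2(\sbar)\, (r^2 \log r - r^2),
\]
for which a direct computation gives
\[
\Lcal_\chi H_p = 2 \bigl( \sech^2 \sss - \sech^2\sbar \bigr) \log r + O(r^2 |\log r|).
\]
Since $\sech^2 \sss - \sech^2 \sbar = O(r)$ near $p$, the right-hand side lies in $C^{0,\alpha}\bigl(\overline{D^\chi_p(1/2)}\bigr)$ for every $\alpha \in (0,1)$ and vanishes at $p$.

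Next, solve $\Lcal_\chi w_0 = -\Lcal_\chi H_p$ on $D^\chi_p(1/2)$ with zero Dirichlet data; this is solvable by Fredholm theory, since the first Dirichlet eigenvalue of $-\Delta_\chi$ on this ball comfortably exceeds the upper bound $2$ on $2\sech^2\sss$. Schauder theory provides $w_0 \in C^{2,\alpha}$. To absorb the value and gradient of $w_0$ at $p$, choose smooth solutions $u_0, u_1, u_2$ of $\Lcal_\chi u_i = 0$ on $D^\chi_p(1/2)$ whose evaluation triples $\bigl(u_i(p),\, \partial_\sss u_i(p),\, \partial_\theta u_i(p)\bigr)$ span $\R^3$---such solutions exist by surjectivity of the evaluation map from the $\Lcal_\chi$-Dirichlet solution space onto $\R^3$, which is verified via suitable boundary data. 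Pick $\alpha_i \in \R$ so that $w := w_0 - \sum_i \alpha_i u_i$ satisfies $w(p) = 0 = \nabla w(p)$, and define $G_p := H_p + w$. Property (i) is immediate, and Taylor's theorem applied to $w \in C^{2,\alpha}$ gives $w = O(r^2)$ near $p$, hence $G_p = \log r + O(r^2|\log r|)$, establishing (ii).

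For (iii), note that $r$ depends only on Euclidean distance from $p$ and so is invariant under $\Rcal_{\sbar}$; consequently $H_p$ is $\Rcal_{\sbar}$-invariant and $\Acal_{\sbar} G_p = \Acal_{\sbar} w$. In particular $\bigl| \Acal_{\sbar} G_p \bigr| \le 2 |w| \le C r^2 |\log r|$ near $p$, which is better than the required $O(r)$. The weighted $C^j$ bounds follow from scale-invariant interior Schauder estimates: at each $q$ near $p$, on $B\bigl(q, r(q)/2\bigr)$ the right-hand side $-\Lcal_\chi H_p$ has sup-norm $O(|\log r(q)|)$, and rescaling to the unit ball together with the $C^0$ bound on $w$ yields $|\nabla^j w|_\chi(q) \le C r(q)^{2-j}|\log r(q)|$. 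This dominates the required $r(q)^{1-j}$ since $r |\log r| \to 0$ as $r \to 0$; for $r(q)$ bounded away from $0$ the bound is immediate from smoothness.

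The main obstacle is arranging the precise vanishing $w(p) = 0 = \nabla w(p)$, which reduces to the surjectivity of the evaluation map on the $\Lcal_\chi$-Dirichlet solution space---standard, but the one step meriting care.
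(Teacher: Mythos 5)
Your proof is correct and reaches the same decomposition $G_p = (\text{parametrix}) + (\text{remainder})$, but via a genuinely different parametrix. The paper uses the exact frozen-coefficient solution
$G'_p = \tfrac{\pi}{2} Y_0(\sqrt{2}\sech\sbar\, r) - (\gamma + \log\tfrac{\sech\sbar}{\sqrt2}) J_0(\sqrt{2}\sech\sbar\, r)$,
a Bessel-function combination solving $\Delta_\chi G'_p + 2\sech^2\!\sbar\, G'_p = 0$ exactly, so that the remaining Dirichlet error comes purely from $2(\sech^2\sbar - \sech^2\sss)G'_p$. You instead use the truncated polynomial parametrix $H_p = \log r - \tfrac12\sech^2\sbar\,(r^2\log r - r^2)$, which only cancels the potential term to leading order and produces an additional $O(r^2|\log r|)$ residual; both errors land in $C^{0,\alpha}$, and the subsequent Dirichlet-solve-plus-adjustment and the asymmetry estimate \textup{(iii)} (which both proofs reduce to $\Acal_{\sbar}$ of the smooth remainder, since the parametrix depends only on $r$) proceed essentially in parallel. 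What your approach buys is the avoidance of special functions entirely; what it costs is that the parametrix solves nothing exactly, so one has to track a second error term.

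One avoidable complication in the adjustment step: you invoke surjectivity of the three-dimensional evaluation map $u \mapsto (u(p), \partial_\sss u(p), \partial_\theta u(p))$ on the space of $\Lcal_\chi$-solutions and flag it as the step meriting care. You can sidestep it by observing that your inhomogeneous term $-\Lcal_\chi H_p$ is invariant under $\theta \mapsto 2\underline\theta - \theta$ (as $H_p$ depends only on $r$ and $\sss$), so the Dirichlet solution $w_0$ shares that symmetry and $\partial_\theta w_0(p) = 0$ automatically. Then only $(w_0(p), \partial_\sss w_0(p))$ needs to be killed, which requires a two-parameter family of $\Lcal_\chi$-solutions whose surjectivity is elementary; this is precisely what the paper does by adding the rotationally invariant ODE solution $\phiunder[\,\cdot\,,\cdot\,;\sbar]$. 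Aside from this, the argument is sound, and your scale-invariant interior Schauder estimate in fact gives the slightly stronger bound $|\nabla^j(\Acal_{\sbar} G_p)|_\chi \le C(j)\, r^{2-j}$, more than enough for \textup{(iii)}.
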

\begin{proof}
As an auxiliary step, we consider solutions of the equation
\begin{align}
\label{Elaplaceode}
\Delta_\chi u + 2\left(\sech^2\sbar\right) u = 0.
\end{align}
Let  $r = \dbold^\chi_p$ be the polar radius on $D^\chi_p(\frac{1}{2})$.  When $u = u(r)$ is radial, \eqref{Elaplaceode} is equivalent to the ODE
\begin{align}
\label{Eoderot}
\frac{d^2 u}{d r^2} + \frac{1}{r} \frac{d  u}{d r} + 2\left(\sech^2\sbar\right) \, u = 0.
\end{align}
The solution space to \eqref{Eoderot} is spanned by
\[  \left\{ J_0\left(\sqrt{2}\left(\sech\sbar\right)  r\right),  Y_0\left(\sqrt{2}\left(\sech\sbar\right)  r\right)\right\}, \] 
where $J_0$ and $Y_0$ are the Bessel functions (cf. \cite[Section 9.1]{abramowitz}) defined by 
\begin{equation}
\label{Ebessel}
\begin{aligned}
J_0(x) &= \sum_{j = 0}^\infty \frac{(-1)^j}{4^j (j!)^2} x^{2j}, \\ 
Y_0(x) &= \frac{2}{\pi}\left\{ \left(\log \frac{x}{2}+ \gamma\right) J_0(x) + \sum_{j=1}^\infty (-1)^{j+1} \left(\sum_{l=1}^j\frac{1}{l} \right)\frac{x^{2j}}{4^j (j!)^2}\right\},
\end{aligned}
\end{equation}
and $\gamma$ is the Euler constant.  
By a short computation, the function $G'_p \in C^\infty\left( D^\chi_p(\frac{1}{2}) \setminus \{ p \}\right)$ defined by
\begin{align}
\label{DG'}
G'_p :=  \frac{\pi}{2} Y_0\left( \sqrt{2}\left(\sech\sbar\right)  r\right) - \left(\gamma+\log \frac{\sech\sbar}{\sqrt{2}}\right) J_0\left(\sqrt{2}\left(\sech\sbar\right)  r\right)
\end{align}
satisfies
\begin{align}
\label{EG'}
\Delta_\chi G'_p + 2\left(\sech^2 \sbar\right) \, G'_p =0 \quad \text{and} \quad G'_p= \log r + O(r^2| \log r|).
\end{align}
Let $w_p \in C^{2, \beta}(D^\chi_p(\frac{1}{2}))$ be the unique solution of the Dirichlet problem
\begin{align}
\label{Ew}
\begin{cases}
\Lcal_\chi w_p &= 2(\sech^2 \sbar - \sech^2 \sss)G'_p  \hfill \quad\text{on} \quad D^\chi_p(\frac{1}{2})\\
w_p &= 0 \hfill \text{on} \quad \partial D^\chi_p(\frac{1}{2})
\end{cases}
\end{align}
and define (recall Definition \ref{dauxode})
\begin{align}
\label{Ew'}
w'_p: = w_p + \phiunder\left[ -w_p(p), \frac{\partial w_p}{\partial \sss}( p); \sbar\right].
\end{align}
Finally, define
\begin{align}
\label{EGdecomp}
G_p = G'_p + w'_p.
\end{align}
By combining \eqref{EG'}, \eqref{Ew}, and \eqref{Ew'} we find $\Lcal_\chi G_p = 0$, which yields (i). 

Now note that the right hand side of \eqref{Ew} is in $C^{0, \beta}(D^\chi_p(\frac{1}{2}))$ for any $0< \beta < 1$.  It follows that $w'_p \in C^{2, \beta}(D^\chi_p(\frac{1}{2}))$.  In light of \eqref{Ew}, $w'_p(p) = \left.\nabla w'_p\right|_{p}=0$.  Therefore, the second order Taylor series for $w'_p$ and the Schauder estimates imply
\begin{align}
\label{Ew'est}
\left\| w'_p : C^j(D^\chi_p(\textstyle{\frac{1}{2}}))\setminus \{ p\}, r, \chi, r)\right\| \le C(j).
\end{align}

(ii) then follows from combining \eqref{EG'}, \eqref{EGdecomp}, and \eqref{Ew'est}. 

Note from \eqref{DG'} that $\Acal_{\sbar} G_p =  \Acal_{\sbar} w'_p$.  (iii) then follows from this and \eqref{Ew'est}.
\end{proof}

The estimates in Section \ref{S:LD}, and in particular the decomposition in \ref{dPprime}, are adapted to the Green's functions $G_p$, for $p\in L$.  
Since for such $p$, $\varphihat_p$ is defined (recall  \ref{dLD0}.(ii)) in terms of the Green's function $G^{\Sph^2}\circ \dbold^g_p$, the following lemma will be useful.
\begin{lemma}
\label{Lgdiff}
 Fix $\sbar \in \R$ and let $p = (\sbar, 0)\in \cyl$.  We have (recall Convention \ref{Cgreen})
\begin{align}
\label{Egdiff}
\left( G_{p} - G^{\Sph^2}\circ \dbold^g_{p} \right)(p) = -\log \sech\sbar,  \quad 
d_{p} \left(G_{p} - G^{\Sph^2}\circ \dbold^g_{p} \right)(p) = \frac{1}{2}\tanh\sbar\,   d\sss. 
\end{align}
\end{lemma}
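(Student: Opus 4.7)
My plan is to extend $f:=G_p-G^{\Sph^2}\circ\dbold^g_p$ smoothly across $p$ via Lemma \ref{Lremovablesing}, and then read off $f(p)$ and $d_pf$ from the known leading asymptotic behaviour of the two Green's functions together with the first-order expansion of $\dbold^g_p/\dbold^\chi_p$.

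Since $\Lcal_\chi=\sech^2\sss\cdot\Lcal'$ with $\sech^2\sss>0$, the operators $\Lcal_\chi$ and $\Lcal'$ have the same kernel. Both $G_p$ and $G^{\Sph^2}\circ\dbold^g_p$ therefore lie in $\ker\Lcal'$ on $D^\chi_p(\tfrac12)\setminus\{p\}$ and satisfy the logarithmic bound of Definition \ref{Dgreengeneral}.(ii), so Lemma \ref{Lremovablesing} and Convention \ref{Cgreen} give $f\in C^\infty$ on a neighbourhood of $p$. Writing $r=\dbold^\chi_p$, Lemma \ref{Lgreen}.(ii) and Lemma \ref{LGp}.(i) yield $G_p(q)=\log r+O(r^2|\log r|)$ and $G^{\Sph^2}(\dbold^g_p(q))=\log\dbold^g_p(q)+O(r^2|\log r|)$ near $p$, and hence
\[
f(q)=\log\bigl(\dbold^\chi_p(q)/\dbold^g_p(q)\bigr)+O(r^2|\log r|).
\]

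The core computation is the first-order expansion of this ratio. In coordinates $(u,v):=(\sss-\sbar,\theta)$ centred at $p$, the relation $g=\sech^2(\sbar+u)(du^2+dv^2)$ shows that the $g$-length of the straight $(u,v)$-segment joining $p$ to $q$ equals $r\int_0^1\sech(\sbar+tu)\,dt$. A Taylor expansion of $\sech$ about $\sbar$ combined with a standard second-variation comparison (which controls the excess of this length over $\dbold^g_p(q)$ by $O(r^3)$, since the $g$-geodesic equation at $p$ forces only $O(r^2)$ deviation of the geodesic from the straight segment) gives
\[
\dbold^g_p(q)=\sech\sbar\cdot r\cdot\bigl[1-\tfrac12\tanh\sbar\cdot u\bigr]+O(r^3).
\]
Substituting and expanding the logarithm yields $f(q)=-\log\sech\sbar+\tfrac12\tanh\sbar\cdot(\sss-\sbar)+o(r)$ as $q\to p$.

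Since $f$ is smooth at $p$, uniqueness of its first-order Taylor expansion at $p$ forces $f(p)=-\log\sech\sbar$, $\partial_\sss f(p)=\tfrac12\tanh\sbar$ and $\partial_\theta f(p)=0$, yielding the two asserted identities. I expect the $O(r^3)$ length comparison to be the only step requiring any attention, but this is a routine variational estimate and should not constitute the real obstacle.
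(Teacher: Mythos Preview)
Your proof is correct and follows essentially the same route as the paper. The only difference in execution is that the paper restricts to points $q=(\sss,0)$ on the meridian through $p$, where the meridian itself is the $g$-geodesic and hence $\dbold^g_p(q)=\bigl|\int_{\sbar}^{\sss}\sech t\,dt\bigr|$ exactly---this sidesteps precisely the $O(r^3)$ length comparison you flag, while the vanishing of the $d\theta$-component of $d_pf$ is then left implicit from the invariance of both Green's functions under the reflection $\theta\mapsto-\theta$ fixing $p$.
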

\begin{proof}
Let $q = (\sss, 0)$, where $\sss$ is close to $\sbar$.  For convenience, in this proof denote $r = \dbold^\chi_{p}(q)$.  Recalling \eqref{Echig2}, we have
\begin{multline*} 
\dbold^g_{p}(q) = \left| \int_{\sbar}^{\sss} \sech t dt\right| 
= \left| \sech \sbar (\sss- \sbar) + \frac{1}{2}(\sss - \sbar)^2 ( - \tanh \sbar \sech\sbar) + O\left( \sech\sbar(\sss-\sbar)^3\right)\right|\\
=(  \sech \sbar )r \left( 1- \frac{1}{2}(\sss- \sbar)\tanh \sbar + O((\sss - \sbar)^2)\right).
\end{multline*} 

Consequently, recalling from \ref{LGp}.(i) that $G^{\Sph^2}(t) = \log t + O(t^2 |\log t|)$ for small positive $t$, we have
\begin{align*}
G^{\Sph^2}\circ \dbold^g_{p} (q) = \log \sech \sbar + \log r - \frac{1}{2} (\sss-\sbar)\tanh\sbar + O(r^2| \log r|).
\end{align*}

By Lemma \ref{Lgreen}.(ii),  $G_{p}(q)= \log r + O(r^2|\log r|)$ and hence
\begin{align}
\label{Ediffexpansion}
\left( G_{p} - G^{\Sph^2}\circ \dbold^g_{p} \right)(q) =- \log \sech \sbar + \frac{1}{2}(\sss-\sbar)\tanh\sbar + O(r^2 |\log r|).
\end{align}
The conclusion then easily follows from \eqref{Ediffexpansion}.
\end{proof}


\section{Rotationally invariant solutions}
\label{S:RLD}

\subsection*{Basic facts and notation} 
$\phantom{ab}$
\nopagebreak

We will estimate our LD solutions by comparing with rotationally invariant solutions.  It will therefore be useful to define a class of rotationally invariant solutions of the linearized equation.  We begin with some notation.

\begin{definition}
\label{dsigma}
Let $\R^\N = \left\{ (a_i)_{i\in \N}: a_i \in \R\right\}$ and $\R_+^\N = \left\{(a_i)_{i\in \N}: a_i\in \R, a_i> 0\right\}$.  

For any $k\in \N$, we identify $\R^{k}$ with a subspace of $\R^\N$ by the map
\[ 
(a_1, \dots, a_{k}) \mapsto (a_1, \dots, a_{k}, 0, 0, \dots).
\]
We consider the normed space $\left(\ell^1(\R^{\N}), |\cdot |_{\ell^1}\right)$ defined by
\begin{align*}
\ell^1(\R^\N) = \left\{ \abold = (a_i)_{i\in \N} \in \R^\N : \sum_{i=1}^{\infty} |a_i|< \infty \right \}, \quad
|\abold |_{\ell^1} = \sum_{i=1}^{\infty} |a_i|.
\end{align*}

\end{definition} 
\begin{remark}
\label{rsigbij}
If $\bsigma = (\sigma_i)_{i\in \N}\in \ell^{1}\left( \R^{\N}\right)$ and $\xibold = (\xi_i)_{i\in \N} \in \ell^{\infty}\left( \R^{\N} \right)$ satisfies $\left| \xibold \right|_{\ell^\infty} < \frac{1}{10}$ and satisfy
\begin{align*}
e^{\sigma_i} = \frac{F_{i+1+} + F_{i+1-}}{F_{i+} + F_{i-}}, \qquad \xi_i = \frac{F_{i+} - F_{i-}}{F_{i+}+F_{i-}}\quad i\in \N,
\end{align*}
for some positive numbers $F_{i\pm}$, $i\in \N$, then note that for any $1 \leq j \le i < \infty$, 
\begin{align*}
F_{i+} = \frac{1  + \xi_i }{1   + \xi_j} \left( e^{\sum_{l=j}^{i-1}\sigma_l} \right) F_{j+} 
&= \frac{1  + \xi_i }{1   - \xi_j} \left( e^{\sum_{l=j}^{i-1}\sigma_l} \right) F_{j-},
\\ 
F_{i-} = \frac{1  - \xi_i }{1   + \xi_j} \left( e^{\sum_{l=j}^{i-1}\sigma_l} \right) F_{j+} 
&= \frac{1  - \xi_i }{1   - \xi_j} \left( e^{\sum_{l=j}^{i-1}\sigma_l} \right) F_{j-},
\end{align*}
and therefore 
\begin{align*}
\sup\{ F_{i\pm} \}_{i\in \N}  \sim_{ \exp{(|\bsigma|_{\ell^1}+3|\xibold|_{\ell^\infty})}} \inf\{ F_{i\pm} \}_{i\in \N}  .
\end{align*}
\end{remark}

\subsection*{RLD solutions and the scale invariant flux}
$\phantom{ab}$
\nopagebreak

\begin{definition}[Scale invariant flux]
\label{dF}
If  $\Omega = \{\sss \in (a, b)\} \subset \cyl$, $\phi \in C^0_\sss(\Omega)$, and  $\phi$ is piecewise smooth and positive on $\Omega$, 
we define $F^\phi_{\pm}:(a, b)\rightarrow \R$ by (recall \ref{Npartial}) 
\[ F^\phi_{\pm}(\sss) = \frac{\partial_{\pm} \phi(\sss)}{\phi(\sss)}. \] 
\end{definition} 

\begin{remark}
\label{rFlux}
Note that $F^\phi_\pm = F^{c\phi}_\pm$  $\forall c\in \R_+$.  Also, if $\phi$ is $C^1$ at $\sss=\sbar$, then $F^\phi_+(\sbar) = - F^\phi_-(\sbar)$.  \end{remark}

\begin{definition}[RLD solutions]
\label{RL}
We say $\phi\in C^0_{|\sss|} \left( \cyl\right)$ is a \emph{rotationally invariant linearized doubling (RLD) solution} if (recall \ref{Dlpar}) 
\begin{enumerate}[label=(\roman*).]
\item  $\phi >0$. 
\item  There is $k\in \N$ and $\sbold^\phi \in \R^k_+$ as in \ref{Dlpar} 
such that 
$\phi \in C^\infty_{|\sss|} ( \cyl^{\sbold^\phi} )$ 
and $\Lcal_\chi \phi =0$ on $\cyl^{\sbold^\phi}$.  
\item  For $i=1, \dots, k$, $F^\phi_-(\sss^\phi_i)> 0$ and $F^\phi_+(\sss^\phi_i)> 0$. 
\end{enumerate}

We call $\sbold^\phi$ the \emph{jump latitudes} of $\phi$ and $L_{par}[\sbold^\phi]$ the \emph{configuration of $\phi$}.  
If $\phi(0) = 1$, we say $\phi$ is a \emph{unit RLD solution}.  
If $\phi$ is extendible to  $C^{\infty}_{|\sss|} \left(\Spheq \setminus L_{par}[\sbold]\right)$ 
we say $\phi$ is \emph{smooth at the poles}.  
\end{definition}

If $\sss^\phi_i$ is a jump latitude of $\phi$, note that \ref{RL}.(iii) implies $\partial\phi$ is not defined at $\Lpar[\sss_i^\phi]$.  
Thus, the jump latitudes of $\phi$ and their number are uniquely determined by $\phi$. 

\begin{definition}[Quantities associated to RLD solutions]
\label{RLquant}
Let $\phi$ be an RLD solution.  
We define
\begin{multline}
\label{dFlist}
\Fboldunder^\phi := \left( F^\phi_{1-}, F^\phi_{1+}, F^\phi_{2-}, \dots, F^\phi_{k+} \right) \in \R^{2k}_+,
\\
\Fbold^\phi := ( F^\phi_i)_{i=1}^k  \in \R^{k}_+,
\quad \bsigma^\phi := (\sigma^\phi_i)_{i=1}^{k-1} \in \R^{k-1}, 
\quad \xibold^\phi := \left( \xi^\phi_i\right)_{i=1}^k \in \R^k,  
\end{multline}
where for $i=1, \dots, k$  and $j=1, \dots, k-1$,
\begin{equation} 
\label{Exi}
F^\phi_{i\pm} := F^\phi_\pm (\sss_i), \quad 
F^\phi_i := F^\phi_+(\sss_i)+F^\phi_{-}(\sss_i), \quad 
e^{\sigma^\phi_j} = \frac{F^\phi_{j+1}}{F^\phi_j}, \quad 
\xi^\phi_i = \frac{F^\phi_{i+} - F^\phi_{i-}}{F^\phi_{i+} + F^\phi_{i-}}. 
\end{equation} 
We define $\bsigmaunder^\phi: = (\bsigma^\phi, \xibold^\phi) \in \R^{k-1}\times \R^k$ and call the entries of $\bsigmaunder^\phi$ the \emph{flux ratios} of $\phi$.   
If $\bsigmaunder^\phi =  \textbf{0}$ we call $\phi$ \emph{balanced}.        
Finally we define
\begin{align*}
F^\phi_{avg} : = \frac{1}{2k}\left|  \Fbold^\phi\right|_{\ell^1} =  \frac{1}{2k}\left| \Fboldunder^\phi\right|_{\ell^1}.
\end{align*}
\end{definition}

\begin{remark}
\label{rmk: ratios unique}
Using \eqref{Exi} (see also Remark \ref{rsigbij}), we recover $\Fboldunder^\phi$ from $F^\phi_{1}$ and $\bsigmaunder^\phi$: 
\begin{align}
\label{EFxi}
F^\phi_{1\pm} = \frac{1}{2}(1\pm \xi^\phi_1)F^\phi_{1} , 
\qquad F^\phi_{i\pm} =  \frac{1}{2}(1\pm \xi^\phi_i) \left( e^{\sum_{l=1}^{i-1} \sigma^\phi_l} \right)  F^\phi_{1}, 
\qquad i>1.
\end{align}
In Proposition \ref{existence} we construct RLD solutions $\phi$ by prescribing $F^\phi_{1-}$ and $\bsigmaunder^\phi$.
\end{remark}

\subsection*{Existence, uniqueness, and estimates}
$\phantom{ab}$
\nopagebreak

If $\phi$ is an RLD solution, it is immediate from \eqref{ELchirot} that $\partial \phi$ (recall \ref{Npartial}) is decreasing on any domain on which it is smooth.  The scale invariant flux $F^\phi_\pm$ enjoys a similar monotonicity:
\begin{lemma}[Flux monotonicity]
\label{LFmono}
Suppose $\phi \in C^\infty_{\sss}\left(\{\sss \in (a, b)\}\right), \phi> 0$ and $\Lcal_\chi \phi =0$.  For $\sss \in (a, b)$,  
\begin{align} \label{F ODE}
\frac{d F^{\phi}_-}{d\sss}(\sss) = 2\sech^2 \sss + \left(F^\phi_-(\sss)\right)^2>0.
\end{align}

\end{lemma}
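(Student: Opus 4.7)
The plan is a direct computation using the ODE $\Lcal_\chi \phi = 0$. Since $\phi \in C^\infty_\sss(\{\sss \in (a,b)\})$, the one-sided derivative $\partial_-$ reduces to the ordinary derivative up to sign: by \ref{Npartial}, $\partial_-\phi = -\partial \phi$, so
\[
F^\phi_-(\sss) \;=\; \frac{\partial_- \phi(\sss)}{\phi(\sss)} \;=\; -\frac{\partial \phi(\sss)}{\phi(\sss)}.
\]

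Next, I would differentiate this quotient using the standard quotient rule. Writing $f := F^\phi_-$, I get
\[
\frac{df}{d\sss} \;=\; -\frac{\partial^2 \phi}{\phi} \,+\, \frac{(\partial \phi)^2}{\phi^2}.
\]
The second term on the right is simply $f^2 = (F^\phi_-)^2$. For the first term, I substitute from \eqref{ELchirot}: since $\phi$ satisfies $\Lcal_\chi \phi = 0$ smoothly on $(a,b)$, we have $\partial^2 \phi + 2\sech^2\sss \cdot \phi = 0$, hence $-\partial^2\phi/\phi = 2\sech^2\sss$. Combining,
\[
\frac{dF^\phi_-}{d\sss}(\sss) \;=\; 2\sech^2\sss \,+\, \bigl(F^\phi_-(\sss)\bigr)^2,
\]
which is the stated identity. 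The strict positivity is then immediate because $\sech^2\sss > 0$ for all $\sss \in \R$.

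There is no real obstacle here; the lemma is essentially the classical Riccati identity satisfied by the logarithmic derivative of any solution to a second-order linear ODE of the form $u'' + V u = 0$, specialized to $V = 2\sech^2\sss$. The only care needed is to track the sign convention in \ref{Npartial}, which flips $\partial_-$ relative to $\partial$; this sign conveniently cancels when squaring, so the same Riccati identity would hold for $F^\phi_+$ as well. No positivity hypothesis on $\phi$ beyond $\phi > 0$ (to make the quotient defined) is used.
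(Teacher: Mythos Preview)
Your proof is correct and is essentially the same computation as the paper's: both derive the Riccati identity from $\Delta(\log\phi) = \Delta\phi/\phi - |\nabla\phi|^2/\phi^2$ together with the ODE $\Lcal_\chi\phi=0$. The paper packages this as an integral identity via the divergence theorem and then differentiates in $t$, whereas you differentiate the quotient directly; your route is slightly more economical but the content is identical.
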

\begin{proof}
Let $t\in (a, b)$ and $\Omega = \{ \sss \in (a, t)\}$.  By the divergence theorem and that $\Lcal_\chi \phi =0$, 
\begin{align*}
\int_{\partial \Omega}\left\langle \frac{\nabla \phi}{\phi}, \eta \right\rangle  = \int_\Omega \Delta\left( \log \phi \right) =\int_\Omega \left( \frac{\Delta \phi}{\phi}- \frac{|\nabla \phi|^2}{\phi^2}\right)= -\int_{\Omega}\left( 2\sech^2\sss  +  \frac{|\nabla \phi|^2}{\phi^2}\right).
\end{align*}
Using that $\eta = \frac{\partial}{\partial \sss}$ when $\sss = t$ and differentiating the above with respect to $t$ yields the lemma.  
\end{proof}
\begin{lemma}
\label{Lintid}
Suppose $\phi \in C^\infty_{\sss}\left(\{\sss \in [a, b]\}\right)$, $\phi>0$, and $\Lcal_\chi \phi = 0$. Then
\[ F^\phi_-(b) + F^\phi_+(a) = 2\left( \tanh b - \tanh a\right) + \int_{a}^b \left( F^\phi_-(\sss)\right)^2d\sss.
\]
\end{lemma}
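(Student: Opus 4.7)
The plan is to derive the identity directly by integrating the Riccati-type ODE established in Lemma \ref{LFmono} over the interval $[a,b]$ and then reconciling the boundary terms using that $\phi$ is smooth at the endpoint $a$.

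Concretely, since $\phi \in C^\infty_\sss(\{\sss \in [a,b]\})$ and $\Lcal_\chi \phi = 0$ with $\phi > 0$, the hypotheses of Lemma \ref{LFmono} are satisfied on $(a,b)$, giving
\begin{equation*}
\frac{dF^\phi_-}{d\sss}(\sss) \;=\; 2\sech^2\sss + \bigl(F^\phi_-(\sss)\bigr)^2.
\end{equation*}
I would then integrate this identity from $\sss=a$ to $\sss=b$. Since $\tanh$ is an antiderivative of $\sech^2$, the first term on the right integrates to $2(\tanh b - \tanh a)$, so I obtain
\begin{equation*}
F^\phi_-(b) - F^\phi_-(a) \;=\; 2(\tanh b - \tanh a) + \int_a^b \bigl(F^\phi_-(\sss)\bigr)^2 \, d\sss.
\end{equation*}

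The final step is to replace $-F^\phi_-(a)$ with $F^\phi_+(a)$, which is justified by Remark \ref{rFlux}: because $\phi$ is smooth (in particular $C^1$) at $\sss=a$, Notation \ref{Npartial} gives $\partial_-\phi(a) = -\partial_+\phi(a)$, whence $F^\phi_-(a) = -F^\phi_+(a)$. Substituting this into the previous display yields the stated identity.

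There is essentially no obstacle here: the lemma is a direct consequence of Lemma \ref{LFmono} together with the $C^1$ matching of one-sided derivatives at a smooth point, and the entire argument is a single integration. The only thing one must be careful about is the sign convention for $\partial_-$ in \ref{Npartial}, which makes $F^\phi_+ + F^\phi_-$ behave like a ``jump'' quantity rather than vanishing at smooth points; the identity as stated is exactly the integrated form of the flux monotonicity equation once this sign is tracked correctly.
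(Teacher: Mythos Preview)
Your proof is correct and takes essentially the same approach as the paper, which simply says the identity follows directly by integrating the Riccati equation \eqref{F ODE} of Lemma \ref{LFmono} over $(a,b)$. You have correctly spelled out the details, including the sign convention from Notation \ref{Npartial} and Remark \ref{rFlux} needed to convert $-F^\phi_-(a)$ to $F^\phi_+(a)$.
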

\begin{proof}
Follows directly from integrating \eqref{F ODE} on $(a, b)$. \end{proof}

To study RLD solutions on domains $\{\sss \in (\sss_i, \sss_{i+1})\}$ between successive jumps, we introduce the following auxiliary functions.   
\begin{definition}
\label{dHflux}
Given $\sbar \in (0, \infty)$ and $F> 0$, we define functions
\[ H^+= H^+\left[F; \sbar \right] \in C^{\infty}_\sss \left(\cyl\right),\quad  H^-= H^-\left[F; \sbar\right] \in C^{\infty}_\sss\left(\cyl\right)\] 
by requesting that they satisfy the equations $\Lcal_\chi H^+ = 0$, $\Lcal_\chi H^- =0$ with initial data 
\begin{align*}
H^+(\sbar) = 1, \quad  \quad F^{H^+}_+(\sbar)=F, 
\qquad \qquad 
H^{-}(\sbar) = 1, \quad  \quad F^{H^-}_-(\sbar)=F.
\end{align*}
\end{definition}
\begin{remark}
\label{rH}
By straightforward computations (recall Lemma \ref{Lphie}), 
$H^+[F;\sbar]=
H^-[-F;\sbar]$ and 
$$
H^\pm(\sss) = \left( F_+^{\phio}(\sbar) \mp F\right) \phio(\sbar) \, \phie(\sss) 
+ 
\left( - F_+^{\phie}(\sbar) \pm F\right) \phie(\sbar) \, \phio(\sss). 
$$
Note also that when $\sss \geq 0$, $H^+[F; \sssunder](\sss)  = \phiunder[1, F; \sssunder](\sss)$ (recall \ref{dauxode}).
\end{remark}

\begin{lemma}
\label{LHmono} For any $\sss>\sbar$, $H^+ = H^+[F; \sbar]$ satisfies 
\begin{enumerate}[label=(\roman*).]
\item  $\frac{\partial F^{H^+}_+}{\partial \sbar}(\sss)=\left(2\sech^2 \sbar+F^2\right)\left(\frac{H^+(\sbar)}{H^+(\sss)}\right)^2>0$.  

\item $\frac{\partial F^{H^+}_{+}}{\partial F}(\sss) =\left(\frac{H^+(\sbar)}{H^+(\sss)}\right)^2>0$.
\end{enumerate}
\end{lemma}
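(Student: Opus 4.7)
The plan is to observe that both parts of the lemma reduce to a standard Wronskian computation, since $H^+$ and its parameter derivatives all solve the same Jacobi equation $\Lcal_\chi u = 0$, which contains no first-order term and therefore forces the Wronskian of any two solutions to be constant in $\sss$.

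First I would set up the formulas uniformly. Write $F_+^{H^+}(\sss) = \partial H^+(\sss)/H^+(\sss)$. For $X$ standing for either parameter $F$ or $\sbar$, and letting $u := \partial H^+/\partial X$, a direct differentiation gives
\begin{equation*}
\frac{\partial F^{H^+}_+}{\partial X}(\sss) \;=\; \frac{u'(\sss)\, H^+(\sss) - (H^+)'(\sss)\, u(\sss)}{\bigl(H^+(\sss)\bigr)^2} \;=\; \frac{W[H^+,u](\sss)}{\bigl(H^+(\sss)\bigr)^2}.
\end{equation*}
Since $\Lcal_\chi H^+ = 0$ and the defining equation does not depend on $F$ or $\sbar$, the function $u$ also satisfies $\Lcal_\chi u = 0$ (with appropriate initial data at $\sss=\sbar$ obtained by implicit differentiation of the defining conditions). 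Because $\Lcal_\chi$ has no first-derivative term, $W[H^+,u]$ is constant in $\sss$, so it suffices to evaluate it at $\sss=\sbar$.

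For part (ii), $v := \partial H^+/\partial F$ satisfies $v(\sbar)=0$ and $v'(\sbar)=1$ (differentiate $H^+(\sbar;\sbar,F)=1$ and $(H^+)'(\sbar;\sbar,F)=F$ in $F$). Thus $W[H^+,v](\sbar) = 1\cdot 1 - F\cdot 0 = 1$. Since $H^+(\sbar)=1$, the factor $1 = (H^+(\sbar))^2$ can be inserted to rewrite the answer as $\bigl(H^+(\sbar)/H^+(\sss)\bigr)^2$, matching the claim. For part (i), let $w := \partial H^+/\partial \sbar$. Differentiating the identities $H^+(\sbar;\sbar,F)=1$ and $(H^+)'(\sbar;\sbar,F)=F$ in $\sbar$ using the chain rule yields $w(\sbar) = -(H^+)'(\sbar) = -F$ and $w'(\sbar) = -(H^+)''(\sbar) = 2\sech^2\sbar\cdot H^+(\sbar) = 2\sech^2\sbar$, where the last step uses $(H^+)'' = -2\sech^2\sss\cdot H^+$ coming from $\Lcal_\chi H^+=0$. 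Hence
\begin{equation*}
W[H^+,w](\sbar) \;=\; 1\cdot 2\sech^2\sbar \;-\; F\cdot(-F) \;=\; 2\sech^2\sbar + F^2,
\end{equation*}
which again via $H^+(\sbar)=1$ yields the stated formula. Positivity in both cases is manifest from the explicit expressions.

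There is no serious obstacle here; the only subtlety is keeping the arguments straight when taking parameter derivatives of $H^+(\sss;\sbar,F)$ evaluated at $\sss=\sbar$, so that the initial data for $u$ are computed correctly. The hardest book-keeping step is thus the implicit differentiation used to extract $w(\sbar)$ and $w'(\sbar)$ in part (i); everything else is the standard fact that Wronskians of solutions to $y''+q(\sss)y=0$ are $\sss$-independent.
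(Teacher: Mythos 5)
Your proof is correct. However, it takes a different (though closely related) route from the paper. The paper works entirely at the level of the flux $F^{H^+}_+$: it starts from the Riccati equation $\frac{d F^{H}_+}{d\sss} + (F^{H}_+)^2 = -2\sech^2\sss$ (which follows from Lemma \ref{LFmono}), differentiates it with respect to the parameter ($\sbar$ or $F$) to obtain the first-order linear ODE $\partial_\sss\bigl(\partial_X F^H_+\bigr) + 2 F^H_+ \bigl(\partial_X F^H_+\bigr)=0$, and then solves it via the integrating factor $H^2$, which yields $\partial_X F^H_+ (\sss) = \partial_X F^H_+(\sbar)\,\bigl(H(\sbar)/H(\sss)\bigr)^2$; the value at $\sss=\sbar$ is then extracted by implicit differentiation of the boundary condition $F^{H[F;\sbar]}_+(\sbar)=F$. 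You instead work at the level of the solution itself: you observe that $\partial F^{H^+}_+/\partial X = W[H^+,\partial_X H^+]/(H^+)^2$ and that, because $\partial_X H^+$ solves the same Jacobi equation (which has no first-order term), the Wronskian is constant in $\sss$, so it can be evaluated at $\sss=\sbar$ from the implicitly differentiated initial data. The two arguments are in fact algebraically equivalent --- the quantity $\bigl(\partial_X F^H_+\bigr) H^2$ that the paper shows is $\sss$-independent is precisely your Wronskian $W[H^+,\partial_X H^+]$ --- but your formulation avoids invoking the Riccati equation and makes the underlying conservation law transparent, at the minor cost of having to note that $\partial_X H^+$ is again a solution (a standard smooth-dependence fact). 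The paper's route is more uniform with its systematic use of the dimensionless flux throughout Section \ref{S:RLD}.
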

\begin{proof}
Applying Lemma \ref{LFmono}  to $H^+[F; \sbar]$ (which we denote below by $H$ for ease of notation) yields
\begin{align}
\label{EHmono}
\frac{d F^{H}_+}{d\sss}(\sss)  + \left(F^H_+(\sss)\right)^2 = - 2 \sech^2 \sss.
\end{align}
Differentiating \eqref{EHmono} with respect to $\sbar$ and switching the order of differentiation gives
\begin{align*}
\frac{\partial}{\partial \sss}\left( \frac{\partial F_+^H}{\partial \sbar}\right)+2  \left(\frac{\partial F_+^H}{\partial \sbar}\right)F_+^H= 0.
\end{align*}
After multiplying through by the integrating factor $H^2$, this is equivalent to 
\begin{align*}
\frac{\partial}{\partial \sss}\left(  \frac{\partial F_+^H}{\partial \sbar} H^2\right) = 0,
\end{align*}
from which we conclude
\[ \frac{\partial F_+^H}{\partial \sbar}(\sss) =\frac{\partial F^H_+}{\partial \sbar}(\sbar)\left(\frac{H(\sbar)}{H(\sss)}\right)^2.\]  
Finally, differentiating both sides of the equation $F^{H^+[F; \sbar]}_+(\sbar) = F$ with respect to $\sbar$ yields 
\[ \frac{\partial F^H_+}{\partial \sbar}(\sbar) = - \frac{\partial F^H_+}{\partial \sss}(\sbar) = 2\sech^2 \sbar+ F^2,\]
where the last equality follows from \ref{LFmono}.  This completes the proof of (i). (ii) follows in an analogous way by differentiating \eqref{EHmono} with respect to $F$ and observing that $\frac{\partial F_+^H}{\partial F}(\sbar) = 1$.
\end{proof}

\begin{lemma}
\label{Lcoeffs}
Suppose 
$a>0$, 
$\Omega = \{ \sss\in (-a, a)\}$, 
$\sbold \in \R^j_+$ is as in \ref{Dlpar} with $\sss_j<a$, 
and 
$\phi$ is a function on $\Omega$ which satisfies $\phi \in C^\infty_{|\sss|}(\Omega^\sbold)$,  
$\phi(0) = 1, \phi>0$, and $\Lcal_\chi \phi = 0$ on $\Omega^\sbold$. 
Finally, suppose that 
$F^\phi_{\pm }(\sss_i) = F_{i\pm}$ for some positive numbers $F_{i\pm}$, $i\in \{1, \dots, j\}.$  
Then
\begin{equation}
\label{Erldcoeff}
\begin{gathered}
\phi=\left\{
\begin{gathered}
A_{0}\phie+B_{0}\phio
\quad\text{on}\quad
\{\sss \in [-\sss_1, \sss_1]\},
\\
A_{1}\phie+B_{1}\phio
\quad\text{on}\quad
\{\sss \in [\sss_1, \sss_2]\},
\\ \dots\\
A_{j}\phie + B_{j}\phio
\quad\text{on}\quad \{\sss\in [\sss_j, a)\},
\end{gathered}
\right., 
\end{gathered}
\end{equation}
where the coefficients $A_{i}, B_{i}$, satisfy the recursive equations 
\begin{equation}
\label{Ecoeffs}
\begin{aligned}
A_{0} = 1  \qquad A_{i} &= A_{i-1}- \phi(\sss_i)(F_{i+}+F_{i-})\phio(\sss_i),\\
B_{0}=0  \qquad B_{i} &= B_{i-1} + \phi(\sss_i)(F_{i+}+F_{i-})\phie(\sss_i) 
\end{aligned}
\quad ( 0< i \leq k) .  
\end{equation}
\end{lemma}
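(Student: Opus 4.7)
The plan is to show first that on each subinterval between consecutive jump latitudes the representation \eqref{Erldcoeff} exists and is unique, and then to derive the recursion by matching at each $\sss_i$. Since $\Lcal_\chi\phi=0$ on each of the intervals $[0,\sss_1],[\sss_1,\sss_2],\ldots,[\sss_j,a)$ and $\phi$ is smooth there, and since $\phie,\phio$ span the solution space of the second-order linear ODE $\Lcal_\chi u=0$ (they are linearly independent because their Wronskian $W[\phie,\phio]\equiv 1$ by Lemma \ref{Lphie}), there exist unique constants $A_i,B_i$ for $i=0,1,\ldots,j$ making \eqref{Erldcoeff} hold.

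To pin down $A_0,B_0$, I would use the initial conditions at $\sss=0$. Because $\phi\in C^\infty_{|\sss|}(\Omega^\sbold)$ and $0$ is not a jump latitude, $\phi$ is smooth and even near $0$, so $\phi(0)=1$ and $\partial\phi(0)=0$. Evaluating $A_0\phie+B_0\phio$ and its derivative at $\sss=0$ using $\phie(0)=1$, $\phio(0)=0$, $\partial\phie(0)=0$, $\partial\phio(0)=1$ from Definition \ref{dphie} immediately gives $A_0=1$ and $B_0=0$.

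For the recursion, at each jump latitude $\sss_i$ I would read off two matching conditions: continuity of $\phi$ (from $\phi\in C^0_{|\sss|}$) and the prescribed jump in the one-sided derivatives. Continuity gives
\[
(A_i-A_{i-1})\,\phie(\sss_i)+(B_i-B_{i-1})\,\phio(\sss_i)=0.
\]
For the derivative jump, note that from the right $\left.\partial\phi/\partial\sss\right|_{\sss_i+}=\partial_+\phi(\sss_i)=F_{i+}\phi(\sss_i)$ while from the left $\left.\partial\phi/\partial\sss\right|_{\sss_i-}=-\partial_-\phi(\sss_i)=-F_{i-}\phi(\sss_i)$ (recall Notation \ref{Npartial} and Definition \ref{dF}). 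Subtracting the left-hand expansion from the right-hand expansion at $\sss_i$ yields
\[
(A_i-A_{i-1})\,\partial\phie(\sss_i)+(B_i-B_{i-1})\,\partial\phio(\sss_i)=(F_{i+}+F_{i-})\,\phi(\sss_i).
\]

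Solving this $2\times 2$ linear system for $A_i-A_{i-1}$ and $B_i-B_{i-1}$ by Cramer's rule, and invoking $W[\phie,\phio](\sss_i)=1$ from Lemma \ref{Lphie} to clear the determinant in the denominator, yields exactly the formulas in \eqref{Ecoeffs}. Nothing in this argument is delicate; the only place where care is required is keeping the signs straight in the two-sided derivative convention of \ref{Npartial} and ensuring that the base point $\sss=0$ lies strictly to the left of $\sss_1$ (which holds since $\sbold\in\R_+^k$ as in Definition \ref{Dlpar}), so that $\phi$ is genuinely smooth across $0$ and the even-symmetry boundary condition $\partial\phi(0)=0$ can be used.
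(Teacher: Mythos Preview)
Your proof is correct. Both you and the paper identify $A_0=1$, $B_0=0$ from the initial data at $\sss=0$, and both derive the recursion by comparing the expansions on either side of each $\sss_i$. The difference is in packaging: the paper routes the computation through the auxiliary functions $H^\pm[F;\sbar]$ of Definition \ref{dHflux}, writing $\phi=\phi(\sss_i)H^-[F_{i-};\sss_i]$ on the left interval and $\phi=\phi(\sss_i)H^+[F_{i+};\sss_i]$ on the right, then expanding each via the explicit formula in Remark \ref{rH} and subtracting. You instead set up the continuity and derivative-jump conditions directly as a $2\times2$ linear system in $A_i-A_{i-1}$ and $B_i-B_{i-1}$ and solve by Cramer's rule using $W[\phie,\phio]\equiv1$. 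Your route is slightly more self-contained; the paper's route has the advantage of exercising the $H^\pm$ machinery, which is reused immediately afterward in Lemma \ref{LHmono} and the proof of Proposition \ref{existence}.
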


\begin{proof}
Clearly $A_0 =1$ and $B_0 = 0$, since $\phi(0) =1$ and $\phi$ extends evenly across $\sss= 0$.  Now fix $i\in \{1, \dots, j\}$.  By Remark \ref{rH},  $\phi = \phi(\sss_i)H^-\left[ F_{i-}; \sss_i\right]$ on $\{ \sss\in [\sss_{i-1}, \sss_i]\}$ and $\phi = \phi(\sss_i)H^+\left[ F_{i+}; \sss_i\right]$ on $\{\sss \in [\sss_{i} , \sss_{i+1}]\}$ and explicitly,
\begin{align*}
\phi(\sss_i)H^+(\sss) &= \left( F_+^{\phio}(\sss_i)- F_{i+}\right)\phi(\sss_i) \phio(\sss_i)\phie(\sss)+\left( - F_+^{\phie}(\sss_i)+F_{i+}\right)\phi(\sss_i) \phie(\sss_i)\phio(\sss)\\
\phi(\sss_i)H^-(\sss) &= \left( F_+^{\phio}(\sss_i)+ F_{i-}\right)\phi(\sss_i) \phio(\sss_i)\phie(\sss)-\left(  F_+^{\phie}(\sss_i)+F_{i-}\right)\phi(\sss_i) \phie(\sss_i)\phio(\sss).
\end{align*}
Subtracting the second of these equations from the first yields \eqref{Ecoeffs}.
\end{proof}

\begin{prop}[Existence and uniqueness of RLD solutions]
\label{existence}
Given $\sss_1 \in (0, \sss_{root})$ and 
$$ 
\bsigmaunder = (\bsigma, \xibold)  = 
\left( \, (\sigma_i )_{i=1}^{\infty} \, , \, (\xi_j)_{j=1}^\infty \, \right)\in \ell^1\left(\R^\N\right) \oplus \ell^{\infty}\left( \R^\N\right)  
$$
satisfying $|\xibold |_{\ell^\infty} < \frac{1}{10}$,   
there is a unique unit RLD solution 
$\phat = \phat[\sss_1; \bsigmaunder]$ 
satisfying the following. 
\\
(a). 
$\sss_1^\phat=\sss_1$.   
\\
(b). 
$\bsigmaunder^\phat=\left. \bsigmaunder \right|_k$ where 
$k= k[\sss_1; \bsigmaunder]\in \N$ is the number of jump latitudes of $\phat$ (recall \ref{RL})  
and  
$\left. \bsigmaunder \right|_k := \left( \, (\sigma_i )_{i=1}^{k-1} \, , \, (\xi_j)_{j=1}^k \, \right)\in \R^{k-1}\times\R^k$. 

Moreover the following hold.
\begin{enumerate}[label=(\roman*).]
\item  $k[\sss_1; \bsigmaunder]$ is a nonincreasing function of $\sss_1$.  
Further,
for each $\bsigmaunder$ as above 
there exists a decreasing sequence $\{a_{0, \bsigmaunder}:= \sss_{root}, a_{1, \bsigmaunder}, \dots\}$  
such that $k[\sss_1; \bsigmaunder] = k$ if and only if $\sss_1 \in [a_{k, \bsigmaunder}, a_{k-1, \bsigmaunder})$. 
Moreover each $a_{k,\bsigmaunder}$ depends only on 
$\left. \bsigmaunder \right|_k $ (defined as above). 

\item  
$\sss_2^\phat, \dots, \sss_k^\phat$ are increasing smooth functions of $\sss_1$ for fixed $\bsigmaunder$.  

\item
$\phat[a; \bsigmaunder]$ is smooth at the poles if and only if $a = a_{k, \bsigmaunder}$ for some $k\geq 1$. 

\item  The restriction of $\phat[\sss_1; \bsigmaunder]$ on any compact subset of $[0, \infty)$  depends continuously on $\sss_1$ and $\bsigmaunder$.  
\end{enumerate}
\end{prop}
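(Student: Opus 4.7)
The plan is to construct $\phat$ by a recursion on jump latitudes, using the flux monotonicity of Lemma \ref{LFmono} to select each new jump, then show the recursion terminates at some finite $k$, and finally read off the structural properties (i)--(iv) from the construction.

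Normalizing $\phat(0)=1$ and imposing evenness forces $\phat = \phie$ on $[-\sss_1, \sss_1]$ (the unique even solution of $\Lcal_\chi\phi=0$ with $\phi(0)=1$); since $\sss_1 < \sssroot$ this is positive and yields $F^\phat_-(\sss_1) = -\partial\phie(\sss_1)/\phie(\sss_1) > 0$. Inductively, at step $i$ the value $F_{i+}$ is forced from $F_{i-}$ and $\xi_i$ by $F_{i+} = F_{i-}(1+\xi_i)/(1-\xi_i)$, which is well-defined and positive thanks to $|\xi_i|<1/10$; the extension of $\phat$ past $\sss_i$ must then be $\phat(\sss_i)\, H^+[F_{i+};\sss_i]$. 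Writing this extension as $A_i\phie + B_i\phio$, a Wronskian computation gives $A_i = \phat(\sss_i)\phio(\sss_i)(F^\phio_+(\sss_i) - F_{i+})$, and a direct phase-plane analysis using the fact that $-\phie/\phio$ is strictly increasing on $(0,\infty)$ shows $\phat$ has a zero past $\sss_i$ iff $A_i > 0$. Two alternatives arise: if $A_i > 0$, letting $\sss_\ast > \sss_i$ be that zero, Lemma \ref{LFmono} shows $F^\phat_-$ increases strictly from $-F_{i+}$ to $+\infty$ on $(\sss_i, \sss_\ast)$, so there is a unique $\sss_{i+1} \in (\sss_i, \sss_\ast)$ with $F^\phat_-(\sss_{i+1}) = F_{(i+1)-} := \tfrac12(1-\xi_{i+1}) e^{\sigma_i}(F_{i-}+F_{i+})$, and we iterate. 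If $A_i \le 0$, the extension stays positive on $[\sss_i, \infty)$ and $F^\phat_-$ remains strictly negative throughout (approaching $0^-$), so no further positive flux can be attained and we terminate with $k=i$. This gives existence and uniqueness at each step.

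Termination is the first nontrivial point. Since $\bsigma\in\ell^1$ and $|\xibold|_{\ell^\infty}<1/10$, the $F_{i\pm}$ are bounded above and below by constants depending only on $F^\phat_1$ and $|\bsigma|_{\ell^1}$ (cf.\ Remark \ref{rsigbij}). Applied on $[\sss_i, \sss_{i+1}]$, Lemma \ref{Lintid} reads
\[
F_{(i+1)-} + F_{i+} = 2(\tanh\sss_{i+1}-\tanh\sss_i) + \int_{\sss_i}^{\sss_{i+1}} \bigl(F^\phat_-(\sss)\bigr)^2\, d\sss,
\]
and since $|F^\phat_-|$ is bounded on this interval by $\max(F_{i+}, F_{(i+1)-})$, this forces a uniform positive lower bound on $\sss_{i+1}-\sss_i$. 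Hence if the iteration continued indefinitely, $\sss_i \to \infty$; but $F^\phio_+(\sss_i) = \sech^2\sss_i/\tanh\sss_i \to 0$ while $F_{i+}$ stays bounded below, so eventually $A_i < 0$, contradicting continuation.

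For (i)--(iv), define $a_{k, \bsigmaunder}$ to be the unique $\sss_1$ for which the $k$-step construction ends with $A_k = 0$; this is precisely the condition $\phat = B_k\phio$ on $[\sss_k, \infty)$, i.e.\ smoothness at the poles, giving (iii). The monotonicity statements (i) and (ii) follow from Lemma \ref{LHmono}, which asserts that $F^{H^+}_+$ is strictly increasing in both $\sbar$ and $F$; applied step-by-step via the implicit function theorem at each jump, it propagates $\partial\sss_{i+1}/\partial\sss_1 > 0$ and strict monotonicity of $A_k$ in $\sss_1$ through the recursion, yielding the smooth dependence of (ii) and the nested interval structure of (i). Continuous dependence (iv) on both $\sss_1$ and $\bsigmaunder$ is then standard for the pieces between jumps, with continuity across the critical values $a_{k, \bsigmaunder}$ following from the observation that as $\sss_1 \to a_{k,\bsigmaunder}^-$ the newly-appearing jump $\sss_{k+1}$ escapes to $+\infty$, so no discontinuity is felt on any fixed compact subset. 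The chief obstacle in this last part is the clean propagation of the one-jump monotonicity of Lemma \ref{LHmono} through the recursion, since the intermediate $\sss_2, \ldots, \sss_i$ themselves depend on $\sss_1$; once this is handled inductively, the rest is routine.
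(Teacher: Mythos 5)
Your proof follows essentially the same approach as the paper: evenness forces $\phat = \phie$ on $[-\sss_1,\sss_1]$; the extension at each jump latitude is determined recursively through Lemma \ref{Lcoeffs} and the prescribed flux ratios together with the flux monotonicity of Lemma \ref{LFmono}; termination combines the lower bound on $\sss_{i+1}-\sss_i$ implied by Lemma \ref{Lintid} with the decay $F^\phio_+ \to 0$; and the monotonicity of the $\sss_i$ in $\sss_1$ is propagated through the recursion via both parts of Lemma \ref{LHmono}. Your remark that as $\sss_1\nearrow a_{k,\bsigmaunder}$ the nascent jump $\sss_{k+1}$ escapes to $+\infty$ is a nice explicit account of why (iv) holds across the critical values, a point the paper treats more tersely.
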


\begin{proof}
We first prove that there is at most one unit RLD solution $\phihat$ satisfying (a) and (b). 
By the symmetries, it follows that $\phihat = \phie$ on $\{ \sss \in [-\sss_1,
\sss_1]\}$.  
Furthermore,
by Remark \ref{rsigbij} and Lemma \ref{Lcoeffs}, $\phihat$ has a unique local
extension beyond $\sss_1$.  
Next, the flux monotonicity---Lemma \ref{LFmono}---and the requirement 
in Definition \ref{RL}.(i) that $\phihat>0$, 
inductively determine all of the jump latitudes $\sbold$ uniquely
using \ref{rsigbij}.  
From this, $\phihat$ is determined uniquely by \ref{Lcoeffs}.

We next construct a family $\phat[\sss_1; \bsigmaunder]$ of RLD solutions, 
parametrized by $\sss_1$ or equivalently $F^{\phat}_1$, with flux ratios $\bsigmaunder$.  
By Lemma \ref{LFmono}, the restriction $\left. F^{\phie}_-\right|_{(0, \sss_{root})}: (0, \sss_{root})\rightarrow (0, \infty)$ is an orientation-preserving homeomorphism.  
Because any unit RLD solution coincides with $\phie$ on $\{\sss \in [0, \sss_1]\}$, 
it follows there is a 1-1 correspondence between choices of 
$\sss_1 \in (0,  a_{0, \bsigmaunder}:= \sss_{root})$ and $F_1 = F_1(\sss_1) : = \frac{2}{1- \xi_1} F^\phie_-(\sss_1)\in (0, \infty)$.  

Let $\sss_1 \in (0, \sss_{root})$.  
By Remark \ref{rsigbij} and Lemma \ref{Lcoeffs}, there is a unique extension $\phat[\sss_1; \bsigmaunder]$ of 
$\left. \phie\right|_{\{\sss\in (-\sss_1, \sss_1)\}}$ to a maximal domain $\{ \sss \in (-a, a) \}$ such that the hypotheses of Lemma \ref{Lcoeffs} hold, 
where the coefficients $F_{i\pm}$ (recall the notation of Lemma \ref{Lcoeffs}) 
are defined by (recall \eqref{EFxi})
\begin{align*}
F_{1\pm} := \frac{1}{2}(1\pm \xi_1)F_{1} , \qquad F_{i\pm} := \frac{1}{2} (1\pm \xi_i) \left( e^{\sum_{l=1}^{i-1} \sigma_l} \right) F_{1}, \quad i>1.
\end{align*}

To show that $\phat$ is an RLD solution, we must show that $a = \infty$ and $\phat$ has finitely many jump latitudes.  
By Remark \ref{rsigbij} and Lemma \ref{Lintid},
\begin{align*}
 2 F_1 & \sim_{ \exp{(|\bsigma|_{\ell^1}+3|\xibold|_{\ell^\infty})}} \left( F_{i+1-} + F_{i+}\right)\\
 & \sim_{ \exp{(|\bsigma|_{\ell^1}+3|\xibold|_{\ell^\infty})}} \left(2\left( \tanh \sss_{i+1} - \tanh \sss_i\right) + \int_{\sss_i}^{\sss_{i+1}} \left( F^{\phat}_-(\sss)\right)^2d\sss\right).
\end{align*}
This implies a lower bound on $\sss_{i+1} - \sss_i$ which is uniform in $i$.  Therefore $a = \infty$.  We next show there are finitely many jump latitudes by estimating an upper bound for $\sss_k$. 
Suppose $\phat$ has a jump at $\sss_{j+1}$. 
On $\{\sss\in (\sss_j, \sss_{j+1})\}$, $ \phat$ coincides with
\begin{multline}
\label{EHj}
 \phat(\sss_j) H^+\left[F_{j+}; \sss_{j}\right] = \left( F^\phio_+(\sss_j) - F_{j+}\right) \phat(\sss_j)\phio(\sss_j) \phie(\sss) \\
  +\left( - F^\phie_+(\sss_j) + F_{j+}\right) \phat(\sss_j)\phie(\sss_j) \phio(\sss).
  \end{multline}
Since $\phat$ has a jump at $\sss_{j+1}$, it follows that
$F^\phio_+ (\sss_j) > F_{j+}$.  
Since
 \[ F^\phio_+ (\sss)= \frac{\sech^2\sss}{\tanh \sss} \searrow 0 \quad \text{as} \quad \sss \rightarrow \infty,\] 
 and $F_{j+} \sim_{\exp{(|\bsigma|_{\ell^1}+3|\xibold|_{\ell^\infty})}} F^\phie_-(\sss_1)$, this implies an upper bound for $\sss_{k-1}$ depending only on $\sss_1$ and $\bsigmaunder$.  
 This establishes the existence of the family of RLD solutions $\phat[\sss_1; \bsigmaunder]$. 
 
We next prove (i).  From Definition \ref{dphie} and Lemma \ref{LFmono}, it
follows that
 \begin{align}
  \label{EA1}
f_1(\sss):=  F^{\phio}_+(\sss) - \frac{1+\xi_1}{1-\xi_1} F^\phie_-(\sss)
 \end{align}
 is monotone on $(0, \sss_{root})$ and moreover satisfies
\begin{align}
\lim_{\sss_1\searrow 0} f_1(\sss) = \infty, \quad \text{and} \quad
\lim_{\sss_1 \nearrow a_{0, \bsigmaunder}} f_1(\sss_1) = - \infty.
\end{align}

We then define $a_{1, \bsigmaunder}$ to be the unique root of $f_1$ in $(0,
a_{0, \bsigmaunder})$.  By \eqref{EA1}, $a_{1, \bsigmaunder}$ depends only
on $\xi_1$.  There are three cases.

\textbf{Case 1:} $\sss_1> a_{1, \bsigmaunder}$.  It follows from Remark
\ref{rsigbij}, \eqref{EA1}, and \eqref{EHj} that $A_1(\sss_1)< 0$.  Since
$B_1>0$ by Lemma \ref{Lcoeffs}, the monotonicity of $\phie$ and $\phio$
imply that $\partial \phi>0$ for all $\sss >\sss_1$.  
Hence $k[\sss_1; \bsigmaunder] = 1$.

\textbf{Case 2:} $\sss_1 = a_{1, \bsigmaunder}$.  Then $A_1(\sss_1) = 0$.
By Lemma \ref{Lcoeffs}, $\phi$ coincides with a positive multiple of
$\phio$ on a maximal domain to the right of $\sss_1$, which must be $\{
\sss\in (\sss_1, \infty)\}$  since $F^\phio_+$ is strictly decreasing and
$\lim_{\sss\rightarrow \infty}F^\phio_+(\sss) = 0$.  Consequently
$\phat[a_{1, \bsigmaunder}; \bsigmaunder]$ is smooth at the poles and
$k[a_{1, \bsigmaunder}; \bsigmaunder] =1$.

\textbf{Case 3:} $\sss_1 < a_{1, \bsigmaunder}$.  Then $A_1 >0$.  Since $\phie$ is monotone and $\lim_{\sss\nearrow \infty}\partial \phie(\sss)= -\infty$, there exists $\sss_{1max}\in (\sss_1, \infty)$ with the property that $\partial \phat(\sss)>0$ when $\sss \in (\sss_1, \sss_{1max})$ and $\phat$ attains a local maximum at $\sss_{1max}$.  In particular, $F^{\phat}_-(\sss_{1max}) = 0$.  By the flux monotonicity, there exists $\sss_2 \in (\sss_{1max}, \infty)$ 
such that $\phat(\sss_1)H^+[F_{1+}; \sss_1](\sss_2) = F_{2-}$.  
Since $\phat$ remains positive, $\sss_2\in L_{par}$ and $\phat$ has its second jump latitude at $\sss_2$.  

We next prove $\sss_2(\sss_1), \sss_3(\sss_1),\dots$ are increasing functions of $\sss_1$.  We first show this for $\sss_2(\sss_1)$.  By Lemma \ref{LFmono}, $F^{\phat}_{1}$ is an increasing function of $\sss_1$.  By \ref{rsigbij}, $F^{\phat}_{2-} = \frac{1-\xi_2}{2} e^{\sigma_1} F^{\phat}_{1}$.  By combining this with both parts of Lemma \ref{LHmono}, it follows that $\sss_2(\sss_1)$ is strictly increasing. Using this fact and replacing $\sss_1$ by $\sss_2$ and $\sss_2$ by $\sss_3$ in the preceding argument shows that $\sss_3(\sss_1)$ is strictly increasing, and inductively in the same way $\sss_j(\sss_1)$ is strictly increasing for $2< j\leq k$.  This proves (ii).  

By the discussion above, $\phat[a_{1, \bsigmaunder}; \bsigmaunder]$ is smooth at the poles and $k[\sss_1; \bsigmaunder] = 1$ for $\sss_1\in I_1$.  
By a straightforward inductive argument, 
there are unique $a_{2, \bsigmaunder}>a_{3, \bsigmaunder}> \dots> 0$, 
where $a_{j, \bsigmaunder}$ depends only on the first $j-1$ entries of $\bsigma$ and the first $j$ entries of $\xibold$, and intervals 
$ [a_{i, \bsigmaunder}, a_{i-1, \bsigmaunder}), j \in \N$ such that for each $i\geq 1$, $\phat[a_{i, \bsigmaunder}; \bsigmaunder]$ is smooth at the poles,  
$k[a_{i, \bsigmaunder}; \bsigmaunder] = i$, and $k[\sss_1; \bsigmaunder] = i$ when $\sss_1\in [a_{i, \bsigmaunder}, a_{i-1, \bsigmaunder})$.  
This concludes the proof of (i) and (iii).  
(iv) follows from the continuous dependence in of the coefficients in the conclusion of Lemma \ref{Lcoeffs} on $\sss_1$ and the continuous dependence of the $\sss_2, \sss_3, \dots$ on $\sss_1$ from (ii).
\end{proof}
\begin{remark}
\label{Rxbalanced}
One construction in \cite{kap} produces surfaces whose configurations consist of points 
on two parallel circles with latitudes whose absolute values are close to (in the notation of \cite{kap}) $\xx_{balanced}\in (0, \xx_{root})$.  
Using \eqref{Exs} to relate $\sss$ and $\xx$ coordinates, we have that $\sech \sss_{root} = \cos \xx_{root}$ and $\sech(a_{1, \zerobold}) = \cos \xx_{balanced}$.  
\end{remark}

If $\phat = \phat[\sss_1; \bsigmaunder]$ is as in Proposition \ref{existence}.(iii), 
recall from  \ref{existence}.(ii) that $\sss^\phat_i$ is an increasing function of $\sss_1$.  
In Lemma \ref{Lsderiv} and Corollary \ref{Csderiv} we estimate the derivatives precisely. 

\begin{lemma}
\label{Lsderiv}
Let $\bsigmaunder = (\bsigma, \xibold)$, 
$\sss_1 \in (0, \sss_{root})$,  
$\phat = \phat[\sss_1; \bsigmaunder]$, 
and $\sbold = \sbold^{\phat[ \sss_1; \bsigmaunder]}$ be as in Proposition \ref{existence}.  
Let $k\in \N$. 
$\phat$ has at least $k$ jumps if and only if 
$\sss_1 \in (0, a_{k-1, \bsigmaunder})$. 
The $k$th jump latitude $\sss_k$ depends then only on $\sss_1$ and $\left. \bsigmaunder\right|_k$.  
$\sss_k$ can be considered as a smooth function defined on the domain $S_k\subset \R\times\R^{k-1}\times\R^k$ where 
$$
S_k:=
\left \{\, \left ( \, \sss_1 \, ,
\, (\sigma_i )_{i=1}^{k-1} \, , \, (\xi_j)_{j=1}^k \, \right) \, : \, 
\sss_1 \in (0, a_{k-1, \bsigmaunder}) \text{ and } |\xi_j|<1/10 \text{ for } j=1,\dots,k\,\right\}.
$$ 
Alternatively we can consider each $\sss_k$ as a smooth function of 
$F_1=F_1^\phat$ and $\left. \bsigmaunder\right|_k$ and then we have for $k=1$ (where $\sss_1$ is a function of $F_1$ and $\xi_1$ only) 
\begin{equation} 
\label{Msderiv0}
\begin{aligned} 
\left(2 \sech^2 \sss_1 + \left( F^\phat_{1-}\right)^2\right)\frac{\partial \sss_1 }{ \partial F_1} &= \textstyle{\frac{1}{2}}(1 - \xi_1), 
\qquad 
\left(2 \sech^2 \sss_1 + \left( F^\phat_{1-}\right)^2\right)\frac{\partial \sss_1 }{ \partial \xi_1} &= - \textstyle{\frac{{1}}{2}} F_1, 
\end{aligned} 
\end{equation} 
and for $k>1$ the recursive formulas (note $S_k\subset S_{k-1}$) 
\begin{multline}
\label{Msderiv}
\left( 2\sech^2 \sss_{k} + \left(F^\phat_{k-}\right)^2\right)\frac{ \partial \sss_{k}}{ \partial F_1}
= 
\left( 2\sech^2 \sss_{k-1} + \left(F^\phat_{k-1+}\right)^2\right)\frac{ \partial \sss_{k-1}}{ \partial F_1}\left( \frac{\phi(\sss_{k-1})}{\phi(\sss_{k})}\right)^2+\\
+ \left( \frac{\phi(\sss_{k-1})}{\phi(\sss_{k})}\right)^2\frac{1}{2} (1+ \xi_{k-1}) \left( e^{\sum_{l=1}^{k-2} \sigma_l} \right)  + 
\frac{1}{2}(1- \xi_{k}) \left( e^{\sum_{l=1}^{k-1} \sigma_l} \right) , 
\end{multline}
\begin{multline}
\label{Msderiv2}
\left( 2\sech^2 \sss_{k} + \left(F^\phat_{k-}\right)^2\right)\frac{ \partial \sss_{k}}{ \partial \xi_j}
= 
\left( 2\sech^2 \sss_{k-1} + \left(F^\phat_{k-1+}\right)^2\right)\frac{ \partial \sss_{k-1}}{ \partial \xi_j}\left( \frac{\phi(\sss_{k-1})}{\phi(\sss_{k})}\right)^2+\\
+\left( \frac{\phi(\sss_{k-1})}{\phi(\sss_{k})}\right)^2\frac{\delta_{j(k-1)}}{2} \left( e^{\sum_{l=1}^{k-2} \sigma_l} \right)  F_1- 
\frac{\delta_{jk}}{2} \left( e^{\sum_{l=1}^{k-1} \sigma_l} \right) F_1,
\end{multline}
\begin{multline}
\label{Msderiv3}
\left( 2\sech^2 \sss_{k} + \left(F^\phat_{k-}\right)^2\right)\frac{ \partial \sss_{k}}{ \partial \sigma_j}
= 
\left( 2\sech^2 \sss_{k-1} + \left(F^\phat_{k-1+}\right)^2\right)\frac{ \partial \sss_{k-1}}{ \partial \sigma_j}\left( \frac{\phi(\sss_{k-1})}{\phi(\sss_{k})}\right)^2+\\
+\left( \frac{\phi(\sss_{k-1})}{\phi(\sss_{k})}\right)^2\frac{F_1}{2}(1+\xi_{k-1})\frac{\partial}{\partial \sigma_j} \left( e^{\sum_{l=1}^{k-2} \sigma_l} \right) 
- \frac{F_1}{2}(1- \xi_{k}) \frac{\partial}{\partial \sigma_j}\left( e^{\sum_{l=1}^{k-1} \sigma_l} \right) .
\end{multline}
\end{lemma}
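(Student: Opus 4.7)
The approach is to recognize each jump latitude $\sss_k$ as the unique solution of an implicit equation built from the auxiliary function $H^+$ of Definition \ref{dHflux}, and then to differentiate that equation using the monotonicity formulas of Lemmas \ref{LFmono} and \ref{LHmono}.

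First I would note that in the inductive construction in the proof of Proposition \ref{existence}, $\sss_k$ is determined once $\sss_{k-1}$, $F^\phat_{(k-1)+}$, and $F^\phat_{k-}$ are known; by \eqref{EFxi} the latter two depend only on $F_1$ and $\left.\bsigmaunder\right|_k$. A straightforward induction on $k$ then gives the asserted dependence of $\sss_k$ on $(\sss_1,\left.\bsigmaunder\right|_k)$. Switching between the parametrizations $\sss_1$ and $F_1$ is legitimate because $\sss_1\mapsto F_1=\frac{2}{1-\xi_1}F^{\phie}_-(\sss_1)$ is a diffeomorphism on $(0,\sss_{root})$ by Lemma \ref{LFmono}.

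For the base case $k=1$, since $\phat=\phie$ on $\{\sss\in[-\sss_1,\sss_1]\}$ by symmetry, we have $F^\phie_-(\sss_1)=F^\phat_{1-}=\tfrac{1}{2}(1-\xi_1)F_1$; implicit differentiation of this identity in $F_1$ and $\xi_1$, using $\frac{dF^\phie_-}{d\sss}(\sss_1)=2\sech^2\sss_1+(F^\phat_{1-})^2$ from Lemma \ref{LFmono}, immediately yields \eqref{Msderiv0} (and the positivity of this derivative gives the smoothness). For the inductive step $k>1$, on $\{\sss\in[\sss_{k-1},\sss_k]\}$ the RLD solution $\phat$ is a positive multiple of $H:=H^+[F^\phat_{(k-1)+};\sss_{k-1}]$, and unwinding the sign convention of Notation \ref{Npartial} gives $F^\phat_-(\sss)=-F^H_+(\sss)$ on this interval. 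Hence $\sss_k$ is characterized by the implicit equation
\begin{equation}
\label{Eimplicit}
F^H_+(\sss_k)=-F^\phat_{k-}.
\end{equation}
Because $F^H_+$ satisfies the Riccati equation $\frac{dF^H_+}{d\sss}=-(2\sech^2\sss+(F^H_+)^2)$ (the $F_+$-analog of Lemma \ref{LFmono}) and is therefore strictly decreasing, the implicit function theorem yields the smoothness of $\sss_k$ in all the parameters inductively on $k$. Differentiating \eqref{Eimplicit} successively in $F_1$, $\xi_j$, and $\sigma_j$, substituting the formulas for $\partial F^H_+/\partial\sbar$ and $\partial F^H_+/\partial F$ provided by Lemma \ref{LHmono} (each producing the factor $(\phat(\sss_{k-1})/\phat(\sss_k))^2$), evaluating $dF^H_+/d\sss$ at $\sss_k$ to produce the denominator $2\sech^2\sss_k+(F^\phat_{k-})^2$, and reading off $\partial F^\phat_{(k-1)+}/\partial(\cdot)$ and $\partial F^\phat_{k-}/\partial(\cdot)$ from \eqref{EFxi}, yields \eqref{Msderiv}, \eqref{Msderiv2}, \eqref{Msderiv3} after routine rearrangement.

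The main obstacle, such as there is one, is bookkeeping with signs: $F^H_+$ passes through zero between $\sss_{k-1}$ and $\sss_k$ at the local maximum of $\phat$, and one must carefully translate the positive flux matching condition $F^\phat_-(\sss_k)=F^\phat_{k-}$ into the correctly signed implicit equation \eqref{Eimplicit}. Once this setup is in place, the three formulas differ only in how the explicit parameter dependence of $F^\phat_{(k-1)+}$ and $F^\phat_{k-}$ in \eqref{EFxi} enters, and all three are uniform applications of implicit differentiation.
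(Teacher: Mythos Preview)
Your proposal is correct and follows essentially the same approach as the paper: both set up the implicit equation $F^{H}_-(\sss_k)=F^\phat_{k-}$ (equivalently your $F^H_+(\sss_k)=-F^\phat_{k-}$) for $H=H^+[F^\phat_{(k-1)+};\sss_{k-1}]$, then differentiate it in $F_1$, $\xi_j$, and $\sigma_j$ using Lemmas \ref{LFmono} and \ref{LHmono} together with \eqref{EFxi}. The paper's proof is slightly terser but the underlying computation is identical.
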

\begin{proof}
The first statement follows directly from Proposition \ref{existence}.  Below, we compute partial derivatives of $\sss_k$ with respect to $F_1, \sss_{k-1}$, and the entries of $\left.\bsigmaunder\right|_{k}$, from which the smoothness claimed follows immediately.  To this end, we recall from \eqref{EFxi} and \ref{dHflux}, that
\begin{align}
\label{EHbump}
\phi = \phi(\sss_{k-1}) H^+\left[ \frac{1}{2}(1+ \xi_{k-1})  \left( e^{\sum_{l=1}^{k-2}\sigma_l} \right)  F_1; \sss_{k-1} \right] \quad \text{on} \quad \{ \sss \in [\sss_{k-1}, \sss_{k}]\}.
\end{align} 
Denoting $H^+$ as in \eqref{EHbump} by $H$ for simplicity and using \eqref{EFxi}, we find
\begin{align}
\label{EFconstraint}
F^{H}_-(\sss_{k}) = \frac{1}{2}(1-\xi_{k})  \left( e^{\sum_{l=1}^{k-1}\sigma_l} \right)  F_1.
\end{align}
Items \eqref{Msderiv}-\eqref{Msderiv3} then follow by using the chain rule to differentiate \eqref{EFconstraint} and Lemma \ref{LFmono} and both parts of \ref{LHmono} to calculate the partial derivatives of $F^H$.
\end{proof}

\begin{lemma}
\label{Lrldest}
There is a constant $\epsilonunder_1>0$ such that for all $k\in \N, k>1$ and all $\bsigmaunder = (\bsigma, \xibold) \in \R^{k}\times \R^{k+1}$ satisfying $|\bsigma|_{\ell^1}+ |\xibold|_{\ell^\infty}< \epsilonunder_1$ and $\sss_1 \in (a_{k+1, \bsigmaunder}, a_{k, \bsigmaunder})$, $\phat = \phat[\sss_1; \bsigmaunder]$ satisfies the following.
\begin{enumerate}[label =(\roman*).]
\item $F^\phat_\ave < \frac{C}{k}$.
\item $\left\| 1- \phat(\sss) : C^0\left( \{ \sss \in [-\sss_k, \sss_k]\} \right) \right\| < \frac{C}{k}\log k$ and $\left| \log \frac{\phat(\sss_i)}{\phat(\sss_{i-1})}\right| < \frac{C}{k}$ for $2\leq i \leq k$. 
\end{enumerate}
\end{lemma}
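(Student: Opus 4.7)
My plan is to reduce both conclusions to scalar estimates on the fluxes by combining the flux comparability from Remark~\ref{rsigbij} with the integral identity of Lemma~\ref{Lintid}, and then to propagate the resulting control on $F^\phat_{\ave}$ into pointwise control on $\phat$ using the Riccati ODE of Lemma~\ref{LFmono} together with the geometric constraints underlying Proposition~\ref{existence}.

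First I choose $\epsilonunder_1$ small enough that $\exp(|\bsigma|_{\ell^1}+3|\xibold|_{\ell^\infty})\le 2$, so Remark~\ref{rsigbij} yields $F^\phat_{i\pm}\sim_{C_0}F^\phat_{\ave}$ for a universal $C_0$. Summing Lemma~\ref{Lintid} over the $k+1$ intervals $[\sss_{i-1},\sss_i]$ (with $\sss_0:=0$, using $F^\phat_{+}(0)=0$ by the evenness of $\phat$ at the origin) gives
\begin{equation*}
\sum_{i=1}^{k+1}F^\phat_{i-}+\sum_{i=1}^{k}F^\phat_{i+}\;=\;2\tanh\sss_{k+1}+\int_0^{\sss_{k+1}}(F^\phat_{-})^2\,d\sss,
\end{equation*}
whose left hand side is $\ge c\,k\,F^\phat_{\ave}$ by comparability. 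To bound the right hand side I invoke Case~3 in the proof of Proposition~\ref{existence}: the existence of the $(i{+}1)$-th jump forces $F^{\phio}_+(\sss_i)>F^\phat_{i+}$, and since $F^{\phio}_+(\sss)=\sech^2\sss/\tanh\sss\sim 4e^{-2\sss}$ at infinity, this gives $\sss_i\le C(1+|\log F^\phat_{\ave}|)$ for $i\le k$, and an additional application of Lemma~\ref{Lintid} on $(\sss_k,\sss_{k+1})$ extends the same bound to $\sss_{k+1}$. Monotonicity of $F^\phat_{-}$ on each subinterval yields $|F^\phat_{-}|\le C_0 F^\phat_{\ave}$ throughout, so the integral is $\le C(F^\phat_{\ave})^2(1+|\log F^\phat_{\ave}|)$. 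To close the estimate I need a universal preliminary bound $F^\phat_{\ave}\le C_*$: since $k\ge 2$, $\sss_1\le a_{k,\bsigmaunder}\le a_{2,\bsigmaunder}$, and by the continuity in Proposition~\ref{existence}.(iv), $a_{2,\bsigmaunder}$ stays a definite distance below $\sss_{root}$ for $\epsilonunder_1$ small, so $F^\phat_{1-}=F^{\phie}_-(\sss_1)$ is universally bounded. Substituting $F^\phat_{\ave}\le C_*$ absorbs $(F^\phat_{\ave})^2(1+|\log F^\phat_{\ave}|)$ into $C F^\phat_{\ave}$, so that $c\,k\,F^\phat_{\ave}\le C+C\,F^\phat_{\ave}$ and hence $F^\phat_{\ave}\le C/k$, proving (i).

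For (ii), conclusion (i) upgrades the previous estimates: $\sss_i\le C(1+\log k)$ for $i\le k$, and on every subinterval $|\partial\log\phat|=|F^\phat_{-}|\le CF^\phat_{\ave}\le C/k$. Integrating from $0$ gives $|\log\phat(\sss)|\le (C/k)\sss_k\le (C\log k)/k$ on $[-\sss_k,\sss_k]$, and exponentiating yields the stated $C^0$-bound on $1-\phat$. For the interior log-ratio bound, since $\sss_i\le C(1+\log k)$ for $i\le k$ we have $\sech^2\sss\ge c/k$ on $[0,\sss_k]$, so the differential inequality $dF^\phat_{-}/d\sss\ge 2\sech^2\sss\ge 2c/k$ combined with the total increase $F^\phat_{i-}+F^\phat_{(i-1)+}\le CF^\phat_{\ave}\le C/k$ of $F^\phat_{-}$ on $(\sss_{i-1},\sss_i)$ forces the interval length $\sss_i-\sss_{i-1}\le CkF^\phat_{\ave}$ to be bounded by a universal constant. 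Multiplying by $|\partial\log\phat|\le C/k$ then yields $|\log(\phat(\sss_i)/\phat(\sss_{i-1}))|\le C/k$ for $2\le i\le k$.

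The main obstacle will be the preliminary universal bound $F^\phat_{\ave}\le C_*$ in the final step of the proof of (i). It cannot be extracted directly from Lemma~\ref{Lintid}, since the $(F^\phat_{\ave})^2\sss_{k+1}$ contribution to the integral is not a priori dominated; one must instead rely on the continuous dependence in Proposition~\ref{existence}.(iv) together with a sufficiently small choice of $\epsilonunder_1$ to pin down $a_{2,\bsigmaunder}$ below $\sss_{root}$. Once this preliminary bound is in hand, the remainder of the argument is a clean bootstrap.
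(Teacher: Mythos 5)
Your raw materials are the right ones (Remark~\ref{rsigbij}, Lemma~\ref{Lintid}, Lemma~\ref{LFmono}, and the jump-existence constraint $F^{\phio}_+(\sss_i)>F^\phat_{i+}$), and the overall structure parallels the paper's argument, but two steps do not close.

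For (i): you sum Lemma~\ref{Lintid} over all $k+1$ subintervals up through $\sss_{k+1}$, which forces you to control $\int_0^{\sss_{k+1}}(F^\phat_-)^2\,d\sss$. The bound $\sss_i\le C(1+|\log F^\phat_{\ave}|)$ does follow for $i\le k$ from $F^{\phio}_+(\sss_i)>F^\phat_{i+}\sim F^\phat_{\ave}$, but the claim that an ``additional application of Lemma~\ref{Lintid} on $(\sss_k,\sss_{k+1})$ extends the same bound to $\sss_{k+1}$'' is unjustified and in fact false. On that interval Lemma~\ref{Lintid} only gives $\tanh\sss_{k+1}-\tanh\sss_k<CF^\phat_{\ave}$, which does not bound $\sss_{k+1}$: as $\sss_1\nearrow a_{k,\bsigmaunder}$ the coefficient $A_k$ of $\phie$ beyond $\sss_k$ tends to $0$, $\phat$ tends to a multiple of $\phio$ there, and $\sss_{k+1}\to\infty$. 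So your integral bound $\le C(F^\phat_{\ave})^2(1+|\log F^\phat_{\ave}|)$ is not established, and the preliminary bound $F^\phat_{\ave}\le C_*$ you reach for at the end does not repair this---it does not control $\sss_{k+1}$. The paper avoids the problem entirely by summing Lemma~\ref{Lintid} only over $[0,\sss_1],\dots,[\sss_{k-1},\sss_k]$, which gives $F^\phat_{1-}+F^\phat_{1+}+\cdots+F^\phat_{k-}=2\tanh\sss_k+\int_0^{\sss_k}(F^\phat_-)^2\,d\sss$ with $2k-1$ comparable flux terms on the left; the jump-existence constraint then yields $F^\phat_{\ave}<C\sech^2\sss_k$, and the integral is bounded by $C\sss_k(F^\phat_{\ave})^2<C\sss_k(\sech^2\sss_k)^2<C\sss_k\sech^2\sss_k\le C$ since $t\mapsto t\sech^2 t$ is bounded. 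No preliminary bound and no bootstrap are needed.

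For (ii): you claim that conclusion (i) ``upgrades'' the intermediate estimate to $\sss_i\le C(1+\log k)$, but this inference runs backwards. Part (i) gives $F^\phat_{\ave}<C/k$, i.e.\ a \emph{lower} bound on $|\log F^\phat_{\ave}|$, so it cannot be combined with $\sss_i\le C(1+|\log F^\phat_{\ave}|)$ to produce an upper bound of size $\log k$; for that you would need the matching lower bound $F^\phat_{\ave}\ge c/k$, which is not proved here. The cleaner route, and the one implicit in the paper's ``easy to see'' remark, is to use $\sech^2\sss\ge\sech^2\sss_k>cF^\phat_{\ave}$ on $[0,\sss_k]$ directly together with $\tanh\sss_i-\tanh\sss_{i-1}<CF^\phat_{\ave}$ from Lemma~\ref{Lintid}; the factors of $F^\phat_{\ave}$ cancel and $\sss_i-\sss_{i-1}\le C$ follows with no reference to $k$. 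With that replacement the remaining integration step in (ii), and your argument for the $C^0$ bound on $1-\phat$ (which the paper's proof leaves implicit), are correct.
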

\begin{proof}  

Using the monotonicity of the flux in lemma \ref{LFmono} we conclude that the maximum of $|F^{\phat}_-|$ is achieved at the jump latitudes. 
Using also \ref{rsigbij} we conclude 
\begin{align} 
\label{EFcomp} 
\max_{\sss\in[0,\infty)} |F^{\phat}_-(\sss)| = 
\max_{1\leq i\leq k+1} {F^\phat_{i\pm}}\sim_{ \exp{(|\bsigma|_{\ell^1}+3|\xibold|_{\ell^\infty})}} F^\phat_{avg} \sim_{ \exp{(|\bsigma|_{\ell^1}+3|\xibold|_{\ell^\infty})}} \min_{1\leq i\leq k+1}{ F^\phat_{i\pm}}.
\end{align}
By \ref{rH} and the fact that $\phat$ has $k+1$ jumps, it follows that $F^\phio_+(\sss_k) = \frac{\sech^2 \sss_k}{\tanh \sss_k} > F^\phat_{k+}$.  From this and \ref{EFcomp} we estimate
\begin{align}
\label{Efsechest}
 F^\phat_{avg} < C \sech^2 \sss_k. 
\end{align}
By using Lemma \ref{Lintid} on the intervals $\{\sss \in [ 0, \sss_1]\}, \dots \{ \sss\in [\sss_{k-1}, \sss_k]\}$ and summing, we find
\begin{align}
\label{Emostfluxes}
F^\phat_{1-}+ F^\phat_{1+}+ \cdots + F^\phat_{k-} = 2\tanh \sss_k + \int_{0}^{\sss_k} \left( F^\phat_-(\sss)\right)^2 \, d\sss.
\end{align}
Next using \ref{RLquant} and \eqref{EFcomp} and \eqref{Efsechest} to estimate \eqref{Emostfluxes}, we find
\begin{align}
\label{Efavg}
(2k-1)F^\phat_\ave < C\left(1+ \sss_k \sech^2 \sss_k\right)< C,
\end{align}
from which we conclude (i).  For (ii), we observe from \ref{dF} that on any domain on which $\phat$ is smooth, $ F^\phat_+(\sss) = \partial \left( \log \phat\right)$.  Integrating from $\sss_{i-1}$ to $\sss_i$, using the easy to see fact that for $2\leq i \leq k$, $0<\sss_{i}-\sss_{i-1} <C$, and estimating using part (i), 
\begin{align*}
\left| \log\left( \frac{\phat(\sss_i)}{\phat(\sss_{i-1})}\right)\right| \leq \int_{\sss_{i-1}}^{\sss_i} \left|F^\phat_+(\sss)\right| d\sss \leq \frac{C}{k}\int_{\sss_{i-1}}^{\sss_i} d\sss \leq \frac{C}{k}.
\end{align*}
\end{proof}

\begin{corollary}
\label{Csderiv}
Let $\bsigmaunder = (\bsigma, \xibold)$, $\sss_1 \in (0, \sss_{root})$, $\phat = \phat[\sss_1; \bsigmaunder]$, and $\sbold = \sbold^{\phat[\sss_1;\bsigmaunder]}$ be as in \ref{Lsderiv} and suppose moreover that $|\xibold|_{\ell^\infty}<\epsilonunder_1/k$ and that  $(\sss_1, \left. \bsigmaunder\right|_{k+1}) \in S_{k+1}$ (recall \ref{Lsderiv}).  The following estimates hold.
\begin{enumerate}[label = (\roman*).]
\item 
$\displaystyle{  \left( 2\sech^2 \sss_{k} + \left(F^\phat_{k-}\right)^2\right)\frac{ \partial \sss_{k}}{ \partial F_1}
\sim_C k}$.
\
\item $\displaystyle{
	\left| \left( 2\sech^2 \sss_{k} + \left(F^\phat_{k-}\right)^2\right)\frac{ \partial \sss_{k}}{ \partial \sigma_i }\right| < C
	}$, $i=1, \dots, k-1$.
\item
	$\displaystyle{
	\left|\left( 2\sech^2 \sss_{k} + \left(F^\phat_{k-}\right)^2\right)\frac{ \partial \sss_{k}}{ \partial 
	\xi_j}	\right| < \frac{C}{k}
	}$, $j=1, \dots, k$.
\end{enumerate}
\end{corollary}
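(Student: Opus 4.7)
The plan is to prove all three estimates by induction on $k$ using the recursive formulas \eqref{Msderiv0}--\eqref{Msderiv3} of Lemma \ref{Lsderiv}. First I collect quantitative facts to be used repeatedly. Under the standing hypothesis $|\xibold|_{\ell^\infty}<\epsilonunder_1/k$ (and the implicit $|\bsigma|_{\ell^1}<\epsilonunder_1$ coming through $S_{k+1}$), the quantities $\tilde{e}_i := e^{\sum_{l=1}^{i-1}\sigma_l}$ and $1\pm\xi_l$ all lie in a fixed interval $[1/C,C]$. By Lemma \ref{Lrldest}.(ii) the propagation factor $c_l:=(\phat(\sss_{l-1})/\phat(\sss_l))^2$ satisfies $c_l = 1+O(1/k)$, and since consecutive factors telescope, $\prod_{m=j+1}^k c_m = (\phat(\sss_j)/\phat(\sss_k))^2 \sim_C 1$. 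By Lemma \ref{Lrldest}.(i) together with Remark \ref{rsigbij}, $F_1\sim_C F^\phat_\ave \sim_C 1/k$.

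For (i), set $G_k := (2\sech^2 \sss_k+(F^\phat_{k-})^2)\,\partial\sss_k/\partial F_1$. The base case \eqref{Msderiv0} gives $G_1 = (1-\xi_1)/2 \sim_C 1$. Iterating \eqref{Msderiv} yields
\[
G_k \;=\; G_1\prod_{l=2}^k c_l \;+\; \sum_{l=2}^k\Bigl(c_l\,\tfrac{1+\xi_{l-1}}{2}\,\tilde{e}_{l-1} \;+\; \tfrac{1-\xi_l}{2}\,\tilde{e}_l\Bigr)\prod_{m=l+1}^k c_m,
\]
which is a sum of $k$ strictly positive summands each $\sim_C 1$, hence $G_k\sim_C k$.

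For (iii), set $G_k^j := (2\sech^2 \sss_k+(F^\phat_{k-})^2)\,\partial\sss_k/\partial\xi_j$. The Kronecker factors $\delta_{j(k-1)},\delta_{jk}$ in \eqref{Msderiv2} force the driving term to vanish unless $k\in\{j,j+1\}$. For $k<j$ clearly $G_k^j=0$. At $k=j$ (using \eqref{Msderiv0} if $j=1$, and \eqref{Msderiv2} with $G_{j-1}^j=0$ otherwise) one finds $G_j^j = -\tilde{e}_j F_1/2$. At $k=j+1$ the propagation term $G_j^j\cdot c_{j+1} = -c_{j+1}\tilde{e}_j F_1/2$ cancels the driving term $+c_{j+1}\tilde{e}_j F_1/2$ exactly, so $G_{j+1}^j=0$; inductively $G_k^j = c_k G_{k-1}^j = 0$ for every $k>j+1$. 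In all cases $|G_k^j|\leq \tilde{e}_j F_1/2 \leq C/k$.

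For (ii), set $G_k^i := (2\sech^2 \sss_k+(F^\phat_{k-})^2)\,\partial\sss_k/\partial\sigma_i$. For $k\le i$ we have $G_k^i=0$; at $k=i+1$ the derivative $\partial\tilde{e}_{i-1}/\partial\sigma_i$ vanishes, so $G_{i+1}^i = -F_1(1-\xi_{i+1})\tilde{e}_{i+1}/2$. For $k\geq i+2$, iterating \eqref{Msderiv3} and exploiting the same cancellation pattern as in (iii) at every step produces the telescoping identity
\[
G_k^i \;=\; \sum_{l=1}^{k-i-1}\Bigl(\prod_{m=i+l+1}^k c_m\Bigr) F_1\,\tilde{e}_{i+l}\,\xi_{i+l} \;-\; \tfrac{1}{2}F_1(1-\xi_k)\tilde{e}_k,
\]
in which each sum term is the residual from cancellation of two $F_1/2$-sized contributions at consecutive steps, hence of order $F_1|\xi_{i+l}|$. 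Using $\sum_{l}|\xi_l|\leq (k+1)\epsilonunder_1/k \leq 2\epsilonunder_1$ we conclude $|G_k^i|\leq C F_1 \leq C/k < C$.

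The main step requiring care is spotting the exact cancellation at $k=j+1$ in (iii) and the analogous telescoping in (ii); once these are recognized, the elementary bounds on $c_l,\tilde{e}_l,1\pm\xi_l$ and the $O(1/k)$ size of $F_1$ finish the argument. The principal technical concern is controlling the accumulated multiplicative error of $\prod c_l$ across $k$ iterations, but Lemma \ref{Lrldest}.(ii) provides precisely the telescoping structure that keeps this product uniformly $\sim_C 1$.
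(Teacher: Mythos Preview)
Your overall strategy---iterate the recursion of Lemma \ref{Lsderiv} and control the accumulated products via Lemma \ref{Lrldest}---is the paper's strategy too, and your treatment of part (i) is essentially correct.  There is, however, a genuine error in how you set up the iteration.  The left side of \eqref{Msderiv}--\eqref{Msderiv3} carries the factor $2\sech^2\sss_k+(F^\phat_{k-})^2$, while the propagated term on the right carries $2\sech^2\sss_{k-1}+(F^\phat_{k-1+})^2$; these differ because $F^\phat_{l+}\neq F^\phat_{l-}$ when $\xi_l\neq 0$.  Hence the correct one–step recursion is $G_k = R_{k-1}\,c_k\,G_{k-1} + (\text{driving term})$, with
\[
R_l \;:=\; \frac{2\sech^2\sss_l+(F^\phat_{l+})^2}{2\sech^2\sss_l+(F^\phat_{l-})^2},
\]
not $G_k=c_k G_{k-1}+(\text{driving term})$ as you use.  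This extra factor is exactly the paper's $R_k$ in \eqref{Esderivrec}.

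Dropping $R_l$ breaks your argument for (iii): at $k=j+1$ the actual propagation term is $R_j c_{j+1}G_j^j=-R_j c_{j+1}\tilde e_j F_1/2$, and this does \emph{not} cancel $+c_{j+1}\tilde e_j F_1/2$ exactly; one finds $G_{j+1}^j=c_{j+1}\tilde e_j F_1(1-R_j)/2$, which is nonzero in general, and hence $G_k^j\neq 0$ for $k>j+1$.  The telescoping identity you write for (ii) is likewise incorrect for the same reason.  The conclusions can be rescued---under the hypothesis $|\xibold|_{\ell^\infty}<\epsilonunder_1/k$ one has $R_l=1+O(1/k)$ and the residuals are harmless---but the argument needs repair.

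More to the point, the cancellations are unnecessary.  For (iii) the Kronecker deltas in \eqref{Msderiv2} mean only \emph{two} driving terms are nonzero, each of size $O(F_1)=O(1/k)$; since the accumulated product $\prod Q_mR_m$ is bounded, $|G_k^j|\le C/k$ follows immediately.  For (ii) there are at most $k$ nonzero driving terms, each $O(F_1)$, so $|G_k^i|\le Ck\cdot F_1\le C$ with no telescoping needed.  This direct counting is the paper's (omitted) argument for (ii)--(iii), mirroring \eqref{Epk}--\eqref{Eqrest}.
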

\begin{proof}
We first prove (i).  To simplify notation in this proof, we denote
\begin{equation*}
\begin{aligned}
P_k &:= \left( 2 \sech^2 \sss_k + \left( F^\phat_{k-}\right)^2\right) \frac{ \partial \sss_k}{\partial F_1}, 
\quad
Q_{k-1} := \left( \frac{\phat(\sss_{k-1})}{\phat(\sss_k)}\right)^2, 
\quad
R_k := \frac{ 2 \sech^2 \sss_k + \left( F^\phat_{k+}\right)^2}{ 2 \sech^2 \sss_k + \left( F^\phat_{k-}\right)^2},\\
T_{k-1} &:=  \frac{1}{2} Q_{k-1} (1+ \xi_{k-1}) \left( e^{\sum_{l=1}^{k-2} \sigma_l} \right)  + 
\frac{1}{2}(1- \xi_{k}) \left( e^{\sum_{l=1}^{k-1} \sigma_l} \right). 
\end{aligned}
\end{equation*}
In this notation, \eqref{Msderiv} from Lemma \ref{Lsderiv} is equivalent to the equation
\begin{align}
\label{Esderivrec}
P_k =  R_{k-1} Q_{k-1}P_{k-1} + T_{k-1}
\end{align}
from which we conclude by applying \eqref{Msderiv} recursively
\begin{align}
\label{Epk}
P_k = P_1 \prod_{i=1}^{k-1} Q_i R_i + \sum_{i=1}^{k-1} \left( T_i \prod_{j=i+1}^{k-1} Q_j R_j\right).
\end{align}
From \eqref{Msderiv0} it follows that $P_1 = \frac{1}{2}(1 - \xi_1)$.  By \ref{Lrldest}, the assumption $|\xibold|_{\ell^\infty}<\epsilonunder_1/k$, and \ref{rsigbij}, the following estimates hold:
\begin{align}
\label{Eqrest}
Q_i \sim_{1+ C/k} 1, \quad R_i \sim_{1+C/k}, \quad T_i \sim_C 1, \qquad i=1, \dots, k-1.
\end{align}
Combining \eqref{Eqrest} with \eqref{Epk} completes the proof of (i).  Proofs of (ii) and (iii) are similar and use respectively \eqref{Msderiv2} and \eqref{Msderiv3} in place of \eqref{Msderiv}, so we omit the details. 
\end{proof}

We are particularly interested in RLD solutions which are smooth at the poles. 
Recall from \ref{existence} that 
$\phat[a_{k,\bsigmaunder};  \bsigmaunder]$ 
is the unique smooth at the poles unit RLD solution  
with $k$ jump latitudes and flux ratios $\bsigmaunder$.  
Moreover by \ref{existence}.(i), 
$\phat[a_{k,\bsigmaunder};  \bsigmaunder]$ 
depends only on 
$\left. \bsigmaunder \right|_k$. 
This motivates the following. 

\begin{notation}
\label{Nphik}
Given $k$, $\bsigmaunder$, and 
$\left. \bsigmaunder \right|_k$ as in \ref{existence}, 
we define 
$$ 
\phat[\bsigmaunder: k]:= 
\phat[\left. \bsigmaunder  \right|_k : k]:= 
\phat[a_{k,\bsigmaunder};  \bsigmaunder],  
\qquad\quad
\sbold = \sbold[\bsigmaunder: k ] := 
\sbold[\left. \bsigmaunder  \right|_k : k ] := 
\sbold^{ \phat[\bsigmaunder: k]   }. 
$$
\end{notation}

\begin{corollary}
\label{Csi}
Fix $\bsigmaunder$ as in Proposition \ref{existence} and let 
$\sbold = \sbold[ \bsigmaunder : k]$ be as in \ref{Nphik}. 
Then 
\begin{enumerate}[label=(\roman*).]
\item For fixed $i$, $ \sss_i = \sss_i[\bsigmaunder :k ]$ is a decreasing function of $k$.
\item If $\bsigmaunder = \zerobold$, and $i$ is fixed, $\sss_{k-i} = \sss_{k-i}[\zerobold : k]$ is an increasing function of $k$. 
\end{enumerate}
\end{corollary}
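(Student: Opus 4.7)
Part (i) is an immediate consequence of Proposition \ref{existence}: \ref{existence}.(i) gives that $\{a_{k, \bsigmaunder}\}$ is strictly decreasing in $k$, which settles $i=1$ since $\sss_1[\bsigmaunder:k] = a_{k, \bsigmaunder}$. For $i > 1$, Lemma \ref{Lsderiv} notes that $\sss_i$ depends only on $\sss_1$ and $\left.\bsigmaunder\right|_i$ and thus is the same function for every $k \ge i$, while \ref{existence}.(ii) ensures $\sss_i$ is strictly increasing in $\sss_1$ for fixed $\bsigmaunder$. Composing these two monotonicities yields (i).

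For (ii), set $\phi_k := \phat[\zerobold:k]$. Since $\bsigmaunder = \zerobold$, the flux at every jump of $\phi_k$ equals a common value $F^*(k) := F^{\phi_k}_{1-}$. Because $\phi_k$ agrees with $\phie$ on $(0, \sss_1[\zerobold:k])$, we have $F^*(k) = F^\phie_-(\sss_1[\zerobold:k])$. Combining part (i) with the strict increase of $F^\phie_-$ (Lemma \ref{LFmono}) gives that $F^*(k)$ is strictly decreasing in $k$.

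Next I reparametrize the jump latitudes of $\phi_k$ right-to-left by a single parameter $F$. Because $\phi_k$ is smooth at the poles, Case 2 of the proof of \ref{existence} shows $\phi_k$ equals a positive multiple of $\phio$ on $(\sss_k[\zerobold:k], \infty)$; balancing at the outermost jump forces $F^\phio_+(\sss_k[\zerobold:k]) = F^*(k)$, so $\sss_k[\zerobold:k] = t_0(F^*(k))$ where $t_0(F) := (F^\phio_+)^{-1}(F)$. Iterating leftward---on the next interval $\phi_k$ is a positive multiple of $H^-[F^*(k); t_{j-1}]$ (recall \ref{dHflux})---I recursively define $t_j(F) < t_{j-1}(F)$ as the unique root of $F^{H^-[F; t_{j-1}(F)]}_+(\cdot) = F$, obtaining $\sss_{k-j}[\zerobold:k] = t_j(F^*(k))$. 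Part (ii) thus reduces to showing that $t_j$ is a strictly decreasing function of $F$.

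I expect this last monotonicity to be the main obstacle, and I would establish it by induction on $j$. The base $j = 0$ is immediate since $F^\phio_+(\sss) = \sech^2 \sss/\tanh \sss$ is strictly decreasing. For the inductive step, set $G(\sss, F, \sbar) := F^{H^-[F; \sbar]}_+(\sss)$; Remark \ref{rH} gives $H^-[F; \sbar] = H^+[-F; \sbar]$, so Lemma \ref{LHmono} applies (with an appropriate sign change, and with its formulas extending to $\sss < \sbar$ by the same integrating-factor argument as in its proof). Implicit differentiation of $G(t_j(F), F, t_{j-1}(F)) = F$ yields
\[
t_j'(F) \;=\; \frac{1 - \partial_F G - \partial_{\sbar} G \cdot t_{j-1}'(F)}{\partial_\sss G}.
\]
Flux monotonicity (Lemma \ref{LFmono}) gives $\partial_\sss G = -(2\sech^2 t_j + F^2) < 0$; Lemma \ref{LHmono} gives $\partial_F G = -H^-(t_j)^{-2} < 0$ and $\partial_{\sbar} G = (2\sech^2 t_{j-1} + F^2) H^-(t_j)^{-2} > 0$, all well-defined since positivity of $\phi_k$ forces $H^-[F^*(k); t_{j-1}] > 0$ on $[t_j, t_{j-1}]$. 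Combined with $t_{j-1}'(F) < 0$ from the inductive hypothesis, the numerator exceeds $1$ and the denominator is negative, so $t_j'(F) < 0$, closing the induction.
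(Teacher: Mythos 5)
Your proof of (i) is exactly the paper's argument---composing \ref{existence}.(i) with \ref{existence}.(ii) (with Lemma \ref{Lsderiv} invoked to note the map $\sss_1 \mapsto \sss_i$ is the same for all $k\ge i$, which the paper leaves implicit). For (ii) the paper only sketches the idea (``build $\phat[\zerobold:k]$ from infinity, taking the final flux as initial data'') and explicitly omits details as unused; your right-to-left reparametrization $t_j(F)$ by the common balanced flux $F^*(k)$, together with the $\sss<\sbar$ extension of Lemma \ref{LHmono} via the same integrating-factor computation, correctly fills in precisely that sketch, so the two approaches coincide.
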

\begin{proof}
(i) follows from parts (i) and (ii) of Proposition \ref{existence}.  (ii) follows from a dual version of Proposition \ref{existence} where one builds $\phat[ \zerobold: k]$ ``from infinity" by taking the final flux $F^\phat_+(\sss_k)$ as initial data rather than the first flux $F^\phat_-(\sss_1)$ as in the proof of \ref{existence}.  We omit the details of the argument because we do not use part (ii) in the remainder of the paper. 
\end{proof}


 \begin{figure}[h]
\centering
\includegraphics[scale=1]{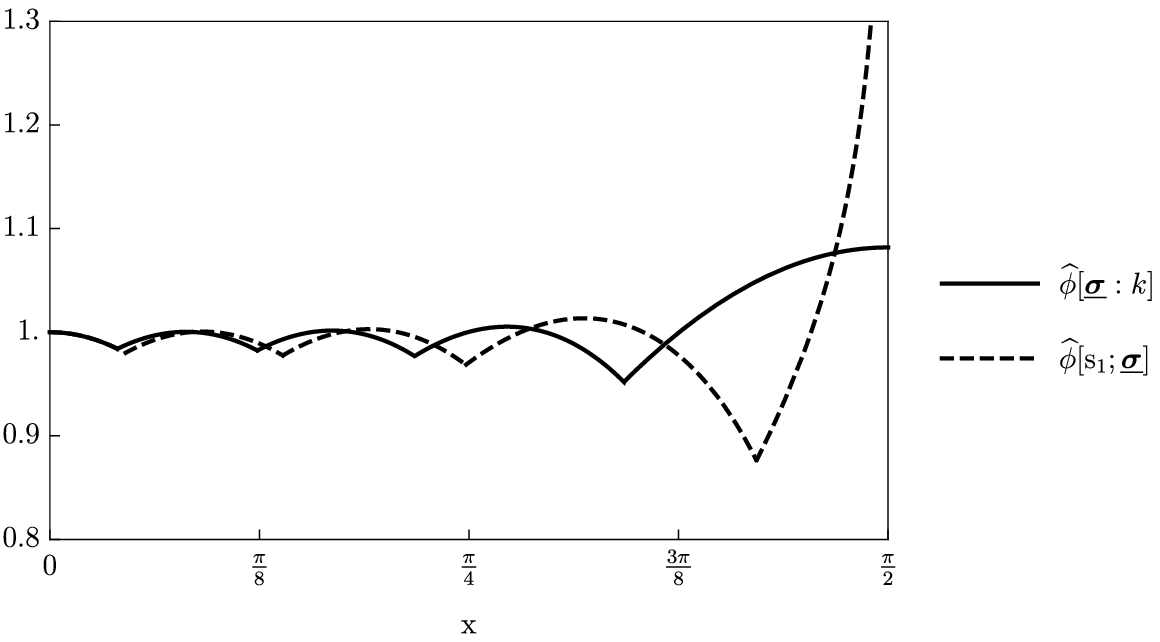}
\caption{Profiles of an RLD solution $\phat[\sss_1; \bsigmaunder]$ for $\sss_1\in (a_{4, \bsigmaunder}, a_{3, \bsigmaunder})$ 
and the smooth at the poles RLD solution $\phat[ \bsigmaunder :k]=\phat[a_{k,\bsigmaunder};  \bsigmaunder]$ (when $k=4$).  
In both cases $\bsigmaunder = \zerobold$.}
\end{figure}

We next establish estimates for smooth at the poles RLD solutions with small flux ratios.  Below, if we write $f = O(w)$ for a number $f$ and some $w>0$, we mean $|f| \le C w$, for some constant $C$ independent of $f$ and $w$. 

\begin{prop}
\label{Prl}
There is a constant $\epsilonunder_1>0$ such that for all $k\in \N$ and all $\bsigmaunder= (\bsigma, \xibold)\in \R^{k-1}\times\R^k$  with $|\bsigma|_{\ell^1}+ |\xibold|_{\ell^\infty} < \epsilonunder_1$,
$\phat =\phat[\bsigmaunder: k]$ satisfies the following.
\begin{enumerate}[label=(\roman*).]
\item  $F^{\phat}_{avg} = \frac{1}{k} +O\left(\frac{1}{k^3}\right)$ and $\sech^2 \sss_k \sim_C \frac{1}{k}$.  
\item \begin{enumerate}[label=(\alph*)]
	\item $ F^{\phat}_{1-}=2\tanh \sss_1 +O\left(\frac{1}{k^3}\right).$
	\item  $F^{\phat}_{i-}+F^\phat_{i-1+} = 2(\tanh \sss_i - \tanh \sss_{i-1}) +O\left(\frac{1}{k^2(k-i+1)}\right)$ for $i=2, \dots, k$.
	\item $F^{\phat}_{k+}=2(1- \tanh \sss_k) + O\left(\frac{1}{k^2}\right)$.
	\end{enumerate}
\item  $\left\|1-\phat(\sss): C^0\left(\cyl\right) \right\| \le \frac{C}{k}\log k$ and  $\left|\log\frac{\phat(\sss_i)}{\phat(\sss_{i-1})}\right| \le \frac{C}{k}$ for $2\leq i \leq k$.    If $\bsigmaunder=\zerobold$, then also $1>\phat(\sss_1)> \cdots >\phat(\sss_k)$.  
\item  There is a constant $C>0$ depending only on $\epsilonunder_1$ such that for any $1\leq i<j\leq k$, 
\begin{align*}
\sss_j - \sss_i &>  \left(1+\frac{C}{k}\right) \frac{j-i}{2k}
\quad \text{and} \quad 
\sech^2 \sss_i - \sech^2 \sss_j \le \frac{C(j-i)}{k}.
\end{align*}
\end{enumerate}
\end{prop}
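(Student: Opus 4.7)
The heart of the argument is repeated application of the flux identity from Lemma \ref{Lintid} on each smooth interval $(\sss_{i-1}, \sss_i)$ of $\phat$ (with $\sss_0 := 0$) and on the tail interval $(\sss_k, \infty)$. Because $\phat$ is smooth at the poles, on $(\sss_k, \infty)$ it must be a positive multiple of the decaying solution $\phio = \tanh \sss$ of Lemma \ref{Lphie} (the other mode $\phie$ vanishes in between). Consequently $F^\phat_+(\sss_k) = F^\phio_+(\sss_k) = \sech^2 \sss_k / \tanh \sss_k$, and applying \ref{Lintid} on $(\sss_k,\infty)$ yields (ii)(c) with error $\int_{\sss_k}^\infty (F^\phat_-)^2 d\sss = O(\sech^4 \sss_k)$, computed using the explicit primitive of $\sech^4 \sss$. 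The symmetry $F^\phat_+(0) = 0$ gives (ii)(a), and (ii)(b) is \ref{Lintid} on $(\sss_{i-1}, \sss_i)$ verbatim; the error bound $O(1/(k^2(k-i+1)))$ comes from estimating $|F^\phat_-| \le C/k$ via Lemma \ref{LFmono}, Lemma \ref{Lrldest}.(i), and Remark \ref{rsigbij}, together with the change of variable $u = F^\phat_-$ which converts $\int (F^\phat_-)^2 d\sss$ into $\int u^2/(2\sech^2\sss(u)+u^2)du$, bounded using $\sech^2 \sss \ge \sech^2 \sss_i$ on the interval.

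For (i), summing the three cases of (ii) telescopes the tanh terms and yields
\[
2kF^\phat_\ave \;=\; \sum_{i=1}^k F^\phat_i \;=\; 2 \;+\; \int_0^\infty (F^\phat_-)^2\, d\sss,
\]
so the main work is bounding this integral by $O(1/k^2)$. The tail $[\sss_k,\infty)$ contributes $O(\sech^4 \sss_k) = O(1/k^2)$ by the explicit computation above. For $[0,\sss_k]$ we split into intervals and apply the change-of-variable estimate; summing the per-interval bound $O(1/(k^3\sech^2 \sss_i))$ and using $\sech^2 \sss_i \gtrsim (k-i+1)/k$ (a consequence of (ii) and the fact that $\tanh \sss_i \approx i/k$) produces the required bound. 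This closes the bootstrap for $F^\phat_\ave = 1/k + O(1/k^3)$, and the estimate $\sech^2 \sss_k \sim_C 1/k$ then follows by combining (ii)(c) with the known scaling of $F^\phat_{k+}$ from Remark \ref{rsigbij}.

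For (iii), on $[0,\sss_1]$ we have $\phat = \phie = 1 - \sss \tanh \sss$, and (ii)(a) together with (i) force $\sss_1 = O(1/k)$, giving $|1-\phat(\sss_1)| = O(1/k^2)$. Between adjacent jumps, $\log(\phat(\sss_i)/\phat(\sss_{i-1})) = -\int_{\sss_{i-1}}^{\sss_i} F^\phat_- d\sss$ with $|F^\phat_-| \le C/k$, and the interval has length $\le C$, yielding the $C/k$ bound; iterating over at most $k$ jumps gives the global $C \log k / k$ bound. The monotonicity in the balanced case comes from the sign of $\int_{\sss_i}^{\sss_{i+1}} F^\phat_- d\sss$: since $\sech^2 \sss$ is strictly decreasing, the ODE of Lemma \ref{LFmono} makes $F^\phat_-$ grow faster on the left of $(\sss_i, \sss_{i+1})$ than on the right, so $F^\phat_-$ spends more length positive than negative, forcing $\log \phat$ to decrease across the interval.

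For (iv), the spacing bound comes from writing
\[
\sss_l - \sss_{l-1} \;=\; \int_{-F^\phat_{(l-1)+}}^{F^\phat_{l-}} \frac{du}{2\sech^2 \sss(u) + u^2} \;\ge\; \frac{F^\phat_{l-}+F^\phat_{(l-1)+}}{2(1 + C/k^2)},
\]
since $\sech^2 \sss \le 1$ and $|u|^2 \le C/k^2$; summing over $l = i+1, \dots, j$ and using (ii) plus (i) gives $\sss_j - \sss_i \ge (j-i)/k \cdot (1- O(1/k^2))$, which implies the stated $(1+C/k)(j-i)/(2k)$ bound comfortably. For the $\sech^2 \sss$ inequality, the elementary bound $|\sech^2 \sss_i - \sech^2 \sss_j| \le 2(\tanh \sss_j - \tanh \sss_i)$ reduces the claim to summing the $\tanh$-differences of (ii)(b), each of which is $\le C/k$ by \ref{rsigbij} and (i). The main subtlety, as throughout, is tracking the $\sech^2 \sss_i$ weight carefully enough to extract the desired power of $k$ from the $(F^\phat_-)^2$ integrals; the change of variable from $\sss$ to $u = F^\phat_-$ is the key device.
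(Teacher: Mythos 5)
Your proposal follows the same overall strategy as the paper: repeated applications of the flux identity Lemma \ref{Lintid} on the intervals $(\sss_{i-1},\sss_i)$ and the tail $(\sss_k,\infty)$, bootstrapping $F^\phat_{avg}=\frac{1}{k}+O(\frac{1}{k^2})$ up to the $O(\frac{1}{k^3})$ estimate by feeding the coarser bound back into the annular identities, and then reading off (ii)--(iv). Your use of the change of variable $u=F^\phat_-$ for the interval-length bound in (iv), in place of the paper's appeal to the mean value theorem, is a minor stylistic variant; both exploit $\sech^2\sss\le 1$ in the same way.

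There is, however, a gap in (iii). You correctly establish $|\log(\phat(\sss_i)/\phat(\sss_{i-1}))|\le C/k$ per interval, since $|F^\phat_\pm|\le C/k$ and each interval has bounded length, but then assert that ``iterating over at most $k$ jumps gives the global $C\log k/k$ bound.'' Summing $k$ contributions of size $C/k$ only yields $O(1)$, not $O(\log k/k)$. The sharper bound the proposition actually claims requires the additional observation that $\sss_k=O(\log k)$, which follows from $\sech^2\sss_k\sim_C \frac{1}{k}$ (the second part of (i)), since $\sech^2\sss\approx 4e^{-2\sss}$ for large $\sss$; one then integrates the uniform bound $|F^\phat_+|\le C/k$ over the interval $[0,\sss_k]$ of total length $O(\log k)$, as the paper does. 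Your write-up nowhere records the $O(\log k)$ bound on $\sss_k$, and without it the stated estimate does not follow. You also do not address the tail $\{\sss>\sss_k\}$, where $\phat$ coincides with a multiple of $\phio=\tanh\sss$; that is a one-line observation but should be mentioned since the $C^0$ norm in (iii) is over all of $\cyl$. Apart from this omission the argument is sound and tracks the paper's proof closely.
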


\begin{remark}
\label{Rsmallk}
The estimates in Proposition \ref{Prl} are substantive only when $k$ is large.  When $k$ is small, by taking $\epsilonunder_1$ small enough, using Proposition \ref{existence}, and smooth dependence of ODE solutions on initial conditions, we maintain coarse control over the family $\phat[\bsigmaunder: k]$.   
More precisely we have the following alternatives for small $k$:  given $\kunder > 0$ and taking $\epsilonunder_1$ small enough in terms of $\kunder$, there exists a constant $c = c(\kunder)> 0$ such that for all $k\in \N, k < \kunder$, \newline
(i). $0< \frac{1}{c}< \phat < c$.\newline 
(ii). $\frac{1}{c}< \sss_1$ and for $i\in \{2, \dots, k\}$, $\sss_i - \sss_{i-1}\sim_c 1$. \newline
(iii). For $i\in \{1, \dots, k\}$,  $F^\phat_{i\pm} \sim_c 1$.
\end{remark}

\begin{proof}[Proof of Proposition \ref{Prl}]

Since $\phat$ is smooth at the poles,
\begin{align*}
F^{\phat}_{k+}= F^{\phio}_+(\sss_k) = \frac{\sech^2 \sss_k}{\tanh \sss_k}.
\end{align*}
Using this with \eqref{EFcomp} and estimating a uniform lower bound for $\tanh \sss_k$, we conclude 
\begin{align}
\label{EupFavg}
\sech^2 \sss_k \sim_{C\exp{ (|\bsigma|_{\ell^1}+|\xibold|_{\ell^\infty})}} F^\phat_{avg}.
\end{align}
By fixing $\epsilonunder_1$ sufficiently small, we can ensure the constant $C$ in \eqref{EupFavg} is less than $10$ and also the constants in the estimates in the statements (i)-(iii) are independent of $k$.  

Using Lemma \ref{Lintid}, we have (where $i=2, \dots, k$)
\begin{equation}
\label{Annuli}
\begin{aligned}
 F^\phat_{1-}&=2\tanh \sss_1 +\int_{0}^{\sss_1} \!  \left(F^\phat_-(\sss)\right)^2d\sss\\
 F^\phat_{i-}+F^\phat_{i-1+} &= 2(\tanh \sss_{i} - \tanh \sss_{i-1}) + \int_{\sss_{i-1}}^{\sss_i} \!  \left(F^\phat_-(\sss)\right)^2d\sss, \\ 
 F^\phat_{k+}&=2(1-\tanh \sss_k) + \int_{\sss_{k}}^{\infty} \!  \left(F^\phat_-(\sss)\right)^2d\sss.
\end{aligned}
\end{equation}
Note $\int_{\sss_{i-1}}^{\sss_i} \frac{\sech^2 \sss}{\sech^2 \sss}d\sss = \int_{\sss_{i-1}}^{\sss_i} \frac{1}{\sech^2 \sss} d\left( \tanh \sss\right)< C(\tanh \sss_i - \tanh \sss_{i-1})/F^\phat_\ave$ and also that
$\int_{\sss_k}^\infty \left( F^\phat_-(\sss)\right)^2 d\sss = \int_{\sss_k}^\infty \frac{\sech^4 \sss}{\tanh^2 \sss} d\sss$.
Using \eqref{EFcomp} and \eqref{EupFavg} we estimate the integrals to obtain 
(where $i=2, \dots, k$)
\begin{equation}
\label{Annuli2}
\begin{aligned}
 F^\phat_{1-}&=2\tanh \sss_1 +\sss_1 O\left(\left(F^\phat_{avg}\right)^2\right)\\
 F^\phat_{i-}+F^\phat_{i-1+} 
  &=2(\tanh \sss_{i} - \tanh \sss_{i-1}) +(\tanh \sss_{i} - \tanh \sss_{i-1})O(F^\phat_{avg}), \\ 
 F^\phat_{k+}&=2(1-\tanh \sss_k) + O\left(\left(F^\phat_{avg}\right)^2\right).
\end{aligned}
\end{equation}
By summing the estimates in \eqref{Annuli2} and dividing through by $k$,  we find that $
F^\phat_{avg} = \frac{1}{k} - \frac{1}{k}O\left(F^\phat_{avg}\right)$,
which implies that $F^\phat_{avg} = \frac{1}{k}+O\left(\frac{1}{k^2}\right)$.  Substituting this estimate for $F^\phat_{avg}$ into \eqref{Annuli2}, we find 
\begin{align}
\label{Etanhest}
2\tanh \sss_1 \sim_C \frac{1}{k}, \quad \tanh \sss_i - \tanh \sss_{i-1} \sim_{C} \frac{1}{k}, \quad 
i=2, \dots, k,
\end{align}
hence
\begin{align}
\label{Etanhest2}
\tanh \sss_i \sim_C \frac{i}{k}, \quad i=1, \dots, k.
\end{align}
Summing the estimates in \eqref{Annuli2} with these improved bounds yields
\begin{align*}
kF^\phat_{avg} = 1+O\left(\frac{1}{k^2}\right),
\end{align*} 
and the first part of (i) follows.  The second statement in (i) follows from the first and \eqref{EupFavg}.
Substituting (i) into the first and last parts of \eqref{Annuli2} yields (ii).(a) and (ii).(c).  For (ii).(b), we substitute (i) into the error term of the second part of \eqref{Annuli2} to get
\begin{align}
\label{E2b}
F^\phat_{i-}+F^\phat_{i-1+} &= 2(\tanh \sss_{i} - \tanh \sss_{i-1}) + \frac{1}{\sech^2 \sss_i}O\left( \frac{1}{k^3}\right).
\end{align}
When $i \leq k/2$, (ii).(b) follows immediately from \eqref{E2b} because \eqref{Etanhest2} implies $\sech^2 \sss_i$ is bounded from below by a constant independent of $k$.  On the other hand, when $j\geq k/2$, \eqref{Etanhest2} implies $\tanh \sss_j \geq C$, so from this and \eqref{Etanhest},
\begin{align}
\label{Esechdiff}
\sech^2 \sss_{j-1} - \sech^2 \sss_j & = (\tanh \sss_j - \tanh \sss_{j-1})(\tanh \sss_j +\tanh \sss_{j-1})
\ge \frac{C}{k}.
\end{align}
Using  \eqref{EupFavg} and summing \eqref{Esechdiff} and, we find for $i\geq k/2$
\begin{align}
\label{Esechbound}
\sech^2 \sss_i \geq \frac{C(k-i+1)}{k}.
\end{align}
Substituting this bound in \eqref{E2b} completes the proof of (ii).(b).  

We now prove (iii).  From Definition \ref{dF}, on any domain on which $\phat$ is smooth,
\begin{align}
\label{EFlog}
 F^{\phat}_+(\sss) = \partial\left( \log \phat \right).
 \end{align}
Integrating \eqref{EFlog} on intervals where $\phat$ is smooth and adding, we find that for any $\sbar \in [0, \sss_k]$,
\begin{align*}
\left|\log\phat(\sbar) \right| &\leq  \int_{0}^{\sbar}\left| F^{\phat}_+(\sss)\right| \,d\sss \leq \frac{C}{k} \int_{0}^{\sss_k} \, d\sss 
\leq \frac{C}{k} \log k,
\end{align*}
where we have used \eqref{EupFavg} in combination with part (i) above to estimate $\sss_k = O\left(\log k\right)$.  Since $\phat$ is smooth at the poles, it coincides with a multiple of $\phio = \tanh \sss$ on $\{ \sss\in (\sss_k, \infty)\}$ and so in particular the estimate in the first part of (iii) holds on $\{ \sss\in (\sss_k, \infty)\}$. 

 For the second statement, it is easy to see that for $2\leq i \leq k$, $0<\sss_{i}-\sss_{i-1} <C$.  Then estimating in the same way as above, we get
\begin{align*}
\left| \log\left( \frac{\phat(\sss_i)}{\phat(\sss_{i-1})}\right)\right| \leq \int_{\sss_{i-1}}^{\sss_i} \left|F^\phat_+(\sss)\right| d\sss \leq \frac{C}{k}\int_{\sss_{i-1}}^{\sss_i} d\sss \leq \frac{C}{k}.
\end{align*}
Next, suppose that $\bsigmaunder = \zerobold$.  Fix $i\in \{ 1, \dots, k-1\}$. Integrating \eqref{EFlog} from $\sss_i$ to $\sss_{i+1}$, we find that 
\begin{align}
\label{Elogphi}
\log\left(\frac{\phat(\sss_{i+1})}{\phat(\sss_{i})}\right) &= - \int_{\sss_i}^{\sss_{i+1}} F^\phat_-(\sss)d\sss. 
\end{align}
Since $\phat$ is balanced, $F^\phat_{i+1-} = F^\phat_{i+}:= F_0$ and the restriction of $F^\phat_-$ to $[\sss_i, \sss_{i+1}]$ satisfies
\begin{align*}
\left.F^\phat_-\right|_{[\sss_i, \sss_{i+1}]}: [\sss_i, \sss_{i+1}]\rightarrow [-F_0, F_0]\quad \text{and} \quad \frac{dF^\phat_-}{d\sss} = 2 \sech^2 \sss+ F^\phat_-(\sss)^2>0.  
\end{align*}
In particular, $F^\phat_-|_{[\sss_i, \sss_{i+1}]}$ is invertible.  Reparametrizing the integral in \eqref{Elogphi} by $\left(F^\phat_-|_{[\sss_i, \sss_{i+1}]}\right)^{-1}$, we have
\begin{align*}
\log\left(\frac{\phat(\sss_{i+1})}{\phat(\sss_{i})}\right) &= - \int_{-F_0}^{F_0}  F\frac{d\sss}{dF} dF\\
&= -\int_{-F_0}^{0} \frac{ F}{2\sech^2 \sss+F^2}dF-\int_{0}^{F_0} \frac{ F}{2\sech^2 \sss+F^2}dF\\
&<0,
\end{align*}
since the first integral term is positive, the second is negative, and $\sech^2 \sss$ is monotonic in $F$ by Lemma \ref{LFmono}.  This completes the proof of (iii).
For (iv), assume $1\leq i<j\leq k$.  By summing instances of (ii).(b) above, we find
\begin{align*}
F^\phat_{i+}+\cdots +F^\phat_{j-} = 2(\tanh \sss_j - \tanh \sss_i) + O\left( \frac{j-i}{k^2(k-i+2)}\right).
\end{align*}
Using \eqref{EFcomp}, we estimate
\begin{align*}
(j-i) \exp{(-|\bsigma|_{\ell^1}-3|\xibold|_{\ell^\infty})}F^\phat_{\ave} \le  (\tanh \sss_j - \tanh \sss_i) + O\left( \frac{j-i}{k^2(k-i+2)}\right).
\end{align*}
Applying the mean value theorem to the right hand side of the preceding gives
\begin{align} 
\label{Eji}
(j-i) \exp{(-|\bsigma|_{\ell^1}-3|\xibold|_{\ell^\infty})}F^\phat_{\ave} < \sss_j - \sss_i + O\left( \frac{j-i}{k^2(k-i+2)}\right)
\end{align}
and the first statement in (iv) follows from combining \eqref{Eji} with (i) above and taking $|\bsigma|_{\ell^1}$ and $|\xibold|_{\ell^\infty}$. small enough.  The second follows from combining \eqref{Esechdiff} with parts (i) and (ii) above.
\end{proof}

\begin{remark}
\label{Rspacing}
Proposition \ref{Prl} implies that the latitudes of $L_{par}$ of RLD solutions which are close to being balanced arrange themselves in a regular way.  Indeed, recalling from \eqref{Exs} that $\tanh \sss = \sin \xx$, Proposition \ref{Prl}.(i) and (ii) together imply that when $\bsigmaunder =\zerobold$,
\begin{equation}
\begin{aligned}
\sin \xx_1 &= \frac{1}{2k} + O\left( \frac{1}{k^3}\right),\\
\sin \xx_i - \sin \xx_{i-1} &= \frac{1}{k} + O\left( \frac{1}{k^2(k-i+1)}\right), \quad i=2, \dots, k-1, \\
1 - \sin \xx_k &= \frac{1}{2k} + O\left(\frac{1}{k^2}\right).
\end{aligned}
\end{equation}
Together with elementary geometric facts about spheres, this means that for large $k$
\begin{enumerate}
\item The (extrinsic $\R^3$) distance between planes corresponding to adjacent circles in $L_{par}$ is approximately $1/k$. 
\item For $i=2, \dots, k$, the area of the annulus $\{\xx\in (\xx_{i-1}, \xx_i) \} \subset \Spheq$ is approximately $2\pi/k$. 
\end{enumerate}
Since the LD solutions we use to construct initial surfaces have configurations arising 
(in a way made precise later in Lemma \ref{Lphiavg}) 
from RLD solutions which are approximately balanced, 
the circles of $L_{par}$ in the minimal surfaces we ultimately construct are also approximately equally spaced. 
\end{remark}

\begin{prop}
\label{PODEest}
 Let $k\in \N$ and suppose that $\bsigmaunder = (\bsigma, \xibold)\in \R^{k-1}\times \R^k , \bsigmaunder'= (\bsigma', \xibold') \in \R^{k-1}\times \R^k$ satisfy $|\bsigma|_{\ell^1}+ |\xibold|_{\ell^\infty}< \epsilonunder_1/k$ and $|\bsigma'|_{\ell^1}+ |\xibold'|_{\ell^\infty}< \epsilonunder_1/k$.  Let $\phat = \phat[ \bsigmaunder: k]$ and $\phat' = \phat[ \bsigmaunder': k]$.  There is a constant $C>0$ independent of $k$ such that: 
 \begin{enumerate}[label =(\roman*).]
\item 
$\displaystyle{ \left| \Fboldunder^{\phat'} - \Fboldunder^\phat \right|_{\ell^\infty} \le 
\frac{C}{k}\left(  | \bsigma' - \bsigma|_{\ell^1}+ |\xibold' - \xibold|_{\ell^\infty}\right) }$. 

\item  
$\displaystyle{ \max_{1\leq i \leq k} \left| \tanh \sss'_i - \tanh \sss_i \right| \le 
\frac{C}{k}\left(  | \bsigma' - \bsigma|_{\ell^1}+ |\xibold' - \xibold|_{\ell^\infty}\right)  }$. 
\end{enumerate}
\end{prop}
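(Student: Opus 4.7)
The plan is to treat $\Fboldunder^{\phat[\bsigmaunder:k]}$ and $\tanh\sbold[\bsigmaunder:k]$ as smooth functions of $\bsigmaunder$ and then estimate their differences by the fundamental theorem of calculus along the straight-line path
\[
\bsigmaunder_t := \bsigmaunder + t(\bsigmaunder' - \bsigmaunder), \qquad t \in [0,1].
\]
By the triangle inequality every $\bsigmaunder_t$ satisfies the smallness assumption $|\bsigma_t|_{\ell^1}+|\xibold_t|_{\ell^\infty}<\epsilonunder_1/k$, so that Proposition \ref{Prl} and Corollary \ref{Csderiv} apply uniformly in $t$.

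First I would justify that the smooth-at-the-poles family $\bsigmaunder \mapsto \phat[\bsigmaunder:k]$ depends smoothly on $\bsigmaunder$. Since $\phat[\bsigmaunder:k] = \phat[a_{k,\bsigmaunder};\bsigmaunder]$, and Lemma \ref{Lsderiv} gives smooth dependence of the two-parameter family $\phat[\sss_1;\bsigmaunder]$ on $(\sss_1,\bsigmaunder)$, it suffices to verify via the implicit function theorem that the pole-smoothness constraint $F^\phat_{k+} = \sech^2\sss_k/\tanh\sss_k$ smoothly determines $\sss_1$ (equivalently $F_1$) as a function of $\bsigmaunder$; the required nondegeneracy follows from Corollary \ref{Csderiv}.(i) together with complementary derivative estimates for $F^\phat_{k+}$ established below.

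Next I would establish derivative estimates for $F^\phat_{i\pm}$ with respect to $(F_1,\bsigmaunder)$, analogous to those of Corollary \ref{Csderiv} for $\sss_i$. These are proved in exactly the manner of Lemma \ref{Lsderiv}: starting from the jump relations \eqref{EFxi} and the propagation between successive jumps encoded by $H^{\pm}$ (Remark \ref{rH}), one differentiates the recursive identity with respect to $F_1, \sigma_j, \xi_\ell$ and invokes both parts of Lemma \ref{LHmono} together with the estimates on $\partial\sss_i/\partial(\cdot)$ from Lemma \ref{Lsderiv}. Finally, along the path $\bsigmaunder_t$ the chain rule gives
\[
\frac{dF^\phat_{i\pm}}{dt} = \frac{\partial F^\phat_{i\pm}}{\partial F_1}\frac{dF_1}{dt} + \sum_j\frac{\partial F^\phat_{i\pm}}{\partial\sigma_j}(\sigma'_j-\sigma_j)+ \sum_\ell\frac{\partial F^\phat_{i\pm}}{\partial\xi_\ell}(\xi'_\ell - \xi_\ell),
\]
with $dF_1/dt$ extracted by implicit differentiation of the pole-smoothness constraint, and analogously for $d\sss_i/dt$. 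Passing from $\sss_i$ to $\tanh\sss_i$ via $d\tanh\sss_i/dt = \sech^2\sss_i\, d\sss_i/dt$ and using Proposition \ref{Prl} to control the $\sech^2$ factors uniformly in $i$, integration in $t\in[0,1]$ produces both (i) and (ii).

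The main obstacle is verifying the correct scaling in the chain rule. Several partial derivatives are individually large (for instance $\partial\sss_k/\partial F_1 \sim k^2$ by Corollary \ref{Csderiv}.(i)), so the stated $C/k$ bounds can only hold after a delicate cancellation built into the system consisting of the jump relations \eqref{EFxi}, the flux integral identity (Lemma \ref{Lintid}), and the pole-smoothness constraint. This cancellation is a differential reflection of the rigidity already manifested in Proposition \ref{Prl}, namely that $F^\phat_\ave = 1/k + O(1/k^3)$ and $\tanh\sss_i - \tanh\sss_{i-1} = 1/k + O(1/(k^2(k-i+1)))$ persist across the whole family, so only the higher-order corrections are sensitive to $\bsigmaunder$. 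Careful bookkeeping is needed to track these sum-of-derivative estimates while retaining the overall $1/k$ factor.
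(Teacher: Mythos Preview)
Your overall plan—implicit function theorem on the pole-smoothness constraint, then chain rule along a path—is exactly what the paper does, but you are missing the one observation that dissolves your ``main obstacle'' for part (i). By \eqref{EFxi}, every entry of $\Fboldunder^{\phat}$ is an \emph{explicit elementary} function of the single scalar $F_1 = F^{\phat}_1$ together with $\bsigmaunder$:
\[
F^{\phat}_{i\pm} \;=\; \tfrac{1}{2}(1\pm\xi_i)\Bigl(e^{\sum_{l<i}\sigma_l}\Bigr)F_1.
\]
Hence $\partial F^{\phat}_{i\pm}/\partial F_1$ is bounded by an absolute constant and $\partial F^{\phat}_{i\pm}/\partial\sigma_j$, $\partial F^{\phat}_{i\pm}/\partial\xi_\ell$ are bounded by $CF_1\le C/k$. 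So (i) reduces entirely to the single bound $|F_1'-F_1|\le (C/k)(|\bsigma'-\bsigma|_{\ell^1}+|\xibold'-\xibold|_{\ell^\infty})$. The paper obtains this by applying the implicit function theorem to the scalar map
\[
\Fcal(F_1,\bsigmaunder) \;:=\; F^{\phat[\sss_1;\bsigmaunder]}_{k+} - F^{\phio}_+(\sss_k),
\]
whose zero set is exactly the smooth-at-the-poles family. Differentiating $\Fcal$ and invoking Corollary \ref{Csderiv} gives $\partial\Fcal/\partial F_1\sim_C k$, $|\partial\Fcal/\partial\sigma_j|\le C$, $|\partial\Fcal/\partial\xi_\ell|\le C/k$, and the desired bound on $F_1$ follows. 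No cancellation is needed.

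Your worry arises because you propose to track $F^{\phat}_{i\pm}$ through the ODE propagation (via $H^\pm$ and Lemma \ref{LHmono}), which drags in the latitudes $\sss_i$ and hence the large derivatives $\partial\sss_k/\partial F_1\sim k^2$. That detour is unnecessary for (i): the latitudes do not appear in \eqref{EFxi} at all. The paper omits the proof of (ii) (``similar to the proof of (i)''), and indeed (ii) is not used elsewhere in the paper; your concern about large $\partial\sss_k/\partial F_1$ is more pertinent there, but one can again bypass it by expressing $\tanh\sss_i$ in terms of the fluxes via Lemma \ref{Lintid} (and, for $i$ near $k$, the constraint $F^{\phat}_{k+}=F^{\phio}_+(\sss_k)$ itself) rather than differentiating $\sss_i$ directly.
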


\begin{proof}
Observe that (i) follows from the estimate
\begin{align}
\label{Efluxdiff1}
\left| F^{\phat'}_1 - F^{\phat}_1 \right| \le \frac{C}{k}\left(  |\bsigma' - \bsigma |_{\ell^1} + |\xibold' - \xibold|_{\ell^\infty}\right), 
\end{align}
because for any $i\in \{1, \dots, k\}$ we may estimate (assuming \eqref{Efluxdiff1} for a moment)
\begin{align*}
2\left| F^{\phat'}_{i+} - F^\phat_{i+}\right| &= 
\left| (1+ \xi'_i ) \left( e^{\sum_{l=1}^{i-1}\sigma'_l} \right)  F^{\phat'}_1 - (1+ \xi_i)  \left( e^{\sum_{l=1}^{i-1}\sigma_l} \right)  F^\phat_{1}\right| \\
&\le \left|  (1 + \xi'_i)  \left( e^{\sum_{l=1}^{i-1}\sigma'_l} \right)  -  (1 +\xi_i)  \left( e^{\sum_{l=1}^{i-1}\sigma_l} \right) \right| F^{\phat'}_1
+   (1 + \xi_i)  \left( e^{\sum_{l=1}^{i-1}\sigma_l} \right) \left| F^{\phat'}_1 - F^{\phat}_1\right| \\
&\le \frac{C}{k}\left(  |\bsigma' - \bsigma |_{\ell^1} + |\xibold' - \xibold|_{\ell^\infty}\right)
\end{align*}
and a corresponding bound for $\left| F^{\phat'}_{i-} - F^{\phat}_{i-}\right|$ follows in an analogous way.  We now prove \eqref{Efluxdiff1}.

Fix $k\in \N$, let $\epsilonunder_1$ be as in \ref{Prl}, and 
consider the map 
defined by
\begin{align}
\label{EFcal}
\Fcal ( F_1, \bsigmaunder ) = F^{\phat[ \sss_1; \bsigmaunder]}_+(\sss_k) - F^{\phio}_+(\sss_k),
\end{align}
where $\phat[ \sss_1; \bsigmaunder]$ is as in \ref{existence} and $\sss_1$ is chosen so $F^\phat_1 = F_1$.  Clearly, $\Fcal ( F_1, \bsigmaunder) = 0$ if and only if $\phat$ is smooth at the poles.  Now let $\pointp \in \Fcal^{-1}\left(\{0\}\right)$ be arbitrary.

It follows from Lemma \ref{LFmono} and \ref{LHmono} that $\Fcal$ is $C^1$; below we estimate the partial derivatives of $\Fcal$ at $\pointp$.

Differentiating \eqref{EFcal} with respect to $F_1$ and using \eqref{EFxi} and \ref{LFmono}, we compute
\begin{align}
\label{EFF1}
\left. \frac{ \partial \Fcal }{ \partial F_1}\right |_{\pointp} &= \frac{1}{2}(1+ \xi_k) \left( e^{\sum_{l=1}^{k-1} \sigma_l} \right)  + \left(2 \sech^2 \sss_k + \left(F^{\phat}_{k+}\right)^2\right) \left.\frac{ \partial \sss_k}{\partial F_1}\right|_{\pointp}.
\end{align}
By combining \eqref{EFF1} with Corollary \ref{Csderiv}, we estimate that for $j\in \{1, \dots, k\}$ and $i\in \{1, \dots, k-1\}$,

\begin{align}
\label{EF1deriv}
\left.\frac{\partial \Fcal}{\partial F_1}\right|_{\pointp} \sim_C k, \qquad 
\left| \left.\frac{\partial \Fcal}{\partial \sigma_i}\right|_{\pointp} \right| \le C, \qquad
\left|\left. \frac{\partial \Fcal}{\partial \xi_j}\right|_{\pointp} \right| \le \frac{C}{k},
\end{align}
where $C>0$ is a constant independent of $k$.

By the implicit function theorem, locally around $(F_1, \bsigmaunder) \in \Fcal^{-1}\left( \{0\}\right)$, $\Fcal^{-1}\left(\{ 0 \} \right)$ is a graph over $\bsigmaunder$ and moreover (abusing notation slightly), for $i\in \{1, \dots, k-1\}$ and $j\in \{1, \dots, k\}$
\begin{align}
\label{Eift}
\left.\frac{\partial F_1}{\partial \sigma_i}\right|_{\bsigmaunder} = - \left(\left. \frac{ \partial \Fcal}{ \partial F_1}\right|_{\pointp}\right)^{-1} \left.\frac{ \partial \Fcal }{ \partial \sigma_i}\right|_{\pointp}, \qquad
\left.\frac{\partial F_1}{\partial \xi_j} \right|_{\bsigmaunder}= - \left( \left.\frac{ \partial \Fcal}{ \partial F_1}\right|_{\pointp}\right)^{-1}\left. \frac{ \partial \Fcal }{ \partial \xi_j}\right|_{\pointp}.
\end{align}
(i) follows by combining this with the estimates \eqref{EF1deriv}.  The proof of (ii) is similar to the proof of (i), but we omit it since we will not use (ii) in the remainder of the paper. 
\end{proof}

\section{Linearized Doubling (LD) solutions}
\label{S:LD}


\subsection*{LD solutions}
$\phantom{ab}$
\nopagebreak 
\begin{definition}[LD solutions, {\cite[Definition 3.1]{kap}}] 
\label{dLD0}
Given a finite set $L\subset \Spheq$ and a function $\tau:L\to\R$,
we define a \emph{linearized doubling (LD) solution of configuration} $(L,\tau)$ to be a function 
$\varphi\in C^\infty(\Spheq\setminus L)$ which satisfies the following conditions, 
where $\tau_p$ denotes the value of $\tau$ at $p$:
\begin{enumerate}[label=(\roman*).]
\item $\Lcalp \varphi=0$ on $\Spheq\setminus L$. 
\item  $\forall p\in L$ there is a smooth extension across $\{p\}$, 
$\varphihat_p\in C^\infty \left(\, \{p\}\cup(\Spheq\setminus (L\cup\{-p\})\,)\,\right)$,   
such that 
$$\varphihat_p=\varphi-\tau_p \, G^{\Sph^2} \circ \dbold^g_p  \quad\text{on}\quad \Spheq\setminus (L\cup\{-p\}),$$ 
where $G^{\Sph^2} \circ \dbold^g_p$   
is as in \ref{LGp}. 
\end{enumerate}
\end{definition}

\begin{remark}
\label{rLDgreen}
By Lemma \ref{Lremovablesing}, 
the class of LD solutions  
and each corresponding $\tau_p$, 
do not depend on which Green's function we use in Definition \ref{dLD0}.(ii) above. 
Note however that $\varphihat$ and its domain do. 
\end{remark}

\begin{definition}[LD solutions modulo {$\skernel [ L]$}, {\cite[Definition 3.2]{kap}}]
\label{dLD1}
Given $L$ and $\tau$ as in Definition \ref{dLD0}, 
and also a finite dimensional space $\skernel[L] \subset C^{\infty}(\Spheq)$, 
we define a \emph{linearized doubling (LD) solution of configuration} $(L,\tau,w)$ to be a function 
$\varphi\in C^\infty(\Spheq\setminus L)$ which satisfies the same conditions as in \ref{dLD0}, 
except that condition (i) is replaced by the following:
\\ 
$\phantom{abc}$ 
(i$\,{}'$). $\Lcalp \varphi=w\in \skernel[L] \subset C^\infty(\Spheq)$ on $\Spheq\setminus L$.

\end{definition} 

Note that LD solutions in the sense of Definition \ref{dLD0} are LD solutions with $w=0$ in the sense of \ref{dLD1}.  
Existence and uniqueness of $\grouptwo$-symmetric LD solutions modulo $\skernel[L]$ is a consequence of the symmetries of the construction: 

\begin{lemma}[$\grouptwo$-Symmetric LD solutions, {\cite[Lemma 3.10]{kap}}]  
\label{LLDexistence}
We assume given $\grouptwo$-invariant $L,\tau,w$, 
where $L= L[\sbold;m]$ is as in \ref{dL},  
$\tau:L\to\R$, and $w\in \skernel[L]$.  
There is then a unique $\grouptwo$-invariant LD solution modulo $\skernel[L]$ 
of configuration $(L, \tau, w)$,  
$\varphi = \varphi[L, \tau, w]$. 
Moreover, the following hold.
\begin{enumerate}[label=(\roman*).]
\item $\varphi$ and $\varphihat_p$ each depend linearly on $(\tau, w)$.

\item  $\varphi_\ave\in C^0(\Spheq\setminus(L\cap\{p_N,p_S\})\,)$
and $\varphi_\ave$ is smooth on 
$\left(\Spheq\right)^\sbold$ 
where it satisfies the ODE $\Lcalp \varphi_\ave=0$.
\end{enumerate}

If $w=0$, then we also write $\varphi = \varphi[L; \tau]$ and 
$\varphi$ is the unique $\grouptwo$-invariant LD solution of configuration $(L, \tau)$ as in \ref{dLD0}.
\end{lemma}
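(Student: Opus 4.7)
The plan is to construct $\varphi$ as a singular ansatz realising the prescribed logarithmic behaviour at $L$ plus a smooth correction obtained via Fredholm theory for $\Lcalp$ on $\Spheq$, with the $\grouptwo$-symmetry absorbing the cokernel. As ansatz, fix a $\grouptwo$-equivariant family $\{\chi_p\}_{p\in L}$ of cut-offs, each supported in $D^\chi_p(1/2)$ and identically $1$ on a smaller neighbourhood of $p$, and set
\[
\varphi_0 \; := \; \sum_{p \in L} \tau_p \, \chi_p \, G_p,
\]
with $G_p$ the local Green's function from \ref{Lgreen}. By \ref{LGp}, \ref{Lgreen}, \ref{Lremovablesing} and \ref{rLDgreen}, the function $\varphi_0 - \tau_p \, G^{\Sph^2}\circ\dbold^g_p$ extends smoothly across each $p \in L$, so $\varphi_0$ realises the singular part demanded by \ref{dLD0}(ii); the $\grouptwo$-invariance of $L$ and $\tau$ makes $\varphi_0$ itself $\grouptwo$-invariant, and because $\Lcalp G_p = 0$ wherever $\chi_p \equiv 1$, the function $E := w - \Lcalp \varphi_0$ extends to a smooth, $\grouptwo$-invariant function on all of $\Spheq$.

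I then seek $u \in C^\infty(\Spheq)^{\grouptwo}$ with $\Lcalp u = E$ and set $\varphi := \varphi_0 + u$. The operator $\Lcalp = \Delta_g + 2$ is self-adjoint elliptic on $\Spheq$ with kernel $K = \operatorname{span}\{x_1, x_2, x_3\}$ (the first spherical harmonics), so Fredholm solvability requires $E \perp K$ in $L^2(\Spheq)$. The decisive observation is that for $m \ge 2$ no nonzero element of $K$ is $\grouptwo$-invariant: under $\xbartilde$ the function $x_3$ is odd, while on $\operatorname{span}\{x_1, x_2\}$ the reflections $\ybartilde$ and $\ybartilde_{\pi/m}$ generate a dihedral action with no nonzero fixed vector. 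Hence integration of the $\grouptwo$-invariant $E$ against each $x_i$ produces zero, and $u$ exists and is smooth by elliptic regularity; the same observation gives $K \cap C^\infty(\Spheq)^{\grouptwo} = \{0\}$, whence $u$ is unique. Uniqueness of $\varphi$ is then immediate: any two $\grouptwo$-invariant LD solutions of configuration $(L, \tau, w)$ have a smooth $\grouptwo$-invariant $\Lcalp$-harmonic difference, which must vanish. Linearity (i) is inherited from the linear dependence of $\varphi_0$, $E$, and the Fredholm inverse on $(\tau, w)$, and passes through \ref{rLDgreen} to each $\varphihat_p$.

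For (ii), the circle-average $\varphi_\ave$ is well defined on every circle of latitude because on each such circle $\varphi$ has at worst $m$ integrable logarithmic singularities; continuity can fail only where a circle degenerates to a singular pole, which happens precisely on $L \cap \{p_N, p_S\}$. Since $\Lcalp$ is invariant under rotations about the polar axis, it commutes with circle-averaging, so $\Lcalp \varphi_\ave = w_\ave$; this vanishes between successive parallels of $L_{par}[\sbold]$ by the design of $\skernel[L]$, giving the ODE $\Lcalp \varphi_\ave = 0$ on $(\Spheq)^\sbold$ and smoothness by ODE regularity on each piece.

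The principal technical hurdle is the Fredholm solvability step, whose success rests entirely on the $\grouptwo$-invariance eliminating the obstruction from $K$; the remaining assertions are structural consequences of the ansatz construction, the removable-singularity results of \ref{Lremovablesing}, and the rotational invariance of $\Lcalp$.
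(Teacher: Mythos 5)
Your construction---a Green's-function ansatz $\varphi_0$ to absorb the prescribed singularities, followed by a Fredholm correction $u$ whose solvability is secured because $\grouptwo$-invariance annihilates the obstruction from the first spherical harmonics $\operatorname{span}\{x_1,x_2,x_3\}$---is the natural argument and is sound for the existence, uniqueness, and linearity claims. (The paper itself proves the lemma by citation to \cite[Lemma 3.10]{kap} rather than giving a fresh argument.)

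Your treatment of part (ii) has a gap, however. You assert that $w_\ave$ vanishes between successive parallels ``by the design of $\skernel[L]$,'' but with $\skernel[L]$ as defined in Definition~\ref{dkernel} this is false: the basis element $W_i = \Lcalp V_i$ is supported on the union of discs $D^\chi_{L_i}(2\delta)$ around the $m$ points of $L_i$, not on a tubular neighbourhood of the full circle $L_{par}[\sss_i]$. Averaging over circles of latitude gives $(W_i)_\ave = \Lcalp\,(V_i)_\ave$, and $(V_i)_\ave$ is a nontrivial rotationally invariant bump supported near $\sss_i$ which is \emph{not} annihilated by $\Lcalp$: the ODE solution $\phiunder[1,0;\sss_i]$ is matched by $V_i$ only inside each small disc, not on a full annulus, so averaging destroys the $\Lcalp$-harmonicity. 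Hence $w_\ave \ne 0$ on the band $\{\,|\sss| - \sss_i\mid < 2\delta\,\}$, which lies in $(\Spheq)^\sbold$, and your argument yields only $\Lcalp\varphi_\ave = w_\ave$, not $\Lcalp\varphi_\ave = 0$. The applications in this paper (Lemmas~\ref{Lvbal} and~\ref{Lphiavg}) invoke the lemma only with $w=0$, where the claim is trivial, but your stated proof of (ii) for general $w\in\skernel[L]$ does not go through as written.
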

\begin{proof}
This is Lemma 3.10 of \cite{kap}.
\end{proof}

%
%
\subsection*{Estimates for LD solutions on the cylinder}
$\phantom{ab}$
\nopagebreak 

The next lemma asserts a balancing law for LD solutions $\varphi$ which restricts their averages $\varphi_{avg}$. 

\begin{lemma}[Vertical balancing]
\label{Lvbal}
Suppose $\varphi = \varphi[L[\sbold; m]; \tau]$ is as in 
\ref{LLDexistence}, where $L[\sbold; m]$ and $\tau$ are as in \ref{dL}.  
Then 
\begin{align}
\label{Evbal} 
m \tau_i = \varphi_{avg}(\sss_i) F^{\varphi_{\ave}}_i, \quad i=1, \dots, k. 
\end{align}
\end{lemma}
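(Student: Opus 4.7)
The plan is to obtain the identity by applying the divergence theorem to $\Lcal_\chi \varphi = 0$ on a thin annular region on the cylinder straddling the latitude circle $\Lpar[\sss_i]$, with small disks about each of the $m$ singularities of $\varphi$ on that circle removed. Taking limits, the interior term will drop out, the boundary contributions on the top and bottom latitude circles will converge to $2\pi\,\partial_\pm\varphi_{avg}(\sss_i)$, and the small boundary circles around the singularities will each contribute $-2\pi\tau_i$, giving the desired balancing relation. The main technical point is verifying the correct coefficient $2\pi\tau_i$ at each singularity, which follows from \ref{LGp}(i) together with the conformal relation \eqref{Echig2}.

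More precisely, enumerate the singularities of $\varphi$ on $\Lpar[\sss_i]$ as $p_{ij}=(\sss_i,2\pi j/m)$ for $j=0,\dots,m-1$, and for small $\epsilon,\delta>0$ set
\[
\Omega_{\epsilon,\delta} := \{|\sss-\sss_i|<\epsilon\}\setminus \bigsqcup_{j=0}^{m-1} D^\chi_{p_{ij}}(\delta).
\]
Since $\Lcal' \varphi=0$ on $\Spheq\setminus L$, also $\Lcal_\chi \varphi = 0$ on $\Omega_{\epsilon,\delta}$, so by the divergence theorem
\[
\oint_{\partial \Omega_{\epsilon,\delta}} \partial_\nu \varphi\, ds \,+\, \int_{\Omega_{\epsilon,\delta}} 2\sech^2 \sss \cdot \varphi \, d\mu_\chi \,=\, 0,
\]
where $\nu$ is the outward unit conormal in the metric $\chi$.

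On $\partial D^\chi_{p_{ij}}(\delta)$ the outward conormal (from $\Omega_{\epsilon,\delta}$) is $-\partial_r$ where $r=\dbold^\chi_{p_{ij}}$. By \ref{dLD0}(ii), near $p_{ij}$ one has $\varphi = \tau_i\, G^{\Sph^2}\circ\dbold^g_{p_{ij}} + \varphihat_{p_{ij}}$ with $\varphihat_{p_{ij}}$ smooth; using $\dbold^g_{p_{ij}} = (\sech \sss_i)\, r \,(1+O(r))$ (from \eqref{Echig2}) and \ref{LGp}(i), we obtain $\partial_r\varphi = \tau_i/r + O(|\log r|)$, so
\[
\oint_{\partial D^\chi_{p_{ij}}(\delta)} \partial_\nu \varphi\, ds \;\longrightarrow\; -2\pi\tau_i \qquad \text{as}\ \delta\to 0.
\]
Summed over the $m$ singularities this gives $-2\pi m\tau_i$. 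On the top and bottom circles $\{\sss=\sss_i\pm\epsilon\}$ the outward conormals are $\pm\partial_\sss$ and, by definition of $\varphi_{avg}$,
\[
\oint_{\{\sss=\sss_i+\epsilon\}} \partial_\sss \varphi\, d\theta = 2\pi\,\partial_\sss\varphi_{avg}(\sss_i+\epsilon),\qquad
-\oint_{\{\sss=\sss_i-\epsilon\}} \partial_\sss \varphi\, d\theta = -2\pi\,\partial_\sss\varphi_{avg}(\sss_i-\epsilon).
\]

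Letting $\delta\to 0$ and then $\epsilon\to 0$: the interior integral of $2\sech^2\sss\cdot \varphi$ vanishes because $\varphi$ is locally integrable (log singularities) and the domain of integration has measure tending to zero; and by \ref{Npartial} the top and bottom terms converge respectively to $2\pi\,\partial_+\varphi_{avg}(\sss_i)$ and $2\pi\,\partial_-\varphi_{avg}(\sss_i)$. Combining all contributions,
\[
\partial_+\varphi_{avg}(\sss_i) + \partial_-\varphi_{avg}(\sss_i) \;=\; m\tau_i.
\]
Dividing through by $\varphi_{avg}(\sss_i)>0$ and using the definitions of $F^{\varphi_{avg}}_\pm$ in \ref{dF} and of $F^{\varphi_{avg}}_i$ in \ref{RLquant} yields \eqref{Evbal}.
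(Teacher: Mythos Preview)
Your proof is correct and follows essentially the same approach as the paper: integrate $\Lcal_\chi\varphi=0$ over a thin annulus about $\Lpar[\sss_i]$ with small disks around the singularities removed, pass to the limit, and identify the derivative jump of $\varphi_{avg}$ with $m\tau_i$. The paper's argument is terser (it just says ``using the logarithmic behavior near $L$''), whereas you spell out the $\partial_r\varphi = \tau_i/r + O(|\log r|)$ asymptotics via \ref{LGp}(i) and the conformal factor; otherwise the two proofs are the same.
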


\begin{proof}
The proof is similar to the proof of \cite[Lemma 3.10]{kap}.  
Fix $i \in \{1, \dots, k\}$.  
For $0<\epsilon_1<<\epsilon_2$ we
consider the domain 
$\Omega_{\epsilon_1,\epsilon_2}:=
D^\chi_{L_{par}[\sss_i]}(\epsilon_2) 
\setminus 
D^\chi_{L_i}(\epsilon_1) 
$.
By integrating $\Lcal_\chi \varphi=0$ on $\Omega_{\epsilon_1,\epsilon_2}$
and integrating by parts we obtain
\begin{align*} \int_{\partial\Omega_{\epsilon_1,\epsilon_2} }
\frac\partial{\partial\eta}\varphi \, + \, 2 \int_{\Omega_{\epsilon_1,\epsilon_2} }\!\left( \sech^2 \sss\right)\,  \varphi = 0,
\end{align*}
where $\eta$ is the unit outward conormal vector field along $\partial \Omega_{\epsilon_1,\epsilon_2}$.  By taking the limit as $\epsilon_1\to 0$ first
and then as $\epsilon_2\to 0$ we obtain using the logarithmic
behavior near $L$ that 
\begin{align}
\label{ELDbal}
m \tau_i = \partial_+ \varphi_{avg}(\sss_i) + \partial_-\varphi_{avg}(\sss_i).
\end{align}
 \eqref{Evbal} follows from  \eqref{ELDbal} after appealing to the definition of 
$F^{\varphi_{avg}}_\pm$ (recall \ref{dF} and \ref{RL}.(iii)).
\end{proof}

\begin{assumption}
\label{Aratio}
We assume from now on $k, m\in \N$ are fixed and $m$ satisfies $m > \Cmk$
where $\Cmk= \Cmk(k)>0$ is a constant which can be taken as large as necessary in terms of $k$.
\end{assumption}  

In the next lemma we convert RLD solutions $\phat$ to LD solutions $\Phi$ whose non-oscillatory part 
in the sense of \ref{davg} is a multiple of $\phat$. 
The overall scale of the LD solutions used is determined later in \ref{dtau1},  
while at the moment we only fix their scale by a suitable normalizing condition. 
The purpose of the rest of this section is to understand and estimate carefully 
$\Phi$ by decomposing it to well described parts $\Pp$ and $\Phat$, 
and an error term $\Phip$ which is estimated in Proposition \ref{LPhip}. 

\begin{lemma}
\label{Lphiavg}
Given $\bsigmaunder = (\bsigma, \xibold)\in \R^{k-1}\times \R^k$ as in \ref{Prl} 
and $m$ as in \ref{Aratio}, 
there is a unique $\grouptwo$-invariant LD solution (recall  \ref{LLDexistence})
\begin{align}
\label{dLDPhi}
 \Phi = \Phi [ \bsigmaunder: k,m] := \varphi[L; \tau'],
 \end{align}
characterized by the requirements that
 \begin{enumerate}[label=(\alph*)]
 \item $\phi = \phi[\bsigmaunder: k,m]:=\Phi_\ave$ is a multiple of $\phat[ \bsigmaunder: k]$
 \item $L = L[ \, \sbold[ \bsigmaunder: k ]\, ; m\,  ]$ (recall \ref{dL}) 
 \item $\tau'_1 = 1$ (normalizing condition).
 \end{enumerate}
 
Moreover,  the following hold.
 \begin{enumerate}[label=(\roman*).]
\item  For $i\in \{1, \dots, k\}$  we have 
$\tau_i' = \displaystyle{  \frac{\phi(\sss_i)}{m}F^\phi_i}$. 
Moreover $\tau_i'$ is independent of $m$ and satisfies 
$\tau_i' = \tau_i' [ \bsigmaunder: k] := 
\displaystyle{  \frac{\phat[\bsigmaunder: k](\sss_i)}{\phat[\bsigmaunder: k ](\sss_1)}  \left( e^{\sum_{l=1}^{i-1}\sigma_l} \right) } $.
\item  $\displaystyle{\phi[\bsigmaunder: k,m] \, = \, 
\frac{m}{\, \phat[\bsigmaunder:k](\sss_1)\,   F^{\phat[\bsigmaunder:k]}_1\,}\, \phat[\bsigmaunder: k ]}$.
\item  On $\Omega[\sss_i;m]$, $\phi = \phiunder_i + \junder_i$, where
\begin{equation}
\label{Ephij}
\begin{aligned}
\phiunder_i &:=  
\phiunder \left[ \phi(\sss_i),\symphi ; \sss_i\right] =  \phi(\sss_i)\, \phiunder \left[1, \textstyle{\frac12}\left(F^\phi_+(\sss_i) - F^\phi_-(\sss_i)\right) ; \sss_i\right],\\
\junder_i &:=  \junder \left [\textstyle{\frac{\tau'_i}{2}}; \sss_i\right].
\end{aligned}
\end{equation}
\end{enumerate}
\end{lemma}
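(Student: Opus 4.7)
\medskip\noindent\textbf{Proof plan.} The strategy is to reverse-engineer $\tau'$ from the requirements (a), (c), apply Lemma~\ref{LLDexistence} to obtain $\Phi$, and then verify that $\Phi_{avg}$ is indeed the prescribed multiple of $\phat[\bsigmaunder:k]$. First, if $\Phi=\varphi[L;\tau']$ is any $\grouptwo$-invariant LD solution whose average $\Phi_{avg}$ is a positive multiple $c\,\phat[\bsigmaunder:k]$, then since the scale-invariant flux $F$ is unchanged by scaling (Remark~\ref{rFlux}), the vertical balancing Lemma~\ref{Lvbal} forces
\[
m\tau'_i \;=\; \Phi_{avg}(\sss_i)\,F^{\Phi_{avg}}_i \;=\; c\,\phat(\sss_i)\,F^{\phat}_i.
\]
Setting $i=1$ and using $\tau'_1=1$ yields $c=m/\bigl(\phat(\sss_1)\,F^{\phat}_1\bigr)$, which in turn gives the first displayed formula of (i) and, after substituting the flux identity \eqref{EFxi}, the second formula; moreover both are manifestly independent of $m$. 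The formula in (ii) then reads off directly as $c\,\phat[\bsigmaunder:k]$.

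With $\tau'$ now defined by the formula in (i) and $L:=L[\sbold[\bsigmaunder:k];m]$, set $\Phi:=\varphi[L;\tau']$ using Lemma~\ref{LLDexistence}; this establishes existence and uniqueness of $\Phi$ subject to (b), (c). To prove (a), I will compare $\Phi_{avg}$ and $c\,\phat$. Both are rotationally invariant and $\grouptwo$-covariant, both satisfy the Jacobi ODE $\Lcal_\chi\phi=0$ on $\cyl^{\sbold}$ (for $\Phi_{avg}$ this is part (ii) of Lemma~\ref{LLDexistence}; for $c\,\phat$ it is built into Definition~\ref{RL}), and both are smooth at the poles---$c\,\phat$ by the choice of $\phat[\bsigmaunder:k]$ from Notation~\ref{Nphik} and Proposition~\ref{existence}.(iii), and $\Phi_{avg}$ because $\Phi\in C^\infty$ near the poles. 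Finally, at each jump latitude $\sss_i$ the balancing identity \eqref{ELDbal} for $\Phi_{avg}$ reads $\partial_+\Phi_{avg}(\sss_i)+\partial_-\Phi_{avg}(\sss_i)=m\tau'_i$, which by the choice of $\tau'$ is precisely the same jump condition satisfied by $c\,\phat$. Since the continuous, piecewise smooth solution of $\Lcal_\chi\phi=0$ on $\cyl$ is determined uniquely by its value and one-sided derivatives at $\sss=0$ together with the jump data at $\sbold$, we conclude $\Phi_{avg}=c\,\phat$, establishing (a) and the explicit form in (ii).

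For part (iii), by Assumption~\ref{Aratio} we may take $m$ large enough in terms of $k$ that $\Omega[\sss_i;m]$ contains no jump latitude other than $\sss_i$; hence both $\phi$ and the candidate $\phiunder_i+\junder_i$ are piecewise smooth on $\Omega[\sss_i;m]$ and solve $\Lcal_\chi=0$ on $\{\sss\in[\sss_i-3/m,\sss_i]\}$ and $\{\sss\in[\sss_i,\sss_i+3/m]\}$. By Definition~\ref{dauxode} applied to $\phiunder_i$ and $\junder_i$, at $\sss=\sss_i$ we have $(\phiunder_i+\junder_i)(\sss_i)=\phi(\sss_i)+0$ and
\[
\partial_{\pm}(\phiunder_i+\junder_i)(\sss_i)\;=\;\pm\tfrac12\bigl(\partial_+\phi(\sss_i)-\partial_-\phi(\sss_i)\bigr)+\tfrac{m\tau'_i}{2}.
\]
Using the vertical balancing $m\tau'_i=\partial_+\phi(\sss_i)+\partial_-\phi(\sss_i)$ established in the previous paragraph, these one-sided derivatives coincide with $\partial_\pm\phi(\sss_i)$, so uniqueness for the Jacobi ODE with matching initial data on each side of $\sss_i$ yields $\phi=\phiunder_i+\junder_i$ on $\Omega[\sss_i;m]$. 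The main bookkeeping step---and the only place where anything is at stake---is the identification of $\tau'$ in (i) via the balancing law; all subsequent identities are then forced by ODE uniqueness.
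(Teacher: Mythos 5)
Your overall approach matches the paper's: use the vertical balancing law (Lemma~\ref{Lvbal}) to pin down $\tau'$, invoke Lemma~\ref{LLDexistence} for existence and uniqueness of $\Phi$, then verify (a) by comparing $\Phi_\ave$ with $c\phat$, and finish (iii) via one-sided ODE data at each $\sss_i$. Parts (i), (ii), and (iii) are handled correctly.

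There is, however, a gap in the concluding step for (a). You assert that a continuous piecewise-smooth solution of $\Lcal_\chi\phi=0$ on $\cyl$ is determined by its value and one-sided derivatives at $\sss=0$ together with the jump data at $\sbold$, and from this you conclude $\Phi_\ave=c\,\phat$. That determination statement is true, but to apply it you would need to know $\Phi_\ave(0)=c\,\phat(0)$ — and this is precisely what is \emph{not} available a priori: evenness gives vanishing derivative at $\sss=0$ for both functions, but $c$ is fixed by the normalization $\tau'_1=1$ while $\Phi_\ave(0)$ comes from the LD solution, so the values at $\sss=0$ are an unknown until the end. The paper closes the argument differently: set $f:=c\,\phat-\Phi_\ave$; your jump computation shows $f$ has vanishing derivative jumps at every $\sss_i$, so $f$ is $C^1$, hence $C^\infty$, on all of $\cyl$ and satisfies $\Lcal_\chi f=0$ globally. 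Together with smoothness at the poles (which you correctly noted for both functions but did not ultimately use), $f$ extends to $C^\infty_{|\sss|}(\Spheq)$ and lies in the kernel of $\Lcalp$. That kernel is spanned by the first spherical harmonics, and its rotationally invariant, even-in-$\sss$ part is trivial — equivalently, $f$ would have to be a multiple of $\phie$, which blows up at the poles — so $f=0$. In other words, smoothness at the poles is not an auxiliary observation in your list: it is the very condition that substitutes for the unknown initial value at $\sss=0$, and it is what your final sentence should invoke.
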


\begin{proof}
Suppose $\Phi$ is as in \eqref{dLDPhi} and satisfies (a)-(c).  
Let $c$ be such that $\phi=c\phat$ and  $i\in \{1, \dots, k\}$. 
Using Lemma \ref{Lvbal} to solve for $\tau'_i$, we immediately conclude $\tau'_i = \phi(\sss_i) F^\phi_i /m$; 
furthermore using Lemma \ref{Lvbal}, (a)-(c) above, and \eqref{Exi}, we compute
 \begin{equation}
  \label{Evbalphi}
 \begin{aligned}
\tau'_i &= \frac{\tau'_i}{\tau'_1} = \frac{\phi(\sss_i)}{\phi(\sss_1)} \frac{F^{\phi}_i}{ F^{\phi}_1} = \frac{\phat(\sss_i)}{\phat(\sss_1)} \frac{ F^\phat_i}{F^\phat_1} = \frac{\phat(\sss_i)}{\phat(\sss_1)}  \left( e^{\sum_{l=1}^{i-1} \sigma_l} \right) ,\\
1 &= \tau'_1 = \frac{\phi(\sss_1)}{m} F^{\phi}_1 = \frac{ c \phat(\sss_1)}{m} F^\phat_1.
 \end{aligned}
 \end{equation}
We conclude from these equations that (a)-(c) imply (i) and (ii).  
In particular, the second equation in (i) determines $\tau'$ and hence uniqueness follows from Lemma \ref{LLDexistence}.   

To prove existence we define $L$ by (b) and $\tau'$ by the second equation in (i). 
We then define $\Phi$ by \eqref{dLDPhi} and we verify that $\Phi_\ave = c\phat$, where $c$ is defined by $c \phat(\sss_1)  F^{\phat}_1 = m$: 
Let $i\in\{1, \dots, k\}$. By Lemma \ref{Lvbal}, it follows that $m \tau'_i = \Phi_\ave(\sss_i) F^{\Phi_\ave}_i$.  
By the definitions of $\tau'_i$ and $c$, we have $ m \tau'_i = c \phat(\sss_i ) F^\phat_i$.  
Since $F^\phat_i = F^{c\phat}_i$, by equating the right hand sides of the preceding equations, 
we conclude that the function $f := c\phat - \Phi_{avg}$ satisfies 
\begin{align}
\partial_+f(\sss_i)+\partial_-f(\sss_i) = 0, \quad i=1, \dots, k.
\end{align}
This amounts to the vanishing of the derivative jumps of $f$ at each $\sss_i$.  
Clearly $f$ is smooth at the poles and satisfies $\Lcal_\chi f = 0$ in between the $\sss_i$.  
Hence we conclude $f\in C^\infty(\Spheq)$ and satisfies $\Lcalp f = 0$ everywhere.  
By the symmetries of $f$, we conclude $f = 0$.

It remains to check (iii).  
The vertical balancing equation \eqref{Evbal} implies
\begin{align*}
\label{eqn: LD vertical balancing}
m \tau'_{i} =  \partial_+ \phi(\sss_i) + \partial_-\phi(\sss_i),
\end{align*}
so from the definition of $\junder$ in \ref{dauxode},
\[ \partial_+\junder_i (\sss_i)  = \partial_-\junder_i (\sss_i)= \frac{\partial_+\phi+\partial_-\phi}{2}(\sss_i). \] 
Therefore, $\phi - \junder_i$ satisfies
\begin{align*}
\partial_+ (\phi - \junder_i)(\sss_i) = \frac{ \partial_+ \phi - \partial_- \phi}{2}(\sss_i) = - \partial_-(\phi - \junder_i)(\sss_i).
\end{align*}
Hence, $\phi - \junder_i\in  C^1_{\sss}\left(\Omega[\sss_i;m]\right)$ and $\Lcal_\chi(\phi-\junder_i)  = 0$.  
By uniqueness of ODE solutions,  $\phi - \junder_i = \phiunder_i$. 
The second expression in \eqref{Ephij} follows from this by \ref{Rauxode}.
\end{proof}

%
%
\begin{definition}
\label{dGhat}
For $i=1, \dots, k$ we define numbers $A_i: = \tau'_i \log \delta$ (recall \eqref{Edelta}).  
We define $\Pp \in C^{\infty}_{\sym} \left(\cyl \setminus L\right)$ by requesting that 
it is supported on $D^\chi_L(3\delta)$ where it is defined by 
\begin{align*}
\Pp = 
  \Psibold[2\delta, 3\delta; \dbold^\chi_{p_i}](\tau'_i G_{p_i} - \phiunder[A_i,0; \sss_i],  0)
 \quad &\text{on } \quad  D^\chi_{p_i}(3\delta), 
\end{align*} 
for $i=1, \dots, k $, 
where the discs $D^\chi_{p_i}(3\delta)$ are disjoint by 
Assumption \ref{Aratio} and Proposition \ref{Prl}. 
\end{definition}

\begin{lemma}
\label{LGhatest}
The following hold.
\begin{enumerate}[label=(\roman*).]
\item  $\left \|\Pp: C_\sym^{j}( \, \cyl \setminus D^\chi_L(\delta)\,,\chitilde \, )\right\|\le C(j)\,$ and $\Pp$ is supported on $D^\chi_L(3\delta)\setminus L$.

\item  $\left\| \Acalsssi \Pp : C_\sym^j(   D^\chi_L(3\delta)\setminus D^\chi_{L}(2\delta) \, , \chitilde \, ) \right\| \le C(j)/m$.
\end{enumerate}
\end{lemma}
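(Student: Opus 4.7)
The plan is to handle (i) and (ii) separately by localizing to a single disk $D^\chi_{p_i}(3\delta)$ at a time (the disks being disjoint by Assumption \ref{Aratio} and Proposition \ref{Prl}), and in each case to exploit the cancellation built into the choice $A_i = \tau'_i \log \delta$. The support statement and smoothness in (i) are immediate from Definition \ref{dGhat} and the support properties of $\psicut[2\delta,3\delta]$. For the $C^j(\chitilde)$ bound, I fix $i$ and work on the annulus $D^\chi_{p_i}(3\delta)\setminus D^\chi_{p_i}(\delta)$. The cutoff factor $\psicut[3\delta,2\delta]\circ \dbold^\chi_{p_i}$ has its transition length $\delta=1/(9m)$ matched to the natural scale of $\chitilde$, so it is uniformly $C^j(\chitilde)$-bounded. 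It remains to control $\tau'_i G_{p_i}-\phiunder[A_i,0;\sss_i]$.

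By Lemma \ref{Lgreen}(ii), $G_{p_i}(q)=\log r + O(r^2|\log r|)$ where $r:=\dbold^\chi_{p_i}(q)$, with the error a smooth function coming from the construction $G_p=G'_p+w'_p$ in the proof of Lemma \ref{Lgreen}; the smoothness there will be what gives $C^j(\chitilde)$ control of the error. By the linearity in \ref{Rauxode} and Lemma \ref{Lode}(i),
\[
\phiunder[A_i,0;\sss_i]=A_i\,\phiunder[1,0;\sss_i]=A_i+A_i\cdot O_{C^j(\chitilde)}(1/m^2).
\]
Combining,
\[
\tau'_i G_{p_i}-\phiunder[A_i,0;\sss_i]=\tau'_i(\log r-\log\delta)+\tau'_i\cdot O_{C^j(\chitilde)}(r^2|\log r|)+A_i\cdot O_{C^j(\chitilde)}(1/m^2).
\]
In the scaled polar coordinate $\tilde r:=mr\in[1/9,1/3]$ on this annulus one has $\log r-\log\delta=\log(9\tilde r)$, which is a $C^j(\chitilde)$-bounded function of $\tilde r$. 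Using $\tau'_i=O(1)$ from Lemma \ref{Lphiavg}(i) and Proposition \ref{Prl}, together with $|A_i|=\tau'_i|\log\delta|=O(\log m)$, the error terms are $O((\log m)/m^2)$ in $C^j(\chitilde)$, yielding (i).

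For (ii) I work on $D^\chi_{p_j}(3\delta)\setminus D^\chi_{p_j}(2\delta)$ for some $p_j\in L_i$ and apply the Leibniz-type formula of Lemma \ref{Rprod}(i). Since $p_j$ has latitude $\sss_i$, the distance $\dbold^\chi_{p_j}$ is reflection-symmetric across $\sss=\sss_i$, and hence $\Acal_{\sss_i}\bigl[\psicut[3\delta,2\delta]\circ \dbold^\chi_{p_j}\bigr]=0$. Therefore
\[
\Acal_{\sss_i}\Pp=\bigl(\psicut[3\delta,2\delta]\circ \dbold^\chi_{p_j}\bigr)\cdot\bigl(\tau'_i\,\Acal_{\sss_i}G_{p_j}-A_i\,\Acal_{\sss_i}\phiunder[1,0;\sss_i]\bigr).
\]
The first summand is $O(1/m)$ in $C^j(\chitilde)$ by Lemma \ref{Lgreen}(iii): since $r\sim\delta=1/(9m)$ on this annulus, the weighted norm there with $\rho=r$ and weight $f=r$ translates (using $r^{-2}\chi\sim 81\,\chitilde$ on this scale) into a $C^j(\chitilde)$-bound of order $r=O(1/m)$. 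The second summand is $|A_i|\cdot O(1/m^3)=O((\log m)/m^3)$ by Lemma \ref{Lode}(iii), which is absorbed into $O(1/m)$.

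The main (essentially bookkeeping) obstacle is the careful translation between the weighted H\"older norm in Lemma \ref{Lgreen}(iii) and the unweighted $C^j(\chitilde)$ norm on the annulus; no new ideas are required beyond recording that $r\sim 1/m$ there.
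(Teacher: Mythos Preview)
Your proposal is correct and follows essentially the same approach as the paper's proof. For (i) you spell out explicitly the cancellation $\tau'_i\log r - A_i=\tau'_i\log(r/\delta)$ that makes the $C^0$ bound work, which the paper leaves implicit in its one-line reference to Lemma~\ref{Lgreen} and Definition~\ref{dGhat}; for (ii) you and the paper both reduce to $\tau'_i\,\Acal_{\sss_i}G_{p_i}$ and $A_i\,\Acal_{\sss_i}\phiunder[1,0;\sss_i]$, then apply Lemma~\ref{Lgreen}(iii) and Lemma~\ref{Lode}(iii) respectively, and your explicit observations that the cutoff is $\Acal_{\sss_i}$-invariant and that the weighted norm in \ref{Lgreen}(iii) translates to $O(1/m)$ in $C^j(\chitilde)$ on the $r\sim\delta$ annulus are exactly what underlies the paper's briefer statements.
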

\begin{proof}
The statement in (i) on the support follows from Definition \ref{dGhat} and the corresponding estimate follows from Lemma \ref{Lgreen} and Definition \ref{dGhat} (observe the scale is such that the cutoff $\Psibold$ has all derivatives bounded in the $\chitilde$ metric).  For (ii), because $\Pp$ is supported on $D^\chi_L(3\delta)\setminus L$, it will suffice to prove the estimate 
\begin{align*}
\label{Egasymmetry}
\left\| \Acalsssi \Pp : C^j( D^\chi_{p_i}(3\delta)\setminus D^\chi_{p_i}(2\delta), \chitilde \, )\right\| \le C(j)/m.
\end{align*}
Recalling Definition \ref{dGhat}, we have on $D^\chi_{p_i}(3\delta)\setminus D^\chi_{p_i}(2\delta)$
\begin{equation}
\label{EAG}
\begin{aligned}
\Acalsssi \Pp &=  \Psibold[2\delta, 3\delta; \dbold^\chi_{p_i}](\tau'_i \Acalsssi \, G_{p_i}, 0) - 
	 \Psibold[2\delta, 3\delta; \dbold^\chi_{p_i}]( \Acalsssi\,  \phiunder[A_i, 0; \sss_i], 0)\\
	 &:= (I)+(II).
 \end{aligned}
 \end{equation}
From Lemma \ref{Lgreen}.(iii) and the uniform bounds on the cutoff $\Psibold$, we have $\| (I):  C^j( D^\chi_{p_i}(3\delta)\setminus D^\chi_{p_i}(2\delta) \, , \chitilde \, ) \| \le C(j)/m$.  By Lemma \ref{Lode}.(iii), $\| (II):  C^j( D^\chi_{p_i}(3\delta)\setminus D^\chi_{p_i}(2\delta) \, , \chitilde \, ) \| \le C(j) (\log m) m^{-3}$, and these estimates complete the proof of (ii).
\end{proof}

By Definition \ref{dGhat} and Lemma \ref{Lgreen}, 
we have for each $p_i \in L$ that $\Pp = \tau'_iG_{p_i}- \phiunder[A_i, 0; \sss_i]$ on $D^\chi_{p_i}(2\delta)$.  
Using this,  Definition \ref{dLD0}.(ii) and \eqref{dLDPhi}, we see that $\Phi- \Pp$ can be extended smoothly across $L$.

In the next subsection, we will estimate $\Phi$.  
The key tool is that $\Phi$ is well approximated by $\phi$ away from $L$.  
$\phi$ is well understood by the analysis in Section \ref{S:RLD}, in particular by \ref{existence}, \ref{Prl} and \ref{PODEest}.  
Lemma \ref{Lphiavg} motivates the definition of a smooth rotationally invariant function $\Phat$, 
which is a dominant term in the decomposition of $\Phi$ in \ref{dPprime}.
Note that Assumption \ref{Aratio} and Proposition \ref{Prl} imply that $\Omega[\sss_i;m] \cap \Omega[\sss_j;  m] = \emptyset$ when $i\neq j$.

\begin{definition}
\label{DPhat}
We define $\Phat \in C^{\infty}_{|\sss|}(\cyl )$ by requesting that
 \begin{align*}
\Phat := 
\Psibold\left[ \frac{2}{m}, \frac{3}{m}; \dbold^{\chi}_{\Lpar[\sss_i]}\right] \left( \phiunder_i, \phi\right) = \phi - \Psibold\left[ \frac{2}{m}, \frac{3}{m}; \dbold^{\chi}_{\Lpar[\sss_i]}\right] \left(\junder_i, 0\right) 
\end{align*}
 on $\Omega[\sss_i;m], i=1, \dots, k$ (recall \eqref{dOmega}, \eqref{Ephij})
and that $\Phat = \phi$ on $\cyl \setminus D^{\chi}_{L_{par}}(3/m)$.
\end{definition}

We are now ready to define a decomposition $\Phi = \Pp + \Phat + \Phip$.  
The third term $\Phip$ we treat as an error term to be estimated (see Proposition \ref{LPhip}).
\begin{definition}
\label{dPprime}
We define
$\Phi', E'\in C^\infty_\sym(\cyl)$ by requesting that on
$\cyl \setminus L$ 
(recall \ref{dGhat} and \ref{DPhat}),  
\[ \Phi=\Pp+\Phat+\Phip, \qquad E'=-\Lcal_{\chitilde} \left( \Pp+ \Phat\right). \] 
\end{definition}

\begin{remark}
\label{Rdecomp}
The decomposition in Definition \ref{dPprime} in some way combines the decomposition $\Phi = \Pp + \Phi''$  \cite[Definition 5.16]{kap} with the semilocal decomposition $\Phi'' = \Phip + \phiunder[\phi_1 - A_1, \hhat(\xx_1); \xx_1]$ \cite[Definition 5.25]{kap}.  There are the following differences:
\begin{enumerate}
\item 
 Our definition of $\Pp$ transits to zero away from $L$, whereas the $\Pp$ in \cite{kap} transits to $A_1$.
 \item 
 Here, $\Phip$ is defined globally, whereas in \cite{kap}, $\Phip$ is defined only on $\Omega_1[\xx_1;m]$.
 \end{enumerate}
\end{remark}
%
%
\subsection*{Estimating $\Phat$ and $\Phip$}
$\phantom{ab}$
\nopagebreak 

We estimate the average and oscillatory parts of $\Phi$ separately.  Taking averages of both sides of the equation $\Phi =\Pp+ \Phat+\Phip$ (recall Definition \ref{davg}) we have $\Phip_{avg}, \Phip_{osc}, E'_{avg}, E'_{osc} \in C^{\infty}_{sym}(\cyl)$ and 
\begin{equation}
\begin{aligned}
\label{EWdec}
 \quad \Phip= \Phip_{avg} + \Phip_{osc}, \quad E'=E'_{avg}+E'_{osc} \quad \text{on}\quad \cyl.
\end{aligned}
\end{equation}
Furthermore, since $\Lcal_\chi \Phi$ vanishes by Definition \ref{dLD0} and $\Lcal_\chi$ is rotationally covariant,
\begin{equation}
\label{ELW}
\Lcal_{\chitilde} \Phip = E',
\qquad
\Lcal_{\chitilde} \Phip_\ave = E'_\ave,
\qquad
\Lcal_{\chitilde} \Phip_\osc = E'_\osc
\quad\text{on} \quad \cyl.
\end{equation}

Because $\Lcal_\chi \phi =\Lcal_\chi \phiunder = 0$, it follows that $E'_{\ave} = \Lcal_{\chitilde} \Phip_{\ave}$ is supported on \\ $\left(D^\chi_{\Lpar}\left( 3/m\right) \setminus D^\chi_{\Lpar}\left( 2/m\right)\right) \bigcup D^\chi_{\Lpar}\left( 3\delta\right)$.
We have the following characterization of $\Phip_{avg}$.
\begin{lemma}
\label{LPhipave}
$\Phip_\ave$ is supported on $D^\chi_{\Lpar}\left(3/m\right)$.  On $\Omega[\sss_i;m], i=1, \dots, k$,
\begin{align}
\label{Ephipavg}
\Phip_{avg} = \begin{cases}
		  \Psibold\left[ \frac{2}{m}, \frac{3}{m}; \dbold^\chi_{\Lpar[\sss_i]}\right] \left( \junder_i, 0 \right) , \quad \text{on} \quad \Omega[\sss_i;m]\setminus \Omega'[\sss_i;m]\\
	 \junder_i -\Pp_\ave, \quad  \text{on} \quad \Omega'[\sss_i;m].
			\end{cases}
\end{align}
\end{lemma}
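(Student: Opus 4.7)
The plan is to start from the basic decomposition $\Phi = \Pp + \Phat + \Phip$ of Definition \ref{dPprime}, take rotational averages on both sides, and exploit the fact that $\Phat \in C^\infty_{|\sss|}(\cyl)$ is already rotationally invariant (so $\Phat_\ave = \Phat$). This yields the master identity
\[
\Phip_\ave \;=\; \Phi_\ave - \Pp_\ave - \Phat \;=\; \phi - \Pp_\ave - \Phat,
\]
since $\phi = \Phi_\ave$ by Lemma \ref{Lphiavg}. The rest of the proof is a three-region case analysis driven by where the cutoff in the definition of $\Phat$ and the cutoff in the definition of $\Pp$ are trivial.

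First I would record the supports. By Definition \ref{dGhat}, $\Pp$ is supported on $D^\chi_L(3\delta)$, and since $3\delta = 1/(3m) < 2/m$ (by \eqref{Edelta}), in particular $\Pp$ vanishes outside $\Omega'[\sss_i; m]$ for each $i$, so $\Pp_\ave$ is supported in the union of the $\Omega'[\sss_i;m]$. By Definition \ref{DPhat}, $\Phat = \phi$ on $\cyl\setminus D^\chi_{\Lpar}(3/m)$, so on that set the master identity gives $\Phip_\ave = \phi - 0 - \phi = 0$. This proves $\Phip_\ave$ is supported on $D^\chi_{\Lpar}(3/m)$, which is the first assertion.

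Next I would treat the two subregions of $\Omega[\sss_i;m]$ separately. On the transition annulus $\Omega[\sss_i;m]\setminus \Omega'[\sss_i;m]$, the support statement for $\Pp$ above gives $\Pp_\ave \equiv 0$, while Definition \ref{DPhat} combined with the splitting $\phi = \phiunder_i + \junder_i$ from Lemma \ref{Lphiavg}.(iii) yields
\[
\phi - \Phat \;=\; \phi - \Psibold\!\left[\tfrac{2}{m},\tfrac{3}{m};\dbold^\chi_{\Lpar[\sss_i]}\right](\phiunder_i, \phi) \;=\; \Psibold\!\left[\tfrac{2}{m},\tfrac{3}{m};\dbold^\chi_{\Lpar[\sss_i]}\right](\junder_i, 0),
\]
using that $\Psibold[a,b;d](f_0,f_1)$ depends linearly on $(f_0,f_1)$ and $\Psibold[a,b;d](f, f) = f$. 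Substituting into the master identity gives the first branch of \eqref{Ephipavg}. On the inner region $\Omega'[\sss_i;m]$, the cutoff $\Psibold[2/m,3/m;\dbold^\chi_{\Lpar[\sss_i]}]$ selects its first argument, so $\Phat = \phiunder_i$ there, and the master identity together with $\phi = \phiunder_i + \junder_i$ gives $\Phip_\ave = \junder_i - \Pp_\ave$, which is the second branch.

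The only mild subtlety, which I would verify along the way, is that the three regions used (the complement of $D^\chi_{\Lpar}(3/m)$, the transition annuli, and the inner regions) are pairwise disjoint and cover $\cyl$ once we note that the sets $\Omega[\sss_i;m]$ are disjoint for distinct $i$, which follows from Assumption \ref{Aratio} and Proposition \ref{Prl}.(iv). Beyond this disjointness bookkeeping the proof is entirely a matter of substitution, so there is no real obstacle.
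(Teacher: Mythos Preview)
Your proof is correct and follows essentially the same approach as the paper: take averages of $\Phi = \Pp + \Phat + \Phip$, use that $\Phat$ is rotationally invariant so $\Phat_\ave = \Phat$ and $\Phi_\ave = \phi$, then substitute the expression for $\Phat$ from Definition \ref{DPhat} and the support of $\Pp$ from Definition \ref{dGhat}. The paper's proof is simply a terser version of what you wrote.
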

\begin{proof}
By taking averages of the equation $\Phi = \Pp + \Phat + \Phip$ and rearranging, we find
\begin{align*}
\Phip_\ave = \phi - \Phat - \Pp_\ave.
\end{align*}
Equation \eqref{Ephipavg} follows from this decomposition after substituting the expression for $\Phat$ from \ref{DPhat} and recalling that $\Pp = 0$ on $\Omega[\sss_i;m]\setminus \Omega'[\sss_i;m]$. 
\end{proof}

The decomposition $\Phi = \Pp+ \Phat + \Phip$ is designed so that $\Phip$ is small in comparison to $\Phat$ (cf. Proposition \ref{LPhip} below).  To estimate $\Phip$, we will use that it satisfies the equation $\Lcal_{\chitilde} \Phip = E'$.  First we establish relevant estimates for $E', E'_{avg}$ and $E'_{osc}$.  
\begin{lemma}
\label{LEest}
$E'$ vanishes on $D^\chi_L(2\delta)$ and $E'_{\osc}$ is supported on $D^\chi_{\Lpar}(3\delta)$. For each $i\in \{1, \dots, k\}$, the following hold.
\begin{enumerate}[label=(\roman*).]
\item 
 \begin{enumerate}[label=(\alph*)]
\item  $\left\|E':C_\sym^{j}( \Omega[\sss_i;m] ,\chitilde \, )\right\|\le C(j)\,$
\item $\left\| E'_\ave:C_\sym^{j}(\Omega[\sss_i;m]  ,\chitilde \, )\right\|\le C(j)$
\item $\left\|E'_\osc:C_\sym^{j}( \Omega[\sss_i;m],\chitilde \, )\right\|\le C(j)$
\end{enumerate}
\item  
 \begin{enumerate}[label=(\alph*)]
\item $\left\|\Acalsssi \, E' :C_\sym^{j}(  D^\chi_{\Lpar[\sss_i]}(3\delta) , \chitilde \, )\right\|\le C(j)/m\,$
\item $\left\|\Acalsssi \, E'_\ave :C_\sym^{j}(   D^\chi_{\Lpar[\sss_i]}(3\delta) \,, \chitilde \, )\right\|\le C(j)/m\,$ 
\item $\left\|\Acalsssi  \, E'_\osc :C_\sym^{j}(   D^\chi_{\Lpar[\sss_i]}(3\delta)\, , \chitilde \, )\right\|\le C(j)/m\,$.
\end{enumerate}
\end{enumerate}
\end{lemma}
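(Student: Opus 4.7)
The plan is to exploit that all the building blocks of $\Pp$ and $\Phat$---namely $G_{p_j}$, $\phiunder[A_i,0;\sss_i]$, $\phiunder_i$, $\junder_i$, and $\phi$---are annihilated by $\Lcal_\chitilde$ on the smooth part of their domain (Lemmas \ref{Lgreen}, \ref{Lphie}, \ref{dauxode}, \ref{Lphiavg}). Consequently the only places where $E' = -\Lcal_\chitilde(\Pp + \Phat)$ can be nonzero are the regions where the cutoffs $\Psibold$ in Definitions \ref{dGhat} and \ref{DPhat} have nontrivial derivatives: the annular shells $D^\chi_{p_j}(3\delta)\setminus D^\chi_{p_j}(2\delta)$ for $p_j\in L$ coming from $\Pp$, and the rotationally invariant shell $\Omega[\sss_i;m]\setminus \Omega'[\sss_i;m]$ coming from $\Phat$. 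This immediately yields the support claims: $E'$ vanishes on $D^\chi_L(2\delta)$, and on the $\Phat$-shell $E'$ depends only on $\sss$, so $E'_\osc$ is supported on $D^\chi_L(3\delta)\subset D^\chi_{\Lpar}(3\delta)$.

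For (i) I would apply the product rule $\Lcal_\chitilde(\Psibold\,u) = u\,\Delta_\chitilde\Psibold + 2\,\nabla_\chitilde\Psibold\cdot\nabla_\chitilde u$ on each shell. The two cutoffs transition over $\chitilde$-width of order one (namely $m\delta = 1/9$ and $1$, respectively), so their $\chitilde$-derivatives are bounded, and it remains to control the underlying factor $u$ uniformly in $C^j_\chitilde$. On the $\Pp$-shell the factor is $u = \tau'_i G_{p_j} - \phiunder[A_i,0;\sss_i]$: the choice $A_i = \tau'_i \log\delta$ has been made precisely to cancel the $\tau'_i\log\delta$ contribution from $\tau'_i G_{p_j}$, so writing $G_{p_j} = \log r + (G_{p_j}-\log r)$ and $\phiunder[A_i,0;\sss_i] = A_i + A_i(\phiunder[1,0;\sss_i]-1)$ and then invoking Lemma \ref{Lgreen} together with Lemma \ref{Lode}.(i), $u$ will be $O(1)$ in $C^j_\chitilde$. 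On the $\Phat$-shell the analogous factor is $\junder_i$, which by Lemma \ref{Lode}.(ii) is approximately $(\tau'_i/2)|\shatbar|$ with $|\shatbar|\in[2,3]$, again $O(1)$ in $C^j_\chitilde$ using that $\tau'_i = O(1)$ by \ref{Lphiavg}.(i) and \ref{Prl}. This yields (i)(a); parts (i)(b) and (i)(c) then follow because circle averaging commutes with $\partial_\stilde$ and is bounded by $C^0$, giving $\|E'_\ave\|_{C^j_\chitilde}\le \|E'\|_{C^j_\chitilde}$ and $\|E'_\osc\|_{C^j_\chitilde}\le 2\|E'\|_{C^j_\chitilde}$.

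For (ii), on $D^\chi_{\Lpar[\sss_i]}(3\delta)\subset \Omega'[\sss_i;m]$ we will have $\Phat = \phiunder_i$, so $\Lcal_\chitilde\Phat = 0$ and therefore $E' = -\Lcal_\chitilde\Pp$ on this neighborhood. The key step is to commute $\Acalsssi$ past $\Lcal_\chitilde$ using Lemma \ref{Rprod}.(ii), producing
\[
\Acalsssi E' \;=\; -\Lcal_\chitilde(\Acalsssi\Pp) \;-\; 2m^{-2}\,\Acalsssi[\sech^2\sss]\,\Rcalsssi\Pp.
\]
The first term will be $O(1/m)$ in $C^j_\chitilde$ by applying Lemma \ref{LGhatest}.(ii) at order $j+2$. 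For the second term, Lemma \ref{Rprod}.(iii)(b) bounds $\|\Acalsssi[\sech^2\sss]\|_{C^j_\chitilde}$ by $C/m$, while Lemma \ref{LGhatest}.(i) bounds $\Pp$ by a constant on the annular shell (noting $\Rcalsssi$ preserves the shell), yielding $O(1/m^3)$ overall. Outside the $\Pp$-shell inside $D^\chi_{\Lpar[\sss_i]}(3\delta)$, $E'\equiv 0$ so $\Acalsssi E'\equiv 0$, and these estimates give (ii)(a). Parts (ii)(b) and (ii)(c) then follow as in (i), using additionally that $\Acalsssi$ commutes with circle averaging so $\Acalsssi E'_\ave = (\Acalsssi E')_\ave$.

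The main obstacle will be (ii)(a), where we must recover an extra factor of $1/m$ from antisymmetrization even though $\Pp$ itself is only $O(1)$ on the shells. This gain is precisely the content of Lemma \ref{LGhatest}.(ii), which in turn packages the refined antisymmetry estimates of Lemmas \ref{Lgreen}.(iii) and \ref{Lode}.(iii); once those are in hand, the remainder of the proof is a careful but mechanical bookkeeping of cutoff supports.
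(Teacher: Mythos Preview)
Your proposal is correct and follows essentially the same strategy as the paper: identify the support of $E'$ from the cutoff regions in Definitions \ref{dGhat} and \ref{DPhat}, bound (i)(a) via the uniform $C^j_\chitilde$ control on $\Pp$ and $\junder_i$ over those shells, deduce (b) and (c) by averaging, and obtain (ii)(a) by commuting $\Acalsssi$ past $\Lcal_\chitilde$ via Lemma \ref{Rprod}.(ii) and invoking Lemma \ref{LGhatest}.(ii). The only cosmetic difference is that for (i)(a) on the $\Pp$-shell you unpack the cancellation of $A_i = \tau'_i\log\delta$ explicitly, whereas the paper simply cites Lemma \ref{LGhatest}.(i), which already encapsulates that estimate.
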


\begin{proof}
The statements on the support of $E'$ and $E'_\osc$ follow from \ref{dPprime}, \ref{DPhat}, and \ref{dGhat}.
Next, note that parts (b) and (c) of items (i) and (ii) follow from part (a) of the respective items by taking  averages and subtracting, so it will suffice to prove part (a) of (i) and (iii).
It follows from \ref{dPprime} that on $\Omega[\sss_i; m]$, 
\begin{align}
\label{EE'}
E' = - \Lcal_{\chitilde}\left( \Pp + \Phat\right).
 \end{align}
On $\Omega[\sss_i; m]\setminus \Omega'[\sss_i;m]$, it follows from this, \ref{dGhat} and \ref{DPhat} that
 $E' = \Lcal_{\chitilde}  \Psibold\left[ \frac{2}{m}, \frac{3}{m}; \dbold^\chi_{\Lpar[\sss_i]}\right] \left( \junder_i, 0 \right)$.  Thus, when restricted to $\Omega[\sss_i; m]\setminus \Omega'[\sss_i;m]$, the bound in (i).(a)  follows from \ref{dauxode} and the uniform bounds of the cutoff.  By \ref{DPhat} and \ref{dGhat}, $E'$ vanishes on $\Omega[\sss_i;m]\setminus D^{\chi}_{\Lpar[\sss_i]}(3\delta)$.
On $D^{\chi}_{\Lpar[\sss_i]}(3\delta)$, note that $\Lcal_{\chitilde} \Phat=0$. Since $\Lcal_{\chitilde} \Pp = 0$ on  $D^{\chi}_{L}(2\delta)$, when restricted to $D^{\chi}_{\Lpar[\sss_i]}(3\delta)$, the required bound in (i).(a) follows from Lemma \ref{LGhatest}.(i).  

For (ii).(a), consider $D^\chi_{\Lpar[\sss_i]}(3\delta)$.  Applying $\Acalsssi$ to both sides of \eqref{EE'} yields that $\Acalsssi\,  E' = -\Acalsssi \, \Lcal_{\chitilde}\,  \Pp$ on $D^\chi_{\Lpar[\sss_i]}(3\delta)$.  Since $E'$ vanishes on $D^\chi_{\Lpar[\sss_i]}(2\delta)$, it is only necessary to prove the estimate on $D^\chi_{\Lpar[\sss_i]}(3\delta)\setminus D^\chi_{\Lpar[\sss_i]}(2\delta)$.  Using \ref{Rprod}.(ii) to switch the order of $\Lcal_{\chitilde}$ and $\Acalsssi$, we find
\begin{align*}
\Acalsssi\,  E' = - \Lcal_{\chitilde}\,  \Acalsssi \, \Pp - 2m^{-2} \Acalsssi \,[ \sech^2 \sss] \, \Rcalsssi \, E' \quad \text{on} \quad D^\chi_{\Lpar[\sss_i]}(3 \delta) \setminus D^\chi_{\Lpar[\sss_i]}(2\delta).
\end{align*}
The estimate in (ii).(a) follows from this equation after using Lemma \ref{LGhatest}.(ii) above to estimate the first term and Lemma \ref{Rprod}.(iii) and (i).(a) above to estimate the second. 
\end{proof}

\begin{lemma}
\label{Lsol}
Given 
$E\in C^{0,\beta}_\sym(\cyl)$ with $E_\ave\equiv 0$ and $E$ supported on $D^\chi_{\Lpar[\sss_i]}(3\delta)$ for some $i\in \{1, \dots, k\}$, there is a unique $u \in C^{2,\beta}_\sym(\cyl)$ solving $ \Lcal_{\chitilde} u = E $ and satisfying the following.
\begin{enumerate}[label=(\roman*).]
\item  $u_\ave=0$.
\item  $ \left \|u: C^{2, \beta}_{sym}(\cyl, \chitilde, e^{-m\left| |\sss| - \sss_i \right|} )\right\| \leq C
\left \|  E: C^{0, \beta}_{sym}\left(D^\chi_{\Lpar[\sss_i]}(3\delta), \chitilde\,\right)\right\|.$ 
\item $\left\|\Acalsssi      u:C^{2,\beta}_\sym(D^\chi_{\Lpar[\sss_i]}(3\delta)\, ,\chitilde \, )\right\| 
\le \\ \hspace*{1cm} Cm^{-3} \left\| E : C_\sym^{0,\beta }(D^\chi_{\Lpar[\sss_i]}(3\delta), \chitilde \, )\right \|
+C\left\| \Acalsssi E: C_\sym^{0,\beta }(D^\chi_{\Lpar[\sss_i]}(3\delta)\, ,\chitilde \, )\right\|.$
\end{enumerate}

\end{lemma}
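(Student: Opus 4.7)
The plan is to diagonalize the problem by Fourier decomposition in $\theta$, exploiting the spectral gap forced by the symmetries, and then use local Schauder estimates. The $\grouptwo$-symmetry of $E$ together with $E_\ave \equiv 0$ restricts the Fourier expansion of $E$ in $\theta$ to cosine modes of positive frequency divisible by $m$:
\begin{align*}
E(\sss,\theta) \;=\; \sum_{j\ge 1} a_{jm}(\sss)\cos(jm\theta),
\end{align*}
and seeking $u$ with the same form decouples $\Lcal_{\chitilde} u = E$ into the family of ODEs
\begin{align*}
u_n''(\sss) + (2\sech^2\sss - n^2)\, u_n(\sss) \;=\; m^2 a_n(\sss), \qquad n = jm,\;\; j\ge 1,
\end{align*}
on $\R$. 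Since $n\ge m$ and $m$ is large by Assumption \ref{Aratio}, the zeroth-order coefficient is bounded above by $-(m^2 - 2)$, so the homogeneous equation has a fundamental pair $\{\phi_n^+,\phi_n^-\}$ decaying at $\pm\infty$ at exponential rate at least $\sqrt{m^2 - 2}\sim m$. Variation of parameters produces a unique bounded solution $u_n$ on $\R$; assembling $u=\sum_j u_{jm}(\sss)\cos(jm\theta)$ yields existence and uniqueness of $u$.

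For (ii), since $E$ is supported on the two components of $D^\chi_{\Lpar[\sss_i]}(3\delta)$, so is every $a_n$, and the variation-of-parameters representation gives the pointwise exponential bound $|u_n(\sss)| \le Cm^{-1}\|a_n\|_\infty e^{-m||\sss|-\sss_i|}$ for $\sss$ outside the support of $a_n$. Summing the series against the H\"older norm of $E$ yields the weighted $L^\infty$ bound, and interior Schauder estimates for $\Lcal_{\chitilde}$ on $\chitilde$-balls of radius $1/10$ (noting that $2m^{-2}\sech^2\sss$ is uniformly in $C^{0,\beta}(\chitilde)$) then upgrade this to the claimed weighted $C^{2,\beta}_\sym$ estimate.

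For (iii), apply $\Acalsssi$ to $\Lcal_{\chitilde} u = E$ and commute using Lemma \ref{Rprod}.(ii):
\begin{align*}
\Lcal_{\chitilde}\bigl[\Acalsssi u\bigr] \;=\; \Acalsssi E \;-\; 2m^{-2}\Acalsssi[\sech^2\sss]\,\Rcalsssi u \qquad \text{on}\quad D^\chi_{\Lpar[\sss_i]}(3\delta).
\end{align*}
By Lemma \ref{Rprod}.(iii) and the multiplicative property \eqref{E:norm:mult}, the second term on the right is bounded in $C^{0,\beta}_\sym(\chitilde)$ by $Cm^{-3}\|u\|_{C^{0,\beta}} \le Cm^{-3}\|E\|_{C^{0,\beta}}$, where the last inequality uses (ii) near the support of $E$. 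Since $\Acalsssi u$ vanishes on $\{\sss=\sss_i\}$ and inherits the symmetries of $u$, applying local Schauder estimates (or equivalently repeating the Fourier argument with the new right-hand side) then produces the $C^{2,\beta}$ bound of (iii).

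The main obstacle is transferring the mode-by-mode pointwise decay into the claimed weighted $C^{2,\beta}_\sym$ estimate when $E$ has only H\"older regularity; the critical input is the gap $n \ge m$ supplied by the $m$-fold symmetry, which both furnishes the exponential decay rate $m$ in the weight and makes the Fourier sum converge. Without the $m$-fold symmetry and zero-average hypothesis, low-frequency modes would destroy the exponential decay and the estimate would fail.
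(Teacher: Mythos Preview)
Your approach is essentially the paper's: Fourier decomposition in $\theta$, mode-by-mode Green's function/variation of parameters, exponential decay from the spectral gap $n\ge m$, then interior Schauder. The paper makes this explicit by writing down the Green's function
\[
\Gcal_n(\sss,\xi)=\frac{1}{2n(1-n^2)}
\begin{cases}
e^{n(\sss-\xi)}(n+\tanh\xi)(n-\tanh\sss),&\sss\le\xi,\\
e^{n(\xi-\sss)}(n-\tanh\xi)(n+\tanh\sss),&\sss\ge\xi,
\end{cases}
\]
for $\Lcal_\chi-n^2$, and your part (iii) via the commutator identity of Lemma~\ref{Rprod}.(ii) is exactly what the paper does as well.

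There is, however, one genuine gap in your sketch: the summability of the Fourier series. By recording the decay rate only as ``at least $\sqrt{m^2-2}\sim m$'' you obtain a bound $|u_n(\sss)|\le Cm^{-1}\|a_n\|_\infty e^{-m||\sss|-\sss_i|}$ with no $j$-dependence at all (here $n=jm$). Summing this over $j\ge1$ requires $\sum_j\|a_{jm}\|_\infty<\infty$, but H\"older regularity of $E$ gives only $\|a_{jm}\|_\infty=O((jm)^{-\beta})$, which diverges for $\beta<1$. The fix is to keep the true rate $n=jm$ rather than its lower bound: the Green's function prefactor is $O(1/n)$, and integrating the kernel $e^{-n|\sss-\xi|}$ contributes another factor $O(1/n)$, so after multiplying by the $m^2$ from $\Lcal_{\chitilde}=m^{-2}\Lcal_\chi$ the $q$th mode is $O(q^{-2})\|E\|$, which sums. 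This is precisely the $C/q^2$ bound the paper records.

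A smaller point on (iii): once you commute, the new right-hand side $-2m^{-2}\Acalsssi[\sech^2\sss]\,\Rcalsssi u+\Acalsssi E$ is no longer supported in $D^\chi_{\Lpar[\sss_i]}(3\delta)$ (the first term is global), so ``repeating the Fourier argument'' needs the exponential decay of $u$ from (ii) rather than compact support of the source. The paper makes this modification explicit; your sketch should too.
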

\begin{proof}
Since the kernel of $\Lcal_{\chitilde} : C^{2,\beta}_\sym(\Spheq)\rightarrow C^{0,\beta}_\sym(\Spheq)$ is trivial by the symmetries (recall Lemma \ref{LLDexistence}), the existence and uniqueness of $u$ is clear.  Since $\Lcal_{\chitilde}$ is rotationally covariant, $\Lcal_{\chitilde} u_{avg} = E_{avg} = 0$ and (i) follows. 

The equation $ \Lcal_{\chitilde} u = E $ is equivalent to 
\begin{align}
\label{Ecylu}
\left( \Delta_{\chi}  + 2\sech^2 \sss \right) u =  m^2E.
\end{align}
We use separation of variables to establish the estimate (ii) (see \cite[Proposition 5.15]{Wiygul:s} 
for a similar technique). Recall $\{1, (\cos n \theta)_{n\in \N},  (\sin n \theta)_{n\in \N}\}$ is a basis for $L^2(\Sph^1)$.  Define $L^2$-projections
\begin{equation}
\label{EL2proj}
\begin{aligned}
E_0(\sss) &= \frac{1}{2\pi} \int_0^{2\pi} E(\sss, \theta) d\theta\\
E_{n, even}(\sss) &= \frac{2}{\pi} \int_0^{2\pi} E(\sss, \theta) \cos (n\theta) d\theta\\
E_{n, odd}(\sss) &= \frac{2}{\pi} \int_0^{2\pi} E(\sss, \theta) \sin (n\theta) d\theta.
\end{aligned}
\end{equation}
The assumption $E_{\ave} = 0$ implies $E_0 \equiv 0$.  By the symmetries, $E_{n, odd} \equiv 0$ and $E_{n, even} \equiv 0$ for all $n$ such that $m$ does not divide $n$.  The Fourier expansion of $u$ then satisfies 
\begin{align}
u(\sss, \theta) = \sum_{q=1}^{\infty} \widehat{u}_{mq, even}(\sss) \cos \left(mq\theta\right)+ 
\sum_{q=1}^{\infty} \widehat{u}_{mq, odd}(\sss) \sin \left(mq\theta\right)
\end{align}
for appropriate functions $\widehat{u}_{n, even}(\sss), \widehat{u}_{n, odd}(\sss)$ described as follows.  Separating variables leads us to consider the eigenspace $\left \{  u \in C^\infty_\sss (\cyl): \Lcal_\chi u = n^2 u\right\} $,
which is spanned by
\begin{align}
u_{\pm n} = (\pm n - \tanh \sss) e^{\pm n\sss}. 
\end{align}
From this, it can be checked that for any $n\geq 2$, $\Lcal_\chi - n^2$ has associated Green's function
\begin{align}
\label{EGreen}
\Gcal_{n}(\sss, \xi) = 
\frac{1}{2n(1-n^2)}
\begin{cases}
e^{n(\sss - \xi)} (n+\tanh \xi)(n - \tanh \sss)  \quad \text{for} \quad \sss\leq \xi \\
e^{n(\xi - \sss)}(n - \tanh \xi)(n+ \tanh \sss) \quad \text{for} \quad \sss \geq \xi. 
\end{cases}
\end{align}
For each $q\in \N$, we have then using that $E$ is supported on $\Omega[\sss_i;m]$
\begin{equation}
\begin{aligned}
\label{Eun}
\widehat{u}_{mq, even}(\sss) &=m^2 \int_{-\infty}^{\infty} \Gcal_{mq}(\sss, \xi) E_{mq, even}(\xi) d\xi\\ 
&= \frac{2m^2}{\pi} \int_{ \sss_i - 3\delta}^{\sss_i+3 \delta}\int_{0}^{2\pi} \Gcal_{mq}(\sss, \xi) E(\xi, \theta) \cos(mq \theta) d\theta d\xi ,\\
\widehat{u}_{mq, odd}(\sss) &= \frac{2m^2}{\pi} \int_{ \sss_i - 3\delta}^{\sss_i+3 \delta}\int_{0}^{2\pi} \Gcal_{mq}(\sss, \xi) E(\xi, \theta) \sin(mq \theta) d\theta d\xi.
\end{aligned}
\end{equation}

A straightforward estimate of \eqref{Eun} implies that for all $q\in \N$,
\begin{equation}
\begin{aligned}
\left| \uhat_{mq, even} \right| \le \frac{C}{q^2}\left \| E: C^0_{sym}(D^\chi_{\Lpar[\sss_i]}(3\delta), \chitilde \, )\right\|,\\ 
\left| \uhat_{mq, odd} \right| \le \frac{C}{q^2}\left \| E: C^0_{sym}(D^\chi_{\Lpar[\sss_i]}(3\delta), \chitilde \, )\right\|.
\end{aligned}
\end{equation} 
Therefore, 
\[ \left\|u: C^{0}_{sym}\left(D^\chi_{\Lpar[\sss_i]}(3\delta), \chitilde\,  \right)\right\| \leq C\left\| E:  C^{0}_{sym}\left(\Omega[\sss_i;m], \chitilde\,\right)\right\|.\] 
Since $\Lcal_{\chitilde} u = E$, the $C^0$ bound above implies with the Schauder estimates that
\begin{align}
\label{Eu'schauder}
\left\|u: C^{2, \beta}_{sym}\left(D^\chi_{\Lpar[\sss_i]}(3\delta), \chitilde \right) \right\| \leq C\left\|E:  C^{0, \beta}_{sym}\left(D^\chi_{\Lpar[\sss_i]}(3\delta), \chitilde\,  \right)\right\|.
\end{align} 
Combining \eqref{Eu'schauder} with the exponential decay from \eqref{EGreen} yields
\begin{align}
\label{Eu'cylinderest}
\left\| u: C^{2, \beta}_{sym}\left(\cyl, \chitilde, e^{-m\left| |\sss| - \sss_i \right|}\right)\right\| 
\leq C \left\|  E: C^{0, \beta}_{sym}\left(D^\chi_{\Lpar[\sss_i]}(3\delta), \chitilde\, \right)\right\|.
\end{align}
This proves (ii). 
Applying $\Acalsssi$ to both sides of \eqref{Ecylu} and using Lemma \ref{Rprod}.(iii), we obtain
\begin{align*}
\Lcal_\chitilde [  \Acalsssi u ]=- 2m^{-2} \Acalsssi [ \sech^2 \sss ] \, \Rcalsssi u +  \Acalsssi E.
\end{align*}
Although $\Acalsssi E-  2m^{-2} \Acalsssi \, [ \sech^2 \sss ] \, \Rcalsssi u$ is not supported on $D^\chi_{\Lpar[\sss_i]}(3\delta)$, it has average zero, so a straightforward modification of the argument leading up to \eqref{Eu'schauder} by replacing the assumption that the inhomogeneous term is compactly supported  with the assumption (from (ii) above) that the right hand side has exponential decay away from $D^\chi_{\Lpar[\sss_i]}(3\delta)$, we conclude that
\begin{align*}
\left\| \Acalsssi\,  u: C^{2, \beta}_\sym \left(D^\chi_{\Lpar[\sss_i]}(3\delta) , \chitilde\, \right)\right\|
&\le C\left\| \Acalsssi\,  E: C^{0, \beta}_\sym \left(D^\chi_{\Lpar[\sss_i]}(3\delta) , \chitilde\, \right)\right \|\\
&+ C\left\|m^{-2} \Acalsssi \, [ \sech^2 \sss ] \Rcalsssi u: C^{0, \beta}_\sym \left(D^\chi_{\Lpar[\sss_i]}(3\delta) , \chitilde\, \right)\right \|.
\end{align*}
(iii) follows after using \eqref{E:norm:mult}, Lemma \ref{Rprod}.(iii), and part (i) above to estimate the last term.
\end{proof} 

Recall that $E'_\osc$ is supported on $D^\chi_{L_{par}}(3\delta)$.  With this and Lemma \ref{Lsol} in mind, we make the following decompositions. 
\begin{definition}
\label{Deosc}
For $i=1, \dots, k$, we define $E'_{\osc, i}\in C^{\infty}_{\sym}(\cyl)$ and $\Phip_i \in C^{\infty}_{\sym}(\cyl)$ by requesting that $E'_{\osc,i}$ is supported on $D^\chi_{\Lpar[\sss_i]}(3\delta)$ and that 
\begin{align*}
\sum_{i=1}^k E'_{\osc, i} = E'_\osc, \quad \sum_{i=1}^k \Phip_{\osc, i} = \Phi'_\osc,
\quad \Lcal_{\chitilde} \, \Phip_{\osc, i} = E'_{osc, i}.
\end{align*}
\end{definition}
The functions $\Phi'_i$ are well-defined by Lemma \ref{Lsol}.  By combining Definition \ref{Deosc} with Lemma \ref{Lsol}, we get global estimates for $\Phip_\osc$.  
\begin{lemma}  
\label{LPhip est}
The following estimates hold. 
\begin{enumerate}[label=(\roman*).] 
\item \begin{enumerate}[label=(\alph*)]
	\item $\left\| \Phip_{\osc, i} : C^{j}_\sym\left( \cyl, \chitilde, e^{-m\left| |\sss| - \sss_i\right|}\right)\right\|\le C(j)$.
	\item 
	 $\left\| \Acalsssi \Phip_{\osc, i} : C^{j}_\sym\left(D^\chi_{\Lpar[\sss_i]}(3\delta), \chitilde\, \right)\right\| \le C(j)/m$.
	 \end{enumerate}
\item  $\left\| \Phip_{osc} : C^{j}_\sym\left( \cyl, \chitilde\, \right)\right \| \le C(j).$

\item $\left\| \Acalsssi \Phip_\osc : C^{j}_\sym\left(D^\chi_{\Lpar[\sss_i]}(3\delta), \chitilde\, \right)\right\| \le C(j)/m$. 
\end{enumerate} 
\end{lemma}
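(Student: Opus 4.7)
The plan is to treat the semilocal summands $\Phip_{\osc,i}$ first and then reconstruct $\Phip_\osc$ by summation.

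First I would observe that, by Assumption \ref{Aratio} and Proposition \ref{Prl}, the discs $D^\chi_{\Lpar[\sss_i]}(3\delta)$ are pairwise disjoint, so the decomposition in \ref{Deosc} forces $E'_{\osc,i}$ to be simply the restriction of $E'_\osc$ to $D^\chi_{\Lpar[\sss_i]}(3\delta)$ extended by zero, and its circle-wise average vanishes because $E'_\osc = E' - E'_\ave$. Hence Lemma \ref{Lsol} applies directly with $u = \Phip_{\osc,i}$ and $E = E'_{\osc,i}$. Plugging the bounds $\|E'_{\osc,i}: C^{j,\beta}_\sym(D^\chi_{\Lpar[\sss_i]}(3\delta),\chitilde\,)\|\le C(j)$ from Lemma \ref{LEest}.(i)(c) into Lemma \ref{Lsol}.(ii) yields the $C^{2,\beta}$ version of (i)(a), and combining that with the bound $\|\Acalsssi E'_{\osc,i}\|\le C(j)/m$ from Lemma \ref{LEest}.(ii)(c) in Lemma \ref{Lsol}.(iii) yields (i)(b). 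The passage from $C^{2,\beta}$ to $C^j$ is a standard Schauder bootstrap on $\Lcal_\chitilde \Phip_{\osc,i} = E'_{\osc,i}$, using the smoothness of the coefficients and the exponential decay of the right-hand side away from $\Lpar[\sss_i]$ that is built into (i)(a).

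Next I would prove (ii) by summing (i)(a) pointwise: for any $\sss$, pick the nearest jump latitude $\sss_{i_0}$ to $|\sss|$, and then use the spacing bound $\sss_j-\sss_i \ge c(j-i)/k$ from Proposition \ref{Prl}.(iv) to estimate
\begin{equation*}
\sum_{i=1}^k e^{-m\left||\sss| - \sss_i\right|} \;\le\; \sum_{j \in \Z} e^{-cm|j-i_0|/k} \;\le\; \frac{2}{1 - e^{-cm/k}}.
\end{equation*}
By Assumption \ref{Aratio} we may take $\Cmk(k)$ so large that $m/k$ exceeds any fixed threshold, making this sum bounded by an absolute constant. Higher derivatives are handled identically since (i)(a) is stated in the $C^j$ norm. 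For (iii) I would decompose $\Phip_\osc = \Phip_{\osc,i} + \sum_{j\ne i}\Phip_{\osc,j}$ on $D^\chi_{\Lpar[\sss_i]}(3\delta)$; the first summand is controlled by (i)(b), while each $\Phip_{\osc,j}$ with $j\ne i$ is bounded on $D^\chi_{\Lpar[\sss_i]}(3\delta)$ by $C(j)e^{-m|\sss_i - \sss_j|}$ thanks to (i)(a), and an analogous geometric sum controlled by $e^{-cm/k}$ is less than $C(j)/m$ once $\Cmk(k)$ is taken large enough.

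The main obstacle will not be any single estimate but the constant-tracking: the $C(j)$ must be independent of both $k$ and $m$. This is why the spacing estimate of Proposition \ref{Prl}.(iv) is essential (rather than a crude pigeonhole argument), and why the exponential decay rate $m$ built into Lemma \ref{Lsol}.(ii) must be retained throughout, since an unweighted sum of $k$ terms of size $O(1)$ would blow up as $k\to\infty$. Everything else reduces to directly invoking the preceding lemmas.
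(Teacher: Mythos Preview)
Your proposal is correct and follows essentially the same approach as the paper: apply Lemma \ref{Lsol} with the inhomogeneous bounds from Lemma \ref{LEest} to get (i), then sum using the spacing estimate from Proposition \ref{Prl}.(iv) and Assumption \ref{Aratio} to control the resulting geometric series for (ii) and (iii). The paper separates the small-$k$ and large-$k$ cases for (ii), but this is a cosmetic difference from your direct use of the spacing bound.
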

\begin{proof}
(i) follows directly from applying Lemma \ref{Lsol} to $E'_{\osc, i}$, using Lemma \ref{LEest} and Schauder regularity for the higher derivative estimates.  
For small $k$, (ii) follows from (i).  On the other hand, for $k$ large enough in absolute terms, Lemma \ref{Prl}.(iv) implies that for all $i, j \in \{1, \dots, k\}$, $|\sss_j - \sss_i| > \frac{3|j-i|}{4k}$.  Using this with part (i) above, we estimate
\begin{align*}
\left\| \Phip_{osc} : C^{j}_\sym\left( \cyl, \chitilde\,\right)\right \| &\le  C(j) \sup_{\sss\in \R} \sum_{i=1}^k e^{-m\left| |\sss| - \sss_i\right|} \\
&\leq C(j) \sum_{l=0}^{k-1} e^{- \frac{3m}{4k} l }\\ 
&\leq C(j),
\end{align*}
where we have used Assumption \ref{Aratio}.  This completes the proof of (ii).

Now fix some $i\in \{1, \dots, k\}.$
As in part (i), we may assume that for all $i, j \in \{1, \dots, k\}$, $|\sss_j - \sss_i| > \frac{3|j-i|}{4k}$.  Using the definitions and (i) above, 
\begin{align*}
\left\| \Acalsssi \Phip_\osc : C^{r}_\sym\left(D^\chi_{\Lpar[\sss_i]}(3\delta), \chitilde\,\right)\right\| &\leq \left\| \Acalsssi \Phip_{\osc, i}: C^{r}_\sym\left(D^\chi_{\Lpar[\sss_i]}(3\delta), \chitilde\,\right)\right\| \\ 
&+\sum_{ j\neq i}\left \| \Acalsssi \Phip_{\osc, j} : C^{r}_\sym\left(D^\chi_{\Lpar[\sss_i]}(3\delta), \chitilde\,\right)\right\| \\
&\leq \frac{C(r)}{m} + C(r)\sum_{ j\neq i}\left \|  \Phip_{\osc, j} : C^{r}_\sym\left(D^\chi_{\Lpar[\sss_i]}(3\delta), \chitilde\,\right)\right\| \\
&\leq \frac{C(r)}{m} + C(r) \sum_{j\neq i} e^{-m\left| \sss_j - \sss_i\right|}\\
&\leq \frac{C(r)}{m}+ C(r) \sum_{l=1}^k e^{-\frac{3m}{4k}l } \\
&\leq \frac{C(r)}{m},
\end{align*}
where we have used Assumption \ref{Aratio}.
\end{proof}
We culminate our understanding of $\Phip$ with the following estimates.  
\begin{prop}  
\label{LPhip}
The following hold. 
\begin{enumerate}[label=(\roman*).]
\item $\| \Phip: C^{j}_\sym\left( \cyl, \chitilde\,\right) \| \le C(j)$. 
\item  For $i\in \{ 1, \dots, k\}$, $\left\|\Acalsssi \Phip : C^{j}_\sym\left( D^\chi_{\Lpar[\sss_i]}(3\delta), \chitilde\, \right)\right \| \le C(j)/m$.
\end{enumerate}
\end{prop}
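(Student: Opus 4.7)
My plan is to reduce the proposition to separate control of the rotationally invariant and oscillatory parts via the decomposition $\Phip = \Phip_\ave + \Phip_\osc$ from \eqref{EWdec}. Lemma \ref{LPhip est}.(ii) immediately yields the bound on $\Phip_\osc$ needed for (i), and Lemma \ref{LPhip est}.(iii) does the same for $\Acalsssi \Phip_\osc$ in (ii). The remaining task is therefore to bound $\Phip_\ave$ in the corresponding norms, which I would carry out using the explicit piecewise description of $\Phip_\ave$ furnished by Lemma \ref{LPhipave}: $\Phip_\ave$ vanishes outside $D^\chi_{\Lpar}(3/m)$, equals a $\Psibold$-interpolation between $\junder_i$ and $0$ on the transition annulus $\Omega[\sss_i;m]\setminus \Omega'[\sss_i;m]$, and equals $\junder_i - \Pp_\ave$ on $\Omega'[\sss_i;m]$.

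The $\junder_i$ contributions are uniformly bounded in $C^j(\chitilde)$ by Lemma \ref{Lode}.(ii), combined with the uniform bound on $\tau'_i$ coming from Lemma \ref{Lphiavg}.(i) and Proposition \ref{Prl}; the $\Psibold$ cutoff transits over scale $1/m$ and has bounded $\chitilde$-derivatives. The main obstacle is controlling $\Pp_\ave$ on $\Omega'[\sss_i;m]$, where $\Pp$ contains the logarithmically singular terms $\tau'_i G_p$, one at each $p\in L_i$. The key cancellation is built into the choice $A_i = \tau'_i \log\delta$ with $\delta = 1/(9m)$: I would invoke Lemma \ref{Lgreen}.(ii) to expand $G_{p_i} = \log r + O(r^2|\log r|)$ and Lemma \ref{Lode}.(i) to approximate $\phiunder[A_i, 0; \sss_i] \approx A_i = \tau'_i \log \delta$, so that on $D^\chi_{p_i}(2\delta)$ the dominant part of $\Pp$ reduces to $\tau'_i \log(r/\delta)$. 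Exploiting the $\grouptwo$-symmetry to write $\Pp_\ave(\sss) = (m/(2\pi))\int_{-\pi/m}^{\pi/m}\Pp(\sss,\theta)\,d\theta$ and computing the integral yields a quantity of size $O(\delta)=O(1/m)$ at each $\sss$, which the prefactor $m/(2\pi)$ converts into an $O(1)$ bound uniform in $m$; higher $\stilde$-derivatives follow by differentiating under the integral, using that $\sss$-derivatives of $\log r$ produce kernels that remain integrable in $\theta$ after averaging.

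For part (ii), since $3\delta = 1/(3m) < 2/m$, the region $D^\chi_{\Lpar[\sss_i]}(3\delta)$ sits inside $\Omega'[\sss_i;m]$, so $\Phip_\ave = \junder_i - \Pp_\ave$ throughout, and Lemma \ref{Lode}.(iv) gives $\|\Acalsssi \junder_i\| \le C(j)/m^3$. For $\Acalsssi \Pp_\ave = (\Acalsssi \Pp)_\ave$, I would combine Lemma \ref{Lgreen}.(iii) (bounding $|\Acalsssi G_{p_i}|$ by $Cr$, which contributes $O(1/m)$ after angular averaging), Lemma \ref{Lode}.(iii) (bounding $\Acalsssi \phiunder[A_i, 0; \sss_i]$ by $O(|A_i|/m^3)$), and Lemma \ref{LGhatest}.(ii) (for the transition annulus $D^\chi_{p_i}(3\delta) \setminus D^\chi_{p_i}(2\delta)$). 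Together these yield the required $C(j)/m$ bound and complete the estimate.
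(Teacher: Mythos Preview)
Your reduction via $\Phip=\Phip_\ave+\Phip_\osc$ and your use of Lemma~\ref{LPhip est} for the oscillatory part match the paper exactly. The divergence is in how you handle $\Phip_\ave$. The paper never estimates $\Pp_\ave$ directly. Instead it uses that $\Phip_\ave$ satisfies the scalar ODE $\partial^2_{\shat}\Phip_\ave+\frac{2}{m^2}\sech^2(\cdot)\,\Phip_\ave=E'_\ave$ on $\Omega[\sss_i;m]$, with initial data at $\partial\Omega[\sss_i;m]$ where $\Phip_\ave$ vanishes (because there $\Phip_\ave=\Psibold[\junder_i,0]$ and the cutoff is zero). Bounds on $E'_\ave$ from Lemma~\ref{LEest} then give the $C^2$ estimate by ODE integration over the bounded $\shat$-interval $[-3,3]$, and higher derivatives follow by differentiating the ODE. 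Part~(ii) is handled the same way, applying $\Acalsssi$ to the ODE and using Lemma~\ref{Rprod} plus Lemmas~\ref{Lode}.(iii)--(iv) for the boundary data and Lemma~\ref{LEest}.(ii) for the inhomogeneous term.

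Your direct-integration route has a real weak point at the step ``higher $\stilde$-derivatives follow by differentiating under the integral, using that $\sss$-derivatives of $\log r$ produce kernels that remain integrable in $\theta$.'' This is not correct as stated: for $j\ge 2$, $\partial^j_\sss\log r$ is \emph{not} integrable in $\theta$ along the line $\sss=\sss_i$ (e.g.\ $\partial^2_\sss\log r|_{\sss=\sss_i}=\theta^{-2}$). The integrals $\int\partial^j_\sss\log r\,d\theta$ are uniformly bounded for $\sss\ne\sss_i$ only because of cancellation coming from harmonicity ($\partial^2_\sss\log r=-\partial^2_\theta\log r$), which converts $\theta$-integrals of $\sss$-derivatives into boundary evaluations away from the singularity; this mechanism is absent from your sketch and is further complicated by the cutoff $\Psibold$, which destroys harmonicity on the transition annulus. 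The same issue recurs in your treatment of $\Acalsssi\Pp_\ave$ in~(ii): Lemma~\ref{Lgreen}.(iii) gives $|\nabla^l_\chi\Acalsssi G_{p_i}|\le C r^{1-l}$, which for $l\ge 2$ is again not $\theta$-integrable at $\sss=\sss_i$. One can rescue the direct approach with enough care, but it is substantially more work than your outline suggests, and the cleanest fix---using the ODE for $\Phip_\ave$ with known inhomogeneous term $E'_\ave$---is precisely what the paper does.
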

\begin{proof}
Because of the estimates on $\Phip_\osc$ established in Proposition \ref{LPhip est}.(ii), it is enough to prove the estimate (i) for  $\Phi'_\ave$.  By Proposition \ref{LPhipave}, $\Phip_\ave$ is supported on $D^\chi_{L_{par}}\left(3/m\right)$.  Fix $i\in \{1, \dots, k\}$.  We first establish the estimate on $\Omega[\sss_i;m]$.  
Equation \eqref{Ephipavg} shows that there,
\[ \Phip_\ave = \junder \left [\textstyle{\frac{\tau'_i}{2}}; \sss_i\right]  -\Pp_\ave.\] 
Note that the left hand side is smooth and the discontinuities on the right hand side cancel.  Using that $\Lcal_\chitilde \Phip_\ave = E'_\ave$ from \eqref{ELW}, on  $\Omega[\sss_i;m]$ we have (where $\shat = \shat\, [ \sss_i]$ is as in \ref{dchi})
\begin{align}
\label{Ephipode}
\partial^2_{\,\shat} \Phip_\ave + \frac{2}{m^2}\sech^2\left( \frac{\shat}{m}+ \sss_i\right)  \Phip_\ave =  E'_\ave.
\end{align}
On a neighborhood of $\partial \Omega[\sss_i;m]$, we have that $\Pp_\ave = 0$ from Definition \ref{dGhat}.  This combined with estimates on $\junder$ from Lemma \ref{Lode} implies that $\left|\Phip_\ave\right|<C$ and $\left| \partial_{\, \shat} \, \Phi'_\ave\right|<C$ on $\partial \Omega[\sss_i;m]$.  Using this as initial data for the ODE and bounds of the inhomogeneous term from Lemma \eqref{LEest} yields the $C^2$ bounds in (i).  Higher derivative estimates follow inductively from differentiating \eqref{Ephipode} and again using Lemma \ref{LEest}.  This establishes (i) on $\Omega[\sss_i;m]$.  The proof of the estimate (i)  on $\Omega[\sss_i;m]\setminus \Omega'[\sss_i;m]$ follows in a similar way using \eqref{Ephipavg} but is even easier since there $\Pp_\ave = 0$, so we omit the details.

As in the proof of (i), Proposition \ref{LPhip est}.(iii) implies it is sufficient to prove the estimate in (ii) for $ \Acalsssi \Phip_\ave$.
By Lemma \ref{Rprod}.(iii), $\Acalsssi \Phip_\ave$ satisfies
\begin{align}
\label{Eu}
\partial^2_{\, \shat} \, \Acalsssi \Phip_\ave +  \frac{2}{m^2}\sech^2\left( \frac{\shat}{m}+ \sss_i\right)\Acalsssi \Phip_\ave+
\frac{2}{m^2}\Acalsssi \, [ \sech^2 \sss] \left( \frac{\shat}{m}+\sss_i\right) \Rcalsssi \Phip_\ave = \Acalsssi  \,  E'_\ave. 
\end{align}
The $C^2$ bounds in (ii) follow in a similar way as above, by using Lemma \ref{Lode}.(iii)-(iv) to estimate the initial data on $\partial D^\chi_{\Lpar[\sss_i]}(3\delta)$, estimates on $\Acalsssi  E'_\ave$ from Lemma \ref{LEest}.(iv), and estimates on $\Acalsssi\, [ \sech^2 \sss]$ and $\Phip_\ave$ from Lemma \ref{Rprod}.(iii) and (i) above.  Higher derivative bounds follow inductively from  differentiating \eqref{Eu} and using Lemma \ref{Rprod}.(iii) and Lemma \ref{LEest}.(iv).
\end{proof} 
%
%
\section{Matched Linearized Doubling (MLD) solutions}
\label{S:MLD}
\subsection*{Mismatch and the spaces $\skernel[L]$ and $\skernelv[L]$}
$\phantom{ab}$
\nopagebreak

\begin{definition}[Mismatch of LD solutions, {\cite[Definition 3.3]{kap}}] 
\label{Dmismatch}
We define a vector space $\val[L]$ and 
given $\varphi$ as in Definition \ref{dLD0} with $\tau>0$, the \emph{mismatch} $\Bcal_L \varphi$ of $\varphi$, by
\begin{gather*}
\Bcal_L \varphi : = \left( \Bcal_p \varphi \right)_{p\in L} \, \in \, \val[L]:= \bigoplus_{p\in L} \val[p], 
\\ \text{ where } \quad 
\Bcal_p \varphi : =  \left( \varphihat_p(p) + \tau_p \log (\tau_p/2), d_p \varphihat_p\right) \in \val[p]:=\R\oplus T^*_p\Spheq .
\end{gather*}
\end{definition}

Among all LD solutions modulo $\skernel[L]$, we are mainly interested in the ones which are well matched: 
\begin{definition}[MLD solutions, {\cite[Definition 3.4]{kap}}] 
\label{DMLD}
We define a \emph{matched linearized doubling (MLD) solution modulo $\skernel[L]$ of configuration $(L, \tau, w)$} 
to be some $\varphi$ as in \ref{dLD0} which moreover satisfies the conditions 
$\Bcal_L\varphi = \zerobold $ and $\tau_p >0$ $\forall p \in L$.
\end{definition}

\begin{definition}[The spaces {$\skernel[L]$} and {$\skernelv[L]$}] 
\label{dkernel}
Given $L = L[\sbold; m]$ as in \ref{dL}, 
we define  
\begin{equation}
\begin{aligned}
\label{Ekhat}
\skernel_\sym[L]  = \text{\emph{span}}\left( \left(W_i\right)_{i=1}^k, \left( W'_i \right)_{i=1}^k\right) 
&\subset C^\infty_{\sym}(\Spheq),\\  
\skernelv_\sym[L] = 
\text{\emph{span}}\left( \left(V_i\right)_{i=1}^k, \left( V'_i \right)_{i=1}^k\right) 
&=  
\left\{v\in C^\infty_\sym(\Spheq):\Lcalp v\in\skernel_\sym[L] \right\},
\end{aligned}
\end{equation}
where for $i\in \{1, \dots, k\}$ 
and with $\delta$ as in \eqref{Edelta}, 
$V_i , V'_i, W_i, W'_i \in  C^\infty_\sym(\Spheq)$ are defined by requesting that they are supported on $D^\chi_{L_i}(2\delta)$  
and 
on $D^\chi_{p_i}(2\delta)$ they satisfy   
\begin{equation}
\label{EVV}
\begin{aligned}
V_i &=V_i[\sss_i, m]: = \Psibold[ \delta, 2\delta; \dbold^\chi_{p_i}](  \phiunder[ 1, 0 ; \sss_i], 0), \qquad 
W_i = W_i[\sss_i,m]: = \Lcalp  V_i, 
\\ 
V'_i &=V'_i[\sss_i, m]: =  \Psibold[ \delta, 2\delta; \dbold^\chi_{p_i}]( \phiunder[ 0, 1 ; \sss_i ], 0), \qquad
W'_i = W'_i[\sss_i,m]: = \Lcalp V'_i. 
\end{aligned} 
\end{equation}
\end{definition}

Note that the last equality follows from the symmetries imposed. 
Note also that $\skernel_\sym[L]$ and $\skernelv_\sym[L]$ are both $2k$-dimensional 
with corresponding bases $\left( W_i, W'_i\right)_{i=1}^k$ and $\left( V_i, V'_i\right)_{i=1}^k$.

\begin{definition}
\label{dEcal}
We define a linear map $\Ecal_L : \skernelv_\sym[L] \to \val_\sym[L]$,
where $\skernelv_\sym[L]$ was defined in \eqref{Ekhat} and
$\val_\sym[L]$ is the subspace of $\val[L]$ (recall \ref{Dmismatch})
consisting of those elements which are invariant under the obvious action of $\grouptwo$ on $\val [L]$, by requesting that \[ \Ecal_L (v) = (v(p),d_pv)_{p\in L}\in\val_\sym[L].\] 
\end{definition}

In Lemma \ref{LPhiest} below, we convert estimates established in Section \ref{S:LD} for $\Phi$ on the cylinder into estimates for $\Phi$ on the sphere.  
Before doing this, we need the following lemma which compares the geometry induced by the metrics $\chi$ and $g$. 

 \begin{lemma}

\label{Lsech} Let $i \in \{1, \dots, k\}$.  There is a constant $C>0$---independent of $m$ and $k$---such that:  
 \begin{enumerate}[label=(\roman*).]
\item  For any $a, b \in (\sss_i - 3/m, \sss_i +3/m)$,
\begin{align*}
\sech^2 a \sim_{1+\frac{C}{m}} \sech^2 b. 
\end{align*}
\item When restricted to $\Omega[\sss_i;m]$, 
\[ \dbold^g_{p_i} \sim_{1+\frac{C}{m}} \sech \sss_i \, \dbold^\chi_{p_i}.\] 
\item For large enough $m$ and any $\epsilon\le\delta/2$,
\begin{align*}
D^\chi_{L_i}\left({\epsilon}/{2}\right)\subset D^g_{L_i}\left( \epsilon \sech \sss_i\right) \subset 
D^\chi_{L_i}\left(2\epsilon\right).
\end{align*}
\item  If $f\in C^j(\Omega)$, where $\Omega\subset \cyl$ is a domain such that $\sup_{p\in \Omega}|\sss(p)|  \leq \sss_k+1$
	then 
	\[ \left\| f: C^j( \Omega, g)\right\|  \sim_{C k^{j/2}} \left\| f: C^j(\Omega, \chi)\right\|.\] 
	\end{enumerate}
\end{lemma}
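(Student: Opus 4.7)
The plan is to derive all four statements from the conformal relation $g=\sech^2\sss\,\chi$ of \eqref{Echig2}, combined with the latitude bound $\sech^2\sss_k\sim_C 1/k$ from Proposition~\ref{Prl}.(i).

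For (i), I apply the mean value theorem to $\log\sech^2\sss=-2\log\cosh\sss$, whose derivative $-2\tanh\sss$ is bounded in absolute value by $2$. For $a,b$ within $6/m$ of each other this yields $|\log\sech^2 a-\log\sech^2 b|\le 12/m$, which exponentiates to the claimed $\sim_{1+C/m}$ comparison once $m$ is large. For (ii), on $\Omega[\sss_i;m]$ the conformal factor $\sech\sss$ stays multiplicatively within $1+C/m$ of $\sech\sss_i$ by (i). Since a minimizing $g$- or $\chi$-geodesic from $p_i$ to a nearby point $q$ may be taken inside $\Omega[\sss_i;m]$ (whose $\chi$-diameter is $O(1/m)$), one has
\[
\dbold^g_{p_i}(q)=\inf_\gamma\int_\gamma\sech\sss\,d\ell_\chi\sim_{1+C/m}\sech\sss_i\cdot\dbold^\chi_{p_i}(q).
\]
Statement (iii) then follows from (ii) upon absorbing the $1+C/m$ factor into the radial comparison $\epsilon/2\le\epsilon\le 2\epsilon$.

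The main technical step is (iv). From $g=\sech^2\sss\,\chi$, a covariant $j$-tensor $T$ satisfies $|T|_g=(\sech\sss)^{-j}|T|_\chi$, and $\nabla^g$ differs from $\nabla^\chi$ by a tensor built from $\nabla^\chi\log\sech\sss=-\tanh\sss\,d\sss$, whose components and all $\chi$-derivatives are polynomials in $\tanh\sss$ and hence uniformly bounded. An inductive expansion of $\nabla^{g,j}f$ in terms of $\nabla^{\chi,l}f$ for $l\le j$ and such Christoffel differences yields
\[
|\nabla^{g,j}f|_g\le C(j)(\sech\sss)^{-j}\sum_{l=0}^j|\nabla^{\chi,l}f|_\chi,
\]
together with a matching reverse bound involving only lower-order terms. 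Summing over $j$ and using $\sech\sss\le 1$ produces $\|f:C^j(\Omega,\chi)\|\le C(j)\|f:C^j(\Omega,g)\|\le C(j)(\sech\sss)^{-j}\|f:C^j(\Omega,\chi)\|$. Since $|\sss|\le\sss_k+1$ on $\Omega$ and $\sech^2\sss_k\sim_C 1/k$, we have $\sech\sss\sim_C k^{-1/2}$ throughout $\Omega$, so $(\sech\sss)^{-j}\le Ck^{j/2}$ and (iv) follows.

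The hard part will be the inductive bookkeeping in (iv): keeping track, at higher orders, of how the Christoffel-symbol corrections between $\nabla^g$ and $\nabla^\chi$ propagate and combine. This is made manageable by the fact that every derivative of $\log\sech\sss=-\log\cosh\sss$ is a polynomial in $\tanh\sss$ with uniformly bounded $\chi$-norms, so each subleading contribution involves strictly smaller powers of $(\sech\sss)^{-1}\sim k^{1/2}$, which are absorbed into the dominant $k^{j/2}$ bound.
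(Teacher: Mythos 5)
Your argument is correct and follows the paper's route throughout: (i) via the bounded logarithmic derivative of $\sech^2\sss$ (the paper phrases this as Gr\"onwall, you as the mean value theorem applied to $\log\sech^2\sss$, but the estimate is the same), (ii)--(iii) by the conformal relation $g=\sech^2\sss\,\chi$ together with (i), and (iv) from the conformal rescaling combined with $\sech\sss_k\sim_C k^{-1/2}$, where you supply the Christoffel-symbol bookkeeping that the paper leaves implicit. One small inaccuracy: the claim $\sech\sss\sim_C k^{-1/2}$ on all of $\Omega$ is too strong (at $\sss=0$ one has $\sech\sss=1$), but only the one-sided bound $\sech\sss\ge ck^{-1/2}$ is used, and that is correct, so the conclusion $(\sech\sss)^{-j}\le C k^{j/2}$ stands.
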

\begin{proof}
Fix $i\in \{1,\dots, k\}$ and suppose $\sss_i - 3/m < a < b< \sss_i + 3/m$.  Trivially, $\sech^2 b< \sech^2 a$.  On the other hand, $\partial( \sech^2 \sss) = -2 \sech^2 \sss \tanh\sss$, so by Gr{\"o}nwall's inequality,
\begin{align*}
\sech^2 a &\leq \left(\sech^2 b\right)\,  e^{\int_{a}^b 2 \tanh \sss\,  d\sss} \leq \sech^2 b\left( 1+ \frac{C}{m}\right). 
\end{align*}
This completes the proof of (i).  (ii) follows easily from (i) and that $g = \left( \sech^2 \sss\right)\chi$ (recall \eqref{Echig2}).
(iii) follows from (ii) by taking $m$ large enough.

(iv) follows by using that $g = \left( \sech^2 \sss\right)\chi$ in combination with part (iii) above and the fact (the second part of Proposition \ref{Prl}.(i)) that $\sech \sss_k \sim_{C} k^{-1/2}$.\end{proof}

\begin{lemma}
\label{LV}
For each $i=1, \dots, k$, $V_i, V_i'\in C^\infty_\sym(\cyl)$ satisfy the following.
\begin{enumerate}[label=(\roman*).]
\item  $\left\| V_i: C^{j}_\sym(\cyl, \chitilde\, )\right \| \leq C(j)$ and $\left\| V_i' : C^{j}_\sym(\cyl, \chitilde\,) \right\| \leq C(j)$.

\item $\Ecal_L$ is an isomorphism and $\left\|\Ecal^{-1}_{L}\right\|\leq C m^{2+\beta} k^\frac{2+\beta}{2}$(recall \ref{dEcal}), where $\left\|\Ecal^{-1}_{L}\right\|$ is the operator norm of $\Ecal^{-1}_L: \val_\sym[L] \rightarrow \skernelv_\sym[L]$ with respect to the $C^{2, \beta}(\Spheq, g)$ norm on the target and the maximum norm subject to the standard metric $g$ of $\Spheq$. 
\end{enumerate}
\end{lemma}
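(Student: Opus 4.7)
The plan is to address (i) by direct computation from the definition, and (ii) first by basis-matrix analysis to obtain invertibility, then by a careful metric conversion from $\chitilde$ to $g$ to produce the stated operator-norm bound.

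For (i): both $V_i$ and $V_i'$ are by construction products of the cutoff $\Psibold[\delta,2\delta;\dbold^\chi_{p_i}]$ with the ODE solutions $\phiunder[1,0;\sss_i]$ and $\phiunder[0,1;\sss_i]$ respectively, supported inside $D^\chi_{p_i}(2\delta)$. Because $\delta=1/(9m)$, the cutoff transitions over a region of $\chitilde$-diameter $O(1)$, hence has all $\chitilde$-derivatives bounded independently of $m$. The factor $\phiunder[1,0;\sss_i]$ is controlled by Lemma \ref{Lode}.(i); an entirely parallel ODE comparison argument (or equivalently, rescaling $\junder[1;\sss_i]/m$ from Lemma \ref{Lode}.(ii) on $\{\sss\ge\sss_i\}$) yields the analogous bound for $\phiunder[0,1;\sss_i]$. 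The multiplicative property \eqref{E:norm:mult} then gives (i).

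For (ii), I would first identify $\val_\sym[L]$. Since $\grouptwo$ permutes the $m$ points of each $\Lpar[\sss_i]$ transitively and the stabilizer of $p_i=(\sss_i,0)$ in $\grouptwo$ contains the reflection in the meridian through $p_i$ (which annihilates the $d\theta$-component), a $\grouptwo$-invariant element of $\val[L]$ is specified by $k$ numbers $e_{p_i}\in\R$ and $k$ covectors $\xi_{p_i}=c_i\,d\sss|_{p_i}$, $c_i\in\R$. Thus $\dim\val_\sym[L]=2k=\dim\skernelv_\sym[L]$. Since $\Psibold[\delta,2\delta;\dbold^\chi_{p_i}]\equiv 1$ on a neighborhood of $p_i$, and the supports of the $V_j,V_j'$ are pairwise disjoint (by $\delta=1/(9m)$ together with the spacing in Proposition \ref{Prl}.(iv), and Remark \ref{Rsmallk} when $k$ is small), the initial conditions in Definition \ref{dauxode} give $V_i(p_j)=\delta_{ij}$, $d_{p_j}V_i=0$, $V_i'(p_j)=0$, $d_{p_j}V_i'=\delta_{ij}\,d\sss|_{p_j}$. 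The matrix of $\Ecal_L$ in these bases is therefore the identity, so $\Ecal_L$ is an isomorphism.

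For the operator-norm estimate, take $\mathbf{e}=(e_p,\xi_p)_{p\in L}\in\val_\sym[L]$ with $\max_p(|e_p|+|\xi_p|_g)\le 1$. Writing $\xi_{p_i}=c_i\,d\sss|_{p_i}$, the bound $|c_i|\cosh\sss_i=|\xi_{p_i}|_g\le 1$ gives $|c_i|\le\sech\sss_i\le 1$, and $v:=\Ecal^{-1}_L(\mathbf{e})=\sum_{i=1}^k(e_{p_i}V_i+c_iV_i')$. By (i) and the disjointness of supports, $\|v\|_{C^{2,\beta}(\cyl,\chitilde)}\le C$. To convert to the $g$-metric on $\Spheq$, use $g=(m^{-2}\sech^2\sss)\chitilde$: for the conformal change the costs are $|df|_g=m\cosh\sss\,|df|_{\chitilde}$, and higher derivatives scale similarly up to bounded conformal-correction terms. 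On the support of $v$ we have $\cosh\sss\le\cosh(\sss_k+O(1))\le C\sqrt{k}$ by Proposition \ref{Prl}.(i). Consequently
\[
\|v\|_{C^{2,\beta}(\Spheq,g)}\ \le\ C(m\cosh\sss_k)^{2+\beta}\,\|v\|_{C^{2,\beta}(\cyl,\chitilde)}\ \le\ C\,m^{2+\beta}k^{(2+\beta)/2}.
\]
The main obstacle is bookkeeping: correctly pinning down $\val_\sym[L]$ (especially that the covector must be a multiple of $d\sss|_{p_i}$, so that the seemingly unbounded factor $\cosh\sss_i$ in the dual-metric identification for $d\sss$ is absorbed by $|c_i|\le\sech\sss_i$), and then tracking the conformal change through $C^{2,\beta}$ so the correct powers of $m$ and $k$ (including the $\beta$-Hölder contribution) come out without extra factors of $\sqrt{k}$.
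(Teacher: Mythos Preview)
Your proof is correct and follows essentially the same approach as the paper. The only cosmetic difference is in the metric conversion for (ii): the paper packages the passage from $\chitilde$ to $g$ by invoking Lemma \ref{Lsech}.(iv) to obtain $\|V_i\|_{C^{j}(\Spheq,g)}\le C(j)k^{j/2}m^{j}$ directly, whereas you carry out the conformal bookkeeping by hand (including the observation $|c_i|\le\sech\sss_i$), arriving at the same estimate.
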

\begin{proof}

(i) follows easily from the bounds on $\phiunder$ in Lemma \ref{Lode} and the uniform bounds on the cutoff in the $\chitilde$ metric.  By the definitions above and \eqref{Echig2}, it is easy to see that $\Ecal_L$ is invertible and that
\begin{align}
\label{Eepsinverse}
\Ecal_L^{-1} \left( \left( a_i , b_i\cos\xx \, d\xx \right)_{i=1}^k\right) =  \sum_{i=1}^k a_i V_i + \sum_{i=1}^k b_i V'_i.
\end{align}
Combining (i) above with Lemma \ref{Lsech}.(iv) to switch to the $g$ metric (recall also \eqref{Egtilde} and \eqref{Echitilde}), we have
\begin{align}
\label{EVnorm}
\left\|V_i: C^{j}_\sym(\Spheq, g\, )\right\|\leq C(j)k^{j/2}m^j, \quad \left\| V_i' : C^{j}_\sym(\Spheq, g\, )\right\| \leq C(j)k^{j/2}m^j.
\end{align}
Recall that $\left\| \Ecalinv\right\|$ is computed with respect to the $C^{2,\beta}(\Spheq,g)$ norm on $\skernelv_\sym[L]$ and  
 the maximum norm on $\val_\sym[L]$, subject to the standard metric $g$ of $\Spheq$. Then (ii) follows by combining \eqref{EVnorm} and \eqref{Eepsinverse}.
\end{proof}

\subsection*{The family of MLD solutions}
$\phantom{ab}$
\nopagebreak

In this subsection we convert the LD solutions we constructed and studied in section \ref{S:LD}  
to MLD solutions. 
We first have to choose the scale of the LD solutions so that we have approximate matching. 
By a heuristic argument which we omit we find that the overall scale $\tauo$ should be given by 
\begin{align}
\label{Etau1} 
\tauo := \frac{1}{m} e^{\zeta_1} e^{-\phi(\sss_1)} = \frac{1}{m} e^{\zeta_1} e^{- \frac{m}{F^\phi_1}}, 
\end{align}
where $\zeta_1$ is an unbalancing parameter used to absorb error terms later. 
The continuous parameters of the construction are then 
$\zetabold := (\zeta_1, \bsigmaunder) = (\zeta_1, \bsigma, \xibold) \in \R \times \R^{k-1} \times \R^k$ 
where we require
\begin{align}
\label{Epbounds}
|\zeta_1| \leq \cunder_1, \quad \left|\bsigma\right|_{\ell^\infty} \leq \frac{\cunder_1}{m}, \quad \left | \xibold \right|_{\ell^\infty} \leq \frac{\cunder_1}{m},
\end{align}
where $\cunder_1>0$ will be fixed later and which we assume may be taken as large as necessary depending on $k$ but independently of $m$.  

With the overall scale $\tauo$ having been chosen, 
we define the MLD solution 
\begin{equation} 
\label{ELD}
\varphi=\tauo\Phi+\vunder 
\end{equation} 
for some $\vunder\in\skernelv[L]$ uniquely determined by the matching condition $\Bcal_L\varphi=\zerobold$: 
By the definitions $\Bcal_L\varphi=\Bcal_L(\tauo\Phi)+\Ecal_L(\vunder)$. 
Using the invertibility of $\Ecal_L$ as in \ref{LV}.(ii),  
the matching condition is equivalent then to 
\begin{equation} 
\label{ELDv}
\vunder = -\Ecal^{-1}_L \Bcal_L (\tauo\Phi).  
\end{equation} 
To record in detail the dependence on the continuous parameters we have the following. 

\begin{definition}[MLD solutions]
\label{dtau1}
We assume $\zetabold$ is given as in \eqref{Epbounds}.
Let $\phi=  \phi[\bsigmaunder: k,m] $, $\Phi = \Phi[\bsigmaunder: k,m]$,  
and $\tau_i' = \tau_i' [ \bsigmaunder: k]$ 
be as in \ref{Lphiavg}.  
We define then 
$\tauo = \tauo[\zetabold;  m]$ 
by \ref{Etau1}, 
an MLD solution 
$ \varphi = \varphi[ \zetabold; m]$ 
of configuration 
$( \, L \, ,  \, \tau \, ,  \, w \, )$ (recall \ref{LLDexistence}) 
by \ref{ELD} and \ref{ELDv}, 
where $L=L[\sbold; m]$,  
$\sbold = \sbold[ \bsigmaunder: k ]$ (recall \ref{Nphik}), 
$\tau= \tau[ \zetabold; m] : L[\sbold; m] \rightarrow \R_+$ 
is $\grouptwo$-invariant satisfying $\tau_i = \tau_1 \tau'_i$ for $i\in \{1, \dots, k \}$,   
and $w= w[ \zetabold; m] := \Lcalp    \vunder$. 
Finally we define 
$\mubold=\mubold[\zetabold; m] = \left(\mu_i[\zetabold; m]\right)_{i=1}^k\in\R^k$ 
and 
$\mubold'=\mubold'[\zetabold; m] = \left(\mu'_i[\zetabold; m]\right)_{i=1}^k\in\R^k$ 
by 
$\vunder= \sum_{i=1}^k \tau_i \mu_i V_i + \sum_{i=1}^k \tau_i \mu'_i V'_i $ 
which also implies 
$w = \sum_{i=1}^k \tau_i \mu_i W_i + \sum_{i=1}^k \tau_i \mu'_i W'_i $.  
\end{definition}

\begin{lemma}
\label{Lmatching}
Let $\varphi$ be as in \ref{dtau1}.  The equation $\Bcal_L \varphi = \zerobold$ is equivalent to the equations
\begin{align}
\label{Ephimatching1} 
0 &= \frac{m}{ F^\phi_{1}} \left( e^{-\sum_{l=1}^{i-1}\sigma_l} - 1\right)+  \frac{\Phip(p_i)}{\tau'_i}+ \mu_i +\zeta_1 + \log\left(\frac{9}{2} \tau'_i\right)- \log \sech \sss_i 
\\ 
\label{Ephimatching2} 
0 &= \frac{1}{\tau'_i}\frac{\partial \Phip}{\partial \sss}(p_i)  +  \frac{m}{2} \xi_i + \mu'_i + \frac{1}{2} \tanh \sss_i
\end{align}
for $i=1, \dots, k$.
\end{lemma}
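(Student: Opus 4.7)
The plan is to expand both components of $\Bcal_{p_i}\varphi$ using the decomposition $\Phi=\Pp+\Phat+\Phip$ (Definition \ref{dPprime}) together with the formula $\varphi=\tauo\Phi+\vunder$ from \eqref{ELD}, then convert the Green's function used in $\Pp$ to the one prescribed in Definition \ref{dLD0}.(ii) by means of Lemma \ref{Lgdiff}, and finally simplify using vertical balancing (Lemma \ref{Lvbal}) and the definition \eqref{Etau1} of $\tauo$.

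First I would recall that on $D^{\chi}_{p_i}(2\delta)$, Definition \ref{dGhat} gives $\Pp=\tau'_iG_{p_i}-\phiunder[A_i,0;\sss_i]$, hence
\begin{equation*}
\varphihat_{p_i}=\tau_i\bigl(G_{p_i}-G^{\Sph^2}\!\circ\!\dbold^g_{p_i}\bigr)
-\tauo\,\phiunder[A_i,0;\sss_i]+\tauo(\Phat+\Phip)+\vunder,
\end{equation*}
which is smooth at $p_i$ by Lemma \ref{Lremovablesing}. Evaluating at $p_i$: by Lemma \ref{Lgdiff} the first term contributes $-\tau_i\log\sech\sss_i$; by construction $\phiunder[A_i,0;\sss_i](\sss_i)=A_i=\tau'_i\log\delta$; on $\Omega'[\sss_i;m]$, Definition \ref{DPhat} gives $\Phat=\phiunder_i$ so $\Phat(p_i)=\phi(\sss_i)$; and near $p_i$ only the terms $V_i,V'_i$ in $\vunder$ are non-zero, with $V_i(p_i)=1$, $V'_i(p_i)=0$, giving $\vunder(p_i)=\tau_i\mu_i$. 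Assembling these and adding $\tau_i\log(\tau_i/2)$, the first mismatch component equals $\tau_i$ times
\begin{equation*}
-\log\sech\sss_i+\tfrac{\phi(\sss_i)}{\tau'_i}-\log\delta+\tfrac{\Phip(p_i)}{\tau'_i}+\mu_i+\log(\tau_i/2).
\end{equation*}
I would then use Lemma \ref{Lphiavg}.(i), which in the form $\phi(\sss_i)F^\phi_i=m\tau'_i$ gives $\phi(\sss_i)/\tau'_i=m/F^\phi_i$, together with \eqref{Exi} in the form $F^\phi_i/F^\phi_1=\exp(\sum_{l<i}\sigma_l)$, and the identity $\log(\tau_i/(2\delta))=\log(9\tau'_i/2)+\zeta_1-\phi(\sss_1)$ that follows from \eqref{Etau1} and $\delta=1/(9m)$ together with $\phi(\sss_1)=m/F^\phi_1$. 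Substituting yields exactly \eqref{Ephimatching1} (the telescoping $m/F^\phi_i-m/F^\phi_1=(m/F^\phi_1)(e^{-\sum_{l<i}\sigma_l}-1)$ is the key algebraic step).

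For the derivative component, by the $\ybartilde$-symmetry fixing $p_i$, only the $\sss$-component of each differential is nonzero. Applying $\partial=\partial/\partial\sss$ at $p_i$ in the displayed expression above: the first term gives $\frac{1}{2}\tau_i\tanh\sss_i$ by Lemma \ref{Lgdiff}; $\partial\phiunder[A_i,0;\sss_i](\sss_i)=0$ by Definition \ref{dauxode}; for $\Phat=\phiunder_i$ one has $\partial\phiunder_i(\sss_i)=\phi(\sss_i)\cdot\tfrac12(F^\phi_+-F^\phi_-)(\sss_i)=\tfrac12\phi(\sss_i)\xi_iF^\phi_i=\tfrac{m\xi_i\tau'_i}{2}$ using \eqref{Exi} and vertical balancing; and $\partial V_i(\sss_i)=0$, $\partial V'_i(\sss_i)=1$ give $\partial\vunder(p_i)=\tau_i\mu'_i$. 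Dividing the vanishing sum by $\tau_i=\tauo\tau'_i$ yields \eqref{Ephimatching2}.

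The calculation is essentially routine once the decomposition $\Phi=\Pp+\Phat+\Phip$ is in hand; the only subtlety, and therefore the main place to be careful, is that $\Phat$ on $\Omega'[\sss_i;m]$ coincides with $\phiunder_i$ which is a genuinely smooth ODE solution across $\sss_i$ (not just a piecewise one), so its one-sided derivative at $p_i$ is unambiguously $\tfrac12(\partial_+\phi-\partial_-\phi)(\sss_i)$; this is precisely what produces the $\tfrac{m}{2}\xi_i$ term in \eqref{Ephimatching2} after using the flux identities from Definition \ref{RLquant}.
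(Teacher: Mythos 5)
Your proof is correct and takes essentially the same approach as the paper: both expand $\varphihat_{p_i}$ using the decomposition $\Phi=\Pp+\Phat+\Phip$ together with the local formula $\Pp=\tau'_iG_{p_i}-\phiunder[A_i,0;\sss_i]$, evaluate at $p_i$ using Lemma \ref{Lgdiff} and the initial data of $\phiunder$, $V_i$, $V'_i$, and then simplify with the vertical balancing identity $\phi(\sss_i)F^\phi_i=m\tau'_i$, the flux ratios from \eqref{Exi}, and the definition \eqref{Etau1} of $\tauo$. The only cosmetic difference is that the paper first writes the normalized intermediate identity \eqref{Ephihatdecomp} for $\frac{1}{\tau_i}\varphihat_{p_i}$ on $D^\chi_{p_i}(3\delta)$ before evaluating, while you evaluate $\varphihat_{p_i}$ at $p_i$ directly and divide by $\tau_i$ at the end; you also make explicit the (correct, though routine) observation that by the $\ybartilde$-symmetry fixing $p_i$ only the $\sss$-component of $d_{p_i}\varphihat_{p_i}$ can be nonzero, which the paper leaves implicit.
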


\begin{proof}
From Definition \ref{Dmismatch}, the condition that $\Bcal_L\varphi = \zerobold$ is equivalent to the conditions that
\begin{equation*}
\begin{aligned}
\frac{1}{\tau_{p_i}} \varphihat_{p_i}(p_i) + \log\left( \frac{\tau_{p_i}}{2}\right) = 0, \qquad
\frac{1}{\tau_{p_i}} \frac{\partial \varphihat_{p_i}}{\partial \sss}(p_i)= 0, \qquad (i=1, \dots, k). 
\end{aligned}
\end{equation*}
By Definitions \ref{Dmismatch}, \ref{dGhat} and \ref{dPprime}, \eqref{ELD}, and Lemma \ref{Lgdiff}, we have on $D^{\chi}_{p_i}(3\delta)$
\begin{align*}
\frac{1}{\tau_i}\varphihat_{p_i} &= \frac{1}{\tau'_i}\Phi - G_{p_i}+ \mu_i V_i + \mu'_i V'_i + \left( G_{p_i} - G^{\Sph^2}\circ \dbold^g_{p_i}\right).
\end{align*}
Expanding $\Phi = \Pp + \Phat+ \Phip$ and using expressions for $\Pp$ and $\Phat$ from Definitions \ref{dGhat} and \ref{DPhat} gives 
\begin{align}
\label{Ephihatdecomp}
\frac{1}{\tau_i}\varphihat_{p_i}&= 
\frac{1}{\tau'_i}\phiunder_i - \phiunder[ \log \delta, 0; \sss_i]
+\frac{1}{\tau'_i}\Phip+\mu_iV_i + \mu'_i V'_i +\left( G_{p_i} - G^{\Sph^2}\circ \dbold^g_{p_i}\right).
\end{align}
Evaluating at $p_i$, using that $V(p_i) = 1$, adding $\log(\tau_{p_i}/2)$ to both sides, using \eqref{Evbal}  to see that $\frac{\phi(\sss_i)}{\tau'_i} = \frac{m}{F^{\phi}_i}$, and using Lemma \ref{Lgdiff} shows that the vertical matching equation is equivalent to
\begin{align*}
0 = \frac{m}{F^{\phi}_{i}}+ \frac{\Phi'(p_i) }{\tau'_i} + \mu_i + \log\left( \frac{\tau'_i \tau_1}{2\delta}\right) - \log \sech\sss_i.
\end{align*}
Simplifying $\frac{m}{F^{\phi}_{i}}$ using \eqref{EFxi} and expanding the last term using \eqref{dtau1} gives \eqref{Ephimatching1}.

Next, note using the definition of $\phiunder_i$ in \ref{Ephij}, the vertical balancing equation and \eqref{Exi}
\begin{align*}
\frac{1}{\tau'_i}\frac{\partial \phiunder_i}{\partial \sss}(\sss_i) &= \frac{\phi(\sss_i)}{\tau'_i} \frac{F^{\phi}_{i+} - F^{\phi}_{i-}}{2}= \frac{m}{2} \frac{F^{\phi}_{i+} - F^{\phi}_{i-}}{F^{\phi}_{i}} = \frac{m}{2}\xi_i.
\end{align*}
\eqref{Ephimatching2} follows from this decomposition, \eqref{Ephihatdecomp}, and Lemma \ref{Lgdiff}.
\end{proof}

\begin{convention}[cf. {\cite[Conventions 2.31, 4.1]{kap}}]
\label{con:alpha}
We fix some $\alpha>0$ which we will assume as small in absolute terms
as needed.  We also fix some $\beta\in(0,1)$ and $\gamma\in (1,2)$ 
satisfying $1-\frac\gamma2>2\alpha$ and $(1-\alpha)\,(\gamma-1)>2\alpha$, for example $\gamma=\frac32$.
We will suppress the dependence of various constants on $\beta$ and $\gamma$.
\end{convention}

\begin{definition}
\label{Edeltapp}
For each $p\in L$ we define
$\delta_p'=\tau_p^\gammagl$ 
where 
$\alpha$ is as in Convention \ref{con:alpha}.  For each $p\in L$, there exists a unique $i\in \{1, \dots, k\}$ such that $p\in L_i$.  
Define then $\delta_p = \left(\sech\sss_i \right) \delta$.
Define also 
\begin{equation*}
\begin{aligned}
\tau_{min}&=\min_{p\in L}\tau_p, \quad 
\tau_{max}=\max_{p\in L}\tau_p,  \quad\\
\delta_{min} &= \min_{p\in L} \delta_p, \quad
\delta_{min}'=\min_{p\in L}\delta_p'=\tau_{min}^\gammagl.
\end{aligned}
\end{equation*}
\end{definition}

\begin{lemma}
\label{LPhiest}
For $\Phi$ as in \ref{Lphiavg}, the following estimate holds.
\begin{align*}
\left\| \Phi : C^{3, \beta}_{\sym}\left( \Spheq \setminus  D^g_{L}(\delta'_{min}), g\right)\right\| 
\le C \left(   (\delta_{min}')^{-3-\beta} | \log \delta_{min}' | k^{\frac{3+\beta}{2}} + m^{4+\beta}k^{\frac{5+\beta}{2}}\right).
\end{align*}
\end{lemma}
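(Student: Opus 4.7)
The plan is to use the decomposition $\Phi = \Pp + \Phat + \Phip$ from Definition \ref{dPprime} and to estimate each summand in $C^{3,\beta}$ on $\Spheq \setminus D^g_L(\delta'_{min})$ separately. Two conversion facts drive the book-keeping: (a) the scalings $\chi\mapsto\chitilde=m^2\chi$ give $\|u:C^{j,\beta}(\Omega,\chi)\|\sim m^{j+\beta}\|u:C^{j,\beta}(\Omega,\chitilde)\|$, and (b) Lemma \ref{Lsech}.(iv), combined with $\sech^2\sss_k\sim_C 1/k$ from Proposition \ref{Prl}.(i), gives $\|u:C^{j,\beta}(\Omega,g)\|\le C k^{(j+\beta)/2}\|u:C^{j,\beta}(\Omega,\chi)\|$ on regions with $|\sss|\le \sss_k+1$. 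Composing these yields $\|u:C^{3,\beta}(\Omega,g)\|\lesssim m^{3+\beta}k^{(3+\beta)/2}\|u:C^{3,\beta}(\Omega,\chitilde)\|$, which is the conversion I will apply to the smooth pieces.

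For $\Pp$, I will use Definition \ref{dGhat}: $\Pp$ is supported on $D^\chi_L(3\delta)$ and on each $D^\chi_{p_i}(2\delta)$ equals $\tau'_i G_{p_i}-\phiunder[A_i,0;\sss_i]$. By Lemma \ref{Lphiavg}.(i) and Proposition \ref{Prl}.(iii), $\tau'_i\le C$ uniformly in $k$. The subtracted ODE piece and the cutoff contribution are bounded in $C^{j,\beta}(\cyl,\chitilde)$ by Lemma \ref{Lode} and \ref{LGhatest}, hence give a contribution of order $m^{3+\beta}k^{(3+\beta)/2}$ in the $g$-norm, which is absorbed in the second term of the target bound. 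The dominant piece is $\tau'_iG_{p_i}$, which by Lemma \ref{Lgreen}.(ii) behaves like $\log r$ plus a $C^{2,\beta}$-controlled remainder. Writing the singular part as $\log d^g_{p_i}-\log\sech\sss_i$ via Lemma \ref{Lsech}.(ii), standard derivative bounds for $\log d^g_{p_i}$ on $\{d^g_{p_i}\ge\delta'_{min}\}$ yield the $(\delta'_{min})^{-3-\beta}|\log\delta'_{min}|$ contribution; since supports of distinct $\Pp_i$'s are disjoint, summing over $i$ contributes only a constant factor.

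For $\Phat$, the key point is that although Definition \ref{DPhat} presents $\Phat$ as a cutoff interpolation between $\phi$ and $\phiunder_i$, the overall size of $\Phat$ is governed by $\phi$. By Lemma \ref{Lphiavg}.(ii), $\phi$ is a scalar multiple of $\phat$ with multiplier $m/(\phat(\sss_1)F^\phat_1)$, and Proposition \ref{Prl}.(i),(iii) give this multiplier size $\sim mk$. The $C^j(\chitilde)$-norms of $\phat$ and of $\phiunder_i$ are uniformly bounded by \ref{Prl}.(iii) and \ref{Lode}.(i), and the $\chitilde$-bounds of the cutoff and of $F^\phat_\pm$ are also bounded by \ref{Lrldest} and the uniformity of $\psicut$. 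Hence $\|\Phat:C^{3,\beta}(\cyl,\chitilde)\|\le Cmk$, which after the metric conversion above yields $\|\Phat:C^{3,\beta}(\Spheq,g)\|\le Cm^{4+\beta}k^{(5+\beta)/2}$, matching the second term. For $\Phip$, Proposition \ref{LPhip}.(i) gives $\|\Phip:C^{j}(\cyl,\chitilde)\|\le C$, which converts to $\|\Phip:C^{3,\beta}(\Spheq,g)\|\le Cm^{3+\beta}k^{(3+\beta)/2}$ and is absorbed into the $\Phat$ bound.

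Combining the three estimates and taking the maximum yields the stated inequality. The only point requiring attention is the book-keeping of the metric conversion factors and confirming that the multiplicative factor $mk$ in $\Phat$ (coming from the normalization $\tau'_1=1$ in Lemma \ref{Lphiavg}, not from any lack of smoothness) is indeed the correct source of the $m^{4+\beta}k^{(5+\beta)/2}$ term; there is no real analytic obstacle since the hard oscillation and averaging estimates were already done in Proposition \ref{LPhip} and in Section \ref{S:RLD}.
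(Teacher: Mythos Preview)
Your approach is essentially the paper's: both use $\Phi=\Pp+\Phat+\Phip$, estimate $\Pp$ via the logarithmic Green's function behavior, and control the smooth part via the $C^0$ bound $\sim mk$ coming from the normalization in Lemma~\ref{Lphiavg}.(ii) together with Proposition~\ref{Prl}.(i),(iii). The only cosmetic difference is that the paper bundles $\Phat+\Phip$ together and invokes Schauder (from $\|\Lcal_{\chitilde}(\Phat+\Phip):C^{3,\beta}(\chitilde)\|\le C$ via Lemma~\ref{LGhatest}.(i), plus the $C^0$ bound) instead of estimating each piece by hand.

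There is, however, one genuine omission. Your conversion fact (b), drawn from Lemma~\ref{Lsech}.(iv), is valid only on $\{|\sss|\le\sss_k+1\}$; on the polar caps $\Spheq\setminus\{|\sss|\le\sss_k+3/m\}$ the factor $\sech^{-2}\sss$ relating $g$ and $\chi$ is unbounded, so the $\chitilde\to g$ conversion you describe cannot be applied there, and your argument as written does not cover that region. The paper closes this with a separate short paragraph: on the polar caps $\Pp=0$ by Definition~\ref{dGhat}, $\Phat=\phi$ equals a bounded multiple of $\phio$ (smooth across the poles) by \ref{DPhat} and Proposition~\ref{Prl}.(iii), and $\Phip=\Phip_{\osc}$ is controlled directly via the exponential decay in Lemma~\ref{LPhip est}.(i)(a). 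You need to add this step; without it the proof is incomplete.
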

\begin{proof}
Recall from Definition \ref{dPprime} that $\Phi = \Ghat + \Phat+ \Phip$. 
By Lemmas \ref{Lgreen} and  \ref{LGhatest}, we have 
\begin{align}
\label{Eghatcirc}
\left\| \Pp : C^{3, \beta}_\sym\left( \cyl\setminus D^\chi_{L}(\delta'_{min}), \chi\right)\right\| \leq C (\delta'_{min})^{-3-\beta}| \log \delta'_{min} | .
\end{align}
Recall from \ref{dPprime} that $\Lcal_\chitilde ( \Phat + \Phip) = - \Lcal_{\chitilde} \Pp$ and 
Since $\Lcal_{\chitilde} \Pp = 0$ on $D^\chi_{L}(2\delta)$,
it follows from \ref{LGhatest}.(i) that
\begin{align*}
\left \| \Lcal_\chitilde ( \Phat + \Phip):  C^{3, \beta}_\sym( \cyl ), \chitilde \, ) \right\| \le C.
\end{align*}
From \ref{LPhip}.(i), \ref{DPhat} and \ref{Lphiavg}.(ii), we have also
\begin{align*}
\left\| \Phat + \Phip :  C^0_\sym( \cyl,  \chitilde \, )\right \| < Cmk, 
\end{align*}
so it follows from the Schauder estimates that
\begin{align}
\label{Ephatp}
\left \| \Phat + \Phip : C^{3, \beta}_\sym( \cyl , \chi) \right \| \le C m^{4+\beta} k .
\end{align}
Now let $\Omega: = \Spheq \cap \{ |\sss| \leq \sss_k + \frac{3}{m}\}$.  After combining \eqref{Eghatcirc} and \eqref{Ephatp} and using Lemma \ref{Lsech}.(iv), we conclude  
\[ \left\| \Phi : C^{3, \beta}_{\sym}\left( \Omega \setminus  D^g_{L}(\delta'_{min}), g\right)\right\| 
\le C \left(   (\delta_{min}')^{-3-\beta} | \log \delta_{min}' | k^{\frac{3+\beta}{2}} + m^{4+\beta}k^{\frac{5+\beta}{2}}\right).
\]
It remains to verify the estimate on $\Spheq \setminus \Omega$. 
On $\Spheq \setminus \Omega$, note that $\Pp =0$ (recall \ref{dGhat}) and also that $\Phat = c \phio$ for $c$ bounded independent of $k$, which follows from \ref{DPhat} and \ref{Prl}.(iii). Using this and the exponential decay of $\Phip$ away from $\Lpar$ from \ref{LPhip est}, we conclude $\| \Phi : C^{3, \beta} ( \Spheq \setminus \Omega, g) \| \le m^{4+\beta}k$.
\end{proof}

The next lemma will be important in controlling certain error terms in the fixed point theorem.
\begin{lemma}
\label{Ltauratio}
For $\Phi$ as in Definition \ref{Lphiavg} and $1\leq j < i \leq k$, we have
\begin{align*}
\frac{\tau'_i}{\tau'_j} = \frac{\phi(\sss_i)}{\phi(\sss_j)}  \left( e^{\sum_{l=j}^{i-1} \sigma_l} \right)  \sim_{1+\frac{C}{k}} 1. 
\end{align*}
\end{lemma}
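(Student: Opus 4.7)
The proof splits cleanly into a closed form identity, which is essentially tautological given Lemma~\ref{Lphiavg}, and a quantitative asymptotic estimate, which combines Proposition~\ref{Prl} with the a priori bounds on the parameters from \eqref{Epbounds}.

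First, to establish the explicit formula, I would apply Lemma~\ref{Lphiavg}.(i), which asserts
\[
\tau'_i = \frac{\phat(\sss_i)}{\phat(\sss_1)} \, e^{\sum_{l=1}^{i-1}\sigma_l}.
\]
Dividing this expression for $\tau'_i$ by the analogous one for $\tau'_j$ telescopes the factor $\phat(\sss_1)$ and the first $j-1$ factors $e^{\sigma_l}$, leaving exactly $\bigl(\phat(\sss_i)/\phat(\sss_j)\bigr)\, e^{\sum_{l=j}^{i-1}\sigma_l}$. Because $\phi$ is a positive scalar multiple of $\phat$ by Lemma~\ref{Lphiavg}.(ii), the ratio $\phat(\sss_i)/\phat(\sss_j)$ coincides with $\phi(\sss_i)/\phi(\sss_j)$, establishing the first equality.

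Second, to establish the $\sim_{1+C/k}1$ estimate, I would bound the two factors separately. For the exponential factor, the parameter constraint $|\bsigma|_{\ell^\infty} \le \cunder_1/m$ from \eqref{Epbounds} gives
\[
\left|\sum_{l=j}^{i-1}\sigma_l\right| \le (i-j)\frac{\cunder_1}{m} \le \frac{k\,\cunder_1}{m},
\]
which is $\le C/k$ as soon as $\Cmk(k) \ge k^2 \cunder_1/C$, admissible under Assumption~\ref{Aratio}. For the $\phat$ factor, I would invoke Proposition~\ref{Prl}.(iii), whose uniform bound $\|1-\phat\|_{C^0(\cyl)} \le Ck^{-1}\log k$ yields $\phat(\sss_i)/\phat(\sss_j) = 1 + O(k^{-1}\log k)$. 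Combining these two estimates with $|\log(1+x)| \le 2|x|$ for small $x$ produces $|\log(\tau'_i/\tau'_j)| \le C/k$, hence the claimed asymptotic.

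The main obstacle is really the logarithmic slack in the supremum bound on $\phat$ from Proposition~\ref{Prl}.(iii), which naively degrades the estimate from $C/k$ to $C(\log k)/k$. The $\log k$ is produced by integrating $|F^\phat_+| \lesssim 1/k$ over an interval of length $\sss_k = O(\log k)$, but the sign of $F^\phat_+$ changes at an interior point of every subinterval $(\sss_{l-1},\sss_l)$, so substantial cancellation occurs. To recover a sharp $C/k$ bound if needed, one can parametrize each interval $(\sss_{l-1},\sss_l)$ by the flux variable $F = F^\phat_+(\sss)$ using $d\sss/dF = -(2\sech^2\sss + F^2)^{-1}$ from Lemma~\ref{LFmono} (in the spirit of the final paragraph of the proof of Proposition~\ref{Prl}.(iii)), write $\int F^\phat_+\,d\sss$ as an integral in $F$, and exploit the odd symmetry of the integrand at fixed $\sech^2 \sss$ together with the controlled variation of $\sech^2 \sss$ along the flux trajectory.
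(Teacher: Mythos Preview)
Your proof is correct and follows the same approach as the paper's: establish the identity via Lemma~\ref{Lphiavg}.(i) and then bound $\log(\tau'_i/\tau'_j)$ by combining the parameter bound \eqref{Epbounds} on $\bsigma$ with Proposition~\ref{Prl}.(iii) for the $\phi$-ratio. Your observation about the $\log k$ slack is valid---the paper's own proof simply writes $O(1/k)$ citing Proposition~\ref{Prl}.(iii) without further comment---so your final paragraph is in fact more careful than the published argument (and the slack is harmless for the applications, which in \eqref{Eevererror1} use only consecutive indices where the second estimate of Proposition~\ref{Prl}.(iii) applies directly).
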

\begin{proof}
The first equality follows from \ref{Lphiavg}.(i).  
We have then
\begin{equation}
\label{Etauratio1}
\begin{aligned}
\log \frac{\tau'_i}{\tau'_{j}} &= \log\frac{\phi(\sss_i)}{\phi(\sss_{j})} + \sum_{l=j}^{i-1}\sigma_{l}=  O\left(\frac{1}{k}\right) + O\left(\frac{k \cunder_1}{m}\right),
\end{aligned}
\end{equation}
where the estimates follow from Proposition \ref{Prl}.(iii), Definition \ref{dtau1} and \eqref{Epbounds}.
\end{proof}

\begin{notation}
Given $\abold = (a_i)_{i=1}^k \in \R^k, k\geq 2$, we define $\Dcal \abold \in \R^{k-1}$ by requesting that $(\Dcal \abold)_i = a_{i+1} - a_i$, $i=1, \dots, k-1$.
It is useful to think of $\Dcal \abold$ as a discrete derivative of $\abold$.
\end{notation}

\begin{lemma}[Properties of the MLD solutions]
\label{LLD} 
Let $\zetabold$ be as in \eqref{Epbounds} and $\varphi = \varphi[\zetabold; m]$ be as in \ref{dtau1}.
For $m$ large enough as in \ref{Aratio} (depending on $\cunder_1$), the following hold:
\begin{enumerate}[label=(\roman*).]
\item $\tauo = \tauo[\zetabold; m]$ and $(\mubold, \mubold') = (\mubold[\zetabold; m], \mubold'[\zetabold; m])$ depend continuously on $\zetabold$.  

\item
 $\tauo[\zetabold; m] \sim_{C(\cunder_1)} \tauo[\zerobold; m]$ and $C(\cunder_1)>1$ depends only on $\cunder_1$. 
 
\item  (Matching estimates) There is an absolute constant $C$ independent of $\cunder_1$ such that 
	\begin{enumerate}[label=(\alph*)]
	\item  $\left|\zeta_1+\mu_1 \right| \le C$.
	\item  $ \left|  \bsigma - \frac{F^\phi_1}{m} \Dcal \mubold \right|_{\ell^\infty}\le C/m.$
	\item  $\left|\xibold+  \frac{2}{m}\mubold' \right|_{\ell^\infty} \le C/m.$
	\end{enumerate}
\item  $ \left\|  \varphi : C^{3, \beta}_{\sym}(\Spheq \setminus D^g_{L}(\delta'_{min}), g) \right\| \le \tau_{min}^{8/9}.$
\item  On $\Spheq \setminus D^g_{L}(\delta'_{min})$ we have $\tau_{max}^{1+\alpha/5} \le \varphi$.
\item  For all $p\in L$, $(\delta_p)^{-2} \left\| \varphihat_p: C^{2, \beta}\left( \partial D^g_p( \delta_p),\left( \delta_p\right)^{-2} g\right)\right\| \leq \tau_p^{1-\alpha/9}.$
	\end{enumerate}
\end{lemma}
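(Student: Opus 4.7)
For parts (i) and (ii), I would use the continuous dependence of $\phat$ on $(\sss_1, \bsigmaunder)$ from Proposition \ref{existence}.(iv), which propagates through Lemma \ref{Lphiavg} and Definitions \ref{dGhat}, \ref{DPhat}, \ref{dPprime} to give continuous dependence of $\phi, \Phi, \Pp, \Phat, \Phip$ on $\bsigmaunder$; the matching equations \eqref{Ephimatching1}--\eqref{Ephimatching2} then determine $(\mubold,\mubold')$ continuously from $\zetabold$, giving (i). For (ii), I would write $\log\bigl(\tauo[\zetabold;m]/\tauo[\zerobold;m]\bigr) = \zeta_1 - m/F^\phi_1[\bsigmaunder] + m/F^\phi_1[\zerobold]$ and estimate the two flux terms using Proposition \ref{PODEest}.(i) combined with the lower bound $F^\phi_1 \sim_C 1/k$ from Proposition \ref{Prl}.(i); Assumption \ref{Aratio} then makes the resulting exponential bounded in terms of $\cunder_1$ and $k$.

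The matching estimates (iii) are extracted from Lemma \ref{Lmatching}. Part (a) is immediate from \eqref{Ephimatching1} at $i=1$ using $\tau'_1 = 1$, the bound $|\Phip(p_1)| \le C$ from Proposition \ref{LPhip}.(i), and the boundedness of $\log\sech\sss_1$. For (b), I would subtract consecutive instances of \eqref{Ephimatching1}, Taylor-expand $e^{-\sum_{l=1}^{i-1}\sigma_l}-1$ to isolate $\sigma_i$, and use $F^\phi_1\sim 1/k$ together with Assumption \ref{Aratio} to absorb quadratic and lower-order errors. Part (c) is the main obstacle: from \eqref{Ephimatching2} one needs $|\partial_\sss\Phip(p_i)/\tau'_i| \le C$, while Proposition \ref{LPhip}.(i) alone yields only the weaker bound $|\partial_\sss\Phip|_\chi \le Cm$ due to the scaling $\chitilde = m^2\chi$. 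The missing factor of $m$ is recovered from antisymmetry: by Definition \ref{Dantisym}, $2\partial_\sss\Phip(p_i) = \partial_\sss\Acal_{\sss_i}\Phip(p_i)$, so
\[
|\partial_\sss\Phip(p_i)|
= \tfrac{m}{2}\bigl|\partial_\stilde\Acal_{\sss_i}\Phip(p_i)\bigr|
\le \tfrac{m}{2}\,\bigl\|\Acal_{\sss_i}\Phip : C^1_\sym(D^\chi_{\Lpar[\sss_i]}(3\delta),\chitilde)\bigr\|
\le C
\]
by Proposition \ref{LPhip}.(ii). This antisymmetry gain of one factor of $m$ is the principal new ingredient of the estimate.

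For (iv), I would apply Lemma \ref{LPhiest} to bound $\|\Phi\|_{C^{3,\beta}(\Spheq\setminus D^g_L(\delta'_{min}),g)}$, multiply by $\tauo \sim \tau_{min}$, and use $(\delta'_{min})^{-3-\beta} = \tau_{min}^{-\alpha(3+\beta)}$; for $\alpha$ small enough (consistent with Convention \ref{con:alpha}) so that $\alpha(3+\beta) < 1/9$, the exponential smallness $\tauo \le e^{-cmk}$ absorbs all polynomial factors in $m, k$ and yields a bound dominated by $\tau_{min}^{8/9}$. The contribution of $\vunder = \sum_i\tau_i(\mu_i V_i + \mu'_i V'_i)$ is controlled via Lemma \ref{LV}.(i) and Lemma \ref{Lsech}.(iv), combined with the bounds $|\mu_i|, |\mu'_i| \le C(\cunder_1, k)$ obtained iteratively from parts (a), (b), (c) of (iii).

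Finally, for (v) I would split $\Spheq\setminus D^g_L(\delta'_{min})$ into $\{d^g_L \ge 1/m\}$ and $D^g_L(1/m)\setminus D^g_L(\delta'_{min})$. On the first region, $\Pp\equiv 0$, $\Phat\sim\phi\ge c>0$ by Proposition \ref{Prl}.(iii), and the remaining terms are of lower order, so $\varphi\ge c\tauo/2 \gg \tau_{max}^{1+\alpha/5}$. On the second, the matching condition $\varphihat_p(p)=-\tau_p\log(\tau_p/2)$ and a Taylor expansion give $\varphi(q)\approx \tau_p\log(2d^g_p(q)/\tau_p)$, which at $d^g_p = \delta'_{min} = \tau_{min}^\alpha$ is approximately $(1-\alpha)\tau_p|\log\tau_{min}|$, overwhelming $\tau_{max}^{1+\alpha/5}$ because $|\log\tau_{min}|$ grows like $mk$ while $\tau_{max}^{\alpha/5}\sim e^{-c\alpha mk}$. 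For (vi), I would expand $\varphihat_p = \tauo\tau'_i(G_{p_i} - G^{\Sph^2}\!\circ d^g_p) - \tauo\phiunder[A_i,0;\sss_i] + \tauo(\Phat+\Phip) + \vunder$ on $\partial D^g_p(\delta_p)$: the dominant term is $-\tauo\phiunder[A_i,0;\sss_i]\sim\tau_p\log m$ at the center, with the remaining pieces bounded by Lemmas \ref{Lgreen}.(iii), \ref{Lode}.(i), \ref{LV}.(i), and Proposition \ref{LPhip}, together with the matching estimates. Multiplying through by $(\delta_p)^{-2}\sim m^2$ produces a bound of order $m^{2+\beta}\tau_p|\log\tau_p|$, which is dwarfed by $\tau_p^{1-\alpha/9}$ since $\tau_p^{-\alpha/9}\sim e^{c\alpha mk}$.
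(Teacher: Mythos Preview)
Your argument is correct and follows essentially the same route as the paper. Two points of comparison are worth noting. For (iii)(c), your explicit use of the identity $2\,\partial_\sss\Phip(p_i) = \partial_\sss\bigl(\Acal_{\sss_i}\Phip\bigr)(p_i)$ together with Proposition~\ref{LPhip}.(ii) is precisely the mechanism that recovers the missing factor of $m$; the paper simply cites \ref{LPhip} and even remarks that the resulting constant depends on $k$, whereas your argument shows it is absolute. For (v), the paper's argument is more direct than your two-region split: since $\phi = \frac{m}{\phat(\sss_1)F^{\phat}_1}\phat \sim mk$ by Lemma~\ref{Lphiavg}.(ii) and Proposition~\ref{Prl}, one has $\Phat > c\,mk$ uniformly on $\Spheq\setminus D^g_L(\delta'_{min})$, while $|\Pp| \le C\alpha\,mk$ there from the logarithmic behavior, so for $\alpha$ small the sum $\tau_1(\Phat + \Pp + \Phip) + \vunder$ dominates $\tau_{max}^{1+\alpha/5}$ without any case analysis. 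Finally, in (vi) your decomposition correctly retains the term $-\tau_1\phiunder[A_i,0;\sss_i]$ coming from $\Pp$, which the paper's displayed decomposition omits; since this term has size $\tau_p\log m \ll \tau_1 mk$, the paper's final bound is unaffected, but your expansion is the accurate one.
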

\begin{proof}

 The continuity of the parameter dependence of $\tauo$ and $\mubold$ on $\zetabold$ then follows from Definition \ref{dtau1}, 
Proposition \ref{existence} and Proposition \ref{Prl}.  

We next prove (ii).  For convenience in this proof, denote $\phi =\left(\Phi[\bsigmaunder: k, m]\right)_{\ave}$ and $\phip = \left( \Phi[\zerobold: k,m]\right)_\ave$ and $\sbold = \sss[\bsigmaunder:k], \sbold' = \sbold[\bsigmaunder': k]$ (recall \ref{Nphik}).  From Definition \ref{dtau1}, 
\begin{align}
\label{Etauratio4}
\frac{\tau_1[\zetabold ;m]}{ \tau_1[\zerobold; m]} &= e^{\zeta_1} e^{\phi'(\sss'_1) - \phi(\sss_1)}.
\end{align}
From  \ref{Lphiavg}.(ii), Proposition \ref{PODEest}, Proposition \ref{Prl}.(i), and \ref{Epbounds}, we have 
\begin{equation}
\begin{aligned}
\label{Eheightdiff}
\left|\phi'(\sss'_1) - \phi(\sss_1)\right|&=m\frac{\left| F^\phi_1- F^{\phip}_1\right|}{F^{\phip}_1 F^{\phi}_1} \le C \cunder_1.
\end{aligned}
\end{equation}
This establishes (ii).
We next prove (iii).  
Taking $i=1$ in \eqref{Ephimatching1} we obtain 
\begin{align}
\label{Evert1}
 \mu_1 + \zeta_1=-  \log \left( \frac{9}{2}\right) - \Phip(p_1) + \log \sech \sss_1.
\end{align}
In conjunction with Proposition  \ref{LPhip}.(ii), this proves (iii).(a).  
Now suppose $i\geq 2$.  
Subtracting the instance of \eqref{Ephimatching1} evaluated at $i$ from the instance evaluated at $i-1$ gives
\begin{equation*}
\begin{aligned}
0 &= \mu_{i-1} - \mu_i+  \frac{m}{ F^\phi_{1}} 
\left(  \sigma_{i-1} + O \left( \frac{\cunder^2}{m^2}\right) \right)- \log\left(\frac{\tau'_i}{\tau'_{i-1}}\right) 
- \frac{ \Phip(p_i)}{\tau'_i} + \frac{ \Phip(p_{i-1})}{\tau'_{i-1}} + \log \frac{\sech \sss_{i}}{\sech \sss_{i-1}}.
\end{aligned}
\end{equation*}
Multiplying through by $\frac{ F^\phi_1}{m}$ and rearranging, we find
\begin{equation}
\label{Evererror}
\begin{aligned}
\frac{ F^\phi_1}{m}\left(  \mu_{i-1}-\mu_i\right)+  \sigma_{i-1}
&= \frac{ F^\phi_1}{m}
\left( \log\frac{\tau'_i}{\tau'_{i-1}} +\frac{ \Phip(p_i)}{\tau'_i} - \frac{\Phip(p_{i-1})}{\tau'_{i-1}} - \log \frac{\sech \sss_i}{\sech \sss_{i-1}} \right) 
+ O\left( \frac{\cunder_1^2}{m^3}\right).
\end{aligned}
\end{equation}
We estimate the right hand side of \eqref{Evererror}:
by Lemma \ref{Ltauratio}, Proposition \ref{Prl}, and Proposition \ref{LPhip}, 
\begin{align}
\label{Eevererror1}
 \frac{ F^\phi_1}{m} \left|\log\frac{\tau'_i}{\tau'_{i-1}}\right| \le \frac{C}{mk^2},
\quad 
 \frac{ F^\phi_1}{m}\left| \log \frac{\sech \sss_i}{ \sech \sss_{i-1}}\right| \le \frac{C}{mk},
 \quad
 \frac{F^\phi_1}{m} \left| \frac{\Phip(p_i)}{\tau'_i}  - \frac{ \Phip(p_{i-1})}{\tau'_{i-1}}\right| &\le 
\frac{C}{mk}.
\end{align}
This completes the proof of (iii).(b).

Multiplying \eqref{Ephimatching2} by $\frac{2}{m}$ and rearranging, we estimate that for 
$i\in \{1, \dots, k\}$ we have  
\begin{align}
\label{Ehor0}
\left|  \xi_i+\frac{2}{m}\mu'_i\right| \le  \frac{2}{\tau'_im} \left|\frac{\partial \Phip}{\partial \sss}(p_i)\right| +\frac{\tanh \sss_i}{m}.
\end{align}
(iii).(c) then follows from \ref{LPhip} with a constant $C$ depending on $k$.  

Next, since $\supp(V_i)\cap \supp(V_j) = \emptyset$ when $i\neq j$ and likewise for the functions $V'_i$, we estimate
\begin{align*}
\left\| \sum_{i=1}^{k}\tau_i \mu_i V_i + \sum_{i=1}^k\tau_i \mu'_i V'_i : C^{3, \beta}_\sym( \cyl, \chitilde\,  )\right\| &\le 
\max_{i=1, \dots, k} \left( \left\|\tau_i\mu_i V_i : C^{3, \beta}_\sym( \cyl, \chitilde\,  )\right\| +\left \| \tau_i\mu'_iV'_i : C^{3,\beta}_\sym( \cyl, \chitilde\, )\right\|\right).
\end{align*}
It follows from (iii) above and \eqref{Epbounds} that
 \[ |\tau_i \mu_i| \le C \cunder_1 \tauo, \quad i=1, \dots, k. \] 
 Therefore, using Lemma \ref{LV} to estimate the norm of $V_i$, we find
 \begin{align}
 \label{EVbound}
\max_{i=1, \dots, k} \left \|\tau_i\mu_i V_i : C^{3, \beta}_\sym( \cyl, \chitilde\,  )\right\|  \le C \cunder_1 \tauo .
 \end{align}
 By (iii) above, we have $|\tau_i \mu'_i | \le C \cunder_1 k^{-2} \tauo$.  Using Lemma \ref{LV} to estimate the norm of $V'_i$, we get
\begin{align}
\label{EV'bound}
 \max_{i=1, \dots, k} \left \| \tau_i\mu'_iV'_i : C^{3, \beta}_\sym\left( \cyl, \chitilde\,  \right)\right\|  \le C\cunder_1 \tauo.
 \end{align}
By combining \eqref{EV'bound}and \eqref{EVbound} and switching to the $g$ metric, we estimate using Lemma \ref{Lsech}.(iv)
\begin{align}
\label{EWest}
\left\| \sum_{i=1}^{k}\tau_i \mu_i V_i + \sum_{i=1}^k\tau_i \mu'_i V'_i : C^{3, \beta}_\sym\left( \Spheq, g\right )\right\| \le C \cunder_1 k^{\frac{3+\beta}{2}} m^{3+ \beta}\tauo.
\end{align}
Recalling from \eqref{ELD} that $\varphi = \tauo \Phi+  \sum_{i=1}^{k} \tau_i\mu_i V_i + \sum_{i=1}^k \tau_i\mu'_i V'_i$ and combining \eqref{EWest} with Lemma \ref{LPhiest} yields
\begin{align*}
\left\| \varphi : C^{3, \beta}\left( \Spheq \setminus D^g_{L}(\delta'_{min}), g\right)\right\| 
\le C\left(   (\delta_{min}')^{-3-\beta} | \log \delta_{min}' | k^{\frac{3+\beta}{2}} + m^{4+\beta}k^{\frac{5+\beta}{2}}+ \cunder_1 k^{\frac{3+\beta}{2}}m^{3+\beta} \right)\tau_1.
\end{align*}

From Definition \ref{Edeltapp} and Lemma \ref{Ltauratio}, $\delta'_{min} = \tau_{min}^{\alpha} \sim_C \tau^{\alpha}_1$.  (iv) then follows from the above by taking $m$ large enough. 
For (v),
from \eqref{EVbound}, \eqref{EV'bound}, Lemma \ref{Lgreen}.(ii) and Definition \ref{dtau1},
\begin{equation}
\label{Ephiparts}
\begin{aligned}
\left\| \Pp: C^0\left(  \Spheq \setminus D^g_L(\delta'_{min})\right)\right\|  &\le \alpha Cm k  \\ 
\left\| \sum_{i=1}^k \tau_i\mu_i V_i + \sum_{i=1}^k \tau_i\mu'_i V'_i : C^0_{sym}\left(\Spheq, g\right)\right\| &\le C\cunder_1 \tau_1\\
\left\| \Phip : C^0\left( \Spheq, g\right)\right \| &\le C.
\end{aligned}
\end{equation}
It is easy to see from  \ref{Lphiavg}.(ii) and definition \ref{DPhat} and that there is an absolute constant $c>0$ such that $ \Phat>  cmk $, so (v) follows from \ref{Ephiparts} and \ref{dtau1} by taking $\alpha$ small enough and $m$ large enough.  

Finally, let $i\in \{1, \dots, k\}$.  By \ref{dLD0}, \ref{dPprime} and \ref{Lsech}, on $D^g_{p_i}(\delta_{p_i})$, $\varphihat_{p_i}$ satisfies
\begin{equation}
\begin{aligned}
\varphihat_{p_i} &= \tau_1\left( \Phat + \Phip\right) + \tau_i' \tau_1 \left( G_{p_i} - G^{\Sph^2}\circ \dbold^g_{p_i}\right) + \sum_{i=1}^k \tau_i\mu_i V_i + \sum_{i=1}^k\tau_i \mu_i' V'_i\\
&:= (I) + (II)+(III).
\end{aligned}
\end{equation}
By \ref{Ephatp} and \ref{Lsech}, $\left \| (I): C^{2, \beta}\left( \partial D^g_{p_i}(\delta_{p_i}), (\delta_{p_i})^{-2} g\right)\right\| \le C\tau_1 mk^{2+\beta}  $.  By Lemma \ref{Lgdiff}, $G_{p_i} - G^{\Sph^2}\circ \dbold^g_{p_i}$ satisfies the equation $\Lcal'\left( G_{p_i} - G^{\Sph^2}\circ \dbold^g_{p_i}\right) = 0$ and the $C^0$ estimate 
\begin{align*}
\left\| G_{p_i} - G^{\Sph^2}\circ \dbold^g_{p_i} : C^0\left( D^g_{p_i}(\delta_{p_i}), g\right)\right\| < C|\log \sech \sss_i|
\le C k,
\end{align*}
where the last estimate follows from \ref{Prl}.(i).  
Therefore, by the Schauder estimates, we have 
\\
$\left \| (I): C^{2, \beta}( \partial D^g_{p_i}(\delta_{p_i}), (\delta_{p_i})^{-2} g)\right\| \le C \tau_1 k $.  
Additionally, by \eqref{EWest}, it follows that 
\newline 
$\left \| (III): C^{2, \beta}( \partial D^g_{p_i}(\delta_{p_i}), (\delta_{p_i})^{-2} g)\right\| \le C \cunder_1 k^{\frac{2+\beta}{2}} \tau_1$.  
(vi) then follows by taking $m$ large enough.
\end{proof}



\section{Main results with no necks at the poles or the equatorial circle}
\label{S:Main}
\subsection*{Initial surfaces from MLD solutions}
$\phantom{ab}$
\nopagebreak

In this subsection we discuss the conversion of 
the MLD solutions (constructed in the previous subsection) 
to  initial surfaces.  
We also discuss the mean curvature and the linearized equation on 
the initial surfaces constructed. 
These steps were carried out under generous assumptions for the 
MLD solutions in \cite[Sections 3 and 4]{kap} and therefore we only 
quote the results here and confirm that our MLD solutions 
satisfy the required conditions (see Lemma \ref{con:add}).

\begin{lemma}[cf. {\cite[convention 3.6]{kap}}]
\label{con:L}
For each $p\in L$,
the disks $D^g_p(9\delta_p)$ are disjoint for different points $p\in L$.
\end{lemma}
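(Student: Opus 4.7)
The plan is to verify the disjointness by a case analysis depending on whether two given points of $L$ lie on the same parallel circle or on different parallel circles. Recall from \ref{Edeltapp} and \eqref{Edelta} that for $p\in L_i$ we have $9\delta_p = \sech\sss_i/m$, so that $D^g_p(9\delta_p)$ and $D^g_q(9\delta_q)$ are disjoint as soon as $\dbold^g_p(q)> 9\delta_p+9\delta_q$.

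\emph{Same-circle case.} Suppose $p,q\in L_i$ are distinct, so that $9\delta_p+9\delta_q = 2\sech\sss_i/m$. By \ref{dL}, $L[\sss_i;m]=\grouptwo(\sss_i,0)$ consists of $m$ points equispaced on the parallel circle $\Lpar[\sss_i]$, which sits in $\R^4$ as a round circle of Euclidean radius $\sech\sss_i$. The minimum $g$-distance between two such points equals the great-circle distance between adjacent ones, namely $2\arcsin(\sech\sss_i\sin(\pi/m))\ge 2\sech\sss_i\sin(\pi/m)\ge \pi\sech\sss_i/m$ (the last bound valid for $m\ge 3$, which is ensured by \ref{Aratio}). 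Since $\pi>2$, this exceeds $2\sech\sss_i/m$ as required.

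\emph{Different-circle case.} Suppose $p\in L_i$ and $q\in L_j$ with $i<j$. Using that $g=(\sech^2\sss)\chi$ by \eqref{Echig2}, the $g$-distance from $\Lpar[\sss_i]$ to $\Lpar[\sss_j]$ equals $\int_{\sss_i}^{\sss_j}\sech t\,dt\ge(\sss_j-\sss_i)\sech\sss_j$, while the sum of the two radii is at most $2\sech\sss_i/m$. It therefore suffices to show
$$ m\,(\sss_j-\sss_i)\,\sech\sss_j\;>\;2\sech\sss_i. $$
For $k$ large, Proposition \ref{Prl}.(iv) gives $\sss_j-\sss_i\ge (j-i)/(3k)\ge 1/(3k)$, while Proposition \ref{Prl}.(i) yields $\sech\sss_j\ge\sech\sss_k\ge c/\sqrt{k}$, so the left-hand side is at least $cm/k^{3/2}$, whereas the right-hand side is at most $2$. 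For $k$ in any fixed bounded range, Remark \ref{Rsmallk} instead provides a positive lower bound for $\sss_j-\sss_i$ and an upper bound for $\sech\sss_i/\sech\sss_j$ depending only on $k$. In either regime, Assumption \ref{Aratio} (which allows $\Cmk(k)$ to be chosen as large as needed in terms of $k$) yields the desired inequality.

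The only real obstacle is bookkeeping: the radii $9\delta_p$ carry a conformal factor $\sech\sss_i$ coming from the passage between $\chi$ and $g$, while the lower bounds on the $\sss$-spacing from Proposition \ref{Prl}.(iv) are purely intrinsic to the cylinder. One must therefore track both the conformal factor and the $\sqrt{k}$-type loss coming from $\sech\sss_k\sim 1/\sqrt{k}$, and absorb them into the freedom in choosing $m$ large in terms of $k$. Everything else is elementary spherical geometry.
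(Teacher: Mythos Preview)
Your argument is correct and matches the paper's approach, which is simply a one-line invocation of \ref{Edeltapp}, Proposition \ref{Prl}, and Assumption \ref{Aratio}; you have supplied the details that the paper leaves implicit. One small omission: since $L_i=\grouptwo(\sss_i,0)$ consists of $2m$ points on the \emph{two} circles $\{\sss=\pm\sss_i\}$, your case analysis should also cover $p$ at latitude $\sss_i$ and $q$ at latitude $-\sss_j$ (in particular the closest such pair at $\pm\sss_1$), but this is immediate by the same reasoning---the $g$-distance $2\int_0^{\sss_1}\sech t\,dt\ge 2\sss_1\sech\sss_1$ exceeds $2\sech\sss_1/m$ once $m>1/\sss_1$, which \ref{Prl}.(ii)(a) and \ref{Aratio} guarantee.
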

\begin{proof}
Follows from \ref{Edeltapp}, Proposition \ref{Prl} and Assumption \ref{Aratio} by taking $m$ large enough. 
\end{proof}

\begin{lemma}[cf. {\cite[convention 3.15]{kap}}]
\label{con:add}
Let $\varphi[\zetabold;m]$ be as in \ref{dtau1}.  The following hold. 
\begin{enumerate}[label=(\roman*).]
\item
$\tau_{max}$ is small enough in absolute terms as needed.

\item
$\forall p\in L$ we have $9 \delta_p' < \tau_p^{\alpha/9}< \delta_p \, $. 

\item
$\tau_{max}\le \tau_{min}^{1-\gammagl/9}$.  

\item
$\forall p\in L$ we have 
$(\delta_{p})^{-2}\left \| \, \varphihat_p 
: C^{2,\beta}(\, \partial D^g_p(\delta_p)    ,\, (\delta_p)^{-2} g\,)\,\right\| 
\le
\tau_p^{1-\gammagl/9}$.

\item
$\left\| \varphi:C^{3,\beta}_\sym ( \, \Spheq\setminus\disjun_{q\in L}D^g_q(\delta_q')    \, , \, 
g \, ) \,\right \|
\le
\tau_{min}^{8/9} \, $.

\item
On $\Spheq\setminus\disjun_{q\in L}D^g_q(\delta_q') $ we have
$\tau_{max}^{1+\alpha/5} \le \varphi$.  

\item $\Ecal_L: \skernelv_\sym[L]\to \val_\sym[L]$ is a linear isomorphism and
	$\delta_{min}^{-4} \,\tau_{max}^\alpha \,
	\left\|\Ecalinv\right\| \,
	\le \,1.$
\end{enumerate}
\end{lemma}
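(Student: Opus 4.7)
\medskip

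\noindent\textbf{Proof proposal.} The lemma is a verification check: each item corresponds to a hypothesis from \cite[convention 3.15]{kap}, and the plan is to deduce it from the quantitative estimates on $\varphi[\zetabold;m]$ already assembled in \ref{Prl}, \ref{Lphiavg}, \ref{Ltauratio}, \ref{dtau1}, \ref{LV}, and \ref{LLD}. The overall leverage is that, by \ref{dtau1} and \ref{Lphiavg}.(ii), $\tau_1 = m^{-1} e^{\zeta_1} e^{-m/F^{\phat}_1}$, and by \ref{Prl}.(i), $F^{\phat}_1 = \frac{1}{k}+O(k^{-3})$, so $\tau_1$ is exponentially small in $mk$; combined with $\tau'_i\sim_{1+C/k} 1$ from \ref{Ltauratio}, every $\tau_p$ is comparably exponentially small, a fact that will dominate almost every polynomial loss appearing below. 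This establishes (i) immediately, taking $m \ge \Cmk$ large in absolute terms.

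\medskip

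For (ii) and (iii), with $\gammagl=\alpha$ and $\delta'_p=\tau_p^\alpha$, the inequality $9\tau_p^\alpha < \tau_p^{\alpha/9}$ is equivalent to $\tau_p^{-8\alpha/9}>9$, which holds once $\tau_p$ is small (from (i)), while $\tau_p^{\alpha/9}<\delta_p=(\sech\sss_i)/(9m)$ follows because $\sech\sss_i\sim_C k^{-1/2}$ by \ref{Prl}.(i) and $\tau_p$ is exponentially small in $mk$. For (iii), \ref{Ltauratio} gives $\tau_{max}/\tau_{min}\le 1+C/k$, so the claim reduces to $\tau_{min}^{\alpha/9}(1+C/k)\le 1$, again guaranteed by the smallness of $\tau_{min}$. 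Item (iv) is literally \ref{LLD}.(vi), since $\gammagl=\alpha$. Items (v) and (vi) are \ref{LLD}.(iv) and (v) respectively, once one observes $\delta'_{min}\le\delta'_q$ for every $q\in L$, so
\[
\Spheq\setminus\textstyle\bigsqcup_{q\in L}D^g_q(\delta'_q) \;\subset\; \Spheq\setminus D^g_{L}(\delta'_{min}),
\]
and both estimates restrict to the smaller set.

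\medskip

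The principal step is (vii), the operator bound $\delta_{min}^{-4}\tau_{max}^\alpha\,\|\Ecalinv\|\le 1$. I would assemble it from three ingredients: first, \ref{LV}.(ii) gives $\|\Ecalinv\|\le C m^{2+\beta}k^{(2+\beta)/2}$; second, \ref{Edeltapp} together with \ref{Prl}.(i) yields $\delta_{min} = (\sech\sss_k)/(9m) \sim_C 1/(m\sqrt{k})$, so $\delta_{min}^{-4}\le Cm^4 k^2$; and third, as noted above, $\tau_{max}^\alpha \le C e^{-\alpha m k \cdot c_0}$ for some absolute $c_0>0$, using the asymptotic $\phi(\sss_1)=m/F^\phat_1 \sim mk$ from \ref{Lphiavg}.(ii) and \ref{Prl}.(i) and the boundedness of $|\zeta_1|\le\cunder_1$. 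Multiplying these three estimates gives
\[
\delta_{min}^{-4}\,\tau_{max}^\alpha\,\|\Ecalinv\| \;\le\; C\,m^{6+\beta}\,k^{3+\beta/2}\,e^{-\alpha m k c_0},
\]
and the exponential factor beats every polynomial in $m$ and $k$ once $m$ is chosen sufficiently large in terms of $k$, as permitted by \ref{Aratio}.

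\medskip

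The only subtle point is bookkeeping in (vii): one must make sure the choice of $m$ large enough needed for the exponential decay to dominate $m^{6+\beta}k^{3+\beta/2}$ is compatible with the other smallness requirements on $m$ used in (i)--(vi) and with the dependence on $\cunder_1$ from \ref{Epbounds}. Since all those requirements are themselves satisfied once $m \ge \Cmk(k)$ with $\Cmk$ chosen sufficiently large, a single overall choice of $\Cmk$ depending on $k$ and on the absolute constants from \ref{LV} and \ref{Prl} will suffice, and the lemma follows.
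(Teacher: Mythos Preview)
Your proposal is correct and follows essentially the same route as the paper, which simply cites \ref{dtau1} and \ref{Ltauratio} for (i)--(iii), Lemma~\ref{LLD}.(iv)--(vi) for (iv)--(vi), and \ref{LV}.(ii) for (vii); you have just spelled out the exponential-versus-polynomial bookkeeping that the paper leaves implicit. One minor slip: \ref{Prl}.(i) gives $F^{\phat}_{avg}=\tfrac{1}{k}+O(k^{-3})$, not $F^{\phat}_1$; since $F^{\phat}_1=F^{\phat}_{1+}+F^{\phat}_{1-}$ one has $F^{\phat}_1\sim_C 2/k$ rather than $1/k$, but this only changes constants and your conclusion $\phi(\sss_1)=m/F^{\phat}_1\sim mk$ (hence exponential smallness of $\tau_1$) is unaffected.
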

\begin{proof}
(i)-(iii) follow from \ref{dtau1} and \ref{Ltauratio} by taking $m$ large enough.  (iv)-(vi) follow from Lemma \ref{LLD}.(v)-(vii).  Finally, (vii) follows from \ref{LV}.(ii).
\end{proof}

Given a $\grouptwo$-symmetric MLD solution $\varphi$ of configuration $(L, \tau, w)$  as 
in \ref{dtau1}, we modify it to 
$\varphinl\in C^\infty_\sym(\Spheq\setminus L)$
as in \cite[Lemma 3.18]{kap}. 
By using cut-off functions then we define as in 
\cite[Definition 3.19]{kap} 
$$
\varphi_{init}
=
\varphi_{init}[L,\tau,w] 
:\Spheq\setminus\textstyle\bigsqcup_{p\in L} D_p(\tau_p)\to[0,\infty), 
$$
and then as in 
\cite[Definition 3.20]{kap} 
the initial smooth surface $M[L,\tau,w]$ as the union of the graphs of 
$\pm\varphiinit$. 
Recall however that our MLD solutions and their configurations 
(defined in \ref{dtau1}) 
are parametrized by $\zetabold$ (which determines also $k$) and $m$. 
Because of this we introduce the notation 
\begin{equation}
\label{dM} 
M= M[[ \zetabold]] = 
M[[ \zetabold; m ]] := M\left[ L, \tau, w\right], \quad \text{ where } 
L=L[[\zetabold]] = L[[ \zetabold; m ]] 
:= L[\, \sbold [ \bsigmaunder: k ]\, ; m], 
\end{equation} 
$\tau= \tau[ \zetabold; m]$, 
and $w= w[ \zetabold; m]$,  
are as in \ref{dtau1}.  
Note that the double brackets are introduced to distinguish from earlier notation. 
Note also that as usual the value of $k$ is implied by $\zetabold$ and we may not mention $m$ 
when it is implied by the discussion. 

In the rest of this section 
we assume throughout that $\zetabold$ satisfies \eqref{Epbounds}, $(k,m) \in \N^2$ is as in Assumption \ref{Aratio}, 
and $m$ is large enough in terms of $\cunder_1$ as needed.
Now that the initial surfaces have been defined, 
we need to discuss their mean curvature, the linearized operator on them, and the nonlinear terms in a small perturbation. 
All these have been studied in \cite[Section 4]{kap} and so we only need to quote a definition and three basic results. 
Note that $\PiSph$ was defined in \eqref{EPiSph}.  
Note also that Convention \ref{con:alpha} and Lemma \ref{con:add} imply 
all the requirements for the applicability of 
\cite[Proposition 4.17]{kap}  
so we do not mention them in \ref{Plinear}.

\begin{definition}[cf. {\cite[Definition 4.12]{kap}}]
\label{D:norm}
For $k\in\N$, $\betahat\in(0,1)$, 
$\gammahat\in\R   $,
and $\Omega$ a domain in $\Spheq$ or
an initial surface $M$ as above, 
we define
$\|u\|_{k,\betahat,\gammahat;\Omega}
:=
\left\|u:C^{k,\betahat}(\Omega ,\rho,g,\rho^\gammahat)\right\|$,
where 
$\rho:= \dbold^g_L \circ \PiSph$ 
when $\Omega\subset \Spheq$
or $\Omega\subset M$, 
and $g$ is the standard metric on $\Spheq$, 
or the metric induced on $M$ by the standard metric on $\Sph^3(1)$.
\end{definition}

\begin{lemma}[cf. {\cite[Lemma 4.15]{kap}}]
\label{LHM}
The modified mean curvature 
$
H-\xiw \circ \PiSph
$
on an initial surface $M=M[[\zetabold]]    $ as above 
is supported on 
$\PiSph^{-1}\left(\bigsqcup_{p\in L}(\,D_p^g(3\delta'_p)\,\setminus D_p^g(2\delta'_p) \,)\right)$.
Moreover it satisfies the estimate 
$$
\|\, H - \xiw \circ \PiSph\, \|_{0,\beta,\gamma-2;M} 
\le
\, \tau_{max}^{1 + {\alpha}/3} \,
.
$$
\end{lemma}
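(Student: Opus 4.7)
The plan is to apply \cite[Lemma 4.15]{kap} directly: that lemma gives exactly the support statement and estimate claimed here, provided the input MLD solution $\varphi[\zetabold;m]$ together with the configuration $(L,\tau,w)$ satisfies the hypotheses listed in \cite[Convention 3.6]{kap} and \cite[Convention 3.15]{kap}. These hypotheses have essentially already been assembled: Lemma \ref{con:L} verifies the disjointness of the disks $D^g_p(9\delta_p)$ for $p\in L$, and Lemma \ref{con:add} verifies items (i)--(vii) of \cite[Convention 3.15]{kap} (the smallness of $\tau_{max}$, the comparability of $\delta_p'$, $\tau_p^{\alpha/9}$, and $\delta_p$, the ratio bound $\tau_{max}\le\tau_{min}^{1-\gammagl/9}$, the boundary estimates for $\varphihat_p$, the global estimate for $\varphi$ on $\Spheq\setminus\bigsqcup D^g_q(\delta_q')$, the lower bound on $\varphi$, and the invertibility estimate for $\Ecal_L$).

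First I would recall the shape of the initial surface: by \cite[Definitions 3.18--3.20]{kap}, on $\PiSph^{-1}(D^g_p(2\delta_p'))$ the surface $M$ coincides with a (rescaled) truncated catenoid of waist $\tau_p$, on $\PiSph^{-1}(\Spheq\setminus\bigsqcup D^g_p(3\delta_p'))$ it is the graph of $\pm\varphinl$ where $\varphinl$ is the nonlinear correction of $\varphi$ designed so that the graph has prescribed mean curvature exactly $\xiw\circ\PiSph$, and on the intermediate annulus the profile is interpolated with a cut-off function. In particular, on the catenoidal region $H\equiv 0$ and also $\xiw\circ\PiSph\equiv 0$ (since $\xiw\in\skernel[L]$ is supported in $D^g_L(2\delta)$ which is disjoint from $D^g_p(2\delta_p')$ by \ref{con:add}.(ii)), while on the outer region $H=\xiw\circ\PiSph$ by construction of $\varphinl$. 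This immediately gives the support statement.

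Next, on the transition annulus $\PiSph^{-1}(D^g_p(3\delta_p')\setminus D^g_p(2\delta_p'))$ the estimate is produced exactly as in \cite[Lemma 4.15]{kap}: the modified mean curvature equals a sum of (a) a cut-off commutator term arising because $\varphiinit$ is a weighted average of the catenoidal profile and $\varphinl$, and (b) quadratic error terms from $\varphinl-\varphi$. The cut-off commutator is estimated by the size of the difference between the catenoid and the LD solution on the annulus; the matching condition $\Bcal_L\varphi=\zerobold$ ensures this difference is controlled by $(\delta_p)^{-2}\|\varphihat_p:C^{2,\beta}(\partial D^g_p(\delta_p),(\delta_p)^{-2}g)\|$, which by \ref{con:add}.(iv) is at most $\tau_p^{1-\gammagl/9}$. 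Combined with the scale factor $\rho^{\gamma-2}\sim (\delta_p')^{\gamma-2}=\tau_p^{\gammagl(\gamma-2)}$ in the weighted norm and the choice of $\alpha,\gamma$ in Convention \ref{con:alpha}, one obtains the stated bound $\tau_{max}^{1+\alpha/3}$.

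The only genuine point is thus the reduction to \cite[Lemma 4.15]{kap}, and the main obstacle is simply the bookkeeping required to match the hypotheses. Since all such bookkeeping is already packaged into Lemmas \ref{con:L} and \ref{con:add}, there is nothing further to prove: the conclusion of \cite[Lemma 4.15]{kap} transfers verbatim to $M[[\zetabold;m]]$.
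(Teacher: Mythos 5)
Your proposal is correct and matches the paper's approach: the paper gives no independent proof of Lemma~\ref{LHM}, but instead (as you do) quotes \cite[Lemma 4.15]{kap} directly, noting that the hypotheses needed for that lemma are assembled in Convention~\ref{con:alpha} together with Lemmas~\ref{con:L} and~\ref{con:add}. Your additional sketch of the catenoid/graph/transition decomposition and of where the estimate comes from is consistent with the cited lemma's proof and does not change the route.
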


\begin{prop}[{\cite[Proposition 4.17]{kap}}]
\label{Plinear}
If $M=M[[\zetabold]]    $ is an initial surface as above,  
there exists a linear map 
$\Rcal_M: C^{0,\beta}_\sym(M) \to C^{2,\beta}_\sym(M) \times \skernel_\sym[L]$ such that if 
\[ \Rcal_M E:= (u,w) \in C^{2,\beta}_\sym(M) \times \skernel_\sym[L], \] 
then 
\[ \Lcal u= E+ w \circ \PiSph.\] 
Moreover the following hold.
\begin{enumerate}[label=(\roman*).]
\item
$
\|u\|_{2,\beta,\gamma;M}
\le  \, C(b)  \, 
\delta_{min}^{-2-\beta} \,
\left\|\Ecalinv\right\|\,
\|\, E\, \|_{0,\beta,\gamma-2;M}.
$

\item
$
\left\|w  : C^{0,\beta}(\Spheq,g)\right\|
\le  \, C \, 
\delta_{min}^{\gamma-2-\beta} \,
\|\Ecalinv\|\,
\|E\|_{0,\beta,\gamma-2;M}.
$

\item
$
\Rcal_M
$ 
depends continuously on the parameters of $\zetabold$.
\end{enumerate}
\end{prop}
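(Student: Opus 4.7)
The plan is to verify that the hypotheses required for the general semilocal inversion argument from \cite[Proposition 4.17]{kap} are satisfied in the present setting, and then invoke that proof verbatim. Convention \ref{con:alpha} fixes $\alpha, \beta, \gamma$ as required there, and Lemma \ref{con:add} provides exactly the list of structural properties (smallness of $\tau_{max}$, the inequalities relating $\delta_p', \delta_p, \tau_p$, the mismatch-vanishing of $\varphi$ up to quantitative bounds on $\varphihat_p$, the $C^{3,\beta}$ control of $\varphi$ away from $L$, the lower bound on $\varphi$, and the invertibility of $\Ecal_L$ with bound on $\Ecalinv$) that are used throughout that proof. Since the arguments of \cite{kap} are structural in nature and depend only on the MLD solution through these properties, the conclusion transfers.

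Should one wish to reconstruct the argument, the inverse $\Rcal_M$ is built in three stages. First, a semilocal solution near each neck $p \in L$ is constructed by modeling the neck on a catenoid of waist $\tau_p$: one solves $\Lcal u = E$ on $D^g_p(2\delta_p)$ using the explicit Jacobi field decomposition on the catenoid, with the $\grouptwo$-symmetric obstruction to decay absorbed into elements of $\skernel_\sym[L]$ supported in $D^g_p(2\delta_p)$, which by the construction of $V_i, V_i'$ in Definition \ref{dkernel} span a codimension-$2k$ subspace of the appropriate target. Second, a global solution on the complement of the necks is obtained by invoking the LD framework: Lemma \ref{con:add}.(vii) guarantees that $\Ecal_L$ is an isomorphism, so any compatible Cauchy data along $\bigsqcup_p \partial D^g_p(\delta_p)$ lifts uniquely (modulo $\skernel_\sym[L]$) to a $\grouptwo$-symmetric solution on $\Spheq \setminus \bigsqcup_p D^g_p(\delta_p)$, with norm controlled by $\|\Ecalinv\|$ and the Cauchy data produced in stage one.

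Third, the two pieces are patched together by a cutoff supported in the transition annuli $D^g_p(2\delta_p) \setminus D^g_p(\delta_p)$. This produces a function satisfying $\Lcal u = E + w \circ \PiSph$ up to an error term $E_1$ supported in the transition annuli. Using Lemma \ref{con:add}.(iii) and the weight exponents of Convention \ref{con:alpha}, one sees that $\|E_1\|_{0,\beta,\gamma-2;M} \le \tau_{max}^{\alpha/C} \|E\|_{0,\beta,\gamma-2;M}$; iterating the construction and summing the resulting geometric series yields the full inverse and the estimates (i) and (ii), with the prefactors $\delta_{min}^{-2-\beta}\|\Ecalinv\|$ and $\delta_{min}^{\gamma-2-\beta}\|\Ecalinv\|$ dictated, respectively, by the Schauder norm of the $\grouptwo$-equivariant catenoidal Jacobi parametrix and by the $\skernel_\sym[L]$-component generated by $\Ecalinv$ acting on the Cauchy data.

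The main technical point—already dispatched in \cite[Section 4]{kap}—is the control of the patching error in the weighted norm, which forces the precise powers of $\delta_{min}$ appearing in (i) and (ii) and which is why the hypothesis $\delta_{min}^{-4}\tau_{max}^{\alpha}\|\Ecalinv\| \le 1$ of Lemma \ref{con:add}.(vii) is sharp enough to drive the iteration. Finally, continuity (iii) follows since each stage of the construction depends continuously on $\zetabold$: the catenoidal parametrices vary continuously with the waist sizes $\tau_p = \tauo \tau_p'$, which by Lemma \ref{LLD}.(i) depend continuously on $\zetabold$; and $\Ecalinv$ and $\skernel_\sym[L]$ likewise depend continuously on $\sbold = \sbold[\bsigmaunder:k]$ through Proposition \ref{existence}.(iv).
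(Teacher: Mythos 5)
Your first paragraph is exactly the paper's argument: Proposition \ref{Plinear} is a verbatim citation of \cite[Proposition 4.17]{kap}, and the paper's only work is to confirm (via Convention \ref{con:alpha} and Lemma \ref{con:add}) that the hypotheses of that proposition hold for the initial surfaces $M[[\zetabold]]$ constructed here. Your subsequent reconstruction of the semilocal/global/iteration argument inside \cite{kap} is a faithful sketch of that paper's proof but is not reproduced in the present paper, which delegates it entirely to the citation.
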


Given $\upphi\in C^1_\sym(M)$, we define the normal perturbation of $\upphi$,  $I_\upphi: M \rightarrow \Sph^3(1)$,  by
\[ I_\upphi(x)=\exp_x(\,\upphi(x)\,\nu(x)\,), \]
where $\nu:M\to T\Sph^3(1)$ is the unit normal to $M$. 
If $\upphi$ is sufficiently small, the normal perturbation $M_\upphi: = I_\upphi(M)$  is an embedded surface.  
Moreover, $M_\upphi$ is invariant under the action of $\groupthree$ on the sphere $\Sph^3(1)$.

We recall the following estimate (\cite[Lemma 4.24]{kap}) on the nonlinear terms of such a perturbation.

\begin{lemma}[{\cite[Lemma 4.24]{kap}}]
\label{Lquad}
If $M$ is as in \ref{Plinear} and 
$\upphi\in C^{2,\beta}_\sym(M)$ 
satisfies $\left\|\upphi\right\|_{2,\beta,\gamma;M} \, \le \, \tau_{max}^{1+\alpha/4} $,  
then $M_\upphi$ is well defined as above and
is embedded.
If 
$H_\upphi$ is the mean curvature of $M_\upphi$ pulled back to $M$ by $I_\upphi$ 
and $H$ is the mean curvature of $M$, then we have 
$$
\left\|\, H_\upphi-\, H - \Lcal \upphi \, \right\|_{0,\beta,\gamma-2;M}
\, \le \, C \, 
\left\|\upphi\right \|_{2,\beta,\gamma;M}^2.
$$
\end{lemma}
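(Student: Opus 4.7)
The plan is standard: establish well-definedness/embeddedness, Taylor expand the mean curvature of a normal graph to second order, then carry out a weighted Schauder bookkeeping on the quadratic remainder. Since this is essentially \cite[Lemma 4.24]{kap}, one could simply cite it; I outline the argument.

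First I would verify well-definedness and embeddedness of $M_\upphi$. For each $x\in M$, in the rescaled metric $\rho(x)^{-2}g$ on $\Sph^3(1)$ the surface $M$ has bounded second fundamental form on a ball of radius $1/100$, so the normal tubular neighborhood of $M$ has injectivity radius bounded below by a fixed constant in rescaled units, which means radius proportional to $\rho(x)$ in the original metric. The hypothesis $\|\upphi\|_{2,\beta,\gamma;M}\le \tau_{max}^{1+\alpha/4}$ gives $|\upphi(x)|\le \tau_{max}^{1+\alpha/4}\rho(x)^\gamma$, so $|\upphi|/\rho \le \tau_{max}^{1+\alpha/4}\rho^{\gamma-1}$ is uniformly small by $\gamma>1$ and the smallness of $\tau_{max}$ from Lemma \ref{con:add}.(i); hence $I_\upphi$ is an immersion. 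Global embeddedness follows because $M$ is, away from the catenoidal bridges, close to two parallel copies of $\Spheq$ transverse to the normal direction, while near a bridge $\{p\}$ the waist size is $\tau_p$ and $|\upphi|\lesssim \tau_p^{1+\alpha/4}$ is much smaller.

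Second, I would carry out the Taylor expansion of the mean curvature. Locally we may write $H_\upphi = F(\upphi,\nabla\upphi,\nabla^2\upphi; x)$ where $F$ is real-analytic in its first three arguments, smooth in $x$, with coefficients built from the metric $g$ on $M$, the second fundamental form $A$ of $M$ in $\Sph^3(1)$, and the ambient Riemann tensor. By definition $F(0,0,0;x)=H(x)$ and the first variation of area gives $D_{(\upphi,\nabla\upphi,\nabla^2\upphi)}F(0,0,0;x)\cdot(\upphi,\nabla\upphi,\nabla^2\upphi) = \mathcal{L}\upphi$. Therefore
\[
H_\upphi - H - \mathcal{L}\upphi = Q(\upphi,\nabla\upphi,\nabla^2\upphi;x),
\]
where $Q$ is analytic and its Taylor series in $(\upphi,\nabla\upphi,\nabla^2\upphi)$ at the origin starts at order two; each monomial has the schematic form $f(x)\upphi^a(\nabla\upphi)^b(\nabla^2\upphi)^c$ with $a+b+c\ge 2$.

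Third, I would derive the weighted quadratic estimate. At each $x\in M$ pass to rescaled coordinates $\rho(x)^{-2}g$ on the ball $B_x$ of Definition \ref{D:norm}. In these coordinates the geometric data entering $F$, including $\rho^2|A|^2$ and $\rho^2\Ric(\nu,\nu)$, have $C^{k,\beta}$ norms bounded uniformly in $x$, so each monomial of $Q$ viewed as a function of rescaled $\tilde\nabla^j\upphi$ has bounded coefficients. The hypothesis gives $\|\upphi:C^{2,\beta}(B_x,\rho^{-2}g)\|\le \|\upphi\|_{2,\beta,\gamma;M}\,\rho(x)^\gamma$; the multiplicative property \eqref{E:norm:mult} applied to each quadratic monomial in $Q$ then yields, in rescaled units,
\[
\|Q:C^{0,\beta}(B_x,\rho^{-2}g)\|\le C\left(\|\upphi\|_{2,\beta,\gamma;M}\,\rho(x)^\gamma\right)^2.
\]
Since $Q$ has the scaling of mean curvature (length${}^{-1}$), converting back to $g$ introduces a factor $\rho(x)^{-2}$, yielding $\|Q:C^{0,\beta}(B_x,\rho^{-2}g,\rho^{-2}_x g)\|\le C\|\upphi\|^2_{2,\beta,\gamma;M}\rho(x)^{2\gamma-2}$; dividing by the weight $\rho(x)^{\gamma-2}$ and taking the supremum gives
\[
\|Q\|_{0,\beta,\gamma-2;M}\le C\|\upphi\|^2_{2,\beta,\gamma;M}\sup_{x\in M}\rho(x)^{\gamma}\le C\|\upphi\|^2_{2,\beta,\gamma;M}
\]
because $\rho$ is bounded on $\Spheq$ and $\gamma>0$. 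The main obstacle is the scaling bookkeeping in the previous sentence: one must verify that across all monomials of $Q$, including those with two factors of $\nabla^2\upphi$ (most singular near necks, where $\rho$ is small), the net power $\rho^{2\gamma-2}$ dominates the target weight $\rho^{\gamma-2}$, which is precisely the role of the assumption $\gamma>1$ from Convention \ref{con:alpha} coupled with $\rho\le C$.
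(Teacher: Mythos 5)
The paper does not prove this lemma; it simply quotes it as \cite[Lemma 4.24]{kap}, so there is no in-paper argument to compare against. Your outline is the standard proof one would expect in \cite{kap}: local well-definedness from the rescaled geometry, analytic Taylor expansion of the graphical mean curvature with the linear term identified via the first variation, and a weighted Schauder bound on the quadratic remainder. At a high level this is right and the conclusion is correct, but the scaling bookkeeping in your third step has arithmetic slips that obscure where the hypothesis $\gamma>1$ is actually used.

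Concretely, the rescaled norm $\|\upphi:C^{2,\beta}(B_x,\rho^{-2}g)\|$ bounds $|\upphi|+\rho|\nabla\upphi|+\rho^2|\nabla^2\upphi|+\cdots$, so the hypothesis gives $|\nabla^j\upphi|\le\|\upphi\|_{2,\beta,\gamma;M}\,\rho^{\gamma-j}$, while $|A|\le C\rho^{-1}$. For a typical quadratic monomial such as $A\,\nabla\upphi\cdot\nabla\upphi$, $\upphi\,A\,\nabla^2\upphi$, or $A^3\upphi^2$, the correct bound is
$\|Q:C^{0,\beta}(B_x,\rho^{-2}g)\|\le C\,\|\upphi\|^2_{2,\beta,\gamma;M}\,\rho^{2\gamma-3}$,
not $C\bigl(\|\upphi\|_{2,\beta,\gamma;M}\rho^\gamma\bigr)^2$, and the extra $\rho^{-2}$ ``conversion factor'' you then insert does not repair this. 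Dividing by the weight $\rho^{\gamma-2}$ gives the surviving power $\rho^{\gamma-1}$, not $\rho^\gamma$; this is where $\gamma>1$ is genuinely needed so that the exponent is nonnegative and the factor stays bounded near the necks where $\rho$ is small (with your exponent $\rho^\gamma$, it would look as though $\gamma>0$ suffices, which is misleading). A smaller point: since $H_\upphi$ is quasilinear in $\upphi$, $Q$ contains at most one factor of $\nabla^2\upphi$ per monomial, so there are no ``two factors of $\nabla^2\upphi$'' terms as you suggest at the end; the worst quadratic terms are the ones listed above.
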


\subsection*{The main theorem with no bridges at the poles or the equatorial circle}
$\phantom{ab}$
\nopagebreak

For the purposes of the fixed point theorem, we will need to fix a reference initial surface 
and pull back perturbations of the initial surfaces we consider to the reference surface via appropriate diffeomorphisms:  

\begin{lemma}
\label{Lrefdiff}
There exists a family of diffeomorphisms $\Fcal_\zetabold : M[[\zerobold]] \rightarrow M[[\zetabold]]$, 
where 
$\zerobold = (0, \dots, 0 )\in \R^{2k}$,  
satisfying the following: 
\begin{enumerate}[label=(\roman*).]
\item $\Fcal_\zetabold$ depends continuously on $\zetabold$.
\item $\Fcal_\zetabold$ is covariant under the action of $\groupthree$.
 
\item  For any $u\in C^{2, \beta}(M[[\zetabold]])$ and $E\in C^{0, \beta}(M[[\zetabold]])$ we have the following equivalence of norms: 
\begin{align*}
\| \, u\circ \Fcal_\zetabold \, \|_{2,\beta,\gamma;M[[\zerobold]] }
\, \sim_2 \, 
\| \, u\, \|_{2,\beta,\gamma;M[[\zetabold]] } ,
\qquad
\| \, E\circ \Fcal_\zetabold \, \|_{0,\beta,\gamma-2;M[[\zerobold]] }
\, \sim_2 \, 
\| \, E\, \|_{0,\beta,\gamma-2;M[[\zetabold]] } .
\end{align*}
\end{enumerate}
\end{lemma}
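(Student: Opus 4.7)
The plan is to build $\Fcal_\zetabold$ by combining an ambient $\groupthree$-equivariant diffeomorphism of $\Sph^3(1)$ with local rescalings near each catenoidal bridge, interpolating between the two via cutoff functions as in \eqref{EPsibold}.

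First I construct a $\grouptwo$-equivariant diffeomorphism $\Phi_\zetabold : \Spheq \to \Spheq$ sending $L[[\zerobold]]$ to $L[[\zetabold]]$. Since both configurations are contained in the same meridian set $\Lmer[m]$, it suffices to move latitudes: in the $\sss$-coordinate of \eqref{Esphcyl}, set $\Phi_\zetabold(\sss,\theta) = (\Phi_1(\sss),\theta)$, where $\Phi_1\in C^\infty(\R)$ is odd and satisfies $\Phi_1(\sss_i[\zerobold]) = \sss_i[\zetabold]$ for each $i=1,\dots,k$, chosen smoothly in $\zetabold$ and with $\|\Phi_1 - \mathrm{id}\|_{C^\infty}$ small. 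That such a choice exists with $\Phi_1$ close to the identity uses Proposition \ref{PODEest} together with \eqref{Epbounds}. I then extend to a $\groupthree$-equivariant diffeomorphism $\Phit_\zetabold : \Sph^3(1)\to \Sph^3(1)$ by declaring it to act as $\Phi_\zetabold$ on $\Spheq$ and trivially on the $x_4$ factor in a tubular neighborhood (damped to the identity further away using a $\zbartilde$-invariant cutoff); covariance with respect to the generators $\xbartilde$, $\ybartilde$, $\zbartilde$, $\ybartilde_{\pi/m}$ holds by construction.

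Away from $\bigsqcup_{q\in L[[\zerobold]]} D^g_q(2\delta_q)$, both $M[[\zerobold]]$ and $M[[\zetabold]]$ are the graphs of $\pm \varphi_{init}[[\zerobold]]$ and $\pm \varphi_{init}[[\zetabold]]$ respectively over corresponding domains in $\Spheq$, so I define the outer piece of $\Fcal_\zetabold$ by sending the point of $M[[\zerobold]]$ over $q\in\Spheq$ to the point of $M[[\zetabold]]$ over $\Phi_\zetabold(q)$ on the same side. Near each $p'\in L[[\zerobold]]$ with image $p=\Phi_\zetabold(p')\in L[[\zetabold]]$, both $M[[\zerobold]]$ and $M[[\zetabold]]$ are close to catenoidal bridges of sizes $\tau_{p'}[\zerobold;m]$ and $\tau_p[\zetabold;m]$, and I define the local piece of $\Fcal_\zetabold$ by pulling back by $\tau_{p'} \Xhat_{cat}$ (recall \eqref{Ecatenoid}), composing with the rescaling $X\mapsto (\tau_p/\tau_{p'})X$, and pushing forward by $\tau_p \Xhat_{cat}$, after translating the base point from $p'$ to $p$. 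Since $\tau_p/\tau_{p'}\sim_{C(\cunder_1)} 1$ by Lemma \ref{LLD}.(ii) combined with Lemma \ref{Ltauratio} and \eqref{Epbounds}, this is a uniformly bounded dilation. The two pieces are glued in the annular transition region $D^g_{p'}(2\delta_{p'})\setminus D^g_{p'}(\delta_{p'})$ by interpolating their defining data through $\Psibold[\delta_{p'}, 2\delta_{p'}; \dbold^g_{p'}]$; the interpolation produces a bona fide diffeomorphism because the two candidates differ by factors of the form $1+O(1/m)+ O(\tau_p/\tau_{p'}-1)$, which can be made arbitrarily small by Assumption \ref{Aratio}. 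Continuous dependence on $\zetabold$ follows from that of $\sss_i$, $\tau_i$, $\mubold$, $\mubold'$ (Lemma \ref{LLD}.(i)), proving (i), and $\groupthree$-covariance, hence (ii), is preserved at each step.

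The main obstacle is the norm equivalence (iii). I must show that $\Fcal_\zetabold^*(\rho^{-2}g)$ and $\Fcal_\zetabold^*\rho$, where $\rho=\dbold^g_{L[[\zetabold]]}\circ \PiSph$ on $M[[\zetabold]]$, are $C$-quasi-isometric to the corresponding quantities on $M[[\zerobold]]$, with controlled $C^{3,\beta}$ derivatives. Outside the bridge neighborhoods this is immediate because $\Phit_\zetabold$ is $C^\infty$-close to the identity, and $\dbold^g_{L[[\zetabold]]}\circ\Phi_\zetabold$ is $C$-equivalent to $\dbold^g_{L[[\zerobold]]}$. Near each bridge, the construction is exactly the catenoidal rescaling by $\tau_p/\tau_{p'}\in [\tfrac{1}{C},C]$, which transforms $\rho^{-2}g$ and $\rho$ by uniformly bounded conformal factors; thus the norm-equivalence constants depend only on $C(\cunder_1)$. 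In the transition annulus, the cutoff $\Psibold[\delta_{p'}, 2\delta_{p'}; \dbold^g_{p'}]$ has all derivatives bounded uniformly when measured in the rescaled metric $\delta_{p'}^{-2}g$, and the interpolated map differs from both the outer and local pieces by terms of size $O(\tau_p/\tau_{p'}-1) + O(1/m)$ in this rescaled metric, yielding the factor-$2$ equivalence required in (iii) after choosing $m$ sufficiently large in terms of $\cunder_1$.
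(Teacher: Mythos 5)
Your overall architecture (latitude shift on $\Spheq$ $\to$ graph piece away from the bridges $\to$ catenoidal piece near the bridges $\to$ cutoff interpolation) is the same as the paper's, so the strategy is sound. However, there are two concrete gaps in the execution.

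First, your transition annulus $D^g_{p'}(2\delta_{p'})\setminus D^g_{p'}(\delta_{p'})$ is at the wrong scale. Recall $\delta_{p'}\sim 1/m$ from \ref{Edeltapp}, whereas the surface $M[[\zerobold]]$ is a perturbed catenoid only inside $\Pi_{\Spheq}^{-1}(D^g_{p'}(\delta'_{p'}))$ with $\delta'_{p'}=\tau_{p'}^{\alpha}\ll 1/m$; outside that it is the graph of $\pm\varphi_{init}$. Your ``local piece,'' built from the parametrization $\tau_{p'}\Xhat_{cat}$, therefore does not parametrize $M[[\zerobold]]$ on most of the region where you use it: at radii between $\delta'_{p'}$ and $\delta_{p'}$ the catenoid and the actual surface diverge, and pulling back by $\tau_{p'}\Xhat_{cat}$ is not a coordinate on the surface. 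The paper places the interpolation in $\Pi_\Spheq^{-1}(D^g_p(2\delta'_p)\setminus D^g_p(\delta'_p))$ precisely because that is where the surfaces pass from catenoidal to graph behavior. You also omit the nearest-point projection $\Pi_{\cat,p_i}$ to the model catenoid, which the paper uses (via the conformal isometry $Y_\zetabold$) to identify the \emph{perturbed} bridge with the model.

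Second, the ambient dilation by $\tau_p/\tau_{p'}$ is not a correct replacement for the paper's cylinder rescaling. Your claim that the two candidate maps ``differ by factors of the form $1+O(1/m)+O(\tau_p/\tau_{p'}-1)$, which can be made arbitrarily small by Assumption \ref{Aratio}'' is false: Lemma \ref{LLD}.(ii) gives only $\tau_p/\tau_{p'}\sim_{C(\cunder_1)}1$, so $\tau_p/\tau_{p'}-1$ is $O(1)$ in terms of $\cunder_1$ and does not shrink as $m\to\infty$. Moreover a pure dilation by $\tau_p/\tau_{p'}$ sends the truncated bridge $\Lambda_\zerobold$ (cut off at radius $\tau_{p'}^\alpha$) to a catenoid truncated at radius $\tau_p\tau_{p'}^{\alpha-1}$, which differs from $\delta'_p=\tau_p^\alpha$ by the bounded-but-not-small factor $(\tau_p/\tau_{p'})^{1-\alpha}$; the dilation does not carry one bridge region onto the other. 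The paper avoids both problems by working in the conformal cylinder picture and rescaling the $\sss$-coordinate linearly by $\ell_\zetabold/\ell_\zerobold$: since $\ell \sim |\log\tau| \sim m/F^\phi_1$, the bound $\tau_p/\tau_{p'}\sim_{C(\cunder_1)}1$ translates to $\ell_\zetabold/\ell_\zerobold = 1 + O(C(\cunder_1)/m)$, which \emph{is} small for $m$ large, maps the truncated bridges onto each other exactly, and yields the factor-$2$ norm equivalence in (iii).
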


\begin{proof}
As a preliminary step, we construct a family of diffeomorphisms $\Fcal'_{L_{par}}:\Spheq \rightarrow \Spheq$ which depend smoothly on $L_{par}$ and covariant under the action of $\grouptwo$.
For ease of notation, denote the positive $\sss$-coordinates of the circles $L_{par}[[\zerobold]] $ by $\sbold$ and likewise the coordinates of the circles in $L_{par}[[\zetabold]]$ by $\sbold'$.  We define $\Fcal'_{L_{par}}$ by requesting the following:
\begin{enumerate}
\item $\Fcal'_{L_{par}}$ is rotationally invariant in the sense that $\Fcal'_{L_{par}}\left((\sss, \theta)\right)$ depends only on $\sss$. 
\item On $D^\chi_{L_{par}[\sss_i]}(5 \delta)$, we have $\Fcal'_{L_{par}}( (\sss, \theta) ) = (\sss'_i - \sss_i + \sss, \theta)$.

\item On $\Spheq \setminus D^\chi_{L[[\zerobold]]}(5\delta)$, $\Fcal'_{L_{par}}$ we have
\[ \Fcal'_{L_{par}}\left( (\sss, \theta)\right) = (f_{L_{par}}(\sss), \theta)\] 
 for a suitably chosen function $f_{L_{par}}$.   
\end{enumerate}
By choosing $f_{L_{par}}$ carefully, we can ensure that $\Fcal'_{L_{par}}$ depends smoothly on $L_{par}$, hence on $\zetabold$ and is close to the identity in all necessary norms.  Next, we use $\Fcal'_{L_{par}}$ to define $\Fcal_\zetabold$ by requesting the following.

\begin{enumerate}
\item $\forall p\in L[[\zerobold]]$ we define 
$\Fcal_\zetabold$ to map $\Lambda_\zerobold: = M[[\zetabold]] \cap \Pi^{-1}_{\Spheq} \left( D^g_{p_i}(\delta'_{p_i})\right)$ onto 
$\Lambda_\zetabold : = M[[\zetabold]] \cap \Pi^{-1}_{\Spheq} \left( D^g_{q_i}(\delta'_{q_i})\right)$ 
where $q_i = \Fcal'_{L_{par}}(p_i)$.  
On $\Lambda_\zerobold$, $\Fcal_\zetabold$ satisfies
\[ \widehat{\Fcal}_\zetabold \circ Y_\zerobold \circ \Pi_{\cat, p_i} = Y_\zetabold \circ \Pi_{\cat, q_i} \circ \Fcal_\zetabold, \]
where $Y_\zetabold$ (and similarly for $Y_\zerobold$ is the conformal isometry from $\Pi_{\cat, q_i}(\Lambda_\zetabold)$ equipped with the induced metric from the Euclidean metric 
$\left. (\tau[\zetabold, m])^{-2} g \right|_{p_i}$, to  $\{ \sss \in [-\ell_\zetabold, \ell_\zetabold]\} \subset (\cyl, \chi)$, and
\begin{equation*} 
\label{EFhat} 
\widehat{\Fcal}_{\zetabold}: 
\{ \sss \in [-\ell_\zerobold,  \ell_\zerobold ] \} 
\to 
\{ \sss \in [-\ell_\zetabold,  \ell_\zetabold ]\} 
\end{equation*} 
is of the form 
\[ \widehat{\Fcal}_{\zetabold} (\sss,\theta) =
( \, \ell_\zetabold \, \sss \,/ \,\ell_\zerobold , \, \theta\,), 
\] 
where the ambiguity due to possibly modifying the $\theta$ coordinate by adding a constant is removed by the requirement that $\Fcal_\zetabold$ is covariant with respect to the action of $\groupthree$.  
\item We define the restriction of $\Fcal_\zetabold$ on $ M[[\zerobold]] \cap \Pi_{\Spheq}^{-1} \left(\Spheq \setminus \bigsqcup_{p\in L[[\zerobold]]} D^g_p (2\delta'_p)\right)$ to be a map onto $M[[\zetabold]] \cap \Pi_{\Spheq}^{-1} \left(\Spheq \setminus \bigsqcup_{p\in L[[\zetabold]]} D^g_p (2\delta'_p)\right)$ which preserves the sign of the $\zz$ coordinate and satisfies 
\[ \Pi_{\Spheq} \circ \Fcal_\zetabold = \Fcal'_{L_{par}} \circ \Pi_{\Spheq}.\]
\item On the region $M[[\zerobold]] \cap \Pi_{\Spheq}^{-1} \left(\Spheq \setminus \bigsqcup_{p\in L[[\zerobold]]} D^g_p (2\delta'_p)\right)\setminus M[[\zerobold]] \cap \Pi_{\Spheq}^{-1} \left(\Spheq \setminus \bigsqcup_{p\in L[[\zerobold]]} D^g_p (\delta'_p)\right)$ we apply the same definition as in (2) but with $\Fcal'_{L_{par}}$ appropriately modified by using cut-off functions and $\dbold^g_{L[[\zetabold]]}$ so that the final definition provides an interpolation between (1) and (2).  
\end{enumerate}
By construction, $\Fcal_\zetabold$ satisfies (i) and (ii).  
To check the uniform equivalence of the norms, 
first observe from the parametrization of the catenoid \eqref{Ecatenoid}
and the uniform equivalence of the $\tau$s from Lemma \ref{LLD}.(ii), it follows that 
\[ \ell_\zetabold \sim_{1+ C(\cunder_1)/m} \ell_\zerobold. \]
Moreover, from Lemma \ref{LLD}.(ii), 
$\tauo[\zetabold; m] \sim_{C(\cunder_1)} \tauo[\zerobold; m]$. 
From this, it follows from arguing as in Lemma 4.13 of \cite{kap} that when $m$ is large enough in terms of $\cunder_1$, the estimates in (iii) hold.  
\end{proof}

For the convenience of the reader and to motivate the fixed point map $\Jcal$ (see \ref{EJcal}) 
used in the proof of the main theorem \ref{Tmain} 
we recall the main steps of the construction 
omitting smallness conditions and the precise ranges of the parameters: 

\begin{remark}[Outline of the construction] 
\label{Rsummary}
\textit{Step 1: Unit RLD solutions:}
Based on Lemmas \ref{LFmono} and \ref{LHmono} we construct in Proposition \ref{existence} a unit 
RLD solution $\phat[\sss_1; \bsigmaunder]$ for given $\sss_1 \in (0, \sss_{root})$ 
and 
$\bsigmaunder = (\bsigma, \xibold) \in \ell^1\left( \R^\N\right) \oplus \ell^\infty \left( \R^\N\right)$. 
\\
\textit{Step 2: Unit RLD solutions with prescribed number of jumps and flux ratios:}
Based on Proposition \ref{existence} we introduce Notation \ref{Nphik} to describe the 
RLD solutions which are smooth at the poles. 
These solutions are denoted by 
$\phat[\bsigmaunder: k]$ 
and are determined uniquely by the number of jumps $k$ and 
the corresponding $2k-1$ flux ratios 
$\bsigmaunder = (\bsigma, \xibold) \in \R^{k-1}\times \R^k$. 
Their jump latitudes are denoted by $\sbold = \sbold[\bsigmaunder: k ]\in \R^k_+$.  
\\ 
\textit{Step 3: Normalized $\grouptwo$-symmetric LD solutions:} 
Based on Lemmas \ref{LLDexistence} and \ref{Lvbal} we construct in Lemma \ref{Lphiavg} 
an LD solution $\Phi [ \bsigmaunder: k,m]$ 
of configuration $(\, L= L[\,\sbold[\bsigmaunder: k] \, ; m] , \, \tau' \, )$ 
for given $k,m\in\N$ and 
$\bsigmaunder = (\bsigma, \xibold) \in \R^{k-1}\times \R^k$. 
$\Phi = \Phi [ \bsigmaunder: k,m]$ is normalized by the condition $\tau'_1=1$ and 
is characterized by the form of the configuration (recall \ref{dL}) and the requirement that 
$\phi = \phi[\bsigmaunder: k,m]:=\Phi_\ave$ is a multiple of $\phat[ \bsigmaunder: k]$. 
Note that the 
$\tau_i' = \tau_i' [ \bsigmaunder: k]$'s 
do not depend on $m$. 
\\ 
\textit{Step 4: $\grouptwo$-symmetric MLD solutions:} 
In this step we first choose a suitable overall scale $\tau_1$ in 
\eqref{Etau1} for our LD solutions,  
and then modify them by adding $\vunder$ as in \ref{ELDv} to obtain MLD solutions 
$\varphi[ \zetabold; m]$ (see \ref{dtau1}), 
where 
$\zetabold := (\zeta_1, \bsigmaunder) = (\zeta_1, \bsigma, \xibold) \in \R \times \R^{k-1} \times \R^k = \R^{2k}$ 
involves an extra parameter $\zeta_1$ used in unbalancing related to the overall scale $\tau_1$. 
\\ 
\textit{Step 5: $\grouptwo$-symmetric initial surfaces:} 
In this step the MLD solutions are converted to initial surfaces by using the process 
developed in \cite[Section 3]{kap}. 
\\ 
\textit{Step 6: $\grouptwo$-symmetric minimal surfaces:} 
In this remaining step we use a fixed point argument 
to perturb to minimality one of the initial surfaces constructed earlier for given $k,m$.  
\end{remark}

\begin{theorem}
\label{Tmain}
There is an absolute constant $\cunder_1>0$ such that given $(k,m)\in \N^2$ with $m$ large enough in terms of $\cunder_1$ and $k$, 
there is $\zetaboldhat \in \R^{2k}$ satisfying \eqref{Epbounds} such that $\varphi[ \zetaboldhat, m]$ satisfies the conditions of Lemma \ref{LLD} 
and moreover there is $\upphihat \in C^\infty(\Mhat)$, where $\Mhat: = M[[  \zetaboldhat ; m ]]$, 
such that
\begin{align*}
\left\| \upphihat\right \|_{2, \beta, \gamma, \Mhat} \le \tauhat_1^{1+ \alpha/4},
\end{align*}
and further the normal graph $\Mhat_\upphihat$ is a genus $2mk -1$ embedded minimal surface in $\Sph^3(1)$ 
which is invariant under the action of $\groupthree$ and has area $\text{Area}(\Mhat_\upphihat)\rightarrow 8 \pi$ as $m\rightarrow \infty$.  
\end{theorem}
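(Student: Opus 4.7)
The plan is to set up a nested fixed-point argument: an inner contraction in the smooth perturbation $\upphi$ of the initial surface (with $\zetabold$ fixed) and an outer application of Brouwer's fixed-point theorem on the parameter $\zetabold\in\R^{2k}$. For each $\zetabold$ satisfying \eqref{Epbounds}, Lemmas \ref{LLD} and \ref{con:add} ensure that the initial surface $M=M[[\zetabold;m]]$ is well-defined and that all hypotheses of Lemma \ref{LHM}, Proposition \ref{Plinear}, and Lemma \ref{Lquad} are met. The diffeomorphism $\Fcal_\zetabold$ from Lemma \ref{Lrefdiff} allows me to pull all analysis back to the fixed reference surface $M_0=M[[\zerobold;m]]$, so that the ambient function spaces become independent of $\zetabold$.

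\emph{Inner contraction.} Given $\zetabold\in B:=\{\zetabold\in\R^{2k}:\eqref{Epbounds}\text{ holds}\}$, define
\[
\Tcal_\zetabold(\upphi) := \text{first component of } \Rcal_M(-H-Q(\upphi)),
\]
where $H$ is the mean curvature of $M$, $Q(\upphi):=H_\upphi-H-\Lcal\upphi$ is the quadratic remainder from Lemma \ref{Lquad}, and $\Rcal_M$ is the operator of Proposition \ref{Plinear}. Combining Lemma \ref{LHM}, Proposition \ref{Plinear}(i), Lemma \ref{Lquad}, and Lemma \ref{con:add}, one verifies that $\Tcal_\zetabold$ is a contraction of the ball $\{\upphi:\|\upphi\|_{2,\beta,\gamma;M}\le\tau_{max}^{1+\alpha/4}\}$ into itself with contraction constant at most $1/2$. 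Let $\upphi(\zetabold)$ be its fixed point and let $w''(\zetabold)\in\skernel_\sym[L]$ be the corresponding second component of $\Rcal_M(-H-Q(\upphi))$. The defining equation $\Lcal\upphi = -H-Q(\upphi)+w''\circ\PiSph$ combined with Lemma \ref{Lquad} gives
\[
H_{\upphi(\zetabold)} = w''(\zetabold)\circ\PiSph,
\]
so $\upphi(\zetabold)$ perturbs $M$ to a minimal surface precisely when $w''(\zetabold)=0$.

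\emph{Outer fixed point.} Expand
\[
w''(\zetabold) = \sum_{i=1}^k \tau_i\,\eta_i(\zetabold)\,W_i + \sum_{i=1}^k \tau_i\,\eta_i'(\zetabold)\,W_i',
\]
and, mimicking the matching identities of Lemma \ref{LLD}(iii), define $\Jcal:B\to\R^{2k}$ coordinate-wise by $\Jcal(\zetabold)_{\zeta_1}=\zeta_1-\eta_1(\zetabold)$, $\Jcal(\zetabold)_{\sigma_i}=\sigma_i-(F^\phi_1/m)(\eta_{i+1}(\zetabold)-\eta_i(\zetabold))$, and $\Jcal(\zetabold)_{\xi_i}=\xi_i+(2/m)\eta_i'(\zetabold)$, so that fixed points of $\Jcal$ correspond exactly to $w''=0$. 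The $\tau_{max}^{1+\alpha/3}$ bound on $H$ from Lemma \ref{LHM} together with Proposition \ref{Plinear}(ii) forces $\eta_i,\eta_i'=O(\tau_{max}^{\alpha/3})$, and the matching estimates of Lemma \ref{LLD}(iii) then imply $\Jcal(B)\subseteq B$ provided $\cunder_1$ is chosen to be a sufficiently large absolute constant and $m$ is sufficiently large (in terms of $\cunder_1$ and $k$). Continuity of $\Jcal$ follows from Lemma \ref{LLD}(i), Proposition \ref{Plinear}(iii), and Lemma \ref{Lrefdiff}(i); Brouwer's theorem then produces a fixed point $\zetaboldhat$, and $\upphihat:=\upphi(\zetaboldhat)$ is the required perturbation with the claimed norm bound.

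\emph{Geometric conclusions and main obstacle.} Embeddedness of $\Mhat_\upphihat$ follows from Lemma \ref{Lquad} for sufficiently small $\upphihat$; $\groupthree$-invariance is preserved at every step of the construction; the genus is $2mk-1$ because $\Mhat$ is topologically the connect-sum of two copies of $\Spheq$ along the $|L|=2mk$ catenoidal bridges; and $\text{Area}(\Mhat_\upphihat)\to 8\pi$ as $m\to\infty$ since $\tau_{max}\to 0$ and the deviation from two copies of $\Spheq$ is concentrated in regions of total area $O(\tau_{max}^2\,|L|\,|\log\tau_{max}|)\to 0$, while the $L^2$ area contribution of $\upphihat$ is controlled by its norm bound. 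The main obstacle is the verification that $\Jcal(B)\subseteq B$ with $\cunder_1$ chosen as an \emph{absolute} constant (independent of $k$); this depends decisively on the $1/m$-smallness (rather than $O(1)$) of the error terms in parts (b) and (c) of Lemma \ref{LLD}(iii) and on the matching $O(1)$ bound in part (a), which in turn reflect the careful construction and estimates of the LD and MLD solutions carried out in Sections \ref{S:LD} and \ref{S:MLD}.
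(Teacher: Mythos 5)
Your nested fixed-point strategy (Banach contraction in $\upphi$ for each fixed $\zetabold$, followed by Brouwer on the finite-dimensional parameter $\zetabold$) is a legitimate alternative to the paper's single Schauder fixed point on the product space $C^{2,\beta}_\sym(M[[\zerobold]])\times\R^{2k}$; the paper avoids the extra hypotheses the nested approach needs (a Lipschitz---not merely quadratic---bound on $Q$ from Lemma~\ref{Lquad}, and continuity of the inner fixed point in $\zetabold$, which requires uniform contraction) by trading them for Arzela--Ascoli compactness. That trade-off is fine in principle, but as written your argument has a genuine gap in the step that is supposed to make the outer loop close.

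The problem is that you apply $\Rcal_M$ to $-H-Q(\upphi)$ and then invoke Lemma~\ref{LHM} as if it gave a $\tau_{max}^{1+\alpha/3}$ bound on $H$. Lemma~\ref{LHM} bounds the \emph{modified} mean curvature $H-\xiw\circ\PiSph$, not $H$ itself; $H$ carries the contribution $w\circ\PiSph$ with $w=\sum_i\tau_i\mu_iW_i+\sum_i\tau_i\mu'_iW'_i$ coming from $\Lcalp\varphi=w$ in the MLD construction (Definition~\ref{dtau1}), and that term is not small. The correct input is $-(H-\xiw\circ\PiSph)-Q(\upphi)$, as in the paper's step defining $(u,w_H)$, and then the total Lagrange multiplier at the inner fixed point is $w+w_H+w_Q$, not $w_H+w_Q$. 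Consequently your claim $\eta_i,\eta'_i=O(\tau_{max}^{\alpha/3})$ is false: the $\eta_i$ include $\mu_i=O(\cunder_1)$, and it is precisely the matching estimates of Lemma~\ref{LLD}(iii) that control them, not the smallness of $H$. This in turn exposes a sign error: with the (correct) identification $\eta_i\approx\mutilde_i$, the update $\zeta_1\mapsto\zeta_1-\eta_1$ does not map $[-\cunder_1,\cunder_1]$ into itself, because Lemma~\ref{LLD}(iii)(a) gives $|\zeta_1+\mu_1|\le C$, so $\zeta_1-\eta_1\approx\zeta_1-\mu_1\approx 2\zeta_1$; you need the plus sign $\zeta_1\mapsto\zeta_1+\eta_1$ (matching the paper's $\zeta_1+\mutilde_1$), for which the estimate $|\zeta_1+\mutilde_1|\le C+O(\tau^{\alpha/3})\le\cunder_1$ holds for $\cunder_1$ a large enough absolute constant. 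With these corrections the remaining parameter updates in your $\Jcal$ (which do have the paper's signs) work as you intend, but as stated $\Jcal(B)\subseteq B$ does not follow.
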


\begin{proof} 
The proof is based on finding a fixed point for a map 
$\Jcal : B \rightarrow C^{2, \beta}_{sym}(M[[\zerobold]])\times \R^{2k}$ 
we will define shortly, 
where 
$B \subset C^{2,\beta}_\sym(\,M[[\zerobold]]\,)\times\R^{2k}$ 
is defined by
$$
B := 
\left\{ \, v\in C^{2,\beta}_\sym(M[[\zerobold]]):\|v\|_{2,\beta,\gamma;M[[\zerobold]]} \, \le \, \tau_1[\zerobold;m]^{1+\alpha}\,\right\} 
\times [-\cunder_1,\cunder_1]\times  \left[-\frac{\cunder_1}{m},\frac{\cunder_1}{m}\right]^{2k-1}.  
$$
To motivate the definition of $\Jcal$, suppose $(v, \zetabold) \in B$.  
Use Proposition \ref{Plinear} to define $(u, w_H):= - \Rcal_{M[[\zetabold]]}\left( H - w\circ \Pi_{\Spheq}\right)$.  
Define also $\upphi \in C^{2, \beta}_{\sym}\left(M[[\zetabold]]\right)$ by $\upphi: = v\circ \Fcal^{-1}_\zetabold + u$.  
We then have: 
\begin{enumerate}
\item $\Lcal u + H = (w+w_H)\circ \Pi_{\Spheq}.$
\item By Lemma \ref{LHM} and Proposition \ref{Plinear}, 
	\begin{align*}
	\left\| w_H: C^{0, \beta}( \Spheq, g)\right\| +\left \| \upphi \right \|_{2, \beta, \gamma; M[[\zeta]]} \leq \tauo^{1+\alpha/4}.
	\end{align*}
Using \ref{Plinear} again, define $(u_Q, \mu_Q): = - \Rcal_{M[[\zetabold]]}(H_\upphi- H - \Lcal \upphi)$.  By definition,
\item $\Lcal u_Q+H_\upphi = H + \Lcal \upphi +w_Q \circ \Pi_{\Spheq}$.\newline
By \ref{Lquad}, we have the following estimate on the quadratic terms:
\item $\left\| w_Q : C^{0, \beta}( \Spheq, g)\right\| + \left\| u_Q\right\|_{2, \beta, \gamma; M[[\zetabold]]} \leq \tauo^{2+\alpha/4}$.\newline
By combining the above, we see
\item $\Lcal( u_Q - v\circ \Fcal^{-1}_{\zetabold}) + H_\upphi = (w+w_H+w_Q)\circ \Pi_{\Spheq}$.  
\end{enumerate}

We then define
\begin{align}
\label{EJcal} 
\Jcal(v, \zetabold) = \left( u_Q \circ \Fcal_{\zetabold}, \zeta_1+\mutilde_1,    \bsigma - \frac{F^\phi_1}{m} \Dcal \muboldtilde , \xibold+\frac{2}{m}\muboldtilde'
\right),
\end{align}
where $\sum_{i=1}^k \tau_i\mutilde_i W_i + \sum_{i=1}^k\tau_i \mutilde'_i W'_i = w+w_H+ w_Q$. 

We are now ready for the fixed-point argument.  Clearly $B$ is convex.  Let $\beta' \in (0, \beta)$.  By the Ascoli-Arzela theorem, the inclusion $B \hookrightarrow C^{2, \beta'}_{sym}\left( M[[ \zerobold]]\right)$ is compact.  Moreover, by \ref{LLD}.(iii) and item (4) above, when $m$ is large enough, $\Jcal(B) \subset B$.   

The Schauder fixed point theorem implies there is a fixed point $(\vhat, \zetaboldhat)$ of $\Jcal$.  By inspection of the defining formula for $\Jcal$, this implies that $\mutilde_i = \mutilde'_i = 0$ for $i=1, \dots, k$, hence $\what+ \what_H+ \what_Q = 0$ and also $\vhat = \uhat_Q \circ F_{\zetaboldhat}$.  By (5), we conclude the minimality of $\Mhat_{\upphihat}$.  

The smoothness follows from standard regularity theory and the embeddedness follows from Lemma \ref{Lquad} and (4) above.  The genus follows because we are connecting two spheres with $2km$ bridges.  Finally, the limit of the area as $m\rightarrow \infty$ follows from the bound on the norm of $\varphihat$ and from the estimates (cf. \cite[Section 3]{kap}) on the function $\varphi_{init}\left[  L[\widehat{\sbold}; m], \tauhat, \what \right]$ used to construct the initial surfaces. 
\end{proof}

\begin{remark}
\label{rtausize}
As $k\rightarrow \infty$, the sizes of the catenoidal bridges on each minimal surface in \ref{Tmain} tends to become uniform in the following sense: 
given $k\in \N$ and $\zetabold\in \R^{2k}$ as in \eqref{Epbounds}, 
it follows from Lemma \ref{Ltauratio} that 
$\limsup_{m\rightarrow \infty} \tau_{max}[\zetabold]/ \tau_{min}[\zetabold] < C/k$ 
where $C$ is a constant independent of $\zetabold$ and $k$.  
Consequently
\begin{align*}
\lim_{k\rightarrow \infty} \lim_{m\rightarrow \infty} \tauhat_{max}/\tauhat_{min} = 1,
\end{align*}
where $\tauhat_{max}/\tauhat_{min} := \tauhat_{max}[\zetaboldhat]/\tauhat_{min}[\zetaboldhat]$ 
is the ratio of the maximum size over the minimum size of the bridges associated with fixed point $(\vhat, \zetaboldhat)$ of $\Jcal$ as in \ref{Tmain}.
\end{remark}

\begin{remark} 
\label{restimprove}
For simplicity in this article we have not attempted to determine explicit bounds 
for $m$ in terms of $k$ (recall \ref{Aratio}) 
under which the conclusion of Theorem \ref{Tmain} holds. 
It would be interesting to extend our results to include the cases that $k\le \cunder  m$ 
with $m$ large depending on $\cunder$ but independently of $k$. 
The case $m$ small with large $k$ would require new ideas as remarked in \cite{kap}. 
\end{remark}

\section{Constructions with necks at the poles or the equator}
\label{S:poles}
$\phantom{ab}$
\nopagebreak
In this section, we construct doublings of $\Spheq$ whose configurations $L$ contain the poles, 
or points equally spaced along the equator circle of $\Spheq$, or both, 
in addition to points distributed symmetrically along $2k$ parallel circles in $\Spheq$ as in the previous sections.  

\subsection*{The case with necks at the poles}
\label{SS:poles}
$\phantom{ab}$
\nopagebreak
\begin{definition}
\label{Linf}
Given $\sbold = (\sss_1, \dots, \sss_k)$ as in \ref{Dlpar} and $L[\sbold;m]$ as in \ref{dL}, define
\begin{equation}
\begin{aligned}
\Lhat &= \Lhat[\sbold; m] = L[\sbold;m]\cup L_{pol} \subset \Spheq,
\end{aligned}
\end{equation}
where $L_{pol} = \{ p_N, p_S\} = \grouptwo\, \left( p_N\right)$.
Given also a $\grouptwo$-symmetric function $\tau:\Lhat[\sbold; m]\rightarrow\R$, we denote 
$\tau_i := \tau(p_i)$ for $i=1, \dots, k$ and $\taupol := \tau(p_N)  = \tau(p_S)$.\end{definition}

\begin{definition}[cf. Proposition \ref{existence}]
\label{dphitilde}
Given $\bsigmaunder = (\bsigma, \xibold) \in \ell^1\left( \R^\N\right) \oplus \ell^\infty \left( \R^\N\right)$ as in \ref{existence}, 
$k\in \N$ and $\sss_1 \in   (a_{k+1, \bsigmaunder}, a_{k, \bsigmaunder}) $ (recall \ref{existence}), 
we modify $\phat[\sss_1; \bsigmaunder]$ to $\phitilde[\sss_1; \bsigmaunder]$ by removing the last discontinuity in the sense that 
$\phitilde[\sss_1; \bsigmaunder] := \phat[\sss_1; \bsigmaunder]$ on $\{ \sss \in [0, \sss_{k+1}]\}$ and 
$\phitilde[\sss_1; \bsigmaunder] $ is smooth on $\{ \sss \in [\sss_k, \infty)\} $. 
By defining 
$A_k = A_k[\sss_1; \bsigmaunder]$ and $B_k = B_k[\sss_1; \bsigmaunder]$ by \eqref{Ecoeffs} as in Lemma \ref{Lcoeffs} 
the last condition is equivalent to 
\begin{align}
\label{Ephitilde}
\phitilde := A_k \phie + B_k \phio \quad \text{on} \quad \{ \sss\in [\sss_k, \infty)\}. 
\end{align}
\end{definition}

 \begin{figure}[h]
\centering
\includegraphics[scale=1]{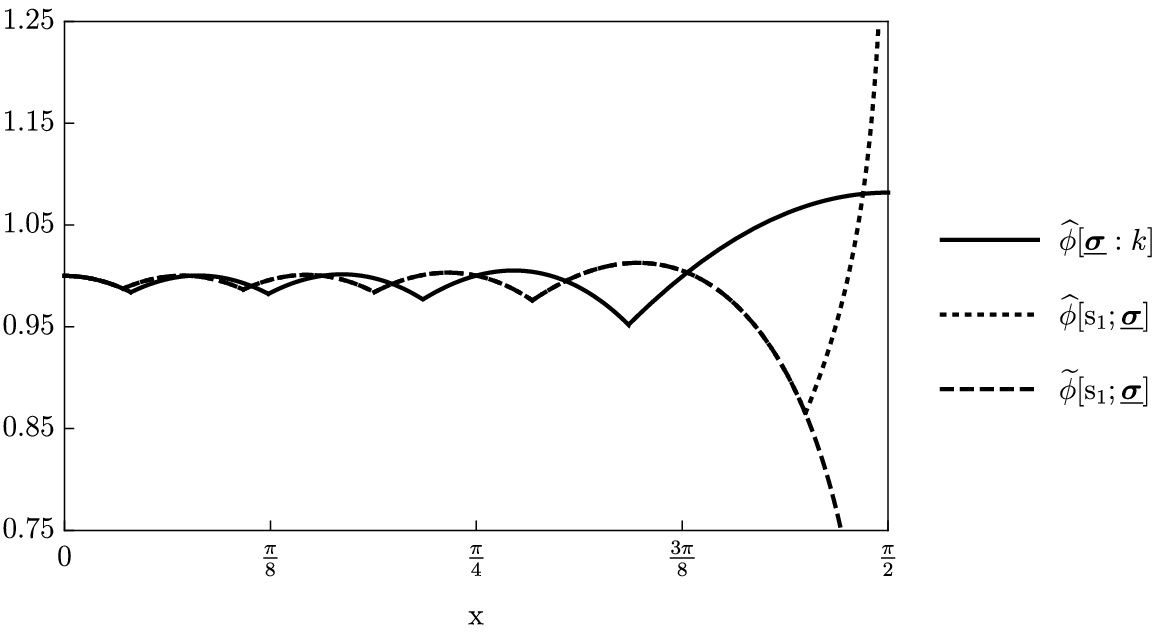}
\caption{Profile curves of some RLD solutions: $\phat[ \bsigmaunder :k]$ (when $k=4$), 
 $\phat[\sss_1; \bsigmaunder]$, and $\phitilde[\sss_1; \bsigmaunder]$, where $\sss_1\in (a_{5, \bsigmaunder}, a_{4, \bsigmaunder})$.  In all cases $\bsigmaunder = \zerobold$.}
\end{figure}

\begin{remark}
In Definition \ref{RL}.(i), we required that RLD solutions $\phi$ be positive.  
This ensures that RLD solutions are meaningful for the linearized doubling apparatus when $L$ consists of points equally spaced on $2k$ parallel 
circles---in particular that each RLD solution $\phi$ is the average of an LD solution 
$\Phi = \varphi[ L; \tau]$ for some $\grouptwo$-invariant $\tau: L \rightarrow \R_+$ (recall Lemma \ref{Lphiavg}).  
When $L$ also contains the poles $\{p_N, p_S\}$, the presence of logarithmic singularities at $\{p_N, p_S\}$ dictates that 
the expansion of the average of an $\grouptwo$-invariant LD solution 
$\Phi = \varphi[ L ; \tau]$ contains a positive multiple of $\phie$ (recall \ref{LGp} and \ref{dLD0}.(ii)) on $\{ \sss \in [\sss_k, \infty)\}$.  
This is ensured by \ref{rH}, \eqref{Erldcoeff} and \ref{dphitilde}.
\end{remark}

\begin{lemma}
\label{Rtautilde} 
There exists $\epsilonunder_2>0$ such that given $k\in \N$ and $\bsigmaunder = (\bsigma, \xibold) \in \ell^1 \left( \R^\N \right) \oplus \ell^\infty\left( \R^\N\right)$ satisfying $|\bsigma|_{\ell^1} + |\xibold|_{\ell^\infty}<\epsilonunder_1/k$ (recall \ref{Prl}), $A_k[\sss_1; \bsigmaunder]$ is a strictly decreasing function of $\sss_1$ on $[ a_{k, \bsigmaunder} - \epsilonunder_2/ k^2, a_{k, \bsigmaunder}]$ which on this interval satisfies $\frac{ \partial A_k [\sss_1; \bsigmaunder]}{\partial \sss_1} \sim_C -k$ and $A_k[ a_{k, \bsigmaunder}; \bsigmaunder] = 0$, where $C>0$ is a constant independent of $k$.
\end{lemma}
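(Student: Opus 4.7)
The plan is to derive a closed-form expression for $A_k$ using the Wronskian identity from Lemma \ref{Lphie}. Since $\phitilde = A_k\phie + B_k\phio$ on $\{\sss \geq \sss_k\}$ with the matching data $\phitilde(\sss_k) = \phat(\sss_k)$ and right-derivative $F_{k+}\phat(\sss_k)$, solving the $2\times 2$ linear system by Cramer's rule (using $W[\phie,\phio]=1$) yields
\begin{equation*}
A_k \;=\; \phat(\sss_k)\,\phio(\sss_k)\,\bigl[F^{\phio}_+(\sss_k) - F_{k+}\bigr].
\end{equation*}
The identity $A_k[a_{k,\bsigmaunder};\bsigmaunder] = 0$ is then immediate: at $\sss_1 = a_{k,\bsigmaunder}$ the limiting solution coincides with $\phat[\bsigmaunder:k]$, which by \ref{existence}.(iii) is smooth at the poles, hence equals a multiple of $\phio$ on $\{\sss\geq\sss_k\}$, forcing $F_{k+} = F^{\phio}_+(\sss_k)$.

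Next I would reparametrize by $F_1$ (using the bijection $F^{\phie}_-(\sss_1) = \tfrac{1-\xi_1}{2}F_1$ from \ref{rmk: ratios unique}) and differentiate the closed form. At $F_1 = F_1(a_{k,\bsigmaunder})$ the factor $F^{\phio}_+(\sss_k) - F_{k+}$ vanishes, so the product-rule term involving $\partial_{F_1}[\phat(\sss_k)\phio(\sss_k)]$ drops out, leaving
\begin{equation*}
\frac{\partial A_k}{\partial F_1} \;=\; \phat(\sss_k)\,\phio(\sss_k)\left[(F^{\phio}_+)'(\sss_k)\,\frac{\partial \sss_k}{\partial F_1} - \frac{\partial F_{k+}}{\partial F_1}\right].
\end{equation*}
Each factor is then estimated: Lemma \ref{LFmono} applied to $\phio$ gives $(F^{\phio}_+)'(\sss_k) = -(2\sech^2\sss_k + F_{k+}^2) \sim -1/k$ using $\sech^2\sss_k\sim 1/k$ and $F_{k+}\sim 1/k$ from Proposition \ref{Prl}; Corollary \ref{Csderiv}.(i) gives $\partial\sss_k/\partial F_1 \sim k^2$; and directly from \eqref{EFxi}, $\partial F_{k+}/\partial F_1 = \tfrac{1+\xi_k}{2}e^{\sum\sigma_l}\sim_C 1$. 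Combined with $\phat(\sss_k)\phio(\sss_k)\sim_C 1$ (using \ref{Prl}.(iii) and $\tanh\sss_k\sim 1$), this yields $\partial A_k/\partial F_1 \sim -k$. Converting back via $\partial F_1/\partial \sss_1 = \tfrac{2}{1-\xi_1}(2\sech^2\sss_1 + F^{\phie}_-(\sss_1)^2)\sim_C 1$ (since $\sss_1\sim 1/k$ makes $\sech^2\sss_1\sim_C 1$) completes the estimate at the endpoint.

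The hard part will be extending the estimate across the full interval $[a_{k,\bsigmaunder}-\epsilonunder_2/k^2,\,a_{k,\bsigmaunder}]$. The strategy is to return to the general derivative formula
\begin{equation*}
\frac{\partial A_k}{\partial F_1} \;=\; \frac{\partial[\phat(\sss_k)\phio(\sss_k)]}{\partial F_1}\bigl[F^{\phio}_+(\sss_k)-F_{k+}\bigr] + \phat(\sss_k)\phio(\sss_k)\,\frac{\partial}{\partial F_1}\bigl[F^{\phio}_+(\sss_k)-F_{k+}\bigr],
\end{equation*}
and bound each contribution uniformly on the interval. For the cross-term, integrating the second-term estimate from the endpoint yields $|F^{\phio}_+(\sss_k)-F_{k+}| \leq C\,(\epsilonunder_2/k^2)\cdot k = C\epsilonunder_2/k$, while the prefactor $\partial[\phat(\sss_k)\phio(\sss_k)]/\partial F_1$ is at worst polynomial in $k$ (controlled by \ref{Csderiv} and the fact that $\partial\phio(\sss_k) = \sech^2\sss_k\sim 1/k$); thus the cross-term is $O(\epsilonunder_2)$ and negligible compared to $-k$ provided $\epsilonunder_2$ is small. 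The second term varies continuously in $\sss_1$ with derivative controlled by differentiating the identities in Lemmas \ref{LFmono}, \ref{LHmono}, \ref{Lsderiv} once more; the resulting fluctuation over an interval of length $\epsilonunder_2/k^2$ is again absorbed by choosing $\epsilonunder_2$ small independent of $k$. Finally, converting back from $F_1$ to $\sss_1$ preserves the $\sim_C -k$ estimate since $\partial F_1/\partial\sss_1$ remains uniformly bounded above and below on the interval.
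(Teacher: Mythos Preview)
Your approach is essentially the paper's: both start from the closed form $A_k = \phat(\sss_k)\phio(\sss_k)\bigl[F^{\phio}_+(\sss_k) - F_{k+}\bigr]$ and split the derivative into a ``bracket term'' and a ``cross term''.  The difference is cosmetic (you differentiate in $F_1$ and convert, the paper differentiates in $\sss_1$ directly), and your endpoint computation of the main term is correct and matches the paper's estimate $(II) \sim_C -k$.

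The gap is in the cross term.  You assert that the prefactor $\partial_{F_1}\bigl[\phat(\sss_k)\phio(\sss_k)\bigr]$ is ``at worst polynomial in $k$, controlled by \ref{Csderiv}''.  But Corollary \ref{Csderiv} only bounds $\partial \sss_k/\partial F_1$; it says nothing about $\partial \phat(\sss_k)/\partial F_1$, and this quantity is not immediate --- $\phat(\sss_k)$ depends on $F_1$ both through the argument $\sss_k$ and through the entire profile of $\phat$ on $[0,\sss_k]$.  The paper spends the bulk of its proof on precisely this point: differentiating $\log\phat(\sss_k) = \int_0^{\sss_k} F^{\phat}_+\,d\sss$ under the integral sign and using \ref{LHmono} and \ref{Csderiv} to bound each of the $k$ boundary terms and the integrand, ultimately obtaining $|\partial\phat(\sss_k)/\partial \sss_1| < Ck^2$.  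Without this specific quadratic bound your argument does not close: an unspecified polynomial bound $Ck^p$ gives a cross term of order $k^{p-1}\epsilonunder_2$, which can only be absorbed into the main term $-k$ with $\epsilonunder_2$ independent of $k$ when $p \le 2$.  (Relatedly, even with $p=2$ the cross term is $O(k\,\epsilonunder_2)$, not $O(\epsilonunder_2)$ as you wrote; this still works, but your intermediate claim is off by a factor of $k$.)  You should either reproduce the paper's computation of $\partial\phat(\sss_k)/\partial \sss_1$ or find an alternative route to the $Ck^2$ bound.
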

\begin{proof}
Let $k\in \N$, $\bsigmaunder = (\bsigma, \xibold) \in \ell^1\left( \R^\N\right) \oplus \ell^\infty \left( \R^\N \right)$ satisfy $|\bsigma|_{\ell^1} + |\xibold|_{\ell^\infty}<\epsilonunder_1/k$, and $\phitilde[\sss_1; \bsigmaunder]$ be as in \ref{dphitilde}, where $\sss_1 \in (a_{k+1, \bsigmaunder}, a_{k, \bsigmaunder})$.  By combining \ref{Prl} with \ref{rsigbij}  we find that
\[ F^{\phie}_-(a_{k, \bsigmaunder}) = F^{\phitilde[ a_{k, \bsigmaunder}; \bsigmaunder]}_{1-} \sim_{\exp(|\bsigma|_{\ell^1} + 3|\xibold|_{\ell^\infty})} F^{\phitilde[ a_{k, \bsigmaunder}; \bsigmaunder]}_{\ave} = \frac{1}{k} + O \left( \frac{1}{k^3}\right).\]

 By a direct calculation, $F^\phie_-(\sss) = \sss + O(\sss^2)$ for small $\sss$.  We we conclude from the preceding in combination with the assumption that $|\bsigma|_{\ell^1}+|\xibold|_{\ell^\infty}<\epsilonunder_1/k$ that 
 $a_{k, \bsigmaunder} - a_{k+1, \bsigmaunder}> C/k^2$.
Assume now that $\sss_1 \in [ a_{k, \bsigmaunder}- \epsilonunder_2/k^2, a_{k, \bsigmaunder}]$ and that $\epsilonunder_2>0$ is small enough that $a_{k, \bsigmaunder}- \epsilonunder_2/k^2> a_{k+1, \bsigmaunder}$.
 From \ref{rH} we have 
\begin{align}
\label{EAk}
A_k[\sss_1; \bsigmaunder] = \phitilde(\sss_k)\left( F^{\phio}_+(\sss_k) - F^{\phitilde}_{k+}\right)\phio(\sss_k).
\end{align}
Differentiating \eqref{EAk} with respect to $\sss_1$, we find
\begin{equation}
\label{EAkd}
\begin{gathered}
\frac{\partial A_k[\sss_1; \bsigmaunder] }{\partial \sss_1} = (I) + (II) \quad \text{where}\\
(I) :=  \frac{\partial \phitilde(\sss_k)}{\partial \sss_1}\left( F^{\phio}_+(\sss_k) - F^{\phitilde}_{k+}\right)\phio(\sss_k), \quad
(II):=
\phitilde(\sss_k) \frac{\partial}{\partial \sss_1}\left[ \left( F^{\phio}_+(\sss_k) - F^{\phitilde}_{k+}\right)\phio(\sss_k)\right].
\end{gathered}
\end{equation}
Using \eqref{Ephiode} and \ref{rsigbij} we find
\begin{align}
\label{E2expand}
(II) = \phitilde(\sss_k)\left( - 2\sech^2 \sss_k \phio(\sss_k) \frac{\partial \sss_k}{\partial \sss_1}
- \frac{\partial F^{\phie}_-(\sss_1)}{\partial \sss_1} \frac{1+\xi_k}{1-\xi_1}\left( e^{\sum_{l=1}^{k-1} \sigma_l}\right) - 
F^\phitilde_{k+} \partial \phio(\sss_k) \frac{\partial \sss_k}{\partial \sss_1}\right).
\end{align}
Using Corollary \ref{Csderiv}, \ref{Prl}.(i), and that $F^\phie_-(\sss) = \sss + O(\sss^2)$ for small $\sss$, we see that $(II) \sim_C -k$.  
To estimate $(I)$, first observe from the estimate on $(II)$ and that $F^\phio_+(\sss_k) - F^\phitilde_{k+} = 0$ when $\sss_1 = a_{k, \bsigmaunder}$ that
\begin{align}
\label{Efdiff}
\left( F^\phio_+(\sss_k) - F^\phitilde_{k+}\right) < Ck( a_{k, \bsigmaunder} - \sss_1) < C \epsilonunder_2/k.
\end{align}

Recall now from \ref{dF} that $\log( \phitilde(\sss_k)) = \int_{0}^{\sss_k} F^\phitilde_+(\sss)d\sss$.  Differentiating under the integral sign, we find
\begin{align}
\label{EPderiv}
\frac{1}{\phitilde(\sss_k)} \frac{ \partial \phitilde(\sss_k)}{\partial \sss_1} = - F^{\phitilde}_{k-} \frac{\partial \sss_k}{\partial \sss_1} -\sum_{i=1}^{k-1} F^\phitilde_i \frac{\partial \sss_i}{ \partial \sss_1}  + 
 \int_{0}^{\sss_k} \frac{\partial F^\phitilde_+(\sss)}{\partial \sss_1} d\sss.
\end{align}
Using Corollary \ref{Csderiv}, we estimate 
\begin{align}
\frac{\partial \sss_i}{\partial \sss_1} < C\left( 2 \sech^2 \sss_i + \left( F^\phitilde_{i-}\right)^2\right)^{-1}  i , \quad i=1, \dots, k,
\end{align}
hence by \ref{rsigbij} and \ref{Prl}.(i)
\begin{align}
\label{Esfderiv}
  F^{\phitilde}_{k-} \frac{\partial \sss_k}{\partial \sss_1} + \sum_{i=1}^{k-1} F^\phitilde_i \frac{\partial \sss_i}{ \partial \sss_1} < Ck^2.
\end{align}
Using \ref{LHmono}, when $\sss\in [\sss_i, \sss_{i+1}]$, $i=1, \dots, k-1$, we have
\begin{align}
\label{Efintderiv}
\frac{\partial F^{\phitilde}_+}{\partial \sss_1} = \left( \frac{ \phitilde(\sss_i)}{\phitilde(\sss)}\right)^2\frac{1}{2}\frac{1+ \xi_i}{1 - \xi_1}\left( e^{\sum_{l=1}^{i-1}\sigma_l}\right) \frac{\partial F^\phie_-}{\partial \sss_1}(\sss_1) +  \left( \frac{ \phitilde(\sss_i)}{\phitilde(\sss)}\right)^2\left(2 \sech^2 \sss_i + \left(F^{\phitilde}_{i+}\right)^2\right)\frac{\partial \sss_i}{\partial F_1}
\end{align}
Using \ref{Lsderiv} and \ref{Csderiv}, we estimate $ \left|\frac{ \partial F^\phitilde_+}{ \partial \sss_1}(\sss)\right |< k$ for $\sss\in [ 0, \sss_k]$.  From this and the estimate that $\sss_k < C\log k $ (recall \ref{Prl}), we find
\begin{align}
\int_{0}^{\sss_k} \frac{ \partial F^\phitilde_+}{ \partial \sss_1}(\sss) d\sss < C k \log k. 
\end{align}
Combining with \eqref{Esfderiv}, we have by estimating \eqref{EPderiv} that 
\begin{align}
\label{Ephideriv1}
\left| \frac{\partial \phitilde(\sss_k)}{\partial \sss_1} \right| < C k^2. 
\end{align}
Combining this with \eqref{Efdiff} we find that  $|(I)|< C k \epsilonunder_2$.  The result then follows from the estimates on $(I)$ and $(II)$ by taking $\epsilonunder_2$ small enough.
\end{proof}

\begin{definition}
\label{Nphitildek}  
Given $k\in \N$, $\bsigmaunder$ as in \ref{Rtautilde}, and $\tautilde \in [0, \epsilonunder_3/k)$, 
where $\epsilonunder_3>0$ is a constant depending only on $\epsilonunder_2$ and the constant $C>0$ in Lemma \ref{Rtautilde},
we define $\phitilde[ \bsigmaunder, \tautilde : k] := \phitilde[ \sss_1; \bsigmaunder]$, 
where $\sss_1$ is uniquely characterized (recall Lemma \ref{Rtautilde}) by the properties that 
$\sss_1 \in [a_{k, \bsigmaunder} - \epsilonunder_1/k^2, a_{k, \bsigmaunder}]$ and $A_k[\sss_1; \bsigmaunder] = \tautilde$.
Note that by \ref{Nphik} $\phitilde[ \bsigmaunder, 0 :k] = \phat[\bsigmaunder: k]$.
We also define $B_{k, \tautilde} := B_k[\bsigmaunder, \tautilde : k]$ as in \eqref{Ephitilde}.
\end{definition}

\begin{definition}
\label{Nsbark}
Given $\phitilde[\bsigmaunder, \tautilde: k]$ as in \ref{Nphitildek}, we define $\stilde_k\in (\sss_k, \infty)$ to be the unique root of $F^\phitilde_+$ in $(\sss_k, \infty)$.  Note that $\stilde_k$ is well defined by Lemma \ref{LFmono}.
\end{definition}

\begin{prop}[cf. Proposition \ref{Prl}]
\label{Prlpoles}
There is a constant $\epsilonunder_1>0$ such that for all $k\in \N$ and all $(\bsigmaunder, \tautilde)= (\bsigma, \xibold, \tautilde)\in \ell^1\left(\R^\N\right)\oplus \ell^\infty\left( \R^\N\right)\times \R_+$ with $|\bsigma|_{\ell^1}+ |\xibold|_{\ell^\infty} < \epsilonunder_1$ and $ \tautilde \in [0,  \epsilonunder_3/ k)$, 
$\phitilde =\phitilde[\bsigmaunder, \tautilde: k]$ satisfies the following.
\begin{enumerate}[label=(\roman*).]
\item  $F^{\phitilde}_{avg} = \frac{1}{k} +O\left(\frac{1}{k^2}\right)$ and $\sech^2 \sss_k \sim_C \frac{1}{k}$.  
\item \begin{enumerate}[label=(\alph*)]
	\item $ F^{\phitilde}_{1-}=2\tanh \sss_1 +O\left(\frac{1}{k^3}\right).$
	\item  $F^{\phitilde}_{i-}+F^\phitilde_{i-1+} = 2(\tanh \sss_i - \tanh \sss_{i-1}) +O\left(\frac{1}{k^2(k-i+1)}\right)$ for $i=2, \dots, k$.
	\item $F^{\phitilde}_{k+}=2(\tanh \stilde_k- \tanh \sss_k) + O\left(\frac{1}{k^2}\right)$.
	\end{enumerate}
\item $\left\|1-\phitilde(\sss): C^0\left( \{ \sss\in [0, \sss_k]\}\right) \right\| \le \frac{C}{k}\log k$ and  $\left|\log\frac{\phitilde(\sss_i)}{\phitilde(\sss_{i-1})}\right| \le \frac{C}{k}$ for $2\leq i \leq k$.    If $\bsigmaunder=\zerobold$, then also $1>\phitilde(\sss_1)> \cdots >\phitilde(\sss_k)$.  

\item There is a constant $C>0$ depending only on $\epsilonunder_1$ such that for any $1\leq i<j\leq k$, 
\begin{align*}
\sss_j - \sss_i &>  \left(1+\frac{C}{k}\right) \frac{j-i}{2k}
\quad \text{and} \quad 
\sech^2 \sss_i - \sech^2 \sss_j \le \frac{C(j-i)}{k}.
\end{align*}
\end{enumerate}
\end{prop}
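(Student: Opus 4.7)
The proof parallels that of Proposition \ref{Prl} closely, with the semi-infinite interval $[\sss_k,\infty)$ (on which $\phat[\bsigmaunder:k]$ is a multiple of $\phio$) replaced by the finite interval $[\sss_k,\stilde_k]$, on which $\phitilde$ is smooth and satisfies $F^\phitilde_+(\stilde_k) = 0$ by Definition \ref{Nsbark}. Applying Lemma \ref{Lintid} on $[0,\sss_1]$, each $[\sss_{i-1},\sss_i]$ for $2\le i \le k$, and $[\sss_k,\stilde_k]$ yields the analogue of \eqref{Annuli}, in which the outermost identity reads $F^\phitilde_{k+} = 2(\tanh\stilde_k - \tanh\sss_k) + \int_{\sss_k}^{\stilde_k}(F^\phitilde_-)^2 d\sss$. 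The flux monotonicity from Lemma \ref{LFmono}, combined with $F^\phitilde_-(\stilde_k) = 0$, forces $|F^\phitilde_-| \le F^\phitilde_{k+}$ on $(\sss_k,\stilde_k)$, so the rough flux comparison \eqref{EFcomp} continues to hold for $\phitilde$.

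The key new ingredient is the estimate $\sech^2 \sss_k \sim_C F^\phitilde_{\ave}$, which in Proposition \ref{Prl} came from the explicit relation $F^\phat_{k+} = \sech^2\sss_k/\tanh\sss_k$ (unavailable here since $\phitilde$ is not a multiple of $\phio$ beyond $\sss_k$). I will instead use the smallness hypothesis $\tautilde < \epsilonunder_3/k$: by Lemma \ref{Rtautilde} the parameter $\sss_1$ of $\phitilde$ lies within $\epsilonunder_2/k^2$ of $a_{k,\bsigmaunder}$, and Corollary \ref{Csderiv} then forces each jump latitude of $\phitilde$ to differ from the corresponding jump latitude of $\phat[\bsigmaunder:k]$ by at most $O(1/k)$; combined with Proposition \ref{Prl}.(i), this gives the required $\sech^2\sss_k \sim_C 1/k$, from which the flux comparison yields $F^\phitilde_{\ave}\sim 1/k$ and hence (i). Items (ii).(a) and (ii).(b) then follow by substituting (i) into the corresponding identities exactly as in the proof of Proposition \ref{Prl}, and items (iii) and (iv) follow as in Proposition \ref{Prl} from (i) and (ii).

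The main obstacle is (ii).(c), which reduces to bounding $\int_{\sss_k}^{\stilde_k}(F^\phitilde_-)^2 d\sss = O(1/k^2)$ uniformly for $\tautilde \in [0,\epsilonunder_3/k)$. This is delicate because in the regime $\tautilde \to 0$ one has $\stilde_k\to\infty$, so the naive bound $(F^\phitilde_{k+})^2(\stilde_k - \sss_k)$ degrades. The plan is to drop the nonlinear term in the Riccati equation of Lemma \ref{LFmono} (justified because on $[\sss_k,\stilde_k]$ the quantity $(F^\phitilde_-)^2 = O(1/k^2)$ is dominated by the $2\sech^2\sss$ term wherever $|F^\phitilde_-|$ is not already negligible) to obtain the approximation $|F^\phitilde_-(\sss)|\approx 2(\tanh\stilde_k - \tanh\sss)$; a direct estimate of the resulting integral $4\int(\tanh\stilde_k - \tanh\sss)^2 d\sss$ via the substitution $v=\tanh\sss$ yields the required $O(1/k^2)$ bound, uniformly in $\tautilde$.
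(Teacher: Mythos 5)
Your route to the central estimate $\sech^2\sss_k\sim_C F^\phitilde_\ave$ diverges genuinely from the paper's. The paper writes $\phitilde=\tautilde\phie+B_{k,\tautilde}\phio$ on $\{\sss\ge\sss_k\}$, derives the explicit decomposition $F^\phitilde_+(\sss)=\frac{\sech^2\sss}{\tanh\sss}(I)+(II)$ of \eqref{EFtau}, and bounds $(I)$ and $(II)$ at $\sss_k$; you instead track the jump latitudes via Lemma \ref{Rtautilde} and Corollary \ref{Csderiv}. Your alternative can be made to work, but the stated ``$O(1/k)$'' displacement of $\sss_k$ is not right: $\partial\sss_k/\partial\sss_1\sim_C k/(2\sech^2\sss_k+(F^\phat_{k-})^2)\sim_C k^2$ throughout the window (since $\sech^2\sss_k$ stays $\gtrsim 1/k$ as $\sss_1$ decreases, because $\sss_k$ decreases with $\sss_1$), so integrating over a window of width $\epsilonunder_2/k^2$ gives displacement $O(\epsilonunder_2)$, not $O(1/k)$. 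The conclusion $\sech^2\sss_k\sim_C 1/k$ survives, because an $O(\epsilonunder_2)$ shift in $\sss_k$ changes $\sech^2\sss_k$ by only a bounded multiplicative factor, but the quantitative claim should be corrected.

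The real gap is in (ii).(c). Integrating the Riccati equation \eqref{F ODE} from $\sss$ to $\stilde_k$ with $F^\phitilde_-(\stilde_k)=0$ gives the exact identity $|F^\phitilde_-(\sss)|=2(\tanh\stilde_k-\tanh\sss)+\int_\sss^{\stilde_k}(F^\phitilde_-)^2\,dt$, so your approximant $2(\tanh\stilde_k-\tanh\sss)$ is a \emph{lower} bound for $|F^\phitilde_-|$; plugging it into $J:=\int_{\sss_k}^{\stilde_k}(F^\phitilde_-)^2\,d\sss$ yields a lower bound, not the upper bound (ii).(c) requires. The justification you offer, that $(F^\phitilde_-)^2$ is dominated by $2\sech^2\sss$ wherever it matters, also fails near $\stilde_k$ for small $\tautilde$: from $\partial\phitilde(\stilde_k)=0$ and $\phitilde=\tautilde\phie+B_{k,\tautilde}\phio$ one computes $\phitilde(\stilde_k)=\tautilde\cosh^2\stilde_k$, so $\sech^2\stilde_k\sim\tautilde$ is arbitrarily small while $|F^\phitilde_-|$ is not yet negligible just to the left of $\stilde_k$. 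A repair is available: by monotonicity of $|F^\phitilde_-|$ one has $\int_\sss^{\stilde_k}(F^\phitilde_-)^2\,dt\le|F^\phitilde_-(\sss)|\int_{\sss_k}^{\stilde_k}|F^\phitilde_-|\,dt=|F^\phitilde_-(\sss)|\log\bigl(\phitilde(\stilde_k)/\phitilde(\sss_k)\bigr)$, and $\phitilde(\stilde_k)=\tautilde\cosh^2\stilde_k=\tautilde+\tautilde\sinh^2\stilde_k\le\tautilde+B_{k,\tautilde}=1+O((\log k)/k)$ makes this logarithm $O((\log k)/k)$; absorbing then gives $|F^\phitilde_-(\sss)|\le 4(\tanh\stilde_k-\tanh\sss)$ for large $k$, after which your substitution $v=\tanh\sss$ does close. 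Notice, however, that this repair needs the same explicit control of $\phitilde$ on $\{\sss\ge\sss_k\}$ in terms of $\phie,\phio$ that underlies the paper's \eqref{EFtau}, so the proofs converge at precisely the step you tried to avoid.
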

\begin{proof}
Since $\phat[\sss_1; \bsigmaunder]$ depends continuously on $\sss_1$ on compact subsets of $\cyl$ and $\lim_{\tautilde \searrow 0} \phitilde[\bsigmaunder , \tautilde: k] = \phat[\bsigmaunder: k ]$, it follows from Proposition \ref{Prl} that for $\tautilde$ sufficiently small (depending on $k$), the estimates in \ref{Prlpoles} hold.  It remains to be seen that these estimates hold when $\tautilde \in [0, \epsilonunder_3/k)$.  

To this end, note that on $\{\sss\in [\sss_k, \infty)\}$, 
\begin{equation}
\label{EFtau}
\begin{gathered}
F^\phitilde_+(\sss)  = \frac{ \tautilde \partial \phie (\sss) + B_{k, \tautilde} \partial \phio(\sss)}{\tautilde \phie(\sss) +  B_{k, \tautilde} \phio(\sss)} 
= \frac{\sech^2 \sss}{\tanh \sss } (I) + (II) 
\quad \text{where} \quad 
\\ 
(I) := \frac{ B_{k, \tautilde} - \tautilde \sss}{ B_{k, \tautilde}+ \tautilde ( \coth \sss - \sss)}, 
\quad   
(II) := \frac{\tautilde}{ B_{k, \tautilde} + \tautilde (\coth \sss - \sss)}.  
\end{gathered}
\end{equation}
Evaluating \eqref{EFtau} at $\sss_k$, taking $\tautilde\in [0, \epsilonunder_3/k)$, and using that $\sss_k \sim_C \log k$, which follows from the fact that $\sss^{\phat[\bsigmaunder: k]}_k \sim_C \log k$ (recall \ref{Prl}.(i)) by using the flux monotonicity and that $\sss_1<a_{k, \bsigmaunder}$, we estimate that
 $0<(I) < 2$ and $|(II)| < \epsilonunder_3/k$.  Combining this with \eqref{EFtau} and \eqref{EFcomp}, it follows that 
\begin{align}
\label{EupFavgpol}
\sech^2 \sss_k \sim_{C\exp(|\bsigma|_{\ell^1}+ |\xibold|_{\ell^\infty})} F^\phitilde_\ave.
\end{align}
The rest of the proof proceeds in the same way as the proof of \ref{Prl}, so we point out only the differences.  The last equation in \eqref{Annuli} must be replaced with 
\begin{align}
F^\phitilde_{k+} = 2( \tanh \stilde_k - \tanh \sss_k ) + \int_{\sss_k}^{\stilde_k} \left( F^\phitilde_-(\sss)\right)^2 d\sss.
\end{align}
It follows that $\tanh \stilde_k = 1 - O\left(\frac{1}{k}\right)$.  Estimating, as in the proof of \ref{Prl}, we find $F^\phitilde_\ave = \frac{1}{k} + O\left(\frac{1}{k}\right)$.  The proofs of the remaining parts are nearly the same as the proof of \ref{Prl}, so we omit them.
\end{proof}

\begin{lemma}[cf. Proposition \ref{PODEest}]
\label{PODEestpol}
 Let $k\in \N$ and $\bsigmaunder = (\bsigma, \xibold)\in \R^{k-1}\times \R^k, \bsigmaunder'= (\bsigma', \xibold') \in \R^{k-1}\times \R^k$.  Suppose that $|\bsigma|_{\ell^1}+ |\xibold|_{\ell^\infty}< \epsilonunder_1/k$, $|\bsigma'|_{\ell^1}+ |\xibold'|_{\ell^\infty}< \epsilonunder_1/k$, and $\tautilde' \in [0,  \epsilonunder_3/k), \tautilde \in [0,  \epsilonunder_3/k)$.  Let $\phitilde = \phitilde[\bsigmaunder', \tautilde: k]$ and $\phitilde' = \phitilde[ \bsigmaunder, \tautilde': k]$. There is a constant $C>0$ independent of $k$ such that: \begin{enumerate}[label = (\roman*).]
\item $\displaystyle{  \left| \Fboldunder^{\phitilde'} - \Fboldunder^{\phitilde} \right|_{\ell^\infty} \le 
\frac{C}{k}\left(  | \bsigma' - \bsigma|_{\ell^1}+ |\xibold' - \xibold|_{\ell^\infty}+|\tautilde'- \tautilde|\right) }$.  
\\
\item   
$\displaystyle{  \max_{1\leq i \leq k} \left| \tanh \sss'_i - \tanh \sss_i \right| \le 
\frac{C}{k}\left(  | \bsigma' - \bsigma|_{\ell^1}+ |\xibold' - \xibold|_{\ell^\infty}+ |\tautilde' - \tautilde|\right) }$.  
\end{enumerate}
\end{lemma}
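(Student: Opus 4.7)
The plan is to extend the implicit function theorem argument used in the proof of Proposition \ref{PODEest} so as to accommodate the additional parameter $\tautilde$. For $(F_1, \bsigmaunder, \tautilde)$ in the parameter space, let $\sss_1 = \sss_1(F_1, \xi_1)$ be the latitude characterized by $F^\phie_-(\sss_1) = \tfrac{1}{2}(1-\xi_1) F_1$ (recall \eqref{EFxi} and the discussion in the proof of Proposition \ref{existence}), and define
\[
\Fcal(F_1, \bsigmaunder, \tautilde) := A_k[\sss_1; \bsigmaunder] - \tautilde,
\]
where $A_k$ is the coefficient appearing in Lemma \ref{Lcoeffs} and \eqref{Ephitilde}. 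By Lemma \ref{Rtautilde} and Definition \ref{Nphitildek}, the zero set $\Fcal^{-1}(\{0\})$ consists precisely of the parameters corresponding to $\phitilde = \phitilde[\bsigmaunder, \tautilde : k]$. The idea is then to apply the implicit function theorem at such zeros and obtain $F_1$ as a smooth function of $(\bsigmaunder, \tautilde)$.

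The next step is to bound the partial derivatives of $\Fcal$ at points of $\Fcal^{-1}(\{0\})$. Using the identity \eqref{EAk} and \eqref{EFxi} to rewrite $A_k$ in terms of $F^\phitilde_{k+}$, $\phitilde(\sss_k)$ and $\phio(\sss_k)$, one obtains
\[
\frac{\partial \Fcal}{\partial \tautilde} = -1,
\]
while the derivatives of $\Fcal$ with respect to $F_1$, $\sigma_i$, and $\xi_j$ differ from those in the proof of Proposition \ref{PODEest} only by factors of order $\phitilde(\sss_k)\phio(\sss_k)$, which are $\sim_C 1$ by Proposition \ref{Prlpoles}.(iii) and the fact that $\tanh \sss_k \to 1$. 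The bounds
\[
\frac{\partial \Fcal}{\partial F_1} \sim_C k, \qquad
\left| \frac{\partial \Fcal}{\partial \sigma_i} \right| \le C, \qquad
\left| \frac{\partial \Fcal}{\partial \xi_j} \right| \le \frac{C}{k},
\]
then follow exactly as in \eqref{EF1deriv}, using Lemma \ref{LFmono}, both parts of Lemma \ref{LHmono}, and the recursive identities of Lemma \ref{Lsderiv}, together with the analogue of Corollary \ref{Csderiv} which holds in the current setting because all of its input estimates (from Proposition \ref{Prlpoles} in place of Proposition \ref{Prl}) carry over uniformly in $k$ and $\tautilde \in [0, \epsilonunder_3/k)$.

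By the implicit function theorem, $F_1$ is then locally a $C^1$ function of $(\bsigmaunder, \tautilde)$ with $\partial F_1/\partial \sigma_i = O(1/k)$, $\partial F_1/\partial \xi_j = O(1/k^2)$, and $\partial F_1/\partial \tautilde = O(1/k)$. Integrating these bounds along a straight-line path between $(\bsigmaunder, \tautilde)$ and $(\bsigmaunder', \tautilde')$, summing the $\sigma_i$ contributions via the $\ell^1$ norm and the $k$ many $\xi_j$ contributions via the $\ell^\infty$ norm, yields
\[
\bigl| F^{\phitilde'}_1 - F^{\phitilde}_1 \bigr| \le \frac{C}{k}\bigl( |\bsigma' - \bsigma|_{\ell^1} + |\xibold' - \xibold|_{\ell^\infty} + |\tautilde' - \tautilde| \bigr),
\]
and then (i) follows from the same reduction as in the proof of \ref{PODEest}: Remark \ref{rsigbij} recovers every entry of $\Fboldunder^\phitilde$ from $F_1^\phitilde$ and $\bsigmaunder$, so the bound on $F_1$ propagates to each $F^\phitilde_{i\pm}$ with a loss of at most a uniform constant. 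Part (ii) would be handled by the analogous (and omitted in \ref{PODEest}) argument expressing $\tanh \sss_i$ as a smooth function of $(F_1, \bsigmaunder, \tautilde)$ through the flux-balance identities of Lemma \ref{Lintid}, combined again with Corollary \ref{Csderiv} and Proposition \ref{Prlpoles}. The main technical point I expect is verifying that the $\partial \Fcal/\partial F_1 \sim_C k$ estimate holds \emph{uniformly} as $\tautilde$ varies over the full interval $[0, \epsilonunder_3/k)$; however, the computation of $\partial A_k/\partial \sss_1 \sim_C -k$ carried out in Lemma \ref{Rtautilde} already exhibits exactly the required uniformity, and converting it into a derivative with respect to $F_1$ requires only the application of Lemma \ref{LFmono} to $\sss_1 = \sss_1(F_1)$.
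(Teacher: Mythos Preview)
Your proposal is correct and takes essentially the same approach as the paper. Indeed, by \eqref{EAk} your function $\Fcal = A_k[\sss_1;\bsigmaunder] - \tautilde$ is exactly the negative of the paper's $\Fcal = \bigl(F^{\phitilde}_+(\sss_k) - F^{\phio}_+(\sss_k)\bigr)\phitilde(\sss_k)\phio(\sss_k) + \tautilde$, so the implicit function theorem argument and the derivative bounds (with $\partial\Fcal/\partial F_1 \sim_C k$ coming from Lemma \ref{Rtautilde} as you note) are identical up to sign.
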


\begin{proof}
We first prove (i).  
As in the proof of \ref{PODEest}, it suffices to prove
\begin{align}
\label{EFdiff2}
\left| F^{\phitilde'}_1 - F^\phitilde_1 \right| \le \frac{C}{k}\left(  |\bsigma' - \bsigma |_{\ell^1} + |\xibold' - \xibold|_{\ell^\infty}\right).
\end{align}
Fix $k\in \N$, let $\epsilonunder_1$ be as in \ref{Prl}, and consider the map
\begin{align}
\Fcal ( F_1, \bsigmaunder, \tautilde) = \left(F^{\phitilde[\sss_1; \bsigmaunder]}_+(\sss_k) - F^{\phio}_+(\sss_k)\right) \phitilde[\sss_1; \bsigmaunder](\sss_k) \phio(\sss_k) + \tautilde,
\end{align} 
where $ \phitilde[\sss_1; \bsigmaunder]$ is as in \ref{dphitilde} and $\sss_1$ is chosen so that $F^\phitilde_1 = F_1$.  Recall from Remark \ref{rH} that 
\begin{equation}
\label{Ephidecompinf}
\begin{aligned}
\phitilde &= \phitilde(\sss_k)\phio(\sss_k) \left( F^\phio_+(\sss_k) - F^\phitilde_+(\sss_k)\right)\phie\\
&+ \phitilde(\sss_k)\phie(\sss_k)\left( - F^\phie_+(\sss_k) + F^\phitilde_+(\sss_k)\right)\phio
\end{aligned}
\end{equation}
By combining \eqref{Ephidecompinf} with \ref{Nphitildek}, we find that $\phitilde[\sss_1; \bsigmaunder] = \phitilde[\bsigmaunder, \tautilde: k]$ if and only if $\Fcal( F_1, \bsigmaunder, \tautilde) = 0$.
Now let $(F_1, \bsigmaunder, \tautilde) \in \Fcal^{-1}(\{ 0\})$ be arbitrary.  Estimating the partial derivatives of $\Fcal$ in a similar manner as in the proof of \ref{PODEest} using \ref{Csderiv} to estimate the partial derivatives of $\sss_k$, we estimate
\begin{align}
\label{EFestpoles}
\left.\frac{\partial \Fcal}{\partial F_1}\right|_{\pointppol} \sim_C k, \qquad 
\left| \left.\frac{\partial \Fcal}{\partial \sigma_i}\right|_{\pointppol} \right| \le C, \qquad
\left|\left. \frac{\partial \Fcal}{\partial \xi_i}\right|_{\pointppol} \right| \le \frac{C}{k}, \qquad
\left. \frac{\partial \Fcal}{\partial \tautilde} \right|_{\pointppol} = 1.
\end{align}
By the implicit function theorem, locally around $\pointppol \in \Fcal^{-1}\left( \{ 0\}\right)$, $\Fcal^{-1}\left( \{ 0\}\right)$ is a graph over $( \bsigmaunder, \tautilde)$ and moreover (abusing notation slightly),
\begin{equation}
\begin{aligned}
\label{Eiftpoles}
\left.\frac{\partial F_1}{\partial \sigma_i}\right|_{(\bsigmaunder, \tautilde)} &= - \left(\left. \frac{ \partial \Fcal}{ \partial F_1}\right|_{\pointppol}\right)^{-1} \left.\frac{ \partial \Fcal }{ \partial \sigma_i}\right|_{\pointppol}\\
\left.\frac{\partial F_1}{\partial \xi_j} \right|_{(\bsigmaunder, \tautilde)}&= - \left( \left.\frac{ \partial \Fcal}{ \partial F_1}\right|_{\pointppol}\right)^{-1}\left. \frac{ \partial \Fcal }{ \partial \xi_j}\right|_{\pointppol}\\
\left.\frac{\partial F_1}{\partial \tautilde}\right|_{(\bsigmaunder, \tautilde)}  &= -\left( \left.\frac{ \partial \Fcal}{ \partial F_1}\right|_{\pointppol}\right)^{-1}\left. \frac{ \partial \Fcal }{ \partial \tautilde}\right|_{\pointppol}.
\end{aligned}
\end{equation}
\eqref{EFdiff2} follows from this and the estimates \eqref{EFestpoles}.
(ii) follows from a similar, but omitted, calculation.
\end{proof}

\begin{lemma}[cf. Lemma \ref{Lphiavg}]
\label{Lphiavginf}
\label{dPhiinf}
Given $(\bsigmaunder, \tautilde) = (\bsigma, \xibold, \tautilde) \in \R^{k-1}\times \R^k \times \R_+$ as in \ref{Prlpoles} and $m$ as in \ref{Aratio}, there is a unique $\grouptwo$-invariant LD solution (recall \ref{LLDexistence})
\begin{align}
\label{EPhiinf}
\Phi = \Phi[\bsigmaunder, \tautilde: k, m] := \varphi[\Lhat ; \tau'],
\end{align}
characterized by the requirements that 
\begin{enumerate}[label=(\alph*)]
\item $\phi = \phi[ \bsigmaunder, \tautilde : k,m] : = \Phi_\ave$ is a multiple of $\phitilde[\bsigmaunder, \tautilde: k]$
\item $\Lhat = \Lhat [ \, \sbold[ \bsigmaunder, \tautilde: k]\, ; m \, ]$ (recall \ref{Linf})
\item $ \tau'_1 = 1$ (normalizing condition).
\end{enumerate}

Moreover, the following hold: 
 \begin{enumerate}[label=(\roman*).]
\item For $i\in \{1, \dots, k\}$ we have $\displaystyle{\tau'_i =  \frac{\phi(\sss_i)}{m} F^\phi_i}$.  Moreover $\tau'_i$ is independent of $m$ and satisfies
	 $\displaystyle{\tau'_i = \frac{\phitilde[\bsigmaunder, \tautilde: k](\sss_i)}{\phitilde[\bsigmaunder, \tautilde: k ](\sss_1)} e^{\sum_{l=1}^{i-1}\sigma_l}}$.
	
	\item $\displaystyle{ \taupol' = \frac{m}{F^{\phitilde[\bsigmaunder, \tautilde: k]}_1 \phitilde[\bsigmaunder, \tautilde: k](\sss_1)} \tautilde}$.
\item $\displaystyle{\phi = \frac{m}{\phitilde[\bsigmaunder, \tautilde: k](\sss_1) F^{\phitilde[\bsigmaunder, \tautilde: k]}_1} \phitilde[\bsigmaunder, \tautilde: k]}$.
\end{enumerate}

\end{lemma}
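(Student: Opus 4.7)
The plan is to mirror the two-step argument used for Lemma \ref{Lphiavg} (uniqueness via balancing, then existence by prescribing $\tau'$ explicitly and verifying the averaging condition), incorporating one new ingredient: a balancing at the poles that fixes $\taupol'$ in terms of the pole parameter $\tautilde$.

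For uniqueness, I would assume $\Phi$ satisfies (a)-(c) and write $\phi = c\phitilde$ for some $c > 0$. Applying Lemma \ref{Lvbal} at each $\sss_i$ yields $m\tau'_i = \phi(\sss_i) F^\phi_i$ exactly as in Lemma \ref{Lphiavg}, and combining with the normalization $\tau'_1 = 1$ and Remark \ref{rFlux} pins down $c = m/(\phitilde(\sss_1) F^{\phitilde}_1)$ and gives (i) and (iii). The new ingredient needed for (ii) is a balancing at the poles: on $\{\sss \ge \sss_k\}$, Definition \ref{Nphitildek} gives $\phitilde = \tautilde\, \phie + B_{k,\tautilde}\, \phio$, so
\[
\phi\big|_{\{\sss \ge \sss_k\}} \,=\, c\tautilde\, \phie \,+\, c B_{k,\tautilde}\, \phio.
\]
By \eqref{Ephiex}, $\phie$ has a logarithmic singularity at $p_N$ (and by symmetry at $p_S$) with leading coefficient $1$, consistent with the identity $G^{\Sph^2} \circ \dbold^g_{p_N} = (\log 2 - 1)\phio + \phie$ from Lemma \ref{LGp}.(iii), while $\phio = \tanh \sss$ is smooth there. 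Hence the coefficient of $\log \dbold^g_{p_N}$ in $\phi$ is $c\tautilde$, which by Definition \ref{dLD0}.(ii) must equal $\tau'(p_N) = \taupol'$; substituting the value of $c$ yields (ii).

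For existence I would reverse the logic: define $\Lhat$ by (b) and the $\grouptwo$-invariant $\tau'$ by the explicit formulas in (i) and (ii), then apply Lemma \ref{LLDexistence} to produce $\Phi := \varphi[\Lhat; \tau']$. It remains to verify that $\Phi_\ave = c\phitilde$ with $c = m/(\phitilde(\sss_1) F^\phitilde_1)$. Setting $f := c\phitilde - \Phi_\ave$, the derivative jumps of $f$ at each $\sss_i$ vanish by the same computation as in Lemma \ref{Lphiavg} (owing to the choice of $\tau'_i$), while the logarithmic singularities of $c\phitilde$ and $\Phi_\ave$ at $p_N$ and $p_S$ cancel by the choice $\taupol' = c\tautilde$. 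Thus $f$ extends to a smooth $\grouptwo$-invariant function on $\Spheq$ satisfying $\Lcalp f = 0$, and hence $f \equiv 0$ by the same symmetry argument invoked in the proof of Lemma \ref{LLDexistence}; this simultaneously yields (iii).

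The only genuinely new ingredient compared with Lemma \ref{Lphiavg} is the pole-flux identification, and this presents no substantive obstacle once the singular expansion of $\phie$ at the pole is read off from \eqref{Ephiex}: essentially by design of Definition \ref{Nphitildek} (where $A_k = \tautilde$), the pole parameter $\tautilde$ already records the logarithmic strength at $p_N$ up to the normalizing factor $c$, so the pole balancing reduces to a single scalar matching.
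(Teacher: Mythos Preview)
Your proposal is correct and follows essentially the same approach as the paper: uniqueness via the balancing identities (with the new pole identification $\taupol' = c\tautilde$ coming from matching logarithmic strengths), and existence by prescribing $\tau'$ via the formulas in (i)--(ii) and showing $f := c\phitilde - \Phi_\ave$ vanishes. The paper's only cosmetic difference is that, for the existence part, it observes directly that $f \in C^\infty_{|\sss|}(\cyl^{\sbold}) \cap C^1_{|\sss|}(\cyl)$ forces $f = c_{odd}\phio + c_{even}\phie$ globally, then kills $c_{odd}$ by evenness and $c_{even}$ by smoothness at the poles; this is the same argument you give, just packaged as an explicit basis decomposition rather than an appeal to the triviality of the symmetric kernel of $\Lcalp$.
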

\begin{proof}
The proof of the uniqueness part proceeds as in the proof of \ref{Lphiavg}, so we only discuss the existence part.  As in the proof of Lemma \ref{Lphiavg}, it follows that $f: = \phi - \Phi_\ave$ is in $C^\infty_{|\sss|}(\cyl^{\sbold})\cap C^1_{|\sss|}(\cyl)$, hence therefore $f = c_{odd} \phio + c_{even} \phie$ for some $c_{odd}, c_{even}\geq 0$.  Since $f\in C^0_{|\sss|}(\cyl)$, $c_{odd} =0$. By the choice of $\tau'$ in (i)-(ii) above, it follows that $f$ is smooth at the poles, hence $c_{even}=0$ and therefore $\phi= \Phi_\ave$.  The proof of (iii) is exactly the same as in the proof of \ref{Lphiavg}.
\end{proof}

\begin{definition}[cf. Definitions \ref{dGhat}, \ref{DPhat}, \ref{dPprime}]
\label{dGphatinf}
Let $\Phi[\bsigmaunder, \tautilde:k, m]$ be as in \ref{dPhiinf}.  Define $\Ghat \in C^\infty_{sym}( \cyl \setminus L)$, $\Phat \in C^\infty_{|\sss|}(\cyl )$, and $\Phip, E' \in C^\infty_{sym}(\cyl)$ using Definitions \ref{dGhat}, \ref{DPhat} and \ref{dPprime} with $\Phi[\bsigmaunder, \tautilde:k,m]$ in place of $\Phi[\bsigmaunder: k,m]$. 
\end{definition}
\begin{remark}
\label{Rphatinf}
While the defining formula for $\Phat[\bsigmaunder, \tautilde: k, m]$ is the analogous to the one for $\Phat[\bsigmaunder: k, m]$, note that $\Phat[\bsigmaunder, \tautilde: k, m]$ is not extendible to $C^\infty_{|\sss|}(\Spheq)$ since $\phitilde[\bsigmaunder, \tautilde: k]$ is not smooth at the poles. 
\end{remark}

\begin{definition}[cf. Definition \ref{dkernel}]
\label{dkernelinf}
Given $\Lhat[\sbold;m]$ as in \ref{Linf}, define $\Vpol, \Wpol \in C^\infty_\sym(\Spheq)$ by requesting that $\Vpol$ is supported on $D^g_{L_{pol}}(2\delta)\setminus D^g_{L_{pol}}(\delta)$ and on 
$D^g_{p_N}(2\delta)$
\begin{align*}
\Vpol=\Vpol[m] = \Psibold\left[ \delta, 2\delta; \dbold^g_{p_N}\right]\left ( \cos \circ \dbold^g_{p_N}, 0 \right), 
\end{align*}
and that $\Wpol = \Wpol[m] := \Lcalp \Vpol$.  
 We define 
 \begin{align*}
 \skernel_\sym [ L_{pol}] &= \text{\emph{span}}( \Wpol) \subset C^\infty_\sym ( \Spheq),\\
 \skernelv_\sym [ L_{pol}] &= \left\{ v\in C^\infty_\sym(\Spheq) : \Lcalp v \in  \skernel_\sym [ L_{pol}]\right \}\subset  C^\infty_\sym ( \Spheq), \\
 \skernel_\sym [ \Lhat ] &= \skernel_\sym[ L] \oplus  \skernel_\sym [ L_{pol}]\subset C^\infty_\sym ( \Spheq),\\
 \skernelv_\sym[ \Lhat] &= \skernelv_\sym[L] \oplus \skernelv_\sym[L_{pol}]\subset C^\infty_\sym ( \Spheq), 
 \end{align*}
where $\skernel_\sym[ L], \skernelv_\sym[L]$ are as in \eqref{Ekhat}.  
\end{definition}

By the symmetries, clearly $ \skernelv_\sym [ L_{pol}] $ is spanned by $\Vpol$.
\begin{lemma}[cf. Lemma \ref{LV}]
\label{LVinf}
For $i=1, \dots, k$, $V_i, V'_i\in C^\infty_\sym (\cyl)$ satisfy (i) of \ref{LV}.  Moreover,
\begin{align}
\label{EVinfest}
\left\| \Vpol : C^j_\sym( \Spheq, \gtilde\, )\right \| \le C(j). 
\end{align}
Furthermore, $\Ecal_{\Lhat}$ is an isomorphism and $\left\|\Ecal^{-1}_{\Lhat}\right\|\leq C m^{2+\beta} k^\frac{2+\beta}{2}$(recall \ref{dEcal}), where $\left\|\Ecal^{-1}_{\Lhat}\right\|$ is the operator norm of $\Ecal^{-1}_{\Lhat}: \val_\sym[\Lhat] \rightarrow \skernelv_\sym[\Lhat]$ with respect to the $C^{2, \beta}(\Spheq, g)$ norm on the target and the maximum norm subject to the standard metric $g$ of $\Spheq$. 
\end{lemma}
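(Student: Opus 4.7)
The plan is to handle the three claims---norm estimates on $V_i, V_i', \Vpol$, the isomorphism, and the operator norm bound---largely by reducing to Lemma \ref{LV}. The bounds on $V_i$ and $V_i'$ are inherited verbatim from Lemma \ref{LV}.(i). For \eqref{EVinfest}, the key observation will be that the cutoff $\Psibold[\delta, 2\delta; \dbold^g_{p_N}]$ transits across a region of width comparable to $\delta = 1/(9m)$ in the $g$-metric, equivalently of unit width in the scaled metric $\gtilde = m^2 g$, so it has $C^j(\Spheq,\gtilde)$-norms bounded independently of $m$; the same is true of $\cos\circ\dbold^g_{p_N}$. A direct chain-rule computation then yields the estimate.

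For the isomorphism, my first step is to verify that $\Ecal_{\Lhat}$ decouples as $\Ecal_L \oplus \Ecal_{L_{pol}}$ with respect to the direct-sum decomposition of $\skernelv_\sym[\Lhat]$. This is because every element of $\skernelv_\sym[L]$ is supported in $\bigcup_i D^\chi_{L_i}(2\delta)$, which for $m$ large in terms of $k$ stays away from $L_{pol}$---this uses Proposition \ref{Prl}.(i), giving $\sss_k \le C\log k$, together with the comparison between $\chi$ and $g$ in Lemma \ref{Lsech}. Conversely, $\Vpol$ is supported in a small neighborhood of $L_{pol}$, hence vanishes together with its differential at every $p_i \in L$. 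Lemma \ref{LV}.(ii) then supplies the isomorphism on the $L$-summand and the bound $Cm^{2+\beta}k^{(2+\beta)/2}$ on its operator norm.

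For the $L_{pol}$-summand I plan to show directly that $\val_\sym[L_{pol}]$ is one-dimensional by exploiting the $\grouptwo$-symmetries: the reflection $\xbartilde$ exchanges $p_N$ and $p_S$, and the stabilizer of $p_N$ in $\grouptwo$ (generated by $\ybartilde$ and $\ybartilde_{\pi/m}$) is a dihedral group of order $2m$ whose action on $T^*_{p_N}\Spheq$ has trivial fixed subspace for $m\ge 2$. Hence $\val_\sym[L_{pol}] \cong \R$, parametrized by the common value at $p_N$ and $p_S$. Since $\Vpol(p_N) = \cos(0) = 1$ and $d_{p_N}\Vpol = 0$ by the radial form of the defining expression, $\Ecal_{L_{pol}}(\Vpol)$ spans $\val_\sym[L_{pol}]$, yielding the isomorphism.

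For the operator norm on the $L_{pol}$-summand, the conversion $\|\,\cdot\,\|_{C^{2,\beta}(\Spheq, g)} \le m^{2+\beta}\|\,\cdot\,\|_{C^{2,\beta}(\Spheq, \gtilde)}$ together with \eqref{EVinfest} gives $\|\Vpol : C^{2,\beta}(\Spheq, g)\| \le C m^{2+\beta}$, which is dominated by $Cm^{2+\beta}k^{(2+\beta)/2}$. Combining with the bound on the $L$-summand completes the argument. None of the steps presents a real obstacle; the only point requiring care is the scaling in \eqref{EVinfest}, which is the reason for stating the estimate in the $\gtilde$-metric rather than in $g$ or $\chitilde$.
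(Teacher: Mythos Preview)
Your proposal is correct and follows essentially the same approach as the paper's proof: reduce the $V_i, V_i'$ estimates to Lemma~\ref{LV}.(i), obtain \eqref{EVinfest} from the uniform bounds on the cutoff in the $\gtilde$-metric, observe that $\Ecal_{\Lhat}$ splits as $\Ecal_L \oplus \Ecal_{L_{pol}}$ by disjointness of supports, and then combine the bound from Lemma~\ref{LV}.(ii) on the $L$-summand with the one-dimensionality of $\val_\sym[L_{pol}]$ and the scaling $\|\Vpol : C^{2,\beta}(\Spheq,g)\| \le C m^{2+\beta}$ on the polar summand. Your version is somewhat more detailed than the paper's---you spell out the dihedral symmetry argument for the triviality of the differential component of $\val_\sym[L_{pol}]$ and the support separation---but the structure is the same.
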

\begin{proof}
The estimates on $V_i, V'_i$ follow exactly as in the proof of \ref{LV}.(i).  The estimate \eqref{EVinfest} follows the uniform bounds of the cutoff $\Psibold$ in the $\gtilde$ metric.  By the symmetries, $\val_\sym[L_{pol}]$ is one dimensional and $\Ecal_{L_{pol}}$ is an isomorphism. Moreover (abusing notation slightly) $\Ecal^{-1}_{L_{pol}}\left( (1, 0)\right) = V_{pol}$ and it is easy to see that $\| \Ecal^{-1}_{L_{pol}} \| \le C m^{2+\beta}$.

 It is clear that $\Ecal^{-1}_{\Lhat}$ splits naturally, i.e. $\Ecal^{-1}_{\Lhat} = \Ecal^{-1}_{L} + \Ecal^{-1}_{L_{pol}}$, so the estimate on $\|\Ecal_{\Lhat}\|$ follows from the above and the estimate on $\|\Ecal_{L}\|$ established in \ref{LV}.
\end{proof}

We now convert the LD solutions constructed above to MLD solutions.  By heuristic arguments which we omit we find that the overall scale $\tau_1$ should be given by
\begin{align}
\label{Etau1pol}
 \tau_1:= \frac{1}{m} e^{\zeta_1} e^{-\phi(\sss_1)} = \frac{1}{m} e^{\zeta_1} e^{- \frac{m}{F^\phi_1}}
\end{align}
and that the scale for the bridges at the poles should be given by 
\begin{align}
\label{Etautilded}
\tautilde = \tautilde[\zetaboldpol;m] : = e^\zetapol \frac{F^{\phat}_1}{m} B_{k, 0},
\end{align}
where $\zeta_1$ and $\zetapol$ are unbalancing parameters used to absorb error terms later.  The continuous parameters of the construction are then $\zetaboldpol : = (\zeta_1, \bsigmaunder, \zetapol) = (\zeta_1, \bsigma, \xibold, \zetapol) \in \R\times \R^{k-1}\times \R^{k}\times \R$, where we require
\begin{align}
\label{Epboundsinf}
(\zeta_1, \bsigmaunder) \quad \text{satisfy \eqref{Epbounds}}\quad \text{and} \quad  \left| \zetapol \right| \leq \cunder_1 \frac{\log m}{m},
\end{align}
where $\cunder_1>0$ will be fixed later and which we assume may be taken as large as necessary depending on $k$ bud independently of $m$.
With the overall scale $\tau_1$ having been chosen, we define the MLD solution
\begin{align}
\label{ELDpol}
\varphi = \tau_1 \Phi + \vunder
\end{align}
for some $\vunder \in \skernel_{\sym}[ \Lhat]$ uniquely determined by the matching condition $\Bcal_L\varphi=\zerobold$: 
By the definitions $\Bcal_L\varphi=\Bcal_L(\tauo\Phi)+\Ecal_L(\vunder)$. 
Using the invertibility of $\Ecal_L$ as in \ref{LV}.(ii),  
the matching condition is equivalent then to 
\begin{equation} 
\label{ELDvpol}
\vunder = -\Ecal^{-1}_L \Bcal_L (\tauo\Phi).  
\end{equation} 
To record in detail the dependence on the continuous parameters we have the following.

\begin{definition}
\label{dvarphiinf}
We assume $\zetaboldpol$ is given as in \ref{Epboundsinf}.  Let $\phi = \phi[\bsigmaunder, \tautilde: k, m]$, $\Phi = \Phi[ \bsigmaunder, \tautilde: k,m]$ and $\tau' = \tau'[\bsigmaunder: k]$ be as in \ref{Lphiavginf}.  We define then $\tau_1 = \tau_1[ \zetaboldpol; m]$ by \eqref{Etau1pol}, an MLD solution $\varphi = \varphi[\zetaboldpol; m ] $ of configuration $(\, \Lhat, \tau, w\, )$ (recall \ref{LLDexistence}) by \ref{ELDpol} and \ref{ELDvpol}, where $\Lhat = \Lhat[\sbold;m]$, $\sbold = \sbold[\bsigmaunder, \tautilde: k]$ (recall \ref{dphitilde}), $\tau = \tau[\zetaboldpol; m ] : \Lhat[\sbold;m ] \rightarrow \R_+$ is $\grouptwo$-invariant satisfying $\tau_i = \tau_1 \tau'_i$ for $i\in \{1, \dots, k\}$, $\taupol = \tau_1 \taupol'$, and $w = w[ \zetaboldpol; m]:= \Lcalp \vunder$.  Finally we define $\mubold = \mubold[\zetaboldpol; m] = (\mu_i)_{i=1}^k \in \R^k, \mubold' = \mubold'[\zetaboldpol; m] = (\mu'_i)_{i=1}^k \in \R^k$, and $\mupol \in \R$ by $\vunder = \sum_{i=1}^k\tau_i \mu_i V_i + \sum_{i=1}^k\tau_i \mu'_i V'_i + \tau_{pol}\mupol \Vpol$, which also implies $w = \sum_{i=1}^k \tau_i \mu_i W_i + \sum_{i=1}^k \tau_i \mu'_i W'_i + \taupol \mupol \Wpol$.
\end{definition}

\begin{lemma}
\label{Lmatchinginf}
Let $\varphi$ be as in \ref{dvarphiinf}.  
The equation $\Bcal_{\Lhat} \varphi = \zerobold$ is equivalent to the system of \eqref{Ephimatching1} and \eqref{Ephimatching2}   
for $i=1, \dots, k$, 
and the condition that (recall  \eqref{Ephitilde})
\begin{align}
\label{Ematchinginf}
0 = \frac{F^\phi_1}{m} \mupol - 1 + \frac{F^\phi_1}{m}\left( \zeta_1 + \log \frac{\taupol'}{4m} + 1\right) + \frac{F^\phi_1}{m} \frac{ B_{k, \tautilde}}{\tautilde}.
\end{align}
\end{lemma}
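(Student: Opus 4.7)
The plan is to reduce the matching equation $\Bcal_{\Lhat}\varphi = \zerobold$ to separate conditions at the points of $L$ and at the pole $p_N$ (the matching at $p_S$ being equivalent by $\grouptwo$-symmetry), and then to unpack the polar condition explicitly using the decomposition $\Phi = \Pp + \Phat + \Phip$ from Definition \ref{dGphatinf}. The matching at each $p_i \in L$ is unchanged from Lemma \ref{Lmatching}, because the extra term $\taupol\mupol\Vpol$ in $\vunder$ has support disjoint from every disc around the $p_i$; hence \eqref{Ephimatching1} and \eqref{Ephimatching2} follow verbatim from the proof of Lemma \ref{Lmatching}. For the matching at $p_N$, the stabilizer of $p_N$ in $\grouptwo$, generated by $\ybartilde$ and $\ybartilde_{\pi/m}$, acts on $T_{p_N}\Spheq$ as a dihedral group with no common fixed covector once $m$ is large as in Assumption \ref{Aratio}, so $d_{p_N}\varphihat_{p_N} = 0$ is automatic and only the scalar condition $\varphihat_{p_N}(p_N) + \taupol\log(\taupol/2) = 0$ remains.

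Writing $\taupol = \tau_1\taupol'$ and $\varphihat_{p_N} = \tau_1(\Phi - \taupol'\,G^{\Sph^2}\!\circ \dbold^g_{p_N}) + \vunder$, I would then evaluate the two pieces at $p_N$ separately. Each $V_i, V'_i$ vanishes on a neighborhood of $p_N$ while $\Vpol(p_N) = 1$ by Definition \ref{dkernelinf}, so $\vunder(p_N) = \taupol\mupol$. For the $\Phi$ piece, I invoke $\Phi = \Pp + \Phat + \Phip$: near $p_N$, $\Pp \equiv 0$ since its support is contained in $D^\chi_L(3\delta)$, and $\Phat = \phi$ since we are outside $D^\chi_{L_{par}}(3/m)$. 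The main subtle point is to show $\Phip(p_N) = 0$. Indeed, Lemma \ref{LPhipave} places the support of $\Phip_\ave$ in $D^\chi_{L_{par}}(3/m)$, so near $p_N$ one has $\Phip = \Phip_\osc$; since both $\Phi - \taupol'\,G^{\Sph^2}\!\circ \dbold^g_{p_N}$ and $\phi - \taupol'\,G^{\Sph^2}\!\circ \dbold^g_{p_N}$ extend smoothly across $p_N$, so does $\Phip_\osc$, and the fact that each latitude slice of $\Phip_\osc$ has zero $\theta$-mean forces its limit at $p_N$ to vanish.

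It then remains to expand $\phi$ near $p_N$ and collect algebra. On $\{\sss \geq \sss_k\}$, Lemma \ref{Lphiavginf}.(iii) together with Definition \ref{dphitilde} give $\phi = c\bigl(\tautilde\,\phie + B_{k,\tautilde}\,\phio\bigr)$ with $c := m/(\phitilde(\sss_1)F^\phitilde_1)$, and Lemma \ref{Lphiavginf}.(ii) gives $c\tautilde = \taupol'$. Using the identity $\phie = G^{\Sph^2}\!\circ\dbold^g_{p_N} - (\log 2 - 1)\phio$ from Lemma \ref{LGp}.(iii) together with $\phio(p_N) = 1$ yields
\[
\phi(p_N) \,-\, \taupol'\,(G^{\Sph^2}\!\circ\dbold^g_{p_N})(p_N) \,=\, c\bigl[\,B_{k,\tautilde} - \tautilde(\log 2 - 1)\,\bigr].
\]
Substituting into the scalar matching condition produces
\[
\tau_1 c\bigl[B_{k,\tautilde} - \tautilde(\log 2 - 1)\bigr] + \taupol\mupol + \taupol\log(\taupol/2) \,=\, 0.
\]
Dividing through by $\taupol = \tau_1 c\tautilde$, expanding $\log\taupol = \log\tau_1 + \log\taupol'$ with $\log\tau_1 = \zeta_1 - m/F^\phi_1 - \log m$ from \eqref{Etau1pol}, and multiplying by $F^\phi_1/m$ produces \eqref{Ematchinginf} after collecting the constant $1 - 2\log 2 = 1 + \log(1/4)$ into $\log(\taupol'/(4m)) + 1$. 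The only genuinely non-routine step is the vanishing $\Phip(p_N) = 0$; everything else is direct expansion and bookkeeping.
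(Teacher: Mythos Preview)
Your proof is correct and follows essentially the same approach as the paper: reduce $\Bcal_{\Lhat}\varphi = \zerobold$ to the matching at $L$ (handled by Lemma \ref{Lmatching}) plus the scalar condition at $p_N$, expand $\Phi = \Pp + \Phat + \Phip$ near the pole, use Lemma \ref{LGp}.(iii) to rewrite $G^{\Sph^2}\circ\dbold^g_{p_N}$ in terms of $\phie,\phio$, and collect the algebra. The only minor difference is in the justification of $\Phip(p_N)=0$: the paper cites the exponential decay of $\Phip_\osc$ from Lemma \ref{LPhip est}, whereas you argue directly that $\Phip_\osc$ extends continuously to $p_N$ and its zero-mean property forces the limit to vanish---both are valid and lead to the same conclusion.
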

\begin{proof}
By Lemma \ref{Lmatching}, the equation $\Bcal_{L} \varphi= \zerobold$ is equivalent to the system of \eqref{Ephimatching1} and \eqref{Ephimatching2}.    
Recall that $\tau_{p_N} = \taupol = \tau_1 \frac{m}{F^\phi_1 \phitilde(\sss_1)} \tautilde$.   
By \ref{Linf}, $\Lhat = L \cup L_{pol}$.  
The condition that $\Bcal_{L_{pol}} \varphi = \zerobold$ is therefore equivalent to the conditions that
\begin{align}
\label{Evertpol}
\frac{1}{\taupol} \varphihat_{p_N}(p_N) +  \log \frac{\tau_1 \taupol'}{2} = 0, \quad  d_{p_N} \varphihat_{p_N} = 0. 
\end{align}
It is automatic from the symmetries that $d_{p_N} \varphihat_{p_N} = 0$.  By Definitions \ref{dGphatinf} and \ref{dvarphiinf} and \ref{Lphiavginf}.(iii), it follows that on $D^g_{p_N}(3 \delta)$,
\begin{align*}
\frac{1}{\taupol} \varphihat_{p_N} = \frac{F^\phi_1 \phitilde(\sss_1)}{\tautilde m} \Phi -  G^{\Sph^2} \circ \dbold^g_{p_N} +  \mupol \Vpol.
\end{align*}
Expanding $\Phi = \Pp+\Phat + \Phip$ (recall \ref{dGphatinf}), noting from \ref{dGhat} that $\Pp$ vanishes on $D^g_{p_N}(3 \delta)$, using \ref{DPhat} and \ref{dGphatinf} to expand $\Phat$, and \ref{LGp}.(iii) to expand $G^{\Sph^2} \circ \dbold^g_{p_N}$, we find that on $D^g_{p_N}(3 \delta)$,
\begin{align*}
\frac{1}{\taupol} \varphihat_{p_N} = \frac{B_{k, \tautilde}}{\tautilde} \phio + \phie + \frac{F^\phi_1 \phitilde(\sss_1)}{m\tautilde} \Phip -  ( \log 2 - 1) \phio -  \phie +   \mupol \Vpol.
\end{align*}
Evaluating at $p_N$, and using that $\Vpol(p_N) = 1$, and that $\Phip(p_N) = 0$, which follows from Lemma \ref{LPhip est}, we have 
\begin{align*}
\frac{1}{\taupol} \varphihat_{p_N} (p_N) = \frac{B_{k, \tautilde}}{\tautilde} - ( \log 2 - 1)  +  \mupol
\end{align*}
Adding $ \log \frac{\tau_1 \taupol'}{2}$ to both sides and using \ref{dvarphiinf} to expand $\tau_1$, we find that \eqref{Evertpol} is equivalent to
\begin{align*}
0 =  \mupol - \frac{m}{F^\phi_1}  +  \left( \zeta_1 + \log \frac{\taupol'}{4m} + 1\right) +  \frac{B_{k, \tautilde}}{\tautilde}.
\end{align*}
Multiplying through by $\frac{F^\phi_1}{m}$ completes the proof. 
\end{proof}

\begin{lemma}[Properties of the MLD solutions, cf. Lemma \ref{LLD}]
\label{LLDinf} 
Let $\zetaboldpol$ be as in \eqref{Epboundsinf} and $\varphi = \varphi[\zetaboldpol; m]$ be as in \ref{dvarphiinf}.
For $m$ large enough (depending only on $\cunder_1$), the following hold:
\begin{enumerate}[label=(\roman*).]
\item $\tauo = \tauo[\zetaboldpol; m]$ and $\mubold = \mubold[\zetaboldpol; m]$ depend continuously on $\zetaboldpol$.  

\item
 $\tauo[\zetaboldpol; m] \sim_{C(\cunder_1)} \tauo[\zerobold; m]$ and $C(\cunder_1)>1$ depends only on $\cunder_1$. 
 
\item (Matching estimates) There is an absolute constant $C$ independent of $\cunder_1$ such that 
	\begin{enumerate}[label=(\alph*)]
	\item  $\left|\zeta_1+\mu_1 \right| \le C$.
	\item  $ \left|  \bsigma - \frac{F^\phi_1}{m} \Dcal \mubold \right|_{\ell^\infty}\le C/m.$
	\item  $\left|\xibold+  \frac{2}{m}\mubold' \right|_{\ell^\infty} \le C/m.$
	\item  $\left | \zetapol -  \frac{F^\phi_1}{m} \mupol \right| \le C \log m/m.$
	\end{enumerate}
\item $\left\|  \varphi : C^{3, \beta}_{\sym}(\Spheq \setminus D^g_{\Lhat}(\delta'_{min}), g) \right\| \le \tau_{min}^{8/9}.$
\item  On $\Spheq \setminus D^g_{\Lhat}(\delta'_{min})$ we have $\tau_{max}^{1+\alpha/5} \le \varphi$.
\item  For all $p\in L$, $(\delta_p)^{-2} \left\| \varphihat_p: C^{2, \beta}\left( \partial D^g_p( \delta_p),\left( \delta_p\right)^{-2} g\right)\right\| \leq \tau_p^{1-\alpha/9}.$
\end{enumerate}
\end{lemma}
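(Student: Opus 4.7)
The proof will follow the template established for Lemma \ref{LLD}, now using Lemma \ref{Lphiavginf} and Proposition \ref{Prlpoles} for the pole-enhanced configuration. Items (i) and (ii) follow by combining Proposition \ref{PODEestpol} with the definition \eqref{Etau1pol} of $\tau_1$, arguing as in the proof of Lemma \ref{LLD}.(i)-(ii); the bound on the discrepancy $|\phi'(\sss_1') - \phi(\sss_1)|$ now uses Proposition \ref{PODEestpol} in place of Proposition \ref{PODEest}. Matching estimates (iii).(a)-(c) follow from the system \eqref{Ephimatching1}-\eqref{Ephimatching2} exactly as in the earlier proof, since those equations are unchanged by the presence of the pole bridges. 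Items (iv)-(vi) proceed as before, with the extra summand $\taupol \mupol \Vpol$ in $\vunder$ absorbed into the various norm bounds via Lemma \ref{LVinf} and the observation that $\taupol = \tau_1 \taupol'$ with $\taupol' \sim_C 1$ (which follows from Lemma \ref{Lphiavginf}.(ii), $\tautilde \sim 1/(km)$, $F^\phitilde_1 \sim 1/k$, and $\phitilde(\sss_1) \sim 1$), so that $\taupol$ is comparable to the other $\tau_i$ modulo a $(\log m)$-factor that is absorbed by the surplus exponents in (iv)-(vi).

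The new ingredient is the matching estimate (iii).(d), derived from \eqref{Ematchinginf}. The plan is to use Lemma \ref{Lphiavginf}.(ii) to rewrite $\taupol' = m\tautilde/(F^\phitilde_1 \phitilde(\sss_1))$, and the definition \eqref{Etautilded} to write $\log\tautilde = \zetapol + \log F^{\phat}_1 - \log m + \log B_{k,0}$. Noting that $F^\phi_1 = F^\phitilde_1$ by scale invariance of the flux together with Lemma \ref{Lphiavginf}.(iii), the identity \eqref{Etautilded} gives $\frac{F^\phi_1}{m}\frac{B_{k,\tautilde}}{\tautilde} = e^{-\zetapol}\frac{F^\phitilde_1}{F^\phat_1}\frac{B_{k,\tautilde}}{B_{k,0}}$. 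Substituting into \eqref{Ematchinginf} and expanding $e^{-\zetapol} = 1 - \zetapol + O(\zetapol^2)$ yields
\begin{equation*}
\frac{F^\phi_1}{m}\mupol - \zetapol = e^{-\zetapol}\left(1 - \frac{F^\phitilde_1}{F^\phat_1}\frac{B_{k,\tautilde}}{B_{k,0}}\right) + O(\zetapol^2) - \frac{F^\phi_1}{m}\bigl(\zeta_1 + 1 + \log\tfrac{\taupol'}{4m}\bigr).
\end{equation*}
The first term is $O(1/(km))$, the second is $O((\log m/m)^2)$, and the third is $O(\log m/(km))$: in the expansion of $\log(\taupol'/(4m))$ the $-\log m$ contribution dominates, all other factors being of size $O(\log k)$. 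Combining these bounds delivers (iii).(d).

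The main obstacle is verifying the two subsidiary estimates underlying the bound on the first bracketed term. First, $|F^\phitilde_1/F^\phat_1 - 1| = O(\tautilde) = O(1/(km))$ follows from Proposition \ref{PODEestpol}.(i) applied with $\bsigmaunder' = \bsigmaunder$ and $\tautilde' = 0$; the corresponding estimate $|B_{k,\tautilde}/B_{k,0} - 1| = O(\tautilde) = O(1/(km))$ requires a derivative bound $|\partial B_k/\partial \tautilde| \le C/k$ obtained by differentiating \eqref{Ecoeffs} via the chain rule, using $|\partial \sss_1/\partial \tautilde| \le C/k$ from Lemma \ref{Rtautilde} together with bounds on $|\partial \sss_i/\partial \sss_1|$ from Corollary \ref{Csderiv}. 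Second, $|\log B_{k,0}| \le C$ uniformly in $k$: since $\phat[\bsigmaunder:k]$ coincides with $B_{k,0}\phio$ on $\{\sss\ge \sss_k\}$, evaluating at $\sss_k$ gives $B_{k,0} = \phat(\sss_k)/\tanh\sss_k$, which is $O(1)$ by Proposition \ref{Prl}.(i) and (iii). With these in hand, all residual logarithmic contributions are $O(\log m/m)$ and (iii).(d) follows.
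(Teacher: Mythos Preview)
Your proposal is correct and follows essentially the same route as the paper: reduce (i)--(ii) and (iii).(a)--(c), (iv)--(vi) to the argument of Lemma \ref{LLD}, and derive (iii).(d) by substituting $\tau'_{pol}$ and $\widetilde{\tau}$ into \eqref{Ematchinginf}, rewriting $\frac{F^\phi_1}{m}\frac{B_{k,\widetilde\tau}}{\widetilde\tau}$ as $e^{-\zeta_{pol}}\frac{F^{\widetilde\phi}_1}{F^{\widehat\phi}_1}\frac{B_{k,\widetilde\tau}}{B_{k,0}}$, and Taylor-expanding in $\zeta_{pol}$. Your treatment is in fact more explicit than the paper's, which simply cites \ref{PODEestpol} and \ref{Prlpoles} for the residual bound; your identification of $B_{k,0}=\widehat\phi(\mathrm{s}_k)/\tanh\mathrm{s}_k$ and the derivative control on $B_k$ supply the details behind those citations.
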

\begin{proof}
The bulk of the proof is nearly the same as that of Lemma \ref{LLD}, so we only prove the essentially new estimate, (iii).(d).  Note that by \ref{Lmatchinginf}, we have
\begin{align*}
0 = \frac{F^\phi_1}{m} \mupol - 1 + \frac{F^\phi_1}{m}\left( \zeta_1 + \log \frac{\taupol'}{4m} + 1\right) + \frac{F^\phi_1}{m} \frac{ B_{k, \tautilde}}{\tautilde}.
\end{align*}
 Using Lemma \ref{Lphiavginf}.(iii) to expand $\taupol'$ and $\tautilde$, we find
\begin{align*}
0 = \frac{F^\phi_1}{m}\mupol - 1 + \frac{F^\phi_1}{m} \left(-\log m + \zeta_1 + 1 + \zetapol + \log \frac{F^\phat_1 B_{k, 0}}{F^\phi_1 \phitilde(\sss_1)} \right) + \frac{B_{k, \tautilde} F^\phi_1}{B_{k, 0} F^\phat_1} e^{- \zeta_1}.
\end{align*}
Expanding 
\begin{align*}
 \frac{B_{k, \tautilde} F^\phi_1}{B_{k, 0} F^\phat_1} e^{- \zeta_1} = 1 + \frac{B_{k, \tautilde} F^\phi_1 - B_{k, 0} F^{\phat}_1}{B_{k, 0} F^\phat_1} - \zetapol + O(\zetapol^2),
\end{align*}
we find the matching equation is equivalent to
\begin{align*}
\frac{F^\phi_1}{m}\mupol - \zetapol = \frac{F^\phi_1}{m}\left(  \log m + O(1)\right) -  \frac{B_{k, \tautilde} F^\phi_1 - B_{k, 0} F^{\phat}_1}{B_{k, 0} F^\phat_1} + O(\zetapol^2).
\end{align*}
The estimate now follows from \eqref{Epboundsinf}, \ref{PODEestpol}, and \ref{Prlpoles}.
\end{proof}

\begin{theorem}
\label{Tmainpol}
There is an absolute constant $\cunder_1>0$ such that if $(k, m)\in \N^2$ with $m$ large enough in terms of $\cunder_1$ and $k$, there is ${\zetaboldpol} \in \R^{2k+1}$ satisfying \eqref{Epboundsinf} such that $\varphi[ \zetaboldpol; m]$ satisfies the conditions of Lemma \ref{LLDinf} and moreover there is $\upphihat \in C^\infty(\Mhat)$, where $\Mhat: = M[[\zetaboldpol]]$ such that
\begin{align*}
\left\| \upphihat\right \|_{2, \beta, \gamma, \Mhat} \le \tauhat^{1+ \alpha/4}_1,
\end{align*}
and further the normal graph $\Mhat_\upphihat$ is a genus $2km+1$ embedded minimal surface in $\Sph^3(1)$ which is invariant under the action of $\groupthree$ and has area $\text{Area}(\Mhat_\upphihat)\rightarrow 8 \pi$ as $m\rightarrow \infty$.  
\end{theorem}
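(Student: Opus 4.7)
The plan is to mirror the proof of Theorem \ref{Tmain} but with the parameter space enlarged by one dimension to accommodate the unbalancing parameter $\zetapol$ governing the scale of the catenoidal bridge at each pole. First, I would construct initial surfaces $M[[\zetaboldpol; m]]$ from the MLD solutions $\varphi[\zetaboldpol; m]$ of Definition \ref{dvarphiinf} by applying verbatim the machinery of \cite[Sections 3 and 4]{kap}. The analogues of Lemmas \ref{con:L} and \ref{con:add} are immediate consequences of Proposition \ref{Prlpoles}, Definition \ref{Edeltapp} (extended trivially to include $\delta_{p_N}$, $\delta_{p_S}$), and items (iv)--(vi) of Lemma \ref{LLDinf} together with the invertibility estimate in Lemma \ref{LVinf}. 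Then Lemma \ref{LHM}, Proposition \ref{Plinear}, and Lemma \ref{Lquad} transfer without modification since they only require the initial surface to satisfy the assumptions listed in Lemma \ref{con:add}.

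Second, I would construct a family of diffeomorphisms $\Fcal_{\zetaboldpol}: M[[\zerobold]] \to M[[\zetaboldpol]]$ exactly as in Lemma \ref{Lrefdiff}, with an additional component describing the catenoidal bridges at $p_N$ and $p_S$: the restriction of $\Fcal_{\zetaboldpol}$ to the catenoidal region near $p_N$ is defined, in conformal coordinates on the catenoid, as a linear rescaling of the $\sss$-coordinate matching the endpoint lengths $\ell^{pol}_{\zerobold}$ and $\ell^{pol}_{\zetaboldpol}$ determined by the respective values of $\taupol$. The uniform equivalence $\taupol[\zetaboldpol; m] \sim_{C(\cunder_1)} \taupol[\zerobold; m]$ following from Lemma \ref{LLDinf}.(ii) and Lemma \ref{Lphiavginf}.(ii) guarantees that the norm equivalence analogous to \ref{Lrefdiff}.(iii) holds.

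Third, I would set up the fixed point argument. Define
\[
B := \bigl\{v \in C^{2,\beta}_\sym(M[[\zerobold]]) : \|v\|_{2,\beta,\gamma;M[[\zerobold]]} \le \tau_1[\zerobold;m]^{1+\alpha}\bigr\} \times B_\zetaboldpol,
\]
where $B_\zetaboldpol := [-\cunder_1, \cunder_1] \times [-\cunder_1/m, \cunder_1/m]^{2k-1} \times [-\cunder_1 \log m/m, \cunder_1 \log m/m]$ is the parameter box in \eqref{Epboundsinf}. For $(v, \zetaboldpol) \in B$, following the recipe in the proof of Theorem \ref{Tmain}, set $(u, w_H) := -\Rcal_{M[[\zetaboldpol]]}(H - w \circ \Pi_{\Spheq})$, $\upphi := v \circ \Fcal^{-1}_{\zetaboldpol} + u$, and $(u_Q, w_Q) := -\Rcal_{M[[\zetaboldpol]]}(H_\upphi - H - \Lcal \upphi)$. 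Write the combined correction $w + w_H + w_Q = \sum_{i=1}^k \tau_i \mutilde_i W_i + \sum_{i=1}^k \tau_i \mutilde'_i W'_i + \taupol \mutilde_{pol} \Wpol$ and define
\[
\Jcal(v, \zetaboldpol) := \Bigl( u_Q \circ \Fcal_{\zetaboldpol},\ \zeta_1 + \mutilde_1,\ \bsigma - \tfrac{F^\phi_1}{m}\Dcal \muboldtilde,\ \xibold + \tfrac{2}{m}\muboldtilde',\ \zetapol - \tfrac{F^\phi_1}{m}\mutilde_{pol}\Bigr).
\]
Using Lemma \ref{LHM}, Proposition \ref{Plinear}, Lemma \ref{Lquad}, and the matching estimates (iii).(a)--(d) of Lemma \ref{LLDinf} (the new estimate (iii).(d) is precisely what is needed to absorb the $\zetapol$-component of $\Jcal$ into $B_\zetaboldpol$), one verifies $\Jcal(B) \subset B$ for $m$ sufficiently large in terms of $\cunder_1$ and $k$. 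The Ascoli--Arzel\`a compactness of the inclusion $B \hookrightarrow C^{2,\beta'}_\sym \times \R^{2k+1}$ combined with the continuity of $\Jcal$ (ensured by Lemma \ref{LLDinf}.(i), the continuous dependence in Proposition \ref{Plinear}.(iii), and the smooth dependence of the initial surfaces on $\zetaboldpol$) allow application of the Schauder fixed point theorem. A fixed point $(\vhat, \zetaboldpolhat)$ forces $\mutilde_i = \mutilde'_i = \mutilde_{pol} = 0$ for all $i$, hence $w + w_H + w_Q = 0$ and $\vhat = \uhat_Q \circ \Fcal_{\zetaboldpolhat}$, yielding the minimality of the normal graph $\Mhat_\upphihat$ with $\upphihat := \vhat \circ \Fcal^{-1}_{\zetaboldpolhat} + \uhat$. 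Embeddedness follows from Lemma \ref{Lquad} and the small size of $\upphihat$, smoothness from standard elliptic regularity, the genus count $2km+1$ from joining two spheres with $2km$ equatorial-type bridges and one bridge at each pole, and the area limit $8\pi$ from the $C^0$ bounds on $\varphihat$ as in the proof of Theorem \ref{Tmain}.

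The main obstacle, and the reason this is not quite routine, is ensuring that the new matching estimate Lemma \ref{LLDinf}.(iii).(d) has the correct scale $O(\log m/m)$ to be absorbed into the $\zetapol$-slot of the parameter box and not into something larger; this required the delicate analysis of $\phitilde[\bsigmaunder, \tautilde: k]$ in Lemma \ref{Rtautilde} and Proposition \ref{PODEestpol}, which controls how the last jump collapses as $\tautilde \to 0$ and how $B_{k,\tautilde}/\tautilde$ behaves. Once this estimate is in place together with the other items of Lemma \ref{LLDinf}, the fixed point argument closes exactly as in Theorem \ref{Tmain}.
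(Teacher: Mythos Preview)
Your proposal is correct and follows essentially the same approach as the paper: the paper's proof is likewise a one-paragraph reduction to the proof of Theorem \ref{Tmain}, defining the same parameter box $B$ and the identical map $\Jcal$ (with the extra $\zetapol$-component $\zetapol - \tfrac{F^\phi_1}{m}\mutilde_{pol}$), invoking Lemma \ref{LLDinf}.(iii) for $\Jcal(B)\subset B$, and applying Schauder. Your additional remarks on the diffeomorphism near the poles and on the role of Lemma \ref{Rtautilde} and Proposition \ref{PODEestpol} in securing the $O(\log m/m)$ scale of estimate (iii).(d) are accurate elaborations that the paper leaves implicit.
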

\begin{proof}

Given $\zetaboldpol  = (\zeta_1, \bsigmaunder, \zetapol) \in \R\times \R^{2k-1} \times \R$ as in \ref{Epboundsinf}, we denote $\zerobold = (0, \dots, 0) \in \R^{2k+1}$ and 
\[ M[[ \zetaboldpol ]] : = M\left[ \Lhat[\sbold; m], \tau[\zetaboldpol; m], w[\zetaboldpol; m]\right], \quad \Lhat[[\zetaboldpol]] := \Lhat[\sbold; m].\] 

Define
\begin{align*}
B &=\left\{ \, v\in C^{2,\beta}_\sym(M[[\zerobold]]):\|v\|_{2,\beta,\gamma;M[[\zerobold]]} \, \le \, \tau_1[\zerobold; m]^{1+\alpha}\,\right\} 
\times [-\cunder_1,\cunder_1]\times  \left[-\frac{\cunder_1}{m},\frac{\cunder_1}{m}\right]^{2k-1} \\&\times\left [ - \cunder_1 \frac{\log m}{m}, \cunder_1 \frac{\log m}{m}\right]
 \subset 
C^{2,\beta}_\sym(\,M[[\zerobold]]\,)\times\R^{2k+1}.
\end{align*}
The proof is the same in structure as the proof of \ref{Tmain}; in particular, after carrying out steps (1)-(5) as in the proof of \ref{Tmain}, we define
\begin{align*}
\Jcal(v, \zetaboldpol) = \left( u_Q \circ \Fcal_{\zetaboldpol}, \zeta_1+\mutilde_1,  \bsigma - \frac{F^\phi_1}{m} \Dcal \muboldtilde , \xibold + \frac{2}{m}\muboldtilde', \zetapol -  \frac{F^\phi_1}{m} \mutilde_{pol}
\right),
\end{align*}
where $\sum_{i=1}^k \tau_i \mutilde_iW_i + \sum_{i=1}^k \tau_i \mutilde'_i W'_i + \taupol \mutilde_{pol} \Wpol = w+w_H + w_Q$.

It follows from Lemma \ref{LLDinf}.(iii) and the estimates on the norms of $w_H$ and $w_Q$ as in the proof of \ref{Tmain} that $\Jcal( B) \subset B$, so we may apply the Schauder fixed-point theorem as in the proof of \ref{Tmain}.
\end{proof}

\begin{remark}
\label{rtausizepol}
As $k\rightarrow \infty$, the sizes of the catenoidal bridges on each minimal surface in \ref{Tmainpol} tends to become uniform in the same sense as in the case where there are no bridges at the poles (recall \ref{rtausize}).  To see this, let $k \in \N$ and $\zetaboldpol \in \R^{2k+1}$ be as in \ref{Epboundsinf}.  By  \ref{Lphiavginf} and  \ref{dvarphiinf}, 
\begin{align*}
\frac{\taupol[\zetaboldpol;m]}{ \tau_1[\zetaboldpol; m]} = \taupol'[\zetaboldpol;m] = \frac{m}{F^{\phitilde}_1 \phitilde(\sss_1)} \tautilde = \frac{F^{\phat}_1 B_{k, 0}}{F^{\phitilde}_1 \phitilde(\sss_1)} e^{\zetapol},
\end{align*}
By \ref{PODEestpol}, \ref{Prlpoles}, and \ref{Epboundsinf}, it follows that
\begin{align}
\lim_{k\rightarrow \infty} \lim_{m\rightarrow \infty} \tauhat_{pol}/ \tauhat_{1} =1, 
\end{align}
where $\tauhat_{pol}/\tauhat_1$ is the ratio of the size of the bridges at the poles over the size of the bridges at the first jump latitude of a fixed point of $\Jcal$ as in \ref{Tmainpol}.  Then arguing   as in remark \ref{rtausize} to compare with the sizes of the rest of the bridges away from the poles, we conclude 
\begin{align}
\lim_{k\rightarrow \infty} \lim_{m\rightarrow \infty} \tauhat_{max}/\tauhat_{min} = 1,
\end{align}
where $ \tauhat_{max}/\tauhat_{min}$ is the ratio of the maximum size over the minimum size of the bridges associated with a fixed point of $\Jcal$ as in \ref{Tmainpol}.
\end{remark}
\subsection*{The case with necks along the equator}
\label{SS:equator}
$\phantom{ab}$
\nopagebreak

Here we construct doublings of $\Spheq$ where $\Lpar$ contains the equator circle.  Most of the construction is as before, so we outline the argument and describe in detail only those aspects which differ from the construction in Theorem \ref{Tmain}.  To begin, we define an expanded class of RLD solutions.

\begin{notation}
\label{Ns2}
Given $k\in \N$ and $0=\sss_0 <\sss_1<\cdots<\sss_k < \infty$, we denote
$\sbold = (\sss_0, \sss_1, \dots, \sss_k)\in \R^{k+1}$. 
\end{notation}

\begin{definition}[RLD solutions, cf. Definition \ref{RL}]
\label{D:RLD2}
We say $\phi\in C^0_{|\sss|} \left( \cyl\right)$ is a \emph{rotationally invariant linearized doubling solution} if $\phi$ is as in \ref{RL}, except $\sbold^\phi$ in \ref{RL}.(ii) may be as in either \ref{Dlpar} or \ref{Ns2}; in the latter case we require that \ref{RL}.(iii) holds for $i=0, \dots, k$.
\end{definition}

From now on we assume $\phi$ is an RLD solution as in \ref{D:RLD2} where $\sbold^\phi$ is as in \ref{Ns2}. 

\begin{definition}[Quantities associated to RLD solutions, cf. Definition \ref{RLquant}]
\label{RLquant2}
Let $\phi$ be an RLD solution as in \ref{D:RLD2}.  Define
\begin{multline}
\label{dFlist2}   
\Fboldunder^\phi := \left( F^\phi_{0-}, F^\phi_{0+}, F^\phi_{1-}, \dots, F^\phi_{k+} \right) \in \R^{2k+2}_+,
\\
\Fbold^\phi := ( F^\phi_i)_{i=0}^k  \in \R^{k+1}_+,
\quad \bsigma^\phi := (\sigma^\phi_i)_{i=0}^{k-1} \in \R^{k}, 
\quad \xibold^\phi = \left( \xi^\phi_i\right)_{i=1}^k \in \R^k,  
\end{multline}
where for $i=0, \dots, k$,  $j=0, \dots, k-1$, and $l = 1, \dots, k$, 
\begin{equation} 
\label{Exi2'}
F^\phi_{i\pm} := F^\phi_\pm (\sss_i), \quad 
F^\phi_i := F^\phi_+(\sss_i)+F^\phi_{-}(\sss_i), \quad 
e^{\sigma^\phi_j} = \frac{F^\phi_{j+1}}{F^\phi_j}, \quad 
\xi^\phi_l = \frac{F^\phi_{l+} - F^\phi_{l-}}{F^\phi_{l+} + F^\phi_{l-}}. 
\end{equation} 
We define $\bsigmaunder^\phi: = (\bsigma^\phi, \xibold^\phi) \in \R^{k}\times \R^{k}$ and call the entries of $\bsigmaunder^\phi$ the \emph{flux ratios} of $\phi$.   
If $\bsigmaunder^\phi =  \textbf{0}$ we call $\phi$ \emph{balanced}.        
Finally we define
\begin{align*}
F^\phi_{avg} : = \frac{1}{2(k+1)}\left|  \Fbold^\phi\right|_{\ell^1} =  \frac{1}{2(k+1)}\left| \Fboldunder^\phi\right|_{\ell^1}. 
\end{align*}
\end{definition}
Note that if $\phi$ is an RLD solution as in Definition \ref{D:RLD2}, the assumption that $\phi \in C^\infty_{|\sss|} \left( \cyl \setminus L_{par}[\sbold]\right)$ implies that $F^\phi_{0+} = F^\phi_{0-}$.  For this reason, we did not define $\xi^\phi_0$. 

We omit the proof of the next result, which is a straightforward modification of the proof of \ref{existence}.
\begin{lemma}[RLD existence and uniqueness, cf. Proposition \ref{existence}]
\label{eq existence}
Given $F\in (0, \infty)$ and 
$$ 
\bsigmaunder = (\bsigma, \xibold)  = 
\left( \, (\sigma_i )_{i=1}^{\infty} \, , \, (\xi_j)_{j=1}^\infty \, \right)\in \ell^1\left(\R^\N\right) \oplus \ell^{\infty}\left( \R^\N\right)  
$$
satisfying $|\xibold |_{\ell^\infty} < \frac{1}{10}$, there is a unique unit RLD solution $\phieq[F; \bsigmaunder]$ satisfying the following. 
\begin{enumerate}[label=(\alph*).]
\item $\sbold^\phieq$ is as in \ref{Ns2} and $F^{\phieq[F; \bsigmaunder]}_+(\sss_0) = F$.
\item $\bsigmaunder^{\phieq} = \left. \bsigmaunder\right|_{k}$ where $k$ is the number of jump latitudes of $\phi$ and $\left. \bsigmaunder\right|_k := \left( \, (\sigma_i)_{i=0}^{k-1}, (\xi_i)_{i=1}^k\, \right)$.
\end{enumerate}
 Moreover, the following hold.

 \begin{enumerate}[label=(\roman*).]
 \item $k[F; \bsigmaunder]$ is a nonincreasing function of $F$.  Further, for each $\bsigmaunder$ as above there exists a decreasing sequence $\{b_{0, \bsigmaunder}: = \infty, b_{1, \bsigmaunder}, b_{2, \bsigmaunder}, \dots\}$ such that $k[F; \bsigmaunder] = k$ if and only if $F \in [b_{k, \bsigmaunder}, b_{k-1, \bsigmaunder})$.  Moreover each $b_{k, \bsigmaunder}$ depends only on $\left. \bsigmaunder\right|_k $ (defined as above).

\item$\sss^\phieq_1, \dots, \sss^\phieq_k$ are increasing smooth functions of $F$ for fixed $\bsigmaunder$.

\item $\phieq[F; \bsigmaunder]$ is smooth at the poles if and only if $F = b_{k, \bsigmaunder}$ for some $k\geq 1$. 

\item The restriction of $\phieq[F; \bsigmaunder]$ on any compact subset $[0, \infty)$ depends continuously on $F$ and $\bsigmaunder$.
\end{enumerate}
\end{lemma}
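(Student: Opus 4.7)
The plan is to parallel the proof of Proposition \ref{existence}, adapting for the jump at $\sss_0 = 0$ and the parametrization by the initial flux $F$ in place of the first jump latitude $\sss_1$. The role previously played by $\phie|_{[-\sss_1,\sss_1]}$ is now played by the even extension across $\sss_0=0$ of $H^+[F;0]=\phiunder[1,F;0]$, which is the unique solution on $[0,\sss_1]$ of $\Lcal_\chi\phi=0$ with $\phi(0)=1$ and $\partial_+\phi(0)=F$. By the evenness requirement, $F^\phieq_{0-}=F^\phieq_{0+}=F$, so $F^\phieq_0=2F$ and $\phieq$ has a (symmetric) jump at the equator.

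For uniqueness: given $F$ and $\bsigmaunder$, the function on $[-\sss_1,\sss_1]$ is forced, and the first jump latitude $\sss_1$ is determined by the matching condition $F^\phieq_{1-}=(1-\xi_1)F\,e^{\sigma_0}$ (from Remark \ref{rsigbij} applied to starting flux $2F$ and ratio $e^{\sigma_0}$), combined with the flux monotonicity Lemma \ref{LFmono} and the positivity requirement on $\phieq$. The remaining jumps and pieces are then determined inductively just as in Proposition \ref{existence}, using Lemma \ref{Lcoeffs}. For existence, I would build $\phieq$ piece by piece: on $[-\sss_1,\sss_1]$ as the even extension of $H^+[F;0]$ with $\sss_1$ chosen as above, and at each subsequent jump use $\xi_i$ and $\sigma_{i-1}$ to specify new initial data and continue with $H^+[F^\phieq_{i+};\sss_i]$. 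Lemma \ref{Lintid} gives a uniform lower bound on consecutive jump spacings in terms of the flux, and the decay $F^\phio_+(\sss)\searrow 0$ as $\sss\to\infty$ bounds $\sss_k$ from above, forcing the process to terminate after finitely many steps.

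For (i)--(iv), the arguments from Proposition \ref{existence} adapt almost verbatim. In (i), increasing $F$ increases $F^\phieq_{1-}$, and by Lemma \ref{LHmono} pushes each subsequent $\sss_i$ outward; the critical values $b_{k,\bsigmaunder}$ are defined as those $F$ at which the recursion produces exactly $k$ jumps and then continues as a multiple of $\phio$ beyond $\sss_k$ (so $\phieq$ is smooth at the poles), and the three-case analysis (analog of Cases 1--3 in the proof of Proposition \ref{existence}) distinguishes the situations $F>b_{1,\bsigmaunder}$, $F=b_{1,\bsigmaunder}$, and $F<b_{1,\bsigmaunder}$, with the lower cases handled inductively. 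Monotonicity and smoothness in (ii) follow from Lemma \ref{LHmono} applied sequentially at each jump, smoothness at the poles (iii) is characterized by the vanishing of the analog of the coefficient $A_k$ in Lemma \ref{Lcoeffs}, and continuous dependence in (iv) comes from smooth dependence of ODE solutions on initial data together with continuity of the jump locations in $F$ and $\bsigmaunder$.

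The main expected obstacle is tracking the monotonicity direction in (i) under the new parametrization by $F$ rather than by $\sss_1$; however Lemma \ref{LHmono} is already formulated to give the needed dependence of fluxes on both initial flux and initial latitude, so this should go through cleanly. Everything else is essentially bookkeeping: the initial-data modification (symmetric jump at $\sss_0=0$ with initial flux $F$) changes only the starting piece of the recursion, not its qualitative behavior, and the estimates used to bound $\sss_k$ and produce termination of the recursion are unchanged.
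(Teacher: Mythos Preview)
Your proposal is correct and follows exactly the approach the paper intends: the paper omits the proof entirely, stating only that it is a straightforward modification of the proof of Proposition \ref{existence}, and your outline supplies precisely such a modification, correctly identifying that the initial piece $\phie|_{[-\sss_1,\sss_1]}$ is replaced by the even extension of $H^+[F;0]=\phie+F\phio$ and that Lemma \ref{LHmono} handles the monotonicity in the new parameter $F$. The remaining steps (uniqueness via flux monotonicity and Lemma \ref{Lcoeffs}, termination via Lemma \ref{Lintid} and the decay of $F^{\phio}_+$, the three-case analysis for (i), and smooth/continuous dependence for (ii)--(iv)) carry over without change, just as you indicate.
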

We emphasize that the only substantial difference between RLD solutions $\phieq = \phieq[F; \bsigmaunder]$ as in \ref{eq existence} and RLD solutions $\phat = \phat[\sss_1; \bsigmaunder]$ as in \ref{existence} is that while $\phat[\sss_1;\bsigmaunder]$ coincides with $\phie$ on $\{ \sss \in [ -\sss^\phat_1, \sss^\phat_1]\}$, the expansion of $\phieq$ on $\{ \sss \in [ -\sss^{\phieq}_1, \sss^{\phieq}_1]\}$ with respect to the basis $\{ \phie, \phio\}$ contains a positive multiple of $\phio$.

\begin{notation} 
\label{Nphieq} 
Given $k$, $\bsigmaunder$ and $\left.\bsigmaunder\right|_k$ as in \ref{eq existence}, we define
\begin{align*}
\phieq[\bsigmaunder: k] : = \phieq[ \left.\bsigmaunder\right|_k : k]: = \phieq [ b_{k, \bsigmaunder}; \bsigmaunder], \qquad 
\sbold = \sbold[\bsigmaunder: k] := \sbold[ \left.\bsigmaunder\right|_k :k] : = \sbold^{\phieq[\bsigmaunder: k]}.
\end{align*}
By modifying Definition \ref{dphitilde}, Lemma \ref{Rtautilde}, and Definition \ref{Nphitildek} we also define for all appropriately small $\tautilde>0$ RLD solutions
$\phitildeq [ \bsigmaunder, \tautilde: k] = \phitildeq[ \sss_1; \bsigmaunder]$
which satisfy $A_k[ \sss_1; \bsigmaunder] = \tautilde$.
\end{notation}

Straightforward modifications of the respective statements and proofs of \ref{Prl} and \ref{PODEest} provide versions of those results which hold for smooth at the poles RLD solutions $\phieq$ and $\phitildeq$ as in \ref{Nphieq}.  Furthermore, as in Lemma \ref{Lphiavg} we convert each RLD solution $\phieq$ (or $\phitildeq$) as in \ref{Nphieq} into an LD solution whose non-oscillatory part is a multiple of $\phieq$ (or $\phitildeq$).  The only important difference is that (recall \ref{Lphiavg}.(c)) these LD solutions are normalized so that $\tau'(p_0):= \tau'_0 = 1$.

These LD solutions are decomposed and estimated via obvious modifications of the machinery developed in Section \ref{S:LD} used to estimate LD solutions $\Phi$ whose configurations $L$ do not contain points on the equator circle of $\Spheq$.

For constructions where $L$ contains points on the equatorial circle but not at the poles of $\Spheq$, the parameters of the construction are $\zetabold_{eq} =\left( \zeta_0, \bsigmaunder \right) = (\zeta_0, \bsigma, \xibold) \in \R\times \R^{k} \times \R^{k}$, where we require
\begin{align}
\label{Eparam2}
|\zeta_0| \leq \cunder_1, \quad |\bsigma|_{\ell^\infty} \leq \frac{\cunder_1}{m}, \quad |\xibold|_{\ell^\infty} \leq \frac{\cunder_1}{m}.
\end{align}
In cases where $L$ contains both points on the equator and at the poles, the parameters are
$\zetaboldpol_{eq} =\left( \zeta_0, \bsigmaunder, \zetapol \right) = (\zeta_0, \bsigma, \xibold, \zetapol) \in \R \times \R^k \times \R^k \times \R$ where we require that
\begin{align}
\label{Eparam3}
(\zeta_0, \bsigmaunder) \quad \text{satisfies}\quad  \eqref{Eparam2} 
\quad \text{and} 
\quad  |\zetapol| < \cunder_1 \log m /m.
\end{align}

Using the preceding, we modify the steps in Sections \ref{S:MLD} and \ref{S:Main} construct MLD solutions $\varphi[ \zetabold_{eq}; m]$ and $\varphi[ \zetaboldpol_{eq};m]$ and in turn smooth initial surfaces $M[[ \zetabold_{eq}]]$ and $M[[\zetaboldpol_{eq}]]$.  For the sake of brevity, we omit the proofs of the following and leave the routine modifications to the reader.

\begin{theorem}
\label{Tspheq}
There is an absolute constant $\cunder_1>0$ such that if $(k,m)\in \N^2$ satisfies Assumption \ref{Aratio}, there is $\zetaboldhat_{eq} \in \R^{2k+1}$ satisfying \eqref{Eparam2} 
 and moreover there is $\upphihat \in C^\infty(\Mhat)$, where $\Mhat: = M[[ \zetaboldhat_{eq}]]$ such that
\begin{align*}
\left\| \upphihat\right \|_{2, \beta, \gamma, \Mhat} \le \tauhat_0^{1+ \alpha/4},
\end{align*}
and further the normal graph $\Mhat_\upphihat$ is a genus $(2k+1)m-1$ embedded minimal surface in $\Sph^3(1)$ which is invariant under the action of $\groupthree$ and has area $\text{Area}(\Mhat_\upphihat)\rightarrow 8 \pi$ as $m\rightarrow \infty$.  
\end{theorem}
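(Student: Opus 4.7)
The plan is to mirror the scheme of Theorems \ref{Tmain} and \ref{Tmainpol}, following the six steps of Remark \ref{Rsummary}, but using the extended class of RLD solutions introduced in Definition \ref{D:RLD2} (with $\sbold^\phi$ as in Notation \ref{Ns2}, so that the first jump latitude lies on the equator $\sss_0=0$) in place of the ones of Definition \ref{RL}. First, for each $\zetabold_{eq}=(\zeta_0,\bsigma,\xibold)\in \R\times\R^k\times\R^k$ satisfying \eqref{Eparam2}, Lemma \ref{eq existence} and Notation \ref{Nphieq} provide a unique smooth-at-the-poles unit RLD solution $\phieq[\bsigmaunder:k]$ with jump latitudes $\sbold=(\sss_0,\dots,\sss_k)$, $\sss_0=0$, and prescribed flux ratios $\bsigmaunder$. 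The obvious analogues of Propositions \ref{Prl} and \ref{PODEest} for $\phieq$ (with $F^{\phieq}_{avg}=\frac{1}{k+1}+O(k^{-2})$, $\sech^2\sss_k\sim_C \frac{1}{k+1}$, and Lipschitz dependence on $\bsigmaunder$) are proved by the same flux-monotonicity and Riccati arguments as in Section \ref{S:RLD}, with one extra interval $\{\sss\in[\sss_0,\sss_1]\}$ contributing to the sum in Lemma \ref{Lintid} and the identity $F^{\phieq}_{0+}=F^{\phieq}_{0-}$ (forced by the symmetry at $\sss_0=0$) playing the role of a boundary condition that eliminates the parameter $\xi_0$.

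Next, following Lemma \ref{Lphiavg}, convert $\phieq[\bsigmaunder:k]$ into a $\grouptwo$-invariant LD solution $\Phi=\Phi[\bsigmaunder:k,m]=\varphi[L;\tau']$ on the configuration $L=L[\sbold;m]$ (which now contains the $m$ points equally spaced on the equator), normalized by $\tau'_0=1$ and characterized by $\phi:=\Phi_{avg}$ being a multiple of $\phieq$. Apply the decomposition $\Phi=\Pp+\Phat+\Phip$ of Definition \ref{dPprime} and its estimates (Lemmas \ref{LGhatest}, \ref{LEest}, \ref{LPhip est}, Proposition \ref{LPhip}); everything goes through verbatim since $p_0$ lies away from the poles and the local analysis at $p_0$ is identical to that at any $p_i$, $i\ge1$. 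Define the overall scale by
\[
\tau_0[\zetabold_{eq};m]:=\frac{1}{m}e^{\zeta_0}e^{-\phi(\sss_0)}=\frac{1}{m}e^{\zeta_0}e^{-m/F^{\phi}_0},
\]
where $\zeta_0$ plays the role of the unbalancing parameter formerly played by $\zeta_1$ in \eqref{Etau1}, and set $\varphi[\zetabold_{eq};m]:=\tau_0\Phi+\vunder$, with $\vunder=-\Ecal_L^{-1}\Bcal_L(\tau_0\Phi)\in\skernelv_\sym[L]$ determined by the matching condition $\Bcal_L\varphi=\zerobold$. The vertical and horizontal matching equations at the interior points $p_1,\dots,p_k$ are the obvious analogues of \eqref{Ephimatching1}--\eqref{Ephimatching2}; at $p_0$, the horizontal matching equation is automatically satisfied by the $\zbartilde$-symmetry (equivalently $\xi_0$ is forced to be zero), while the vertical matching equation at $p_0$ becomes the analogue of \eqref{Evert1} with $\zeta_0+\mu_0$ on the left, giving the absolute bound $|\zeta_0+\mu_0|\le C$ which is the replacement of Lemma \ref{LLD}(iii)(a).

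The construction of the initial surfaces $M[[\zetabold_{eq};m]]$ and the estimates on mean curvature, the inverse of the linearized operator modulo $\skernel_\sym[L]$, and the quadratic terms of a normal perturbation (Lemmas \ref{con:L}, \ref{con:add}, \ref{LHM}, \ref{Lquad} and Proposition \ref{Plinear}) transfer without change, as does the construction of the reference diffeomorphisms $\Fcal_{\zetabold_{eq}}:M[[\zerobold]]\to M[[\zetabold_{eq}]]$ of Lemma \ref{Lrefdiff}. Finally, define
\[
\Jcal(v,\zetabold_{eq})=\Bigl(u_Q\circ\Fcal_{\zetabold_{eq}},\ \zeta_0+\widetilde\mu_0,\ \bsigma-\tfrac{F^\phi_0}{m}\Dcal\widetilde{\boldsymbol\mu},\ \xibold+\tfrac{2}{m}\widetilde{\boldsymbol\mu}'\Bigr)
\]
in direct analogy with \eqref{EJcal}, where the $\widetilde\mu$'s are defined by $w+w_H+w_Q=\sum\tau_i\widetilde\mu_i W_i+\sum\tau_i\widetilde\mu'_iW'_i$ and $\widetilde{\boldsymbol\mu}=(\widetilde\mu_0,\dots,\widetilde\mu_k)$, $\widetilde{\boldsymbol\mu}'=(\widetilde\mu'_1,\dots,\widetilde\mu'_k)$. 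The Schauder fixed point theorem applied to
\[
B=\{v\in C^{2,\beta}_\sym(M[[\zerobold]]):\|v\|_{2,\beta,\gamma;M[[\zerobold]]}\le\tau_0[\zerobold;m]^{1+\alpha}\}\times[-\cunder_1,\cunder_1]\times[-\cunder_1/m,\cunder_1/m]^{2k}
\]
yields a fixed point $(\widehat v,\widehat\zetabold_{eq})$, which, as in the proof of Theorem \ref{Tmain}, forces all $\widetilde\mu_i,\widetilde\mu'_i$ to vanish and hence produces the minimal normal graph $\widehat M_{\widehat\upphi}$. The genus count $(2k+1)m-1$ comes from connecting the two hemispheres with $(2k+1)m$ bridges, and the area limit follows from the $C^{2,\beta}$ bound on $\widehat\upphi$ together with the initial-surface area estimates of \cite[Section 3]{kap}. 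The main new technical point is establishing the flux estimates of Proposition \ref{Prl} and the derivative bounds of Lemmas \ref{Lsderiv}--\ref{PODEest} for $\phieq$; the rest of the argument is a clean transcription of the proof of Theorem \ref{Tmain}.
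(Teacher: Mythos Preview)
Your proposal is correct and follows exactly the approach the paper takes: the paper itself omits the proof of Theorem \ref{Tspheq}, saying only that it is obtained by ``routine modifications'' of the proof of Theorem \ref{Tmain} using the RLD solutions $\phieq[\bsigmaunder:k]$ of Lemma \ref{eq existence} and Notation \ref{Nphieq}, with the LD solutions normalized by $\tau'_0=1$; your outline faithfully and in more detail carries out precisely these modifications.

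One small correction: the symmetry that forces the horizontal matching at $p_0$ (and hence $\xi_0=0$) is $\xbartilde$, the reflection of $\Spheq$ across its equator circle (see \eqref{Esphsym}), not $\zbartilde$, which is the reflection exchanging the two sides of $\Spheq$ in $\Sph^3(1)$.
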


\begin{theorem}
\label{Tspheqpol}
There is an absolute constant $\cunder_1>0$ such that if $(k,m)\in \N^2$ satisfies Assumption \ref{Aratio}, there is $\widehat{\zetaboldpol}_{eq} \in \R^{2k+2}$ satisfying \eqref{Eparam3}
 and moreover there is $\upphihat \in C^\infty(\Mhat)$, where $\Mhat: = M[ [ \widehat{\zetaboldpol}_{eq} ]]$ such that
\begin{align*}
\left\| \upphihat\right \|_{2, \beta, \gamma, \Mhat} \le \tauhat_0^{1+ \alpha/4},
\end{align*}
and further the normal graph $\Mhat_\upphihat$ is a genus $(2k+3)m-1$ embedded minimal surface in $\Sph^3(1)$ which is invariant under the action of $\groupthree$ and has area $\text{Area}(\Mhat_\upphihat)\rightarrow 8 \pi$ as $m\rightarrow \infty$.  
\end{theorem}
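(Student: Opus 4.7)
\medskip

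\noindent\textbf{Proof proposal for Theorem \ref{Tspheqpol}.}
The plan is to combine the modifications that converted the proof of Theorem \ref{Tmain} into the proofs of Theorems \ref{Tmainpol} and \ref{Tspheq}: the equatorial circle is treated as in \ref{Tspheq} (an extra jump latitude $\sss_0 = 0$ on which the RLD solution's expansion contains a non-trivial multiple of $\phio$, with reduced horizontal freedom by the symmetries), while the presence of catenoidal bridges at the poles is handled as in \ref{Tmainpol} (the RLD solution is no longer smooth at the poles; rather, on $\{\sss \ge \sss_k\}$ it equals $\tautilde \, \phie + B_{k,\tautilde}\, \phio$ for some small $\tautilde > 0$). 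Specifically, first I would modify Lemma \ref{eq existence} to introduce a family $\phitildeq[\bsigmaunder,\tautilde:k]$ of RLD solutions with jump latitudes $0 = \sss_0 < \sss_1 < \dots < \sss_k$, flux ratios $\bsigmaunder = (\bsigma,\xibold) \in \R^k \times \R^k$, and polar parameter $\tautilde \ge 0$, and then verify by straightforward modifications of Propositions \ref{Prlpoles} and \ref{PODEestpol} that the analogous quantitative estimates and continuous dependence hold.

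Next, following \ref{Lphiavg}, \ref{Lphiavginf}, and the construction sketched after \ref{Nphieq}, I would convert each $\phitildeq[\bsigmaunder,\tautilde:k]$ to an LD solution $\Phi[\bsigmaunder,\tautilde:k,m]$ of configuration $(\Lhat[\sbold;m], \tau')$ where $\sbold = (0, \sss_1, \ldots, \sss_k)$ and $\Lhat[\sbold;m] = L[\sbold;m] \cup L_{pol}$, normalized by $\tau'_0 = 1$. Decompose $\Phi = \Pp + \Phat + \Phip$ and obtain the analogs of the estimates in Section \ref{S:LD} and Lemma \ref{LV} for $\Ecal^{-1}_{\Lhat}$ (combining \ref{LV} and \ref{LVinf}). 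The overall scale $\tauo$ is then chosen by \eqref{Etau1pol} with $\zeta_1$ replaced by the new unbalancing parameter $\zeta_0$, and the polar scale $\tautilde$ by \eqref{Etautilded}; the continuous parameters are $\widehat{\zetaboldpol}_{eq} = (\zeta_0, \bsigma, \xibold, \zetapol) \in \R \times \R^k \times \R^k \times \R$ subject to \eqref{Eparam3}. Solving $\vunder = -\Ecal^{-1}_{\Lhat}\Bcal_{\Lhat}(\tauo \Phi)$ produces the MLD solution $\varphi[\widehat{\zetaboldpol}_{eq};m]$, and the matching equation $\Bcal_{\Lhat} \varphi = \zerobold$ decomposes into: a vertical balancing equation at each $\sss_i$ for $i = 0, 1, \ldots, k$ (as in \eqref{Ephimatching1}); a horizontal balancing equation at each $\sss_i$ for $i = 1,\ldots,k$ only, since by the symmetries there is no horizontal freedom at the equator (as in \eqref{Ephimatching2}); and the polar matching equation \eqref{Ematchinginf}. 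Analogs of the estimates in \ref{LLDinf} then control $(\mubold, \mubold', \mupol)$ in terms of $\widehat{\zetaboldpol}_{eq}$.

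Finally, I would convert $\varphi[\widehat{\zetaboldpol}_{eq};m]$ to an initial smooth $\groupthree$-invariant surface $M[[\widehat{\zetaboldpol}_{eq}]]$ by the procedure of Section \ref{S:Main} (recalling \cite[Section 3]{kap}, which applies thanks to the direct analog of Lemma \ref{con:add}). The fixed-point map is defined on
\begin{align*}
B \; := \; \bigl\{\, v \in C^{2,\beta}_\sym(M[[\zerobold]]) : \|v\|_{2,\beta,\gamma;M[[\zerobold]]} \le \tau_0[\zerobold;m]^{1+\alpha}\,\bigr\} \times [-\cunder_1,\cunder_1] \times [-\tfrac{\cunder_1}{m},\tfrac{\cunder_1}{m}]^{2k} \times [-\cunder_1\tfrac{\log m}{m}, \cunder_1\tfrac{\log m}{m}]
\end{align*}
by
\begin{align*}
\Jcal(v,\widehat{\zetaboldpol}_{eq}) \;=\; \Bigl(\, u_Q \circ \Fcal_{\widehat{\zetaboldpol}_{eq}},\; \zeta_0 + \mutilde_0,\; \bsigma - \tfrac{F^\phi_0}{m} \Dcal\muboldtilde,\; \xibold + \tfrac{2}{m}\muboldtilde',\; \zetapol - \tfrac{F^\phi_0}{m}\mutilde_{pol} \,\Bigr),
\end{align*}
where $u_Q$ is defined exactly as in the proof of \ref{Tmain} using Proposition \ref{Plinear} and Lemma \ref{Lquad}, and $(\mutilde_0, \muboldtilde, \muboldtilde', \mutilde_{pol})$ are the total mismatch coefficients arising from $w + w_H + w_Q$. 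Invoking the Schauder fixed-point theorem in $C^{2,\beta'}$ for $\beta' \in (0,\beta)$ as in \ref{Tmain} produces a fixed point $(\vhat, \widehat{\zetaboldpol}_{eq})$, and the minimality, embeddedness, symmetry, and area asymptotics of the perturbed surface $\Mhat_\upphihat$ follow just as in \ref{Tmain}, \ref{Tmainpol}, and \ref{Tspheq}.

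The main obstacle is purely organizational: verifying that the flux-monotonicity arguments underlying Proposition \ref{Prl} behave correctly at both ``ends'' of the configuration simultaneously, so that the estimates on $\Fboldunder^{\phitildeq}$ depend on $\tautilde$ in a way compatible with the $O(\log m / m)$ bound on $\zetapol$ from \eqref{Eparam3}. In particular, the analog of Lemma \ref{Rtautilde} must be established with two-sided control, since the equatorial coefficients $A_0, B_0$ are now themselves parameters in the ODE rather than being fixed to $A_0 = 1, B_0 = 0$, and the proof that $\Jcal(B) \subset B$ requires checking that the $\zetapol$-component of the error is controlled by $\log m / m$ rather than the $1/m$ scale governing the other flux-ratio components.
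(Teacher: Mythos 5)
Your proposal follows essentially the same approach the paper intends: the paper explicitly omits the proof of Theorem \ref{Tspheqpol}, stating that it is obtained by routine modifications combining the equatorial case (Theorem \ref{Tspheq}) and the polar case (Theorem \ref{Tmainpol}), which is exactly what you reconstruct. The structure — RLD solutions $\phitildeq[\bsigmaunder,\tautilde:k]$ with an equatorial jump at $\sss_0=0$ and non-smoothness at the poles, conversion to LD solutions normalized by $\tau'_0=1$, the overall scale and polar scale via \eqref{Etau1pol} and \eqref{Etautilded} with $\zeta_0$ in place of $\zeta_1$, the matching system, and the fixed-point map on $B\subset C^{2,\beta}_\sym(M[[\zerobold]])\times\R^{2k+2}$ — all match the pattern from \ref{Tmain}, \ref{Tmainpol}, and \ref{Tspheq}.

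One small correction to your closing remark: the equatorial coefficient $A_0$ is not free; the unit normalization $\phi(0)=1$ (and the even symmetry at $\sss=0$) still forces $A_0=1$ and $\xi_0=0$, with only $B_0 = F^{\phieq}_+(0)=F$ serving as the new scalar parameter replacing $\sss_1$. So the interaction between the two modifications is somewhat less delicate than you anticipate — the equatorial jump and the polar $\tautilde$ parameter act at disjoint ends of the configuration and couple only through the already-understood flux propagation (analogs of Lemmas \ref{Lcoeffs}, \ref{LFmono}, \ref{LHmono}), precisely as in the proofs of Propositions \ref{Prlpoles} and \ref{PODEestpol}. The $O(\log m/m)$ bound on $\zetapol$ is inherited unchanged from Lemma \ref{LLDinf}.(iii).(d) since the polar matching equation \eqref{Ematchinginf} is unaffected by the presence of the equatorial circle.

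Incidentally, the genus stated in the theorem, $(2k+3)m-1$, does not appear to match a direct count of the bridges: the configuration $\Lhat[\sbold;m]$ has $(2k+1)m$ points on the parallel circles (including the equator) plus $2$ at the poles, which by the counting used in \ref{Tmain}–\ref{Tspheq} would give genus $(2k+1)m+1$. Your proposal does not address the genus computation, so this is not a gap in your argument, but it is worth flagging as a likely typographical issue in the statement rather than something your proof needs to establish.
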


\end{document}